\documentclass[11pt,reqno]{amsart}
\usepackage[utf8]{inputenc}
\usepackage{latexsym,amssymb,longtable}
\usepackage{hyperref,enumerate}
\usepackage{tikz}
\usetikzlibrary{decorations.pathmorphing,calc,arrows.meta}
\usepackage[eulergreek]{sansmath}

\usepackage{amsmath,amsthm,amsxtra}
\usepackage{amsfonts,eucal,mathrsfs,todonotes}
\usepackage{enumitem}
\usepackage{fullpage}
\usepackage{xcolor}
\usepackage{upgreek}
\usepackage[normalem]{ulem}

\numberwithin{equation}{section}

\definecolor{ddorange}{rgb}{1,0.5,0}
\definecolor{ddcyan}{rgb}{0,0.2,1.0}
\definecolor{dcyan}{rgb}{0,0.3,0.8}
\definecolor{ddmagenta}{rgb}{0.8,0,0.8}
\definecolor{turk}{rgb}{0,0.5,0.5}

\newcommand{\N}{\mathbb{N}}

\newcommand{\CE}[2]{\calC \calE([#1,#2])}

\newcommand{\ACE}[2]{\mathcal{A}\mathcal{C}\mathcal{E}([#1,#2])}
\newcommand{\ACEn}[2]{\mathcal{A}\mathcal{C}\mathcal{E}^n([#1,#2])}
\newcommand{\ECE}[2]{\mathcal{R}\mathcal{C}\mathcal{E}([#1,#2])}
\newcommand{\ECEn}[2]{\mathcal{R}\mathcal{C}\mathcal{E}^n([#1,#2])}

\newcommand{\R}{\mathbb{R}}

\newcommand{\BL}{\mathrm{BL}}

\newcommand{\calA}{\mathcal{A}}

\newcommand{\calC}{\mathcal{C}}

\newcommand{\calE}{\mathcal{E}}
\newcommand{\calF}{\mathcal{F}}

\newcommand{\calM}{\mathcal{M}}
\newcommand{\calP}{\mathcal{P}}
\newcommand{\calR}{\mathcal{R}}

\newcommand{\scrL}{\mathscr{L}}
\newcommand{\Lebone}{\lambda}
\newcommand{\Lip}{\mathrm{Lip}}
\newcommand{\Lipb}{\mathrm{Lip}_{\mathrm{b}}}

\newcommand{\Fmap}{\mathrm{F}}

\newcommand{\eps}{\varepsilon}

\newcommand{\dnabla}{\overline\nabla}
\newcommand{\supp}{\text{supp}\,}

\newcommand{\dd}{\mathrm{d}}

\newtheorem{theorem}{Theorem}[section]
\newtheorem{prop}[theorem]{Proposition}

\newtheorem{cor}[theorem]{Corollary}
\newtheorem{lemma}[theorem]{Lemma}
\newtheorem{remark}[theorem]{Remark}
\newtheorem{mainthm}{Main Theorem}

\theoremstyle{definition}
\newtheorem{definition}[theorem]{Definition}
\newtheorem{assumption}[theorem]{Assumption}
\newtheorem{example}[theorem]{Example}


\let\eps\ep

\newcommand{\foraa}{\text{for a.e.\ }}

\def\dd{\mathrm{d}}
\DeclareMathSymbol{\mtimes}{\mathord}{symbols}{"0A}

\newcommand{\dom}{\mathop{\rm dom}}
\newcommand{\edg}{E}

\newcommand{\ona}{\dnabla}

\newcommand{\odivn}{\overline{\mathrm{div}}}


\newcommand{\AC}{\mathrm{AC}}

\newcommand{\Cc}{\mathrm{C}_{\mathrm{c}}}
\newcommand{\Cb}{\mathrm{C}_{\mathrm{b}}}
\newcommand{\Bb}{\mathrm{B}_{\mathrm b}}

\newcommand{\Fish}{\mathscr{D}}

\newcommand{\weakto}{\rightharpoonup}


\newcommand{\piecewiseConstant}[2]{\overline{#1}_{\kern-1pt#2}}

\newcommand{\underpiecewiseConstant}[2]{\underline{#1}_{\kern-1pt#2}}

\newcommand{\pwM}[2]{\widetilde{#1}_{\kern-1pt#2}}
\def\gen(#1,#2){\calS_{#1}(#2)}

\newcommand{\teta}{\boldsymbol \vartheta}
\newcommand{\serifsigma}{{\sansmath \sigma}}

\newcommand{\tetapi}{\boldsymbol{\teta}_{\!\kappa}}
\newcommand{\tetapil}{\boldsymbol{\teta}_{\!\kappa}^d}

\newcommand{\tetapin}{\boldsymbol{\teta}_{\!\kappa_n}}

\newcommand{\pinfty}{{+\infty}}
\newcommand{\mres}{\kern1pt\mathbin{\vrule height 1.6ex depth 0pt width 0.13ex\vrule height 0.13ex depth 0pt width 1.3ex}}

\newcommand{\thalf}{\relax} 

\newcommand{\frB}{\mathfrak B}

\def\calS{\mathscr E}
\def\calF{\mathscr F}
\def\calR{\mathscr R}
\def\Aalpha{\upalpha}
\newcommand{\bnu}{\boldsymbol\upnu}
\newcommand{\bj}{{\boldsymbol j}}

\newcommand{\rmD}{\mathrm{D}}

\newcommand{\restr}[1]{\lower3pt\hbox{$|_{#1}$}}
\newcommand{\nchi}{{\raise.3ex\hbox{$\chi$}}}

\newcommand{\scrR}{\mathscr{R}}
\newcommand{\scrD}{\mathscr{D}}
\newcommand{\scrE}{\mathscr{E}}
\newcommand{\jj}{{\boldsymbol{j}}}
\newcommand{\rrho}{{\boldsymbol{\rho}}}

\newcommand{\Ed}{{E'}}
\newcommand{\dV}{\mathsf{d}}
\newcommand{\Mloc}{\mathcal{M}_{\mathrm{loc}}}
\newcommand{\pairing}[4]{ \sideset{_{#1 }}{_{ #2}}  {\mathop{\langle #3 , #4  \rangle}}}

\newcommand{\psih}{\mathfrak{f}}
\newcommand{\nbl}[1]{\|#1\|_{\mathrm{BL}} }

\newcommand{\bbeta}{\boldsymbol \beta}

\newcommand{\rmL}{\mathrm{L}}
\newcommand{\rmX}{\mathrm{X}}
\newcommand{\rmC}{\mathrm{C}}

\newcommand{\down}{\downarrow}

\newcommand{\RNEW}{\color{black}} 

 \newcommand{\EEE}{\color{black}}

 \newcommand{\lds}{\nu}

  \newcommand{\DRd}{\mathcal{E}}

\newcommand{\Young}{\mathcal{Y}}
\newcommand{\calX}{\mathcal{X}}
\newcommand{\VV}{\boldsymbol{V}}
\newcommand{\EE}{\boldsymbol{E}}

\newcommand{\TTheta}{\boldsymbol{\Theta}}
\newcommand{\ppi}{\boldsymbol{\pi}}
\newcommand{\kkappa}{\boldsymbol{\kappa}}
\newcommand{\mmu}{\boldsymbol{\mu}}
\newcommand{\xx}{\boldsymbol{x}}
\newcommand{\yy}{\boldsymbol{y}}
\newcommand{\zz}{\boldsymbol{z}}

\newcommand{\torus}{\mathbb{T}^d}
\newcommand{\trait}{\mathsf{S}}

\newcommand{\Dissipative}{\textsc{Dissipative}}
\newcommand{\Balanced}{\textsc{Balanced}}
\newcommand{\Reflecting}{\textsc{Reflecting}}

\newcommand{\B}{\mathrm{B}}
\newcommand{\Ms}{\mathcal{M}_{\sigma}}
\newcommand{\Rpe}{\overline{\R}_{\geq0}}

\numberwithin{equation}{section}

\title[Singular jump processes]{Singular jump processes as generalized gradient flows}

\begin{document}

\author{Jasper Hoeksema}
\address{J. Hoeksema, Department of Mathematics and Computer Science, Eindhoven University of Technology, 5600 MB Eindhoven, The Netherlands}
\email{j.hoeksema@tue.nl}

\author{Riccarda Rossi}
\address{R.\ Rossi, DIMI, Universit\`a degli Studi di Brescia. Via Branze 38, I--25133 Brescia -- Italy}
\email{riccarda.rossi@unibs.it}

\author{Oliver Tse}
\address{O. Tse, Department of Mathematics and Computer Science, Eindhoven University of Technology, 5600 MB  Eindhoven, The Netherlands}
\email{o.t.c.tse@tue.nl}

\begin{abstract}
 We extend the generalized gradient-flow framework of Peletier, Rossi, Savar\'e, and Tse to singular jump processes on abstract metric spaces, moving beyond the translation-invariant kernels considered in $\R^d$ and $\torus$ in previous contributions. To address the analytical challenges posed by singularities, we introduce \emph{reflecting} solutions, a new solution concept inspired by reflected Dirichlet forms, which ensures the validity of a chain rule and restores uniqueness. We establish existence, stability, and compactness results for these solutions by approximating singular kernels with regularized ones, and we show their robustness under such approximations. The framework encompasses \emph{dissipative} and \emph{balanced} solutions, clarifies their relations, and highlights the role of density properties of Lipschitz functions in upgrading weak formulations to \emph{reflecting} solutions. As an application, we demonstrate the versatility of our theory to nonlocal stochastic evolutions on configuration spaces.
\end{abstract}

\maketitle

\centerline{\today}

\tableofcontents

\section{Introduction}

In recent years, stochastic processes have increasingly been understood through the lens of generalized gradient flows in spaces of measures. Starting from the seminal work of Jordan, Kinderlehrer, and Otto \cite{JordanKinderlehrerOtto98}, this variational point of view has opened new perspectives for understanding the geometry underlying a variety of stochastic processes, ranging from diffusions and discrete Markov chains \cite{AGS08,Adams-Dirr-Peletier-Zimmer,Maas11,Mielke13CALCVAR,MielkePeletierRenger14,LieMiePeleRenger17} to more recent developments in jump processes \cite{Erbar14,PRST22,warren2025}, leading to new connections between probability, analysis, and thermodynamics.

The present work serves to extend the generalized gradient flow framework proposed in \cite{PRST22} to encompass \emph{singular} jump processes on abstract spaces beyond $\R^d$ or $\torus$. Such singular jump processes naturally arise in statistical mechanics, disordered systems, and certain scaling limits of interacting particle models \cite{bertini2015macroscopic,kipnis2013scaling,finkelshtein2010vlasov}. Yet, their variational interpretation remains largely unexplored (cf.\ Section~\ref{s:8} for an example in the realm of interacting particle systems). 

A fundamental obstacle to this generalization is that in $\R^d$ or $\torus$ the strongest variational notions of solutions, namely those that lead to both existence and uniqueness, seem to require (see \cite{Erbar14,warren2025}) that the jump kernel behaves suitably well under translation, to allow for the usage of convolution techniques. Such machinery indeed plays a pivotal role in guaranteeing the validity of a suitable chain rule (cf.\ \eqref{CR-intro} ahead), which is at the core of the variational approach. We will provide elementary examples that show that removing the restriction on the kernel related to translations can, in fact, result in non-uniqueness, but that this can be salvaged by making an explicit choice of the function spaces involved, which is, in turn, connected to choosing a set of boundary conditions. This will be clarified by Examples \ref{ex:Jasp-intro-1} and \ref{ex:Jasp-intro-2} ahead.

We will focus on a specific choice of function spaces and introduce $\Reflecting$ solutions for gradient-flow systems, involving jump kernels $\kappa$ over separable metric spaces $(V,\dV)$ for which the metric and kernel are compatible, in the sense that
\begin{equation}\label{eq:mainm}
\sup_{x\in V}\int_{V} (1{\wedge} \dV^2(x,y))\kappa(x,\dd y)<+\infty.
\end{equation}
Here, our naming convention for this notion of solution stems from the relationship between these solutions to \emph{reflected} Dirichlet forms \cite{Schilling2012,Schmidt2018}, and generalizations of Neumann boundary conditions for fractional Laplacians \cite{Guan2006}.

Together with the corresponding invariant measure $\pi$, tuples $(V,\dV,\kappa,\pi)$ satisfying \eqref{eq:mainm} are the straightforward extensions of L\'evy-type kernels \cite{Applebaum_2009}, for which $V=\R^d$ and $\dV(x,y)=|x-y|$, and arise both in the study of adapted or intrinsic metrics and stochastic completeness for infinite graphs \cite{Frank2014,Grigoryan2011}, or in the rich literature on nonlocal or fractional metric measure spaces \cite{ToniHeikkinen2013,DiMarino2019,Grka2022}. It should be noted that we neither require continuity of the kernel nor local compactness of the metric space $(V,\dV)$.

In spite of the analytical challenges posed by these singularities and the lack of local compactness of $V$, we show that the essential gradient-flow structure, which was unveiled in \cite{PRST22} in the case of bounded kernels, now survives in a suitably generalized sense. In particular, we identify the correct
functional-analytic setting in which energy-dissipation balances can be recovered, and we demonstrate how the variational formulation can be adapted to account for singular transition kernels. Moreover, we will establish that any sequence of \emph{cut-off} problems, namely those where the singular kernel is approximated from below by bounded kernels, converges to our framework of $\Reflecting$ solutions. 

\medskip

\paragraph{\bf Variational formulation for bounded kernels}
Let us briefly recall the variational approach set forth in \cite{PRST22}  for jump processes with bounded kernels, i.e., satisfying
\begin{equation}
\sup_{x\in V} \int_V \kappa(x,\dd y) < +\infty.
\end{equation}
The law of the corresponding jump process consists of time-dependent measures $t\mapsto \rho_t\in \calM^+(V)$ satisfying the following Forward Kolmogorov Equation
\begin{equation}\label{FKEm}\tag{\textsf{FKE}}
\partial_t \rho_t = Q^* \rho_t, \qquad (Q^* \rho)(\dd x)=\int_{y\in V}\kappa(y,\dd x)\rho(\dd y)-\rho(\dd x)\int_{y\in V} \kappa(x,\dd y).
\end{equation}
Here $Q^*$ is the dual of the infinitesimal generator $Q$, given by 
\begin{equation}
(Q \varphi)(x)=\int_V (\dnabla \varphi)(x,y) \kappa(x,\dd y), \qquad (\dnabla \varphi)(x,y):=\varphi(y)-\varphi(x).
\end{equation}
This system can be reformulated in terms of gradient flows, see \cite{Maas11,ChowHuangLiZhou12,Mielke13CALCVAR,MielkePeletierRenger14,LieroMielkePeletierRenger17} for finite graphs, and \cite{PRST22} for the general setting of bounded kernels over Polish spaces and for the extension to generalized gradient flows. The formal (generalized) gradient-flow structure for \eqref{FKEm} reads 
\begin{equation}\label{FGF-intro}
\begin{aligned}
\partial_t \rho + \odivn\,\jj &=0,
\\
\jj &= \partial_2 \calR^* (\rho,{-}\dnabla\mathrm{D} \scrE(\rho)).
\end{aligned}
\end{equation}

Here, the first equation, henceforth called the \emph{continuity equation}, can be interpreted in a suitably weak sense: For every \emph{bounded} function $\varphi\in \Bb(V)$, the \emph{density-flux} pair $(\rho,\jj)$ satisfies
\begin{equation}\label{eq:contm}
\int_V \varphi(x)\,\rho_t(\dd x) - \int_V \varphi(x)\,\rho_s(\dd x) = \int_s^t \!\!\iint_{V \times V}\dnabla \varphi(x,y)\,\bj_r(\dd x \dd y)\,\dd r\qquad\text{for all $[s,t]\subset[0,T]$}.
\end{equation}
The building blocks of this structure are the driving  entropy functional 
\[
   \calM^+(V)\ni \rho\mapsto \scrE(\rho):=\int_V \upphi\left(\frac{\dd \rho}{\dd \pi}\right)\dd \pi,
\]
with entropy density $\upphi$, and the dissipation potential $\calR$, with dual $\calR^*$, given by
\begin{equation}
\label{formal_definition}
\begin{aligned}
&
\scrR(\rho,\jj): = \frac12 \iint_{V\times V} \uppsi \left( 2 \frac{\dd \jj}{\dd \bnu_\rho} \right) \bnu_\rho(\dd x \dd y), \qquad \scrR^*(\rho,\xi): = \frac12 \iint_{V\times V} \uppsi^*(\xi)  \,\bnu_\rho(\dd x \dd y),
\end{aligned}
\end{equation}
where $\upalpha$ is a concave mean-function, $(\uppsi,\uppsi^*)$ a convex conjugate pair of dissipation densities, and
\[ \bnu_\rho(\dd x \dd y) :=\upalpha(u(x),u(y)) \, \tetapi (\dd x, \dd y), \quad \tetapi (\dd x, \dd y):=\pi(\dd x)\kappa(x,\dd y), \quad u = \frac{\dd \rho}{\dd \pi}. \]

\begin{example}
\label{admissible-triples-intro}
It has been shown in \cite{PRST22} that, as soon as the triple  $(\upphi, \uppsi,\upalpha)$ satisfies the compatibility property
\begin{equation}
\label{admiss-compat}
(\Psi^*)'\big[\upphi'(v){-}\upphi'(u)\big]\upalpha(u,v) = v-u
   \quad
   \text{for every }u,v>0,
\end{equation}
solutions to \eqref{FGF-intro} do solve the Forward Kolmogorov Equation
 \eqref{FKEm}. 
 Thus, various choices of the above elements are possible. For instance, if we make the \emph{canonical} choice 
 \[
 \upphi(s)=s \log s-s+1,
 \]
  turning $\scrE$ into the relative Boltzmann  entropy of $\rho$ with respect to $\pi$, pairs 
  $(\uppsi^*,\upalpha)$ complying with 
  \eqref{admiss-compat} are given by 
\begin{align}
\uppsi^*(\xi)&:=\frac{1}{2}\xi^2, &\quad \alpha(u,v) &:=\frac{u-v}{\log{u}-\log{v}}, & \text{(\emph{linear quadratic})}
 \\[0.4em]
\uppsi^*(\xi) &:= 4 \left(\cosh(\xi/2)-1\right), &\quad \alpha(u,v) &:=\sqrt{uv}. & \text{(\emph{linear cosh})}
\end{align}
Here, the word `linear' hints at the resulting linear equation \eqref{FKEm}. The linear-quadratic structure was discovered in \cite{Maas11,ChowHuangLiZhou12,Mielke13CALCVAR}, while the linear-$\cosh$ structure stems from large deviation functionals of empirical measures of jump processes \cite{MielkePeletierRenger14}. 

 \par
 Nonetheless, we emphasize that different choices for $(\upphi, \uppsi,\upalpha)$ can lead to gradient-flow representations of various numerical schemes \cite{hraivoronska2023diffusive,Hraivoronska2024,Esposito2025}. The setting can be further generalized to one-way fluxes, even in the case of unbounded but locally bounded jump kernels \cite{HoeksemaTh}.
\end{example}
\medskip

As shown in \cite{PRST22}, equivalent formulations of the corresponding gradient-flow solution exist, either stating the evolution as the (possibly nonlinear) integro-differential equation
\begin{equation}\tag{$\mathsf{IDE}$}
\label{integro-diff-Fmapm}
\partial_t u_t (x) = \int_V \Fmap (u_t(x), u_t(y))\, \kappa (x, \dd y), \qquad \Fmap (u,v):=(\uppsi^*)'\left( \upphi'(v){-}\upphi'(u)\right) \upalpha (u,v),
\end{equation}
(in fact, the compatibility property \eqref{admiss-compat} reformulates as $ \Fmap (u,v) = v-u$), or characterizing the solution curve $(\rho,\jj)$ as the null-minimizer of the energy-dissipation functional 
 \begin{equation}
\label{trajectoryi}
\scrL_T(\rho,\jj) := \calS(\rho_T)-\calS(\rho_0)+ \int_0^T \bigl(\scrR(\rho_t, \bj_t) {+} \Fish(\rho_t)\bigr)\, \dd t,
\end{equation}
with the Fisher information functional $\Fish$ (see Section \ref{ss:dissd}). In fact, the validity of the aforementioned chain rule precisely amounts to having 
\begin{equation}
\label{CR-intro}
\scrL_t(\tilde\rho,\tilde\jj) \geq 0 \qquad \text{for all } t\in [0,T],
\end{equation}
for arbitrary curves $(\tilde\rho, \tilde\jj)$, 
whereas along a solution curve $(\rho,\jj) $  it holds 
\begin{equation}
\label{null-minimizer}
\scrL_T(\rho,\jj) =0 \,.
\end{equation}
Again, we emphasize that the validity of the chain rule, and hence the characterization of solutions in terms of \eqref{null-minimizer}, is made possible, in the setup of  \cite{PRST22}, by the regularity of the kernel and the corresponding continuity equation. 
\medskip

It is clear that a suitable generalization to singular kernels involves the following challenges:
\begin{itemize}
\item finding a notion of unbounded signed measures for arbitrary separable metric spaces, to represent the fluxes $\jj$,
\item replacing the class of bounded functions in the continuity equation \eqref{eq:contm} with an appropriate function space for which the equation is well defined,
\item introducing classes of solutions for which a suitable chain rule holds. 
\end{itemize}

The first challenge arises due to the fact that, in the present singular setting, the fluxes $\jj$ are not extended signed measures, which are defined over every set, and that real-valued Radon measures are not available because of the lack of (local) compactness. We handle this by introducing a concept of $\sigma$-finite signed measures on a given separable metric space $Y$
(the corresponding spaces will be denoted as $\Ms(Y)$;   $\Ms(Y;\R^d)$ for vector-valued measures), see Section \ref{ss:smmeas}. In essence,  a $\sigma$-finite signed measure  consists of tuples of mutually singular \emph{nonnegative} $\sigma$-finite measures. 

The last two challenges are more fundamental and intertwined: in particular, the choice of test functions for the continuity equation affects the validity of a chain rule. Therefore, a straightforward generalization of the works of \cite{Erbar14,warren2025} using Lipschitz functions as test functions is not viable, due to the absence of a general chain rule when Lipschitz functions are not dense in the relevant nonlocal Sobolev spaces.

\medskip

\paragraph{\bf Setup and dissipative solutions}
We are now going to illustrate the solution concepts introduced in this paper. Overlooking some technicalities (see Section \ref{s:3} for the precise definitions of all the objects involved), let $(\upphi, \uppsi^*,\alpha)$ be as above. Additionally, hereafter in this Introduction, we will assume that the density of the Fisher information functional $\mathscr{D}$ is convex, that the entropy density $\upphi$ is strictly convex, and that the dual dissipation density $\uppsi^*$ is asymptotically quadratic near zero. While all these conditions hold true for the canonical settings (cf.\ Example~\ref{admissible-triples-intro}), we invoke all of them here solely for expository purposes; in the upcoming Sections \ref{s:3-NEW} and \ref{s:4}, their specific role in the various statements will become clearer.
  
The weakest formulation of the continuity equation we shall address involves the class $\Lip_{\mathrm{b}}(V)$ of bounded $\dV$-Lipschitz functions. We consider density-flux pairs $(\rho,\jj)$, with $\rho_t \in \calM^+(V)$ for all $t\in [0,T]$ and a measurable family $(\jj_t)_{t\in [0,T]} \subset \Ms(V{\times}V)$, satisfying the \emph{continuity equation} in the following sense: for any interval $[s,t]\subset [0,T]$,
\begin{equation}\label{CE-INTRO}\tag{\textsf{CE}}
   \begin{aligned}
		\int_V \varphi(x)\, \rho_t(\dd x)  - \int_V \varphi(x)\, \rho_s(\dd x)   = \int_s^t\!\! \iint_{V\times V} \dnabla\varphi\,(x,y)\,\jj_r(\dd x\dd y)\,\dd r\qquad \text{for all } \varphi \in \Lip_{\mathrm{b}}(V)\,,
  \end{aligned}
\end{equation}
and write, in this case, $(\rho, \jj) \in \CE0T$.

We now define $\Dissipative$ solutions (see Section \ref{ss:3.notions-sols}) of the $(\scrE,\scrR,\scrR^*)$ evolution system as those pairs $(\rho, \jj) \in \CE0T$ 
for which $\calS(\rho_0)<\pinfty$ and $\scrL_t(\rho,\jj)\leq 0$ for every $t\in [0,T]$, i.e, those that satisfy the \emph{energy-dissipation inequality}
\begin{equation}
\label{UEDEi}\tag{\textsf{EDI}}
 \int_0^t \left( \scrR(\rho_r, \bj_r) + \Fish(\rho_r) \right) \dd r+ \calS(\rho_t)   \leq \calS(\rho_0)   \qquad \text{on any interval $[0,t]\subset [0,T]$}. 
\end{equation}

The relevance and usefulness of $\Dissipative$ solutions stems from the fact that the trajectory functional $\scrL_t$ is lower semicontinuous w.r.t.\ $(\rho,\jj)$, and also with respect to a varying kernel $\kappa$, under appropriate assumptions and strong convergence of the initial data. Because of this, $\Dissipative$ solutions frequently arise as limits in various types of variational convergence. For example, see \cite{Carrillo2022} for the transition of a sequence of grazing Boltzmann equations to the Landau equation, or \cite{Erbar16TR,Fathi2016,ErbarFathiLaschosSchlichting16TR,HT2023,HLS2025} for an exposition on its use in limits of interacting particle systems. 

In particular, in our current setting, we obtain the following existence result after approximating the singular kernel $\kappa$ by a sequence of bounded kernels 
\begin{equation}\label{eq:ikr}
\kappa_n(x,\dd y):=a_n(x,y) \kappa(x,\dd y),
\end{equation}
with $a_n(x,y)\in [0,1]$ an appropriately chosen sequence of regularizers or cut-off functions. 

\begin{mainthm}[Existence of $\Dissipative$ solutions]
\label{thm:mdissex}
There exists a $\Dissipative$ solution for any initial data $\rho_0 \in \dom(\scrE)$.
\end{mainthm}

Unfortunately, $\Dissipative$ solutions are not necessarily unique, as will be discussed below, unless the initial datum is bounded, and bounded Lipschitz functions are dense in the nonlocal Sobolev space 
\begin{equation}
\label{X2intro}
 \mathcal{X}_2:= \left\{ \varphi \in \rmL^\infty (V;\pi) : \ \iint_{V{\times}V} |\dnabla \varphi(x,y)|^2\ \tetapi(\dd x \dd y)<\infty \right\}.
\end{equation}
\par
Therefore, for our next main result, we need to consider curves that satisfy a stronger solution concept, which will heavily depend on a suitable solvability notion for the continuity equation. 

\medskip

\paragraph{\bf $\Reflecting$ solutions} 
  We start by specifying an \emph{enhanced} continuity equation underlying our stronger solutions. We say that a curve $(\rho,j)$ satisfies the 
\emph{reflecting continuity equation} on $(0,T)$, and write   $(\rho, \jj) \in \ECE 0T$, if 
\begin{enumerate}
\item for $u_t = \frac{\dd \rho_t}{\dd \pi}$ there holds $\sup\nolimits_{t\in [0,T]} \|u_t\|_{\rmL^\infty(\pi)} <\infty$;
\item the pair $(\rho,\jj)$ has finite action and finite Fisher information 
\begin{equation}
\label{finite-act+Fish-intro}
 \int_0^T \left(\scrR(\rho_t,\bj_t){+} \scrD(\rho_t) \right) \dd t <\infty, 
 \end{equation}
\item for any interval $[s,t]\subset [0,T]$, 
\begin{equation}
\label{RCE-INTRO}\tag{\textsf{RCE}}
		\int_V \varphi(x)\, \rho_t(\dd x)  - \int_V \varphi(x)\, \rho_s(\dd x)   = \int_s^t \!\!\iint_{V{\times}V} \dnabla\varphi\,(x,y)\,\jj_r(\dd x\dd y)\,\dd r \qquad \text{for all } \varphi \in \mathcal{X}_2\,.
	\end{equation}
\end{enumerate}
\par
Our stronger solution concept also involves 
the energy-dissipation \emph{balance} on \emph{every} sub-interval $[s,t]\subset [0,T]$. 
We say that a curve $ (\rho, \jj) \in \ECE 0T$ is a $\Reflecting$ solution
of the $(\scrE,\scrR,\scrR^*)$ evolution system
if $\scrL_t(\rho,\jj)=0$ for all $t\in [0,T]$, i.e., if the pair $(\rho,\bj)$ complies with the  {\em $(\scrE,\scrR,\scrR^*)$
  energy-dissipation balance}:  \EEE
\begin{equation}
\label{R-Rstar-balance-intro}\tag{\textsf{EDB}}
\int_s^t \left( \scrR(\rho_r, \bj_r) + \Fish(\rho_r) \right) \dd r+ \calS(\rho_t)   = \calS(\rho_s) \qquad \text{on any interval $[s,t]\subset [0,T]$} .
\end{equation}

\par 
We emphasize that, because of the validity of \eqref{R-Rstar-balance-intro}, $\Reflecting$ solutions are in particular $\Balanced$, in the sense of the solution concept introduced 
in \cite{PRST22} (cf.\ also Definition \ref{def:weak-solution} ahead).
Nonetheless, we have chosen the terminology `reflecting' to emphasize the role of the  \emph{reflecting continuity equation} for the validity of the chain rule, which is apparent in the following result. 
\begin{mainthm}[Characterization]
\label{thm:mchar}
For any curve $(\rho,\jj)\in \ECE 0T$ the following properties are equivalent:
\begin{itemize}
\item The energy-dissipation inequality holds,  i.e. $\scrL_t(\rho,\jj)\leq 0$ for all $t\in [0,T]$;
\item The energy-dissipation balance holds, i.e. $\scrL_t(\rho,\jj)= 0$ for all $t\in [0,T]$;
\item The equation \eqref{integro-diff-Fmapm} is satisfied in the sense that 
\[2\jj_t (\dd x, \dd y) : = -\Fmap(u_t(x),u_t(y))\tetapi (\dd x \dd y ) \qquad \foraa\, t \in (0,T).\]
\end{itemize}
\end{mainthm}
\par 
The reflecting continuity equation is at the core of the proof of the above characterization, which may not hold if the continuity equation is satisfied in a weaker sense, as illustrated by Figure \ref{fig1}. An immediate consequence of this is that any $\Dissipative$ solution $(\rho, \jj)$, originating from a bounded density, that also satisfies the reflecting continuity equation, is also a $\Reflecting$ solution (cf.\ Theorem \ref{thm:ECE}) 
\par
In Section \ref{ss:ASIDE} ahead, we will further explore the relationship between \eqref{CE-INTRO} and \eqref{RCE-INTRO}. In particular, we will show that curves $(\rho,\jj)\in \CE 0T$  with density $u = \frac{\dd\rho}{\dd \pi} \in \rmL^\infty(0,T;\rmL^\infty(V;\pi))$, do upgrade to $(\rho,\jj)\in \ECE 0T$, if the functional-analytic setup is such that bounded Lipschitz functions are \emph{dense} in $\mathcal{X}_2$. Accordingly, $\Dissipative$ solutions with bounded density upgrade to $\Reflecting$ solutions (cf.\ Corollary \ref{cor:density-vindicated}). 

\begin{figure}
\label{fig1}
\centering
\begin{tikzpicture}

  \draw[thick, rounded corners=6pt] (-5,-1.8) rectangle (5,1.8);

  \draw[thick, rotate=10] (-1.5,0.2) ellipse (3cm and 1.4cm);
  \node[anchor=south] at (-.5,.3) {\Large $\mathcal{R}\mathcal{C}\mathcal{E}$};
  \node[align=center, font=\itshape] at (3.8,1.2) {\Large$\mathcal{C}\mathcal{E}$};

  \fill ( -3.8, -0.6) node[above right, xshift=3pt] {\text{Dissipative} $=$ \text{Reflective}};
  \fill (  1.8,-0.9) node[right, xshift=-3pt] {$\begin{matrix}\exists \,\text{Dissipative} \\ \qquad\ne \text{Reflective}\end{matrix}$};
\end{tikzpicture}
\caption{An illustration of solution sets for bounded initial data. }
\label{fig:dissipative-balance}
\end{figure}
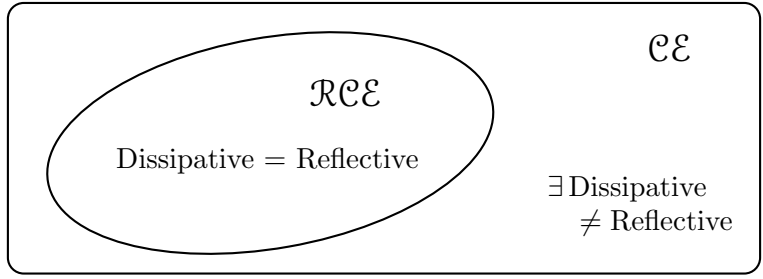

\medskip
A key feature of $\Reflecting$ solutions is that they are unique for strictly convex entropy densities $\upphi$; $\Reflecting$ solutions exist when the initial densities are bounded.
\begin{mainthm}[Existence and uniqueness $\Reflecting$ solutions]
\label{thm:mrexist}
For any fixed $\rho_0 \in \dom(\scrE)$, $\rho_0 = u_0 \pi$ with $\|u_0\|_{\rmL^\infty(\pi)}<\infty$, $\Reflecting$ solutions exist.
\par
Moreover, they 
are \emph{unique} within the class 
 $\mathcal{R}\mathcal{C}\mathcal{E}([0,T])$ with initial datum $\rho_0$.  
\end{mainthm}

In our existence result, we again use a sequence of regularized kernels as in \eqref{eq:ikr}, with appropriate regularizations $(a_n)_n$. In fact, due to the uniqueness mentioned above, this limit is independent of the choice of the sequence $(a_n)_n$. 

\begin{mainthm}[Robustness]
\label{thm:mrobust}
Let $a_n: V\times V \to  [0,1]$, $n \in \N$, form a sequence of pointwise converging functions, defining approximate kernels $(\kappa_n)_n$ via \eqref{eq:ikr}. Let $(\rho^n_0)_n\subset\dom(\scrE)$ be a sequence of initial data converging in entropy to $\rho_0$, i.e., $\scrE(\rho_0^n)\to \scrE(\rho_0)$.
\par
Then, the sequence of $\Reflecting$ solutions of the evolution systems $(\scrE, \scrR^n,(\scrR^n)^*)_n$ associated with the kernels $(\kappa_n)_n$ converge to the unique $\Reflecting$ solution of the evolution system $(\scrE, \scrR,\scrR^*)$ corresponding to $\kappa$. 
\end{mainthm}

\paragraph{\bf Reflecting boundary conditions and non-uniqueness}
At this point, it might be helpful to clarify our naming convention for $\Reflecting$ solutions, since, in the case of bounded kernels, the (unique) gradient-flow solutions are also $\Reflecting$ solutions. The following examples put the issue into clearer focus. 

\begin{example}
\label{ex:Jasp-intro-1}
Let us first consider the local setting, namely the canonical entropic Wasserstein gradient flow over some smooth domain $\Omega \subset \R^d$, corresponding to linear diffusion, \cite{AGS08}. Suppose that $\rho_t=u_t \Lebone^d$, $\jj_t=w_t \Lebone^d$ for all $t\in [0,T]$, with both $u_t$ and $w_t$ being smooth in time and space. Then the continuity equation usually takes the form
\begin{equation}
\int_{\Omega} \varphi(x)\, \rho_t(\dd x)  - \int_{\Omega} \varphi(x)\, \rho_s(\dd x) = \int_s^t \!\!\int_{\Omega} \nabla\varphi\,(x)\cdot \jj_r(\dd x)\,\dd r \qquad \text{for all } \varphi \in \mathrm{C}^{\infty}(\Omega),
\end{equation}
(or equivalently, under suitable regularity conditions on the fluxes, for all  $\varphi \in \Lipb(\Omega)$). Note that $\nabla \varphi$ is the usual gradient, contrasted with the discrete gradient $\dnabla \varphi$ used throughout this paper. 

On the other hand, the analogue of the reflecting continuity equation would involve curves that have finite action and Fisher information,  which, in the setting of linear diffusion, reads as
\[
    \int_0^T \left(\int_{\Omega} |w_t|^2 \rho_t(\dd x) +\int_\Omega |\nabla \sqrt{u_t}|^2 \dd x\right) \, \dd t < \infty\,,
\]
and the `enhanced' continuity equation (cf.\ \eqref{RCE-INTRO}) 
\begin{equation}
\int_{\Omega} \varphi(x)\, \rho_t(\dd x)  - \int_{\Omega} \varphi(x)\, \rho_s(\dd x)   = \int_s^t \!\!\int_{\Omega} \nabla\varphi\,(x)\cdot \jj_r(\dd x)\,\dd r \qquad  \text{for all }\varphi\in W^{1,2}(\Omega) \cap L^\infty(\Omega). 
\end{equation}
It is straightforward to check that in the latter case, the no-flux boundary condition 
\[\jj_t(x) \cdot \mathbf{n}_{\Omega}(x)=0\qquad  \mbox{ for all } x \in \partial \Omega, \, t\in [0,T], \]
is enforced, signifying reflection. In particular, if $\Omega=\Omega_1\cup \Omega_2$ with $\Omega_1$ and $\Omega_2$ both smooth and strongly separated, the masses $[0,T]\ni  t \mapsto \rho_t(\Omega_i)$ are constant in time. 

However, problems arise when $\Omega_1$ and $\Omega_2$ are not strongly separated, for example, when $\Omega$ is a punctured domain. Consider $\Omega=(-1,0) \cup (0,1)$. Then, Lipschitz functions are clearly not dense in $W^{1,2}(\Omega)$. Note that the classical Meyers-Serrin density result implies the density of smooth functions over $\Omega$, which need not be Lipschitz over $\overline\Omega$. Clearly, in this setting, the Euclidean metric is not the `right' metric for this problem, unless solutions are considered for \emph{permeable} membranes, i.e., corresponding to processes of particles that pass or transmit through the origin (see \cite{Chiarini2018} for a discussion on a similar phenomenon for diffusion processes with singular drifts). 
\end{example}

\begin{example}
\label{ex:Jasp-intro-2}
Similar issues might occur with singular jump kernels, but they could be less obvious due to potential degeneracies in the kernel. Take for example $\Omega=[-1,1]$, $\Omega_1=[-1,0)$, $\Omega_2=(0,1]$, and the singular kernel
\[\kappa(x,\dd y) = a(x,y) |x{-}y|^{-(1{+}2s)}, \qquad a(x,y):=1_{\Omega_1\times \Omega_1 \cup \Omega_2 \times \Omega_2}(x,y),\]
with a sufficiently strong singularity, namely $s>\tfrac{1}{2}$. It can be shown (see Section \ref{app:examples-density}) that bounded Lipschitz functions are not dense in $\mathcal{X}_2$ (cf.\ \eqref{X2intro}), since their closure results in functions $\varphi$ that are continuous in $x=0$, while $\mathcal{X}_2$ allows for discontinuities, representing the difference between reflection and transmission. This leads to  nonuniqueness of $\Dissipative$ solutions, potentially resulting in two distinct solutions for different admissible metrics $d_1$ and $d_2$, as shown in Figure \ref{figece}.

Both for this example and more general domains in $\R^d$, this can also be expressed using nonlocal analogs of partial integration (but still \emph{local} boundary terms) and traces for fractional kernels (\cite{Guan2006}). Moreover, $\mathcal{X}_2$ can be viewed as the largest space for which the expression is well-defined and can be identified as the maximal Silverstein extension of subservient Dirichlet forms, and is also known as \emph{active reflected} Dirichlet form \cite{Schmidt2018}. 
\par
In Section \ref{app:examples-density}, we will give various examples where the density of Lipschitz functions in $\mathcal{X}_2$ can still be proven, typically relying on convolution techniques.
\end{example}

\begin{figure}[h!]\centering
\begin{tikzpicture}
  \tikzset{ set/.style={thick}, arrow/.style={->,thick} }

  \draw[set] (0,0) circle (1.3cm);
  \node[font=\Large] at (0,0) {$\mathcal{R}\mathcal{C}\mathcal{E}$};

  \draw[set, rotate=12]  (0,0) ellipse (4cm and 1.8cm);
  \draw[set, rotate=-12] (0,0) ellipse (4cm and 1.8cm);

  \node[font=\Large\itshape] at (4.5,1.5) {$\mathcal{C}\mathcal{E}_{d_1}$};
  \node[font=\Large\itshape] at (-4.5,1.5) {$\mathcal{C}\mathcal{E}_{d_2}$};

  \node at (0,-3) (balance) {$d_2$-Dissipative $=$  Reflecting  $=$ $d_1$-Dissipative};
  \draw[arrow] (balance.north) -- (0,-1);

  \node at (7.2,0.2) (notbalance) {$\exists$ $d_1$-Dissipative $\neq$  Reflecting};
  \draw[arrow] (notbalance.west) -- (2.9,1.0);

\end{tikzpicture}
\caption{\label{figece}An illustration of solution sets for bounded initial data showing that
\emph{(i)} for two different metrics $d_1$, $d_2$ on the \emph{same} space $V$, the class of $\Dissipative$ solutions may or may not coincide with that of $\Reflecting$ solutions: For instance, if $(V,d_2)$ enjoys the additional property that functions in $\Lipb(V,d_2)$ are dense in $\mathcal{X}_2$, then $d_2$-$\Dissipative$ solutions are also $\Reflecting$, cf.\ Corollary \ref{cor:density-vindicated}; \emph{(ii)} for both metrics $d_1$ and $d_2$, $\Dissipative$ solutions that satisfy the reflecting continuity equation \eqref{RCE-INTRO} are also $\Reflecting$, see Theorem \ref{thm:ECE}.}
\label{fig:dissipative-balance2}
\end{figure}
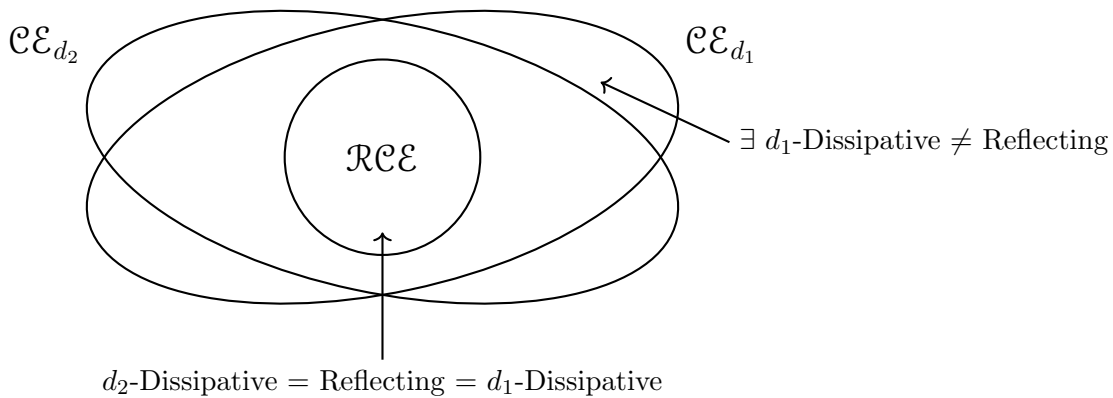

\medskip

\paragraph{\bf Outlook to generalizations}

Our proposed reflecting continuity equation incorporates a bound on the densities $u_t = \frac{\dd \rho_t}{\dd \pi}$, namely the constraint $\sup_{t\in [0,T]} \|u_t\|_{\rmL^\infty(\pi)} <\infty$, in order to establish a chain rule. In fact, the proof of the chain rule depends on bounds on $u$ from above and below; see Section \ref{thm:ECE}, where the lower bound can be removed via a convexity argument. Adjustments can be made to $\mathcal{R}\mathcal{C}\mathcal{E}$ to remove the upper bound, while still avoiding issues like those in Example \ref{ex:Jasp-intro-2}, by choosing a strictly smaller space than $\mathcal{X}_2$, but possibly larger than the space of bounded Lipschitz functions. However, due to the absence of a chain rule, these solutions may face the same problems as our current $\Dissipative$ solutions, particularly a lack of uniqueness. It should be noted that when the kernel is translation invariant, convolution techniques can be employed to remove the upper bound, similar to the approach taken in \cite{Erbar14}. 
%

A related and more fundamental issue is in our choice of function space $\mathcal{X}_2$, which leads to our class of $\Reflecting$ solutions. A natural question is whether our framework can be generalized to any other class of functions $\mathscr{F}\subset \mathcal{X}_2$, whether bounded Lipschitz functions or otherwise, that can relate to various other notions of boundary conditions. 

In the Hilbertian setting, with $\phi(s)=s^2$, $\alpha(u,v)=1$, $\uppsi^*(\xi)=\frac{1}{2}\xi^2$, a partial positive answer exists via the use of Dirichlet forms in \cite{Fukushima1994}, where any $\mathscr{F}$ that is closed in our nonlocal Sobolev space and satisfies suitable assumptions (including the class of bounded Lipschitz functions) generates a unique strongly continuous semigroup. This can be adapted to our setting but results in a chain rule and uniqueness for a corresponding Fisher information $\Fish_{\mathscr{F}}$ (possibly) strictly smaller than our current $\Fish$, with $\Fish_{\mathscr{F}}(\rho)=+\infty$ if $u=\frac{\dd \rho}{\dd \pi} \notin \mathscr{F}$. 

The problem lies in the fact that our current \emph{existence} result uses a specific choice of regularizations, and naturally leads to $\Reflecting$ solutions with the Fisher information $\Fish$. Other techniques that lead to existence, such as minimizing movement schemes with (possibly generalized) local slopes as found in \cite{PRST22}, either need precise information on geodesics for the solutions as in \cite{Erbar14}, or control on approximate solutions \cite{PRST22}. It remains to be seen if a combination of such techniques can be generalized to our setting with arbitrary $\mathscr{F}$. 

\medskip


Finally, our choice of objects involved leads to a very general but specific class of evolution equations, which in particular satisfies a maximum principle and incorporates a one-particle jump kernel $\kappa$. At the same time, there is a now a rich literature on gradient flows for the spatially homogeneous Boltzmann and Landau equation (see e.g.\ \cite{Erbar16TR,Carrillo2022,Carrillo2024}) or exchange-type equations \cite{Schlichting2019,HLS2025}, incorporating collision or two-particle kernels of the form 
\[\kappa(x,y;\dd x,\dd y).\] 
An important question, arising from all of the above remarks, is whether our current results can be generalized to Boltzmann-type equations over general metric spaces, picking out a specific class of solutions, \emph{reflecting} or otherwise. 

\medskip

\paragraph{\bf Plan of the paper} \underline{Section \ref{s:prelims}} settles the preliminary measure-theoretic definition and notions upon which we will build our analysis. 
\par
In \underline{Section \ref{s:3}}, we introduce the setup for our variational structure, by detailing our conditions on the ambient space $V$, with reference measure $\pi$, on the kernels $(\kappa(x,\cdot))_{x\in V}$, on the entropy, flux, and dissipation densities $\upphi$, $\upalpha$, and $\uppsi$, respectively.  We then proceed to specify our first solvability notion for the continuity equation and rigorously define the primal and dual potentials.
\par
\underline{Section \ref{s:3-NEW}} revolves around the concepts of solutions for the evolution system studied in this paper: $\Dissipative$, $\Balanced$, and $\Reflecting$ solutions (the latter properly defined after the introduction of the reflecting continuity equation). We explore their relationship, particularly proving \textsc{Main Theorem} $2$. Moreover, we show that if bounded Lipschitz functions are dense in $\mathcal{X}_2$, then curves fulfilling the continuity equation in the sense of Section \ref{s:3-NEW}, with additional bounds, can be upgraded to curves satisfying the reflecting continuity equation. 
 \par
 In \underline{Section \ref{s:4}}, we present the approximation of our evolution system $(\mathscr{E}, \mathscr{R},  \mathscr{R}^*)$ and state our main existence results, in particular yielding  \textsc{Main Theorems} $1$ \& $3$. We prove them in \underline{Section \ref{s:5}} via a robustness result that incorporates 
 \textsc{Main Theorem} $4$.
 \par
In \underline{Section \ref{s:8}}, we develop an application of our results for singular nonlocal stochastic evolutions in configuration spaces, demonstrating the flexibility of our assumptions.
\par
The \underline{Appendix} collects miscellaneous results and proofs, as well as examples of functional analytic setups in which the density property holds.

\section{Preliminaries of measure theory}
\label{s:prelims}

\paragraph{\bf List of symbols.}
Throughout the paper we will use the following notation.
\begin{center}
\newcommand{\specialcell}[2][c]{%
  \begin{tabular}[#1]{@{}l@{}}#2\end{tabular}}
\begin{small}
\begin{longtable}{lll}
$\R_+:=[0,\pinfty[$, $\R^m_+:=(\R_+)^m$ 
\\
$\overline{\R}^{m} = (\overline{\R}{\times}\ldots {\times}\overline{\R})$  & extended positive half-line, extended real line & 
\\
$\Rpe=[0,+\infty], \, \overline\R$  &  & 
\\
$ \B(Y), \, \B^+(Y; \overline{\R}) , \, \B(Y;\overline{\R})  $ &  real-valued, non-negative extended, or extended (Borel) 
\\
 & measurable functions on a separable metric space $Y$
\\
$\Bb(Y), \,  \Bb(Y;\R^m)$ & bounded Borel real-valued,  $\R^m$-valued functions 
\\
$\| f\|_\infty$ & $\sup$-norm of a function $f\in \Bb(Y)$ or $\Bb(Y;\R^m)$ 
\\
$\Cb(Y;\R^m), \Lip(Y;\R^m)$  & bounded continuous (Lipschitz, resp.)
$\R^m$-valued functions 
\\
$\Lipb(Y;\R^m) = \Lip(Y;\R^m) \cap \Cb(Y;\R^m)$ & bounded Lipschitz 
$\R^m$-valued functions & 
\\
$\mathcal{M}(Y;\R^m)$, $\mathcal{M}^+(Y)$ & Borel vector-valued (positive Borel)  measures on $Y$, & 
\\
 & with finite total variation & 
\\
$\Ms^+(Y)$ & positive $\sigma$-finite measures
\\
$\Ms(Y)$,\, $\Ms(Y;\overline{\R}^m)$ &  $\sigma$-finite signed measures with values in $\overline\R$,  $\overline{\R}^m$ resp.\
\\
$\boldsymbol{1}_{A}$ & indicator function of a set $A$
\\
$\Lebone$ ($\Lebone^d$) & Lebesgue measure on $\R$ ($\R^d$, resp.) & 
\\
$V$, $E=V{\times}V $ & space of states \& space of edges &
\\
$\Ed = E{\setminus} \{ (x,x)\, : x \in V\} $
\\
$\ona$ & graph gradient &
\\
$\kappa$, $\tetapi$ & jump kernel \& coupling associated with $\kappa$ \& $\pi$ &
\\
$\upphi$ and $\calS$ &  entropy density and entropy functional &
\\
$\uppsi$ and $\uppsi^*$ & dual pair of dissipation functions &
\\
$\calR$, $\calR^*$ & dual pair of dissipation potentials & 
\\
$\bnu_\rho$ & edge measure in definition of $\calR^*$, $\calR$
\\
$\upalpha(\cdot,\cdot)$ & multiplier in flux rate $\bnu_\rho$ & 
\\
$\Fish$ & Fisher-information functional &
\\
 $\rmD_\upphi$, $\rmD^+_\upphi$ & integrands
             defining the Fisher information $\Fish$ &
             \\
$\CE ab$ & set of pairs $(\rho,\bj )$ satisfying the \eqref{CE-INTRO} & 
             \\
$\ACE ab$ & set of pairs $(\rho,\bj ) \in \CE ab$ 
with finite entropy, & 
\\ & action and Fisher information &
\\
$\ECE ab$ & set of pairs $(\rho,\bj ) \in \ACE ab$ satisfying \eqref{RCE-INTRO} &
\end{longtable}
\end{small}
\end{center}

\paragraph{\bf Finite measures.}
Let $(Y,d)$ be separable metric space, and let 
$\mathfrak{B}(Y)$ be its associated Borel $\sigma$-algebra. 
We denote by 
 $\calM(Y;\R^m)$  the space of  $\sigma$-additive measures  on
 $\mu: \frB(Y) \to \R^m$ 
 of \emph{finite} total variation, 
 $\|\mu\|_{\mathrm{TV}}: =|\mu|(Y)<\pinfty$, where for every $B\in\frB(Y)$
 \[
   |\mu|(B): = \sup \left\{ \sum_{i=0}^\pinfty |\mu(B_i)|\, : \ B_i \in \frB_Y,\, \ B_i \text{ pairwise disjoint}, \ B = \bigcup_{i=0}^\pinfty B_i \right\}.
 \]
 We shall also refer to the elements in $\calM(Y;\R^m)$ as \emph{finite Borel measures}.  
 The set function  $|\mu|: \frB(Y) \to [0,\pinfty)$  is a positive
 finite
 measure on $\frB(Y)$ \cite[Thm.\ 1.6]{AmFuPa05FBVF}
 and $(\calM(Y;\R^m),\|\cdot\|_{\mathrm{TV}})$ is a Banach space.
 In the case $m=1$, we will simply write $\calM(Y)$,
 and we shall denote the space of \emph{positive} finite
measures on $\frB(Y)$ by $\calM^+(Y)$.
 By the Hahn decomposition, any $\mu \in \calM(Y)$ can be written as the difference of two positive finite measures
$\mu^\pm \in \calM^+(Y)$, i.e.,
$\mu = \mu^+ - \mu^-$.

   For $m>1$,
 we will identify any element $\mu \in \calM(Y;\R^m)$ with  a vector
 $(\mu_1,\ldots,\mu_m)$, with $\mu_i \in \calM(Y)$ for all
 $i=1,\ldots, m$,  with respective positive and negative parts $\mu_i^{\pm}$.  
 We will denote by $\Bb(Y;\R^m)$  the set of bounded $\R^m$-valued
 $\frB$-measurable
 maps;
 if $\varphi
 =(\varphi_1,\ldots,\varphi_m)\in \Bb(Y;\R^m)$, the duality between $\mu \in \calM(Y;\R^m)$ and $\varphi$
 can be expressed by 
\begin{equation}
\label{vector-duality-measures}
 \langle\mu,\varphi\rangle : = \int_{Y} \varphi \cdot \mu (\dd x) =
 \sum_{i=1}^m \int_Y  \varphi_i(x) \mu_i(\dd x).
 \end{equation}

\paragraph{\bf Extended and signed measures}
We call \emph{Borel measure} any measure $\mu: \mathfrak{B}(Y) \to \Rpe$, with the extended Borel $\sigma$-algebra on $\Rpe$.
\par
Furthermore,  we 
denote by $\calM^+_{\sigma}(Y)$ the space of $\sigma$-finite Borel measures. 
Throughout, we will repeatedly use the fact that for any $\mu,\nu\in \calM_{\sigma}^+(Y)$ with corresponding monotone sequences $(A_n)_{n\in \N}$ and $(B_n)_{n\in \N}$ that exhaust $Y$, we can  always find 
\begin{equation}
\label{common-exhaustion}
\begin{gathered}
\text{a new monotone sequence that exhausts $Y$ with both $\mu,\nu$ finite on these sets,}
\\
\text{by renumbering $C_{n,m}:=A_n \cap B_m$.}
\end{gathered}
\end{equation}
\par
Finally, recall from \cite[Definition 10.1]{Yeh} that any \emph{extended signed} Borel measure $\mu:\mathfrak{B}(Y) \to \overline{\R}$ (with the extended Borel $\sigma$-algebra on $\R$), 
is a countably additive  set function, with  $\mu(\emptyset)=\emptyset$, such that  
 either $\mu(B)\in (-\infty,+\infty]$ for every $B\in  \mathfrak{B}(Y)$, or
 $\mu(B)\in [-\infty, +\infty)$ 
 for every $B\in  \mathfrak{B}(Y)$. 
It should be noted that an extended signed measure is defined over the \emph{whole} $\sigma$-algebra, and is forbidden to attain $+\infty$ and $-\infty$ on two different sets. 

\par Nonetheless, in this paper we will need to handle measures like
\begin{subequations}
\label{clarifying-example}
\begin{equation}
\mathfrak{m}(\dd x):=x^{-1} \Lebone(\dd x)
\quad \text{ over $\R\backslash \{0\}$,}
\end{equation}
which is not an extended signed measure, since $\mathfrak{m}(B)$ is not defined for e.g.\ $B=(-1,0)\cup (0,1)$. 

However, $\mathfrak{m}$ is still a \emph{Radon} measure over $\R\backslash \{0\}$, where (see \cite{Bourbaki2004IntI,folland1999real}) a Radon measure $\nu$ on a locally compact separable $Y$ can equivalently be defined as either a set function defined over $\cup_{n \in \N} \mathfrak{B}(K_n)$ for some compact exhaustion $(K_n)_{n\in \N}$ of $Y$, or as a continuous functional over $\Cc(Y)$ with the inductive limit topology induced by $\mathrm{C}(K_n)$, or as the difference between two mutually singular positive Radon measures $\nu^{\pm}$. Note that, in the latter case, the decomposition $\nu=\nu^+-\nu^-$ is unique. For $\mathfrak{m}$, 
such decomposition is provided by 
\begin{equation}
\label{clar-ex-pm}
\mathfrak{m}=\mathfrak{m}^+-\mathfrak{m}^- \qquad  \text{with $\mathfrak{m}^{\pm}(\dd x ):=\boldsymbol{1}_{\{\pm r>0\}}(x)  |x|^{-1} \Lebone(\dd x)$.}
\end{equation}
\end{subequations}

Here, $Y$ can be a general separable metric space and is not required to be locally compact. Consequently, real-valued Radon measures are not applicable to our flux measures. Therefore, in the forthcoming section, we shall present a new construction in which $\sigma$-finiteness will assume a pivotal role, compensating for the lack of local compactness.

\subsection{Sigma-finite signed measures}\label{ss:smmeas}
The above preliminaries have set the stage for our own version of signed `measures', which need not be extended signed measures or real-valued Radon measures. 
In our setting, the $\sigma$-finiteness of all the measures involved
in Definition \ref{def:msigma}
will play a similar role as the compact exhaustion for Radon measures, ensuring that our definition is both well-posed and natural for the spaces we are interested in. 



\begin{definition}\label{def:msigma}
The space $\calM_{\sigma}(Y)$ consists of pairs $(\mu^+,\mu^-)$ such that $\mu^+\!\perp \mu^-$ and $\mu^{\pm}\in \calM_{\sigma}^+(Y)$. 
We call $(\mu^+,\mu^-)$ a \emph{$\sigma$-finite signed measure}. 
\par
For any such pair, we set $|(\mu^+,\mu^-)|:=\mu^+ + \mu^-$, and consider the collection 
\[
    \mathfrak{A}_{(\mu^+,\mu^-)} := \bigl\{ B\in \mathfrak{B}(Y) : \mbox{either $\mu^+(B)<\infty$ or $\mu^{-}(B)<\infty$} \bigr\}.
\]
We define the signed set function $\mu:\mathfrak{A}_{(\mu^+,\mu^-)}\to \overline\R$ associated to $(\mu^+,\mu^-)$ by
\begin{equation}
\label{induced-mu}
\mu(B):=\mu^+(B)-\mu^-(B), \qquad B\in \mathfrak{A}_{(\mu^+,\mu^-)}.  
\end{equation}
\par
In addition, for any $f\in \B(Y;\overline \R)$ and $(\mu^+,\mu^-) \in \calM_{\sigma}(Y)$ such that $|f|\in L^1(|(\mu^+,\mu^-)|)$ we define the finite integral
\begin{equation}\label{eq:fsint}
\int_{Y} f \dd \mu:= \int_Y f^+ \dd \mu^+ - \int_Y f^+ \dd \mu^- - \int_Y f^- \dd \mu^+ + \int_Y f^- \dd\mu^-.
\end{equation}
Similarly, $\calM_{\sigma}(Y;\overline{\R}^m)$ consists of tuples $(\mmu^+,\mmu^-) = ((\mu_1^{+},\mu_1^{-}), ,\dots, (\mu_m^{+},\mu_m^{-})) \in \calM_{\sigma}^+(Y)$, with $|(\mmu^+,\mmu^-) |:=\sum_{i=1}^m (\mu^+_i+\mu^-_i)$, and we define a vector-valued set-valued function $\mmu$
via \eqref{induced-mu}, componentwise. 
\end{definition}
We have the following result, whose proof is postponed to Appendix \ref{app:preliminary}. 
\begin{lemma}\label{lm:sigmap}
The following holds:
\begin{enumerate}
\item For any two pairs 
$(\mu^+,\mu^-),\,(\nu^+,\nu^-)\in \calM_{\sigma}(Y)$ such that 
\[
    \mu^+(B) - \mu^-(B) =\nu^+(B) - \nu^-(B) \qquad \text{for all } B \in \mathfrak{A}_{(\mu^+,\mu^-)}\cap \mathfrak{A}_{(\nu^+,\nu^-)},
\]
we have $\mu^\pm=\nu^\pm$.
\item Let $\{A_n\}_{n\in \N}$ be an increasing exhaustion
of $Y$ and $\nu:\cup_{n=1}^{\infty} \mathfrak{B}(A_n) \to \R$ be any set function such that for every $n\in \N$  the restriction $\nu\mres A_n$ is a finite signed Borel measure. Then there exists a unique pair $(\mu^+,\mu^-)\in \calM^+_{\sigma}(Y)$ such that $\nu=\mu^+-\mu^-$ on $\cup_{n=1}^{\infty} \mathfrak{B}(A_n)$. 
\end{enumerate}
\end{lemma}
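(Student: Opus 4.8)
\textbf{Proof plan for Lemma \ref{lm:sigmap}.}

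For part (1), the plan is to exploit $\sigma$-finiteness of all four measures $\mu^\pm,\nu^\pm$ to produce a common exhaustion. Using \eqref{common-exhaustion} repeatedly, I would first fix an increasing sequence $(C_n)_{n\in\N}$ exhausting $Y$ on which all of $\mu^+,\mu^-,\nu^+,\nu^-$ are simultaneously finite; in particular $C_n\in\mathfrak{A}_{(\mu^+,\mu^-)}\cap\mathfrak{A}_{(\nu^+,\nu^-)}$, and more importantly every Borel subset of $C_n$ lies in this intersection as well. Restricting the hypothesis to $\mathfrak{B}(C_n)$, the identity $\mu^+(B)-\mu^-(B)=\nu^+(B)-\nu^-(B)$ holds for all $B\in\mathfrak{B}(C_n)$ with all four quantities finite, so the finite signed Borel measures $\mu^+\mres C_n-\mu^-\mres C_n$ and $\nu^+\mres C_n-\nu^-\mres C_n$ coincide on $\mathfrak{B}(C_n)$. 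By uniqueness of the Hahn–Jordan decomposition for finite signed measures, together with the mutual singularity $\mu^+\perp\mu^-$ and $\nu^+\perp\nu^-$ (which passes to the restrictions), I would conclude $\mu^+\mres C_n=\nu^+\mres C_n$ and $\mu^-\mres C_n=\nu^-\mres C_n$ for every $n$. Letting $n\to\infty$ and using monotone convergence (the $C_n$ exhaust $Y$) upgrades this to $\mu^\pm=\nu^\pm$ on all of $\mathfrak{B}(Y)$.

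For part (2), the plan is to build the decomposition on each $A_n$ via the ordinary Hahn–Jordan theorem and then glue consistently. Since $\nu\mres A_n$ is a finite signed Borel measure, write $\nu\mres A_n=\sigma_n^+-\sigma_n^-$ with $\sigma_n^\pm\in\calM^+(A_n)$ mutually singular, say carried by a Hahn set partition $A_n=P_n\cup N_n$. The key compatibility point is that for $m\le n$ the restriction of the decomposition of $\nu\mres A_n$ to $A_m$ must be the decomposition of $\nu\mres A_m$: this follows because $\nu\mres A_m = (\nu\mres A_n)\mres A_m = \sigma_n^+\mres A_m - \sigma_n^-\mres A_m$ is again a difference of mutually singular positive measures, hence equals $\sigma_m^+-\sigma_m^-$ by uniqueness. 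Therefore $\sigma_n^{\pm}\mres A_m=\sigma_m^{\pm}$, so the set functions $\mu^{\pm}(B):=\lim_{n}\sigma_n^{\pm}(B\cap A_n)=\sup_n \sigma_n^\pm(B\cap A_n)$ are well-defined on $\mathfrak{B}(Y)$, are countably additive by monotone convergence, are $\sigma$-finite (finite on each $A_n$), and satisfy $\mu^+-\mu^-=\nu$ on $\cup_n\mathfrak{B}(A_n)$. Mutual singularity $\mu^+\perp\mu^-$ is obtained by taking $P:=\bigcup_n (P_n\setminus A_{n-1})$ as a carrier for $\mu^+$ and its complement for $\mu^-$ (using the compatibility to check the Hahn sets can be chosen nested up to null sets); then uniqueness of the pair is immediate from part (1).

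The main obstacle I expect is the gluing step in part (2): ensuring that the locally-defined Hahn decompositions are genuinely consistent across the exhaustion, i.e., that one does not merely get agreement of the signed measures but also of their positive/negative parts, and that the global mutual singularity $\mu^+\perp\mu^-$ can be witnessed by a single Borel set rather than a sequence. Both points are handled by the uniqueness of the Jordan decomposition applied on each finite level $A_n$ and a careful choice of nested Hahn sets (modifying on sets that are null for the relevant measures), but this bookkeeping is the only non-routine part; everything else is a direct application of standard finite-measure theory combined with the exhaustion device \eqref{common-exhaustion}.
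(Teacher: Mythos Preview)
Your proposal is correct and follows essentially the same route as the paper: for (1), pass to a common exhaustion on which all four measures are finite, invoke uniqueness of the Jordan decomposition at each level, and send $n\to\infty$; for (2), take the Hahn--Jordan decomposition of $\nu\mres A_n$, observe consistency across levels, and define $\mu^\pm(B):=\lim_n \nu_n^\pm(B\cap A_n)$. The only noticeable difference is in the mutual singularity argument for (2): the paper argues by contradiction (if $\mu^+\not\perp\mu^-$ then some $\nu_n^+,\nu_n^-$ would fail to be singular), whereas you construct an explicit global Hahn set $P=\bigcup_n(P_n\setminus A_{n-1})$ from nested local Hahn sets---your version is slightly more careful and avoids the bookkeeping you correctly flag as the only non-routine point.
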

\par
Relying on statement (1) of Lemma~\ref{lm:sigmap}, we may identify the pair $(\mu^+,\mu^-)$ with the set-function $\mu $ from \eqref{induced-mu}.
Therefore, hereafter we will write $\mu \in \Ms(Y)$ 
($\mmu \in \Ms(Y;\overline{\R}^m)$, respectively). 
With $\mu \in \Ms(Y)$ we can associate an exhaustion $(C_n)_n$ defined from the exhaustions for $\mu^+$ and $\mu^-$ via the procedure described in \eqref{common-exhaustion}.

\begin{remark}
\label{rmk:warranted-disc}
As far as the authors are aware, the above approach is not standard and warrants some discussion.
\par
Firstly, observe that, unlike extended signed measures, the  set-functions $\mu$ from \eqref{induced-mu} are not defined on the whole of $\mathfrak{B}(Y)$. In turn, any real-valued Radon measure on a locally compact separable space is $\sigma$-finite signed measure, since it can be represented as a difference of two mutually singular $\sigma$-finite measures. But, $\sigma$-finite signed measures are more general than real-valued Radon measures. 

Indeed, let us return to $\mathfrak{m}^+, \,\mathfrak{m}^-$ defined on $\R\backslash \{0\}$ from our example \eqref{clarifying-example}. When we extend $\mathfrak{m}^{\pm}$ to measures over $\R$ instead of $\R\backslash \{0\}$, setting $\mathfrak{m}_{\mathrm{ext}}^{\pm}(\{0\})=0$, it is clear that $\mathfrak{m}_{\mathrm{ext}}^{\pm}$ and $\mathfrak{m}_{\mathrm{ext}} = \mathfrak{m}_{\mathrm{ext}}^+ - \mathfrak{m}_{\mathrm{ext}}^-$ are no longer Radon measures on $\R$. However, since $\mathfrak{m}_{\mathrm{ext}}^{\pm}$ are still $\sigma$-finite, we have that $\mathfrak{m}_{\mathrm{ext}}^{\pm}$ is a $\sigma$-finite signed measure. 
\end{remark}
 

\paragraph{\bf Properties of $\sigma$-finite signed measures.}
The following result collects some basic facts that will be used afterwards:
 $\calM_{\sigma}$ is closed under suitable operations and a version of dominated convergence theorem holds. 
\begin{lemma}
\label{l:basic-props}
\begin{enumerate}
\item For any $\mu,\nu \in \calM_{\sigma}(Y)$ there exists a unique element $\eta\in \calM_{\sigma}(Y)$ such that $\eta(B)=\mu(B)+\nu(B)$ for all $B$ for which the terms involved are finite. In particular, when denoting this element as $\mu+\nu$, $\calM_{\sigma}(Y)$ becomes a vector space.
\item For any $f\in \B(Y;\overline{\R})$ and $\nu\in \calM_{\sigma}(Y)$ such that $|f|$ is $|\nu|$-a.e.\ finite, we have
\[
    f \nu:=(f^+ \nu^+{+}f^- \nu^-,f^- \nu^+{+}f^+\nu^-) \in \calM_{\sigma}(Y).
\]
\item The modified dominated convergence theorem holds:

\noindent For any $(f_n)_{n\in \N} \subset \B(Y;\overline{\R})$, $f\in \B(Y;\overline{\R})$, $(g_n)_{n\in N} \subset \B^+(Y;\overline{\R}), g\in \B^+(Y;\overline{\R})$, we have  
\[
    \left.\begin{gathered}
     \text{$f_n\to f$ $|\nu|$-a.e.},\;\;\text{$|f_n|\leq g_n$ $|\nu|$-a.e.}, \\
     \text{$g_n\to g$ $|\nu|$-a.e.\ with 
$\textstyle\int_{Y} g_n \, \dd |\nu|  \to \int_Y g\, \dd |\nu|$}
    \end{gathered}\;\right\}\quad\Longrightarrow\quad \text{$\int_Y f_n \,\dd \nu \to \int_Y  f \,\dd \nu$}.
\]
\end{enumerate}

\end{lemma}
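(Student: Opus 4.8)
The plan is to prove the three parts of Lemma~\ref{l:basic-props} by systematically reducing to the classical theory of $\sigma$-finite (unsigned) measures via a common exhaustion, exploiting part (1) of Lemma~\ref{lm:sigmap} for well-definedness and part (2) for existence of the claimed elements of $\calM_\sigma(Y)$.

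For part (1), I would start with $\mu=(\mu^+,\mu^-)$ and $\nu=(\nu^+,\nu^-)$ and pass to a common increasing exhaustion $(C_n)_n$ of $Y$ on which all four measures $\mu^\pm,\nu^\pm$ are finite, using the renumbering trick \eqref{common-exhaustion}. On each $C_n$ the restrictions $(\mu+\nu)\mres C_n := \mu^+\mres C_n - \mu^-\mres C_n + \nu^+\mres C_n - \nu^-\mres C_n$ is a well-defined finite signed Borel measure, and these restrictions are consistent across $n$; hence by Lemma~\ref{lm:sigmap}(2) there is a unique pair $\eta=(\eta^+,\eta^-)\in\calM_\sigma(Y)$ with $\eta=\mu+\nu$ on $\cup_n\mathfrak{B}(C_n)$. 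One then checks that $\eta(B)=\mu(B)+\nu(B)$ whenever all three terms are finite: such a $B$ is covered by the exhaustion in the sense that $B\cap C_n \uparrow B$ and the finiteness assumptions let one pass to the limit in the countably additive identity on $B\cap C_n$. Uniqueness of such an $\eta$ follows from Lemma~\ref{lm:sigmap}(1). The vector space axioms are then routine: scalar multiplication is $(\lambda\mu^+,\lambda\mu^-)$ for $\lambda\ge 0$ and swaps the components for $\lambda<0$, associativity and commutativity of $+$ follow from the corresponding facts for the unsigned measures on each $C_n$ together with the uniqueness statement.

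For part (2), given $f\in\B(Y;\overline\R)$ with $|f|$ finite $|\nu|$-a.e., I would first note $f^+\nu^+$, $f^-\nu^-$, $f^+\nu^-$, $f^-\nu^+$ are genuine $\sigma$-finite nonnegative Borel measures (absolute continuity with an a.e.-finite density preserves $\sigma$-finiteness, again via a common exhaustion intersected with the sets $\{|f|\le k\}$). It remains to verify the mutual singularity $\bigl(f^+\nu^++f^-\nu^-\bigr)\perp\bigl(f^-\nu^++f^+\nu^-\bigr)$: since $\nu^+\perp\nu^-$ there is a Borel set $P$ with $\nu^-(P)=0=\nu^+(Y\setminus P)$, and additionally the sets $\{f>0\}$, $\{f<0\}$, $\{f=0\}$ partition $Y$; intersecting with $P$ and its complement exhibits a Borel set carrying all of $f^+\nu^++f^-\nu^-$ and null for $f^-\nu^++f^+\nu^-$. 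Hence the pair lies in $\calM_\sigma(Y)$ and its associated set function is the expected $f\nu$; one should also remark that the integral \eqref{eq:fsint} is then consistent, i.e.\ $\int_Y g\,\dd(f\nu)=\int_Y gf\,\dd\nu$ for suitable $g$.

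For part (3), the modified dominated convergence theorem, I would work on a fixed common exhaustion $(C_n)_n$ and split the integral $\int_Y f_n\,\dd\nu = \int_Y f_n^+\,\dd\nu^+ - \int_Y f_n^+\,\dd\nu^- - \int_Y f_n^-\,\dd\nu^+ + \int_Y f_n^-\,\dd\nu^-$ into its four unsigned pieces according to \eqref{eq:fsint}. Each piece is handled by the \emph{generalized} dominated convergence theorem for nonnegative measures (Pratt's lemma): from $|f_n|\le g_n$, $f_n\to f$, $g_n\to g$ with $\int g_n\,\dd|\nu|\to\int g\,\dd|\nu|<\infty$ one gets $f_n^\pm\to f^\pm$ dominated by the convergent envelope $g_n$, so each of the four integrals converges to the corresponding integral of $f^\pm$ against $\nu^\pm$; summing with the correct signs gives $\int_Y f_n\,\dd\nu\to\int_Y f\,\dd\nu$. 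The only subtlety is ensuring $\int_Y g\,\dd|\nu|<\infty$ so that all terms are finite and the limits are legitimate — but this is exactly the hypothesis. I expect the main obstacle to be none of these individually deep, but rather the bookkeeping in part (2): verifying mutual singularity of the two composite measures cleanly, and confirming that the set function induced by the constructed pair genuinely agrees with the componentwise formula for $f\nu$ on all sets where it is finite, so that the identification via Lemma~\ref{lm:sigmap}(1) is legitimate.
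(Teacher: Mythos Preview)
Your proposal is correct and follows essentially the same route as the paper for all three parts: common exhaustion plus Lemma~\ref{lm:sigmap}(2) for (1), the exhaustion $\{|f|<m\}$ intersected with an exhaustion for $\nu$ for (2), and the generalized (Pratt-type) dominated convergence theorem applied componentwise for (3).

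There is one small step in part (3) that you gloss over and that the paper makes explicit. To apply Pratt's lemma to, say, $\int f_n^+\,\dd\nu^+$, you need $\int g_n\,\dd\nu^+\to\int g\,\dd\nu^+$, not merely $\int g_n\,\dd|\nu|\to\int g\,\dd|\nu|$. The paper obtains this via the Hahn decomposition $Y=P\cup Q$ with $\nu^-(P)=\nu^+(Q)=0$: Fatou's lemma gives $\liminf\int g_n\,\dd\nu^\pm\ge\int g\,\dd\nu^\pm$, and combining this with $\int g_n\,\dd\nu^++\int g_n\,\dd\nu^-=\int g_n\,\dd|\nu|\to\int g\,\dd|\nu|$ forces both $\liminf$'s to be limits. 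Once this is in place, Pratt applies to $\nu^+$ and $\nu^-$ separately (the paper splits only into these two pieces rather than your four, but either works). Your mention of a common exhaustion in part (3) is harmless but not needed.
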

\noindent
As for point \emph{(2)}, note that for arbitrary extended measurable functions we might no longer have the desired identification, since then the resulting measures might no longer be $\sigma$-finite. 
We postpone the proof of Lemma \ref{l:basic-props}, as well, to Appendix \ref{app:preliminary}.

\medskip
\paragraph{\bf Restriction.}
For every $\mu\in \Ms(Y;\overline{\R}^m)$ and $B\in \frB(Y)$
 we denote by $\mu{\mres} B$ the
 pair $(\mu^+{\mres}B, \mu^-{\mres}B)$. Hence, we have  
 \[
 (\mu{\mres} B)(A):=\mu(A{\cap}B)
 \]
for every $A\in \frB(Y)$ such that 
$\mu(A{\cap}B)$ is well-defined. 

\medskip
\paragraph{\bf Lebesgue decomposition.} 
 
 For every $\mu\in \Ms(Y;\overline{\R}^m)$ and $\gamma\in \Ms^+(Y)$ there exists
  a unique (up to the modification
 in a $\gamma$-negligible set) measurable map
 $\frac{\dd\mu}{\dd\gamma}:
 Y\to\R^m$, a $\gamma$-negligible set $N\in \frB$
 and a unique measure $\mu^\perp\in \Ms(Y;\overline{\R}^m)$
 yielding 
 \begin{equation}
   \label{eq:Leb}
   \begin{gathered}
     \mu=\mu^{\mathrm{a}}+\mu^\perp,\quad \mu^{\mathrm{a}}=\frac{\dd\mu}{\dd\gamma}\,\gamma=
     \mu{\mres}(Y\backslash N),\quad \mu^\perp=\mu{\mres} N,\quad
     \gamma(N)=0\\
     |\mu^\perp|\perp \gamma,\quad |\mu|(Y)=\int_Y
     \left|\frac{\dd \mu}{\dd\gamma}\right|\,\dd\gamma+|\mu^\perp|(Y).
   \end{gathered}
 \end{equation}
 This decomposition is obtained by combining  
 the Lebesgue decompositions of $\mu^{\pm}$ w.r.t.\ $\gamma$.
 \EEE
\subsection{Convergences  of measures}
\label{ss:2.2}
The space 
$\calM(Y;\R^m)$ is clearly endowed with the notion of convergence induced by the total variation norm  $\|\cdot\|_{\mathrm{TV}}$; we shall refer to it as \emph{strong convergence}. On
$\calM(Y;\R^m)$, we also consider \emph{setwise convergence}, defined by
  \begin{equation}
  \label{setwise-convergence}
  \mu_n \to \mu \text{ \emph{setwise} }  \quad \text{if}  \quad \lim_{n\to\infty}  \mu_n(B) = \mu(B) \quad \text{for all } B \in \frB(Y).
  \end{equation}
  Observe that,
  as a consequence of \cite[Theorem 4.6.3]{Bogachev07},
  along a setwise converging sequence $(\mu_n)_n$ we have
  $ \sup_{n} \|\mu_n\|_{\mathrm{TV}} < \infty$.  
   In fact, the  topology of setwise convergence is the coarsest one on $\calM(Y;\R^m)$ making all the functions $ \frB(Y)\ni B
  \mapsto  \mu(B) $ continuous. 
 \EEE
  Among the various characterizations of setwise convergence, we recall  \cite[\S 4.7(v)]{Bogachev07}
 \begin{enumerate}
 \item \emph{Convergence in duality with $\Bb(Y;\R^m)$}: $ \mu_n \to \mu$ setwise if and only if 
   \begin{equation}
     \label{eq:70}
     \lim_{n\to\pinfty}\langle \mu_n,\varphi\rangle=
     \langle \mu,\varphi\rangle
     \qquad
     \text{for every $\varphi\in \Bb(Y;\R^m)$}.
   \end{equation}
   ($\Bb(Y;\R^m)$ denoting the space of  bounded Borel $\R^m$-valued functions);
 \item \emph{Weak topology convergence in $ \calM(Y;\R^m)$}: $ \mu_n \to \mu$ setwise if and only if 
 $\mu_n$ converges to $\mu$ w.r.t.\ the weak topology
   of the Banach space $(\calM(Y;\R^m);\|{\cdot}\|_{\mathrm{TV}})$.
   \end{enumerate}
   Furthermore,  \RNEW we mention that, 
   by  \cite[Thm.\ 4.7.25)]{Bogachev07}, given a sequence $(\mu_n)_n \subset \calM(Y;\R^m) $, 
   the following properties are equivalent: \EEE
   \begin{itemize}
   \item[(i)]    $(\mu_n)_n$ is relatively compact w.r.t.\ setwise convergence;
   \item[(ii)]    
   \RNEW   there exists $\gamma \in \calM^+(Y)$ such that 
      \begin{equation}
         \label{eq:73}
         \forall\,\eps>0\ \exists\,\delta>0:
         \quad
         B\in \frB(Y),\ \gamma(B)\le \delta\quad
         \Rightarrow\quad
         \sup_{n}|\mu_n|(B)\le \eps;
       \end{equation}
          \item[(iii)] there exists $\gamma \in \calM^+(Y)$ such that $\mu_n \ll \gamma$ for all $n\in \N$ and the sequence $\bigl(\frac{\dd \mu_n}{\dd \gamma}\bigr)_n$
          admits a subsequence weakly converging in the topology of $\rmL^1(Y,\gamma;\R^m)$.
       \end{itemize}
       \EEE
       \par

\begin{remark}
It should be noted that \eqref{eq:70} can be slightly improved. Namely, suppose $\mu_n\to \mu$ converges setwise, and consider any uniformly bounded sequence $(\varphi_n)_n \subset \Bb(Y;\R^m)$ converging pointwise (or almost everywhere up to a common dominating measure for  $(\mu_n)_{n}$) to some $\varphi\in \Bb(Y;\R^m)$. Then, by a similar argument as in \cite[4.7.130]{Bogachev07}, we have 
\begin{equation}
\label{eq:pointset}
    \lim_{n\to\pinfty}\langle \mu_n,\varphi_n\rangle = \langle \mu,\varphi\rangle.
\end{equation}
\end{remark}


We recall that a sequence $(\mu_n)_n \subset \calM(Y;\R^m)$ converges \emph{narrowly} to some $\mu \in  \calM(Y;\R^m)$
if \eqref{eq:70} holds for every continuous \emph{bounded} function $\varphi \in \Cb(Y)$. 
\par
Finally, let us endow the space $\Lip_{\mathrm{b}}(Y)$
with the norm
\begin{equation}
\label{Lip-bounded-b}
\RNEW \| \varphi \|_{\Lip_{\mathrm{b}}(V)}: = \sup_{x\in Y} |\varphi(x)| +  \sup_{x,y \in Y, \, x \neq y} \frac{|\dnabla\varphi\,(x,y)|}{1{\wedge}d(x,y)}\,.
\end{equation}
 It is immediate to check  that the above norm in indeed equivalent to the usual norm for $\Lip_{\mathrm{b}}(Y)$, defined as 
 $ \sup_{x\in Y} |\varphi(x)| +  \sup_{x \neq y \in Y} \frac{|\dnabla\varphi\,(x,y)|}{d(x,y)}$.
 Then, we can define  on  $\calM(Y)$ the norm obtained by taking the duality against functions in $\Lip_{\mathrm{b}}(Y)$, i.e.\
\begin{equation}
\label{nbl-norm}
\nbl{\rho}: = \sup\left \{ \left| \int_V \varphi(x) \rho(\dd x ) \right| \, : \varphi \in \Lip_{\mathrm{b}}(Y),  \ \|\varphi\|_{\Lip_{\mathrm{b}}(Y)}\leq 1  \EEE  \right\}\,.
\end{equation}
Since $(Y,d)$ is separable, the narrow topology on $\calM^+(Y)$ is generated by the bounded Lipschitz metric, 
 as seen in  \cite[Theorem 8.3.2]{Bogachev07}. 
  \par
\medskip

\paragraph{\bf Convergence in $\Ms(Y)$.} 

As the setwise analog to vague convergence, i.e., convergence against  continuous functions with compact support, we give the following
\begin{definition}
We say that a sequence $(\mu_n)_{n\in \N}\subset \Ms(Y)$ converges \emph{$\sigma$-setwise} to some $\mu\in \Ms(Y)$ if, for some exhaustion $(A_k)_{k\in \N}$ corresponding to $\mu$, all restrictions $\mu_n {\mres} A_k$ are finite signed measures that converge setwise to $\mu {\mres} A_k$. 
\end{definition}
Note that by the properties of setwise convergence mentioned previously, this implies that $\sup_{n} |\mu_n|(A_k)< \infty$ for every $k\in \N$. 
\par
Since this notion of convergence is less standard, let us collect one additional result, also proved in Appendix \ref{app:preliminary}. 

\begin{lemma}\label{lm:sigmasetwise}
A sequence $(\mu_n)_{n\in \N}\subset \Ms(Y)$ that converges $\sigma$-setwise has a unique limit.

Moreover, consider any sequence $(\mu_n)_{n}\subset \Ms(Y)$ and a  function $f\in \Bb^+(Y)$ that is strictly positive $\mu_n$-a.e., and suppose that for some $\nu 
\in \calM(Y)$, we have
\[f \mu_n \to \nu \qquad \mbox{setwise in $\calM(Y)$}.\]
Then, $f$ is strictly positive $\nu$-a.e.\ in $Y$, and the sequence  $(\mu_n)_n$ converges $\sigma$-setwise to the measure  \[\mu(\dd x) :=\boldsymbol{1}_{f>0}(x) \frac1{f(x)} \nu(\dd x)\in \Ms(Y).\] 

In particular, if 
\[f \mu_n \to f \mu^* \qquad \mbox{setwise in $\calM(Y)$}.\] 
with $f\in \Bb^+(Y)$ strictly positive $\mu_n$, $\mu^*$-a.e.\ in $Y$ for some $\mu^*\in \Ms(Y)$, then $(\mu_n)_n$ converges $\sigma$-setwise to $\mu^*$.  

Viceversa, for any $\sigma$-setwise converging sequence $(\mu_n)_{n}\subset \Ms(Y)$ there \emph{exists} some $f\in \Bb^+(Y)$ that is strictly positive $\mu_n,\mu$-a.e.\ such that $(f \mu_n)_{n}\subset \calM(Y)$ converges setwise to $f \mu \in \calM(Y)$. 
\end{lemma}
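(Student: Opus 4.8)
\textbf{Proof plan for Lemma~\ref{lm:sigmasetwise}.}

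The plan is to establish the four assertions in sequence, relying throughout on the common-exhaustion trick \eqref{common-exhaustion} and on the basic properties of setwise convergence recalled in Section~\ref{ss:2.2} (in particular the characterization \eqref{eq:70}--\eqref{eq:pointset} and the uniform boundedness of total variations along setwise-convergent sequences). For uniqueness of the $\sigma$-setwise limit: if $(\mu_n)_n$ converges $\sigma$-setwise to both $\mu$ and $\tilde\mu$, I would pick exhaustions $(A_k)_k$ for $\mu$ and $(\tilde A_k)_k$ for $\tilde\mu$, form a common exhaustion $(C_k)_k$ via \eqref{common-exhaustion}, and observe that on each $C_k$ both $\mu{\mres}C_k$ and $\tilde\mu{\mres}C_k$ are finite signed measures which are the setwise limit of $\mu_n{\mres}C_k$; by uniqueness of setwise limits in $\calM(C_k)$ they agree on every Borel subset of $C_k$, hence on $\cup_k \mathfrak B(C_k)$, and then part~(1) of Lemma~\ref{lm:sigmap} (applied with the set function $\nu := \mu = \tilde\mu$ on $\cup_k\mathfrak B(C_k)$) forces $\mu^\pm = \tilde\mu^\pm$.

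For the main ``moreover'' statement, suppose $f\in\Bb^+(Y)$ is strictly positive $\mu_n$-a.e.\ and $f\mu_n \to \nu$ setwise in $\calM(Y)$. First I would show $f>0$ $\nu$-a.e.: on the set $\{f=0\}$ we have $(f\mu_n)(\{f=0\})=0$ for all $n$, so setwise convergence gives $\nu(\{f=0\})=0$ for $\nu^+$ and $\nu^-$ separately (testing against $\One_{\{f=0\}}\in\Bb$ in each component of the Hahn decomposition of $\nu$), whence $|\nu|(\{f=0\})=0$. Then the candidate limit $\mu(\dd x):=\One_{f>0}(x)f(x)^{-1}\nu(\dd x)$ is well-defined as an element of $\calM_\sigma(Y)$: since $f$ is bounded, $f^{-1}$ is bounded below away from $0$ on no set in general, but the level sets $\{f\ge 1/m\}$ exhaust $Y$ up to the $|\mu|$-null set $\{f=0\}$ and on each such set $f^{-1}\le m$ is bounded, so $|\mu|(\{f\ge 1/m\}) \le m\,|\nu|(Y)<\infty$; this furnishes an exhaustion making $\mu^\pm$ $\sigma$-finite, and by part~(2) of Lemma~\ref{l:basic-props} the product $f^{-1}\nu$ indeed lies in $\Ms(Y)$. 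It remains to check $\mu_n\to\mu$ $\sigma$-setwise: fix $m$, set $A_m:=\{f\ge 1/m\}$ (these form an exhaustion adapted to $\mu$), and for any $B\in\mathfrak B(A_m)$ write $\mu_n(B) = \int_B f^{-1}\,\dd(f\mu_n)$; since $\One_B f^{-1}\in\Bb(Y)$ (bounded by $m$) and $f\mu_n\to\nu$ setwise, \eqref{eq:70} applied to this bounded test function gives $\mu_n(B)\to\int_B f^{-1}\,\dd\nu = \mu(B)$, and likewise for $\mu_n^\pm$, so $\mu_n{\mres}A_m\to\mu{\mres}A_m$ setwise. The special case $\nu=f\mu^*$ is then immediate from uniqueness: the $\sigma$-setwise limit must be $\One_{f>0}f^{-1}(f\mu^*) = \mu^*$ (using $f>0$ $\mu^*$-a.e.).

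For the converse, given a $\sigma$-setwise convergent sequence $(\mu_n)_n\to\mu$ with adapted exhaustion $(A_k)_k$, I would build $f$ by a weighted dyadic construction: on the ``annulus'' $A_k\setminus A_{k-1}$ set $f := c_k$ for a constant $c_k>0$ chosen small enough that $c_k\bigl(\sup_n|\mu_n|(A_k) + |\mu|(A_k)\bigr) \le 2^{-k}$ — this is possible because $\sup_n|\mu_n|(A_k)<\infty$ by the remark following the definition of $\sigma$-setwise convergence. Then $f\in\Bb^+(Y)$ (it is bounded by $\max_k c_k<\infty$, in fact by $c_1$ if the $c_k$ are taken decreasing) and strictly positive everywhere, hence $\mu_n,\mu$-a.e.; moreover $\sum_n$-summability of the tails gives $f\mu_n,f\mu\in\calM(Y)$ with finite total variation. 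To see $f\mu_n\to f\mu$ setwise: for $B\in\mathfrak B(Y)$ split $B = (B\cap A_k)\cup(B\setminus A_k)$; the tail $(f\mu_n)(B\setminus A_k)$ is bounded by $2^{-k}$ uniformly in $n$ and similarly for $f\mu$, while on $B\cap A_k$ the measure $f$ is a bounded nonnegative step function so $(f\mu_n)(B\cap A_k) = \int_{B\cap A_k} f\,\dd\mu_n \to \int_{B\cap A_k}f\,\dd\mu$ by setwise convergence of $\mu_n{\mres}A_k$ against the bounded function $\One_{B\cap A_k}f$; letting $n\to\infty$ and then $k\to\infty$ closes the argument, and the same works componentwise for the Hahn parts.

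The main obstacle I anticipate is not any single estimate but the bookkeeping required to keep all measures $\sigma$-finite and all integrals finite while passing between exhaustions: one must be careful that the exhaustion adapted to the \emph{limit} $\mu$ (or to $f^{-1}\nu$) is compatible with the one adapted to the sequence, which is exactly what \eqref{common-exhaustion} and Lemma~\ref{lm:sigmap}(1) are designed to handle, and that the test functions $\One_B f^{-1}$ appearing in the ``moreover'' direction are genuinely bounded — which is why one must restrict to the level sets $\{f\ge 1/m\}$ rather than work on all of $Y$ at once.
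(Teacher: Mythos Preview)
Your proposal is correct and follows essentially the same route as the paper: uniqueness via a common exhaustion and Lemma~\ref{lm:sigmap}, the ``moreover'' via the level-set exhaustion $\{f\ge 1/m\}$ and testing against $\One_B f^{-1}$, and the converse via a dyadically-weighted $f$ with uniformly summable tails. The only cosmetic difference is that the paper defines $f$ as the overlapping sum $f(x)=\sum_k \frac{2^{-k}}{1+c_k}\One_{A_k}(x)$ rather than piecewise on the annuli $A_k\setminus A_{k-1}$, but the resulting tail estimate and limit argument are identical.
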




\subsection{Convex functionals of measures}
\label{ss:prelims-cvx-funct}
We will work with functionals depending on pairs of $\sigma$-finite measures, defined in this way:
let 
$\Upsilon:\R^m\to [0,\pinfty]$ be proper,  convex and lower semicontinuous
and let us denote
by $\Upsilon^\infty:\R^m\to [0,\pinfty]$ its recession function
\begin{equation}
  \label{recession-upsi} \Upsilon^\infty(z):=\lim_{t\to\pinfty}\frac{\Upsilon(tz)}t=\sup_{t>0}\frac{\Upsilon(tz)-\Upsilon(0)}t\,.
\end{equation}
and with $\hat\Upsilon:\R^{m+1}\to[0,\infty]$ the perspective function 
\begin{equation}
\hat \Upsilon(z,t):=
      \begin{cases}
        \Upsilon(z/t)t&\text{if }t>0,\\
        \Upsilon^\infty(z)&\text{if }t=0,\\
        \pinfty&\text{if }t<0.
      \end{cases}
\end{equation}
We note that $\Upsilon(z,t)$ is 1-homogeneous and convex, with $\Upsilon^\infty$ being convex, lower semicontinuous, and positively $1$-homogeneous, with $\Upsilon^\infty(0)=0$. Hence, we define 
\begin{equation}
\label{def:F-F}
\begin{aligned}
&
\calF_\Upsilon:\Ms(Y;\overline{\R}^m) \times \Ms^+(Y)\mapsto [0,\pinfty], \qquad
\\
&
\calF_\Upsilon(\mu|\nu) :=
\int_Y \Upsilon \Bigl(\frac{\dd \mu}{\dd \nu}\Bigr)\,\dd\nu+
\int_Y \Upsilon^\infty\Bigl(\frac{\dd \mu^\perp}{\dd |\mu^\perp|}\Bigr) \,
\dd |\mu^\perp|,\qquad \text{for }\mu=\frac{\dd \mu}{\dd \nu}\nu+\mu^\perp.
\end{aligned}
\end{equation}
\par
In \cite[Lemma 2.3]{PRST22}, we proved some properties of functionals pertaining to this class, albeit defined on 
$\calM(Y;\R^m) \times \calM^+(Y)$; most of them extend to the present case, in which we consider functionals on $\Ms(Y;\overline{\R}^m) \times \Ms^+(Y)$. 
In Lemma \ref{l:crucial-F} below, we collect the properties 
  that will be used in what follows: in particular, let us highlight the lower semicontinuity of $\calF_\Upsilon$ with respect to \emph{$\sigma$-setwise} convergence in $\Ms$,  which is in the spirit of the lower semicontinuity results \`a la Reshetnyak from 
  \cite{AmFuPa05FBVF, ButtazzoBOOK89, Reshetnyak68, SlepcevWarren23}. 
\begin{lemma}
\label{l:crucial-F}
The functional $\calF_\Upsilon:\Ms(Y;\overline{\R}^m) \times \Ms^+(Y)\to [0,\pinfty]$ enjoys the following properties:
\begin{enumerate}
\item \RNEW if $\Upsilon(0)=0$, then for every $\mu \in \Ms(Y;\overline{\R}^m)$ and $\nu,\, \nu' \in \Ms^+(Y)$ there holds
\begin{equation}
\label{AC-monotonicity}
\nu \leq \nu' \ \Longrightarrow  \ \calF_\Upsilon(\mu|\nu') \leq  \calF_\Upsilon(\mu|\nu)\,; \EEE 
\end{equation} \EEE
 \item if $\Upsilon$ is superlinear, then
\begin{equation}
\label{superlinearity}
  \calF_\Upsilon(\mu|\nu)<\infty\quad\Longrightarrow\quad
  \mu\ll\nu,\quad
  \calF_\Upsilon(\mu|\nu)=
\int_Y \Upsilon \Bigl(\frac{\dd \mu}{\dd \nu}\Bigr)\,\dd\nu\,;
\end{equation}
\item if $ \Upsilon$ is positively $1$-homogeneous, then
    $\Upsilon\equiv \Upsilon^\infty$, $\calF_\Upsilon(\cdot|\nu)  =: \calF_\Upsilon$ is
    independent of $\nu$ and 
    \begin{equation}
      \label{eq:78}
       \calF_\Upsilon(\mu) \EEE =\int_Y
      \Upsilon\left(\frac{\dd\mu}{\dd\gamma}\right)
      \dd\gamma\quad
      \text{for every }\gamma\in \Ms^+(Y)\text{ such that } \mu\ll\gamma\,;
    \end{equation}
    \item Jensen's inequality: for every $B\in \frB(Y)$
    \begin{equation}\label{eq:jens}
     \hat \Upsilon(\mu^a(B)|\nu^a(B)) +\Upsilon^{\infty}(\mu^{\perp}(B))\leq  \calF_\Upsilon(\mu{\mres}B|\nu{\mres} B) \leq \calF_\Upsilon(\mu|\nu).
    \end{equation}
    \item  $\calF_\Upsilon$ is sequentially  lower
    semicontinuous in $\Ms(Y;\overline{\R}^m) \times \Ms^+(Y)$ with respect
    to $\sigma$-setwise convergence.
    \item Suppose that $\Upsilon$ is superlinear, that  the sequence $(\nu_n)_{n}\subset \calM^+(Y)$ converges setwise
    to some $\nu \in \calM^+(Y)$, and the sequence $(\mu_n)_{n}\subset \calM(Y)$ fulfills
    \begin{equation}
    \label{eq:crucsb}
    \lim_{n\to \infty} \calF_\Upsilon(\mu_n|\nu_n) < \infty. 
    \end{equation}
    Then $(\mu_n)_{n}$ is relatively compact w.r.t.\ setwise convergence. 
    
    In addition, for any sequence $(\mu_n)_{n}\subset \calM^+(Y)$ satisfying \eqref{eq:crucsb} and converging setwise to $\mu$, we have $\mu\ll \nu$.     
\end{enumerate}

\end{lemma}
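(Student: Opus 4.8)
The plan is to establish Lemma~\ref{l:crucial-F} by reducing each assertion to the corresponding statement of \cite[Lemma 2.3]{PRST22} for functionals on $\calM(Y;\R^m)\times\calM^+(Y)$, exploiting the exhaustion procedure \eqref{common-exhaustion} to localize to sets on which all the measures involved are finite. For items (1)--(4), I would argue pointwise/setwise: since $\calF_\Upsilon(\mu|\nu)$ is defined by an integral of a Borel integrand against $\nu$ plus $|\mu^\perp|$, the monotonicity \eqref{AC-monotonicity} follows from the $1$-homogeneity and monotonicity of the perspective function $\hat\Upsilon(z,\cdot)$ in its second argument together with $\Upsilon^\infty(0)=0$ (so that the extra singular mass of $\nu$ relative to $\nu'$ only decreases the value); the superlinearity statement \eqref{superlinearity} is immediate because $\Upsilon^\infty\equiv+\infty$ off $\{0\}$ when $\Upsilon$ is superlinear, forcing $\mu^\perp=0$; the $1$-homogeneous case \eqref{eq:78} follows since then $\Upsilon=\Upsilon^\infty$ and the Lebesgue decomposition against any $\gamma\gg\mu$ collapses the formula; and Jensen's inequality \eqref{eq:jens} is the classical Jensen estimate applied separately to the absolutely continuous part (on $B$) and to the singular part, noting $\calF_\Upsilon(\mu\mres B|\nu\mres B)\le\calF_\Upsilon(\mu|\nu)$ by nonnegativity of the integrand.

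The two substantive points are (5) and (6). For the lower semicontinuity (5), I would proceed as follows. Given a $\sigma$-setwise convergent sequence $(\mu_n,\nu_n)\to(\mu,\nu)$, fix a common exhaustion $(A_k)_k$ (via \eqref{common-exhaustion}) adapted to $\nu$ and to $\mu$, along which $\mu_n\mres A_k$ and $\nu_n\mres A_k$ are finite measures converging setwise. On each fixed $A_k$, the restricted functional $\calF_\Upsilon(\cdot\mres A_k|\cdot\mres A_k)$ is exactly a functional of the type treated in \cite[Lemma 2.3]{PRST22}, so it is lower semicontinuous with respect to setwise (equivalently weak) convergence of finite measures; hence
\[
\calF_\Upsilon(\mu\mres A_k|\nu\mres A_k)\le\liminf_{n\to\infty}\calF_\Upsilon(\mu_n\mres A_k|\nu_n\mres A_k)\le\liminf_{n\to\infty}\calF_\Upsilon(\mu_n|\nu_n),
\]
the last inequality again by nonnegativity. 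Letting $k\to\infty$ and using monotone convergence of the nonnegative integrals defining $\calF_\Upsilon(\mu\mres A_k|\nu\mres A_k)\uparrow\calF_\Upsilon(\mu|\nu)$ (here one must check that the absolutely continuous part and the singular part of the Lebesgue decomposition of $\mu$ with respect to $\nu$ are compatible with restriction to $A_k$, which they are since $A_k\uparrow Y$) concludes the proof.

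For (6), with $\Upsilon$ superlinear and $\calF_\Upsilon(\mu_n|\nu_n)$ bounded, superlinearity forces $\mu_n\ll\nu_n$ with $\mu_n=\frac{\dd\mu_n}{\dd\nu_n}\nu_n$ and $\int_Y\Upsilon(\frac{\dd\mu_n}{\dd\nu_n})\,\dd\nu_n\le C$. Since $\nu_n\to\nu$ setwise in $\calM^+(Y)$, the $\nu_n$ have uniformly bounded total variation and are uniformly absolutely continuous with respect to a single dominating measure $\gamma\in\calM^+(Y)$ by the equivalence (i)$\Leftrightarrow$(ii)$\Leftrightarrow$(iii) recalled before \eqref{eq:73}; I would then show that the bound on $\calF_\Upsilon(\mu_n|\nu_n)$ upgrades to a De la Vallée–Poussin–type uniform integrability of $\frac{\dd\mu_n}{\dd\gamma}$ (using that $s\mapsto\Upsilon(s)$ is superlinear and that $\frac{\dd\mu_n}{\dd\gamma}=\frac{\dd\mu_n}{\dd\nu_n}\frac{\dd\nu_n}{\dd\gamma}$, combined with the uniform integrability of $\frac{\dd\nu_n}{\dd\gamma}$), whence relative setwise compactness of $(\mu_n)_n$ follows from criterion (iii). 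For the final claim that a setwise limit $\mu$ of nonnegative $\mu_n$ satisfying \eqref{eq:crucsb} is absolutely continuous with respect to $\nu$, I would argue that if $\nu(B)=0$ then by lower semicontinuity (item (5), or directly by the finite-measure version) and superlinearity $\mu(B)\le\liminf_n\mu_n(B)$ must vanish, since otherwise $\calF_\Upsilon(\mu_n\mres B|\nu_n\mres B)$ would blow up along a subsequence (concentration of positive mass on sets of vanishing $\nu_n$-measure, the latter following from $\nu_n\mres B\to\nu\mres B=0$ setwise). The main obstacle I anticipate is the careful bookkeeping in (6): transferring the a priori bound expressed relative to the moving reference measures $\nu_n$ into a genuine equi-integrability statement relative to a \emph{fixed} dominating measure $\gamma$, and making sure the superlinearity is used quantitatively (e.g.\ via a convex superlinear $\Theta$ with $\Theta\le\Upsilon$ and $\int\Theta(\frac{\dd\mu_n}{\dd\gamma})\,\dd\gamma$ bounded) rather than just qualitatively; the rest is routine localization via exhaustions and appeals to \cite[Lemma 2.3]{PRST22}.
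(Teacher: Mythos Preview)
Your treatment of (1)--(5) is essentially identical to the paper's: for (1)--(4) the paper simply refers to \cite[Lemma 2.3]{PRST22}, and for (5) it uses exactly your exhaustion argument, restricting to $A_k$, invoking the finite-measure lower semicontinuity, and passing to the limit by monotone convergence.

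For (6), your De la Vall\'ee--Poussin route via densities $\tfrac{\dd\mu_n}{\dd\gamma}=\tfrac{\dd\mu_n}{\dd\nu_n}\tfrac{\dd\nu_n}{\dd\gamma}$ can be made to work, but the obstacle you flag is real and you are making it harder than necessary. The paper bypasses densities entirely by using item (4): Jensen's inequality on \emph{sets} gives $\sup_n \hat\Upsilon(\mu_n(B)\,|\,\nu_n(B))\le M$ for every Borel $B$. Then, given $\eps>0$, superlinearity of $\Upsilon$ ensures that $\eps_\nu:=\tfrac12\inf\{b>0:\exists\,a\text{ with }\hat\Upsilon(a|b)\le M,\ |a|>\eps\}$ is strictly positive; so whenever $\sup_n\nu_n(B)\le\eps_\nu$ one automatically has $\sup_n|\mu_n(B)|\le\eps$. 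Combined with the equi-absolute-continuity \eqref{eq:73} for $(\nu_n)_n$, this yields \eqref{eq:73} for $(\mu_n)_n$ directly. No auxiliary superlinear $\Theta$ or manipulation of products of densities is needed.

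For the final assertion $\mu\ll\nu$, your contradiction argument via ``concentration on sets of vanishing $\nu_n$-measure'' is more delicate than what is required. The paper simply observes that lower semicontinuity (item (5)) gives $\calF_\Upsilon(\mu|\nu)\le\liminf_n\calF_\Upsilon(\mu_n|\nu_n)<\infty$, and then item (2) immediately yields $\mu\ll\nu$.
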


\begin{proof}
For statements \textbf{(1)} up to \textbf{(4)} we refer to the proof of the analogous items in \cite[Lemma 2.3]{PRST22}. We first address here the proof of \textbf{(5)}. Thus, consider a sequence $(\mu_n)_{n\in \N}$ converging $\sigma$-setwise to some $\mu$ in $\Ms(Y;\overline{\R}^m)$ and a sequence $(\nu_n)_{n\in \N}$ converging $\sigma$-setwise to some $\nu$ in $\Ms^+(Y)$. Then we can find a common exhaustion $(A_k)_{k\in \N}$ such that both $\mu_n{\mres} A_k$ and $\nu_n {\mres} A_k$ are finite and converge setwise for every $k\in \N$. Therefore, due to nonnegativity and the results of \cite[Lemma 2.3]{PRST22}, we obtain
\begin{align*}
\liminf_{n\to \infty} \calF_\Upsilon(\mu^n|\nu^n)&\geq \liminf_{n\to \infty} \int_{A_k} \Upsilon \Bigl(\frac{\dd \mu^n}{\dd \nu^n}\Bigr)\,\dd\nu^n+\liminf_{n\to \infty} 
\int_{A_k} \Upsilon^\infty\Bigl(\frac{\dd (\mu^n)^\perp}{\dd |(\mu^n)^\perp|}\Bigr) \,
\dd |(\mu^n)^\perp| \\
&\geq \int_{A_k} \Upsilon \Bigl(\frac{\dd \mu}{\dd \nu}\Bigr)\,\dd\nu +
\int_{A_k} \Upsilon^\infty\Bigl(\frac{\dd \mu^\perp}{\dd |\mu^\perp|}\Bigr) \,
\dd |\mu^\perp|. 
\end{align*}
Taking the limit $k\to \infty$ via monotone convergence, we obtain the desired inequality. 

Now, for the case \textbf{(6)}, we will provide a modification of the proof in \cite{HoeksemaTh}. By assumption we have $\mu_n\ll \nu_n$, at least for  sufficiently big $n$.
Then,  after choosing an appropriate (not relabeled) subsequence, by Jensen's inequality \eqref{eq:jens}, we have
\begin{equation*}
\sup_{n\in \N} \hat \Upsilon(\mu_n(B)|\nu_n(B)) \leq \sup_{n \in \N}\calF_\Upsilon(\mu_n|\nu_n) =:M, \quad \mbox{for every $B\in \frB(Y)$}.
\end{equation*}
Since the sequence $(\nu_n)_n$ converges setwise to some $\nu$, there exists a  measure $\gamma_{\nu} \in \mathcal{M}^+(Y)$ such that \eqref{eq:73} holds. Now,
in order to show that  $(\mu_n)_n$ is itself relatively compact w.r.t.\ setwise convergence, we need to verify condition \eqref{eq:73}. For this, take an arbitrary $\eps>0$
and set 
\begin{align*}
    \eps_{\nu} &:= \frac{1}{2}\inf\{ b>0 \, : \ \exists\, a \in \R \text{ such that }  \hat \Upsilon(a|b)\leq M \text{ and }  |a|>  \eps\,\} \\
    &< \frac{1}{2}\inf\{ b>0 \, : \ \exists\, a \in \R \text{ such that }  \hat \Upsilon(a|b)\leq M \text{ and }  |a|>  \eps\,\}\,.
\end{align*}
Observe that, by the superlinearity of $\Upsilon$, $\eps_{\nu}$ is strictly positive. Therefore, by  \eqref{eq:73},  there exists some $\delta_{\nu}>0$ such that 
\[
 B\in \frB(Y),\ \gamma_{\nu}(B)\le \delta_\nu\ 
         \Longrightarrow\ 
         \sup_{n}|\nu_n|(B)\le \eps_\nu \text{ and, a fortiori, }  \sup_{n}|\mu_n|(B)\le \eps\,.
\]
Thus, \eqref{eq:73} is indeed valid for  $(\mu_n)_n$.

The final statement follows from the lower semicontinuity of $\calF_\Upsilon$.
\end{proof}

\paragraph{\bf Concave transformations of vector measures}
We will also need to extend  a construction, set forth in \cite[Sec.\ 2.3]{PRST22}
for vector and positive finite measures, to  the case of vector $\sigma$-finite signed measures, and $\sigma$-finite  positive measures. Namely,
 recall that   $\R_+:=[0,\pinfty[$, $\R^m_+:=(\R_+)^m$, and let
$\upalpha:\R^m_+\to\R_+$ be a continuous and concave function. It is
obvious that $\upalpha$ is non-decreasing with respect to each variable.
As in \eqref{recession-upsi}, the recession function $\upalpha^\infty$ is defined
by
\begin{equation}
  \label{eq:1}
  \upalpha^\infty(z):=\lim_{t\to\pinfty}\frac{\upalpha(tz)}t=\inf_{t>0}\frac{\upalpha(tz)-\upalpha(0)}t,\quad
  z\in \R^m_+.
\end{equation}
With slight abuse of notation, we will denote by $\Aalpha$ also the mapping   
\begin{equation}
  \label{alpha-mu-gamma}
  \begin{aligned}
  &
  \Aalpha:\Ms(Y;\overline{\R}^m_+)\times\Ms^+(Y)\to\Ms^+(Y), 
  \\
  &
  \Aalpha[\mu|\gamma]:=
  \upalpha\Bigl(\frac{\dd\mu}{\dd\gamma}\Bigr)\gamma+
  \upalpha^\infty\Bigl(\frac{\dd\mu}{\dd
    |\mu^\perp|}\Bigr)|\mu^\perp|\quad
  \mu\in \Ms(Y;\overline{\R}^m_+),\ \gamma\in \Ms^+(Y),
  \end{aligned}
\end{equation}
where  $\mu=\frac{\dd\mu}{\dd\gamma}\gamma+\mu^\perp$ is the
Lebesgue decomposition of $\mu$ with respect to ~$\gamma$.

\section{Setup for variational structures}
\label{s:3}
\noindent
We start by collecting our conditions 
\begin{itemize}
\item[-]
on the  vertex space $V$;
 \item[-] on the reference measure $\pi$, which will be invariant under the evolution
generated by our generalized gradient system;
\item[-]
 on the family of singular kernels $(\kappa(x,\cdot))_{x\in V} $, whose `singularity' is  encoded in the fact that 
for each $x\in V$ $\kappa(x,\cdot)$ is a $\sigma$-finite measure 
on $V\backslash\{x\}$. 
\end{itemize}

\EEE
 \paragraph{\bf Vertex space and kernels}
 Throughout the paper, 
\begin{enumerate}
\item the vertex space
\begin{equation*}
(V,\dV) \text{ is a separable metric space, with Borel $\sigma$-algebra $\mathfrak{B}(V)$;}
\end{equation*}
\item the reference measure
$\pi \in \calM^+(V)$ is a finite positive measure;
\item
for any $x\in V$,  $\kappa(x,\cdot)\in \Ms^+(V\backslash\{x\})$ 
and the kernels 
form a measurable family of measures, in the sense that, for all $f \in \Bb(V)$,
\begin{subequations}\label{ass:kappa}
\begin{equation}\label{true-measurability}
\mbox{the map $\displaystyle V\ni x \mapsto \int_{V\backslash\{x\}} f(y) (1{\wedge}\dV^2(x,y)) \, \kappa (x,\dd y)$ is measurable.}\tag{\textsf{A}$_\kappa$a}
 \end{equation}
 Furthermore, there holds
\begin{equation}
\label{mitigation of singularity}
\sup_{x\in V}\int_{V} (1{\wedge} \dV^2(x,y)) \,\kappa(x,\dd y) =: c_\kappa <+\infty.\tag{\textsf{A}$_\kappa$b}
\end{equation}
\end{subequations}
\end{enumerate}
It will be convenient, though, to have each $\kappa(x,\cdot)$ defined on the whole of $V$, and we will do so by setting
\[
 \kappa(x,\{x\}):= \int_{V} \boldsymbol{1}_{\{x\}}(y) \, \kappa(x, \dd y ) \doteq 0\,. 
\]
(the extended measure will be denoted by the same symbol). In this way, we  obtain a family $(\kappa(x,\cdot))_{x\in V} \subset \Ms^+(V)$ that, is indeed, thanks to 
\eqref{true-measurability}, a \emph{Borel} family of measures.
 Then, the kernels induce a
 measure 
 on the edge space $E=V{\times}V$ via 
 \begin{equation}
\label{teta-coupling}
    \tetapi(A{\times}B) = \int_{A} \kappa (x,B)\, \pi(\dd x)  \qquad \text{for all } A,\, B \in \mathfrak{B}(V)\,.
\end{equation}
Note that, due to \eqref{mitigation of singularity}, 
 $\tetapi$ is a  \emph{$\sigma$-finite} positive  measure on $E$. 
  We postpone to Appendix \ref{ss:appB} some further comments on  $\tetapi$
 and on equivalent formulations for the measurability condition \eqref{true-measurability}. 
 \par
 In addition to the previous assumptions,  \EEE
\begin{enumerate}
\setcounter{enumi}{3}
\item we require that 
the \emph{detailed balance condition} holds, i.e.,
\begin{equation}
\label{DBC}
s_\# \tetapi = \tetapi, \tag{\textsf{A}$_\text{DB}$}
\end{equation}
where 
$s: E \to E$, 
 $s(x,y):=(y,x)$ is the symmetry map and $s_\# \tetapi$ is the push-forward of $\tetapi$ under $s$. 
\end{enumerate}
\medskip

\par
Let $\Ed$ be the subset obtained removing from $E$ its diagonal, i.e.,
\[
    \Ed: = E \backslash \{ (x,x) : x \in V\}.
\]
In what follows, we will crucially use the fact that 
\begin{equation}
\label{crucial-facts-E'}
\tetapi(E\backslash \Ed)=0, \qquad \dV(x,y)>0 \quad \text{for all } (x,y) \in \Ed
\end{equation}
Let us also point out that 
\eqref{mitigation of singularity} implies that 
 \begin{equation}
 \label{tetapi-lambda}
 \tetapil (A) :=  \iint_A  (1{\wedge} \dV^2(x,y)) \EEE \, \tetapi (\dd x \dd y) \text{ for all } A \in \mathfrak{B}(E)
 \end{equation}
defines a (\emph{finite}) measure in $\calM^+(E)$.




 \begin{remark}
 \label{rmk:lambda}
 \sl
Let us dig deeper  on our conditions on the kernels  $(\kappa(x,\cdot))_{x\in V} $:
 \begin{enumerate}

\item
\EEE In the classical setting, where $V=\R^d$ with $\dV$ the Euclidean metric and $\pi=\Lebone^d$ (for the moment disregarding the finiteness of $\pi$), the general measurability and boundedness assumptions on the kernel ensure that $\kappa$ is a L\'evy-type kernel, with the associated conditions, semigroups, and processes treated in \cite[\S 3.5.1]{Applebaum_2009}. However, to frame the equation in terms of a gradient-flow formulation, sometimes the stronger requirement of narrow continuity and/or uniform integrability of $x\mapsto \dV^2(x,\cdot) \kappa(x,\cdot)$ is assumed, as in \cite{Erbar14, warren2025}, which is related to stability of the continuity equation and related terms with respect to narrow convergence. We would like to stress that, instead, we  do not assume any continuity of the kernel, due to the fact that we routinely work with setwise convergence instead of narrow convergence, with compactness provided by bounds on the entropy. 

Moreover, in both \cite{Erbar14, warren2025} the kernels are assumed to be (approximately) translation invariant, which allows for the use of convolution techniques to establish density in the sense of Assumption \eqref{Ass:F-bis}, and hence the chain rule. \EEE

\item
 While the analysis in \cite{PRST22} was carried out in the context of a \emph{standard Borel space} $V$, i.e., requiring $V$ to be a measurable space homeomorphic to a Borel subset of a complete and separable metric space, 
in the present setup we  rely on the fact that $V$ is endowed with a metric $\dV$ 
satisfying \eqref{mitigation of singularity}.
In turn, the existence of such a metric does reflect a restriction on the singularity of $\kappa$ and  ensures that  
nontrivial functions exist in the nonlocal Sobolev space, formed by bounded $\dV$-Lipschitz functions. 

This is  well illustrated by the following example: take $V=[0,1]$, $\kappa(x,\dd y):=|x-y|^{-3}$
(in $d$-dimensions, one could take $V$ as bounded domain in $\R^d$ and $\kappa(x,\dd y):=|x-y|^{-d-2}$), and assume that there exists a bounded, jointly measurable, and separable metric $\dV:E\to [0,+\infty)$ satisfying our assumptions. Fix any point $x^*$  and set $f(x):=1{\wedge} \dV(x^*,x)$. Then, 
\[ \int_E |\dnabla f|^2 \kappa(x,\dd y)\Lebone(\dd x) \leq\int_E \dV^2(x,y) \kappa(x,\dd y)\Lebone(\dd x) < \infty, \]
but, by \cite[Proposition 2]{Brezis2007}, this implies that $f=C$ and hence $\dV(x^*,\cdot)=C$ a.e.\ for some constant $C$, which contradicts  the validity of \eqref{mitigation of singularity} at the point $x^*$. 
\end{enumerate}
 \end{remark}


\medskip

 \paragraph{\bf Energy and dissipation.}
Our variational structure  will also be  based on a driving entropy functional, on a dissipation potential, and on a flux density map whose properties are collected below.
\begin{assumption}[Entropy]
\label{Ass:E} 
\sl
The entropy functional
$\scrE: \mathcal{M}^+(V) \to [0,+\infty]$  is of the form
\[
\scrE(\rho) = \begin{cases}
\displaystyle \int_V \upphi\left(\frac{\dd\rho}{\dd \pi} \right)\, \dd \pi & \text{if } \rho \ll \pi,
\\
+\infty  & \text{otherwise.}
\end{cases}
\]
We require the entropy density to fulfill
\begin{equation}
\label{ass-phi}
\begin{gathered}
\upphi \in \mathrm{C}([0,+\infty] ) \cap \mathrm{C}^1(]0,+\infty] ), \  \text{$\upphi$  convex with  $\min \upphi =0$,}
\\
\lim_{r\to+\infty}  \frac{\upphi(r)}{r} =+\infty.
\end{gathered}
\end{equation}
\end{assumption}
\begin{assumption}[Dissipation]
\label{Ass:D}
\sl
The dual dissipation density $\uppsi^*:\R \to [0,+\infty)$ is convex, differentiable, even, with $\uppsi^*(0)=0$ and 
\begin{align}
\label{growth-psistar}
&
 \lim_{|\xi|\to+\infty} \frac{\uppsi^*(\xi)}{|\xi|} =+\infty,
\\
&
 \label{quadratic-at-0}
 \lim_{\xi \to 0} \frac{\uppsi^*(\xi)}{|\xi|^2} =c_0 \in (0,+\infty)\,. \EEE
 \end{align}
 \end{assumption}
\begin{assumption}[Flux density]
\label{Ass:flux-density}
The mapping $\upalpha: [0,+\infty)\times [0,+\infty) \to [0,+\infty)$ is non-degenerate (not identically null), continuous, concave, symmetric, with 
\begin{equation}
\label{props-alpha}
\upalpha(u_1,u_2) = \upalpha(u_2,u_1) \qquad\forall\, (u_1,u_2) \in  [0,+\infty)\times [0,+\infty)\,.
\end{equation}
\end{assumption}
The conditions on $\upalpha$ ensure that there exists a constant $c_{\upalpha}>0$ providing the bound
\begin{equation}
     \label{eq:alphab}
 \upalpha(u_1,u_2)\leq c_{\upalpha}(1{+}u_2{+}u_2).
\end{equation}

Before moving on, let us pin down two key growth properties that will be extensively used ahead. 

\begin{lemma}
\label{l:props-uppsistar}
The function $\uppsi^*$ is non-decreasing on $[0,\infty)$.
Moreover,
\begin{enumerate}
\item
 there exists a 
conjugate pair $(\psih,\psih^*)$, with $\psih, \, \psih^* : \R \to [0,+\infty)$
convex, even, and superlinear, 
such that  \EEE
\begin{equation}
\label{needed-control-bound}
\forall\, \zeta \in \R \ \  \forall\, B>0 \ \ \forall\, \beta \in [{-}B, B] \,: \qquad \uppsi^*(\beta \zeta) \leq \frac1{B^2} \beta^2 \EEE \psih^*(B \zeta )\,;
\end{equation}
 as a consequence, 
\begin{equation}
\label{corollary-needed-control-bound}
\forall\, w \in \R \ \  \forall\, \delta \in (0,1]  \,: \qquad 
\psih(w) \leq \frac1{\delta^2} \uppsi(\delta w)\,;
\end{equation}
\item \EEE  $\uppsi^*$ enjoys a quadratic bound from below:
\begin{equation}
 \forall\, M>0 \ \ \exists\, K_M>0   \ \  \forall\, \xi \in [{-}M,M]\,:  \quad  |\xi|^2 \leq K_M \uppsi^*(\xi)\,. 
\label{Olli-est-2}
\end{equation}
\end{enumerate}
\end{lemma}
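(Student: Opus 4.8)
The plan is to establish the four assertions in the order in which they are stated, the substantive part being the construction of the conjugate pair $(\psih,\psih^*)$ in item~(1). For the preliminary monotonicity claim: since $\uppsi^*$ is convex, even and vanishes at $0$, writing for $0\le a<b$ the convex combination $a=\tfrac{b-a}{2b}(-b)+\tfrac{b+a}{2b}\,b$ and using evenness gives $\uppsi^*(a)\le\uppsi^*(b)$, so $\uppsi^*$ is non-decreasing on $[0,\infty)$.

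For item~(1), I would define
\[
\psih^*(v):=\sup_{s\in(0,1]}\frac{\uppsi^*(sv)}{s^2}\quad(v\in\R),\qquad \psih:=(\psih^*)^*.
\]
Each map $v\mapsto s^{-2}\uppsi^*(sv)$ is convex, even and non-negative, so $\psih^*$ inherits all three properties as a pointwise supremum; and $\psih^*\ge\uppsi^*$ (take $s=1$), which together with \eqref{growth-psistar} makes $\psih^*$ superlinear. The one genuinely delicate point — and the main obstacle — is \emph{finiteness} of $\psih^*$: for fixed $v\ne0$ one rewrites $s^{-2}\uppsi^*(sv)=v^2\,\uppsi^*(sv)/(sv)^2$, and assumption \eqref{quadratic-at-0} shows this extends continuously to $s=0$ with value $c_0 v^2$, hence is bounded on the compact interval $[0,1]$; this is precisely where the asymptotically quadratic behaviour of $\uppsi^*$ near $0$ enters. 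Being finite-valued and convex on $\R$, $\psih^*$ is continuous, hence lower semicontinuous, so $(\psih^*)^{**}=\psih^*$ and $(\psih,\psih^*)$ is a bona fide convex conjugate pair; $\psih$ is then convex, even (the conjugate of an even function is even), non-negative, with $\psih(0)=0$ (because $\psih^*\ge\psih^*(0)=0$), and superlinear, since testing the Legendre transform at $\eta=R\,\sign(w)$ gives $\psih(w)\ge R|w|-\psih^*(R)$ for every $R>0$. Finally, \eqref{needed-control-bound} is immediate from the definition of $\psih^*$: setting $u:=\beta/B\in[-1,1]$ and $z:=B\zeta$, one has $\uppsi^*(\beta\zeta)=\uppsi^*(uz)\le u^2\psih^*(z)=\tfrac{\beta^2}{B^2}\psih^*(B\zeta)$, the middle inequality holding by definition of the supremum when $u\ne0$ and trivially when $u=0$.

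The corollary \eqref{corollary-needed-control-bound} I would obtain by Legendre duality. Taking $\beta=1$ in \eqref{needed-control-bound} gives, for every $B\ge1$, the pointwise bound $\uppsi^*(\cdot)\le B^{-2}\psih^*(B\,\cdot)$; since conjugation reverses inequalities, since $(\uppsi^*)^*=\uppsi$ and $(\psih^*)^*=\psih$, and since the conjugate of $\zeta\mapsto B^{-2}\psih^*(B\zeta)$ is $w\mapsto B^{-2}\psih(Bw)$, we get $\uppsi(w)\ge B^{-2}\psih(Bw)$ for all $w\in\R$. Choosing $B=1/\delta$ with $\delta\in(0,1]$ and then replacing $w$ by $\delta w$ yields exactly $\psih(w)\le\delta^{-2}\uppsi(\delta w)$.

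For item~(2), I would first note that $\uppsi^*>0$ on $\R\setminus\{0\}$: otherwise, convexity together with $\uppsi^*(0)=0$ would force $\uppsi^*\equiv0$ on a neighbourhood of $0$, contradicting \eqref{quadratic-at-0}. Consequently $g(\xi):=\uppsi^*(\xi)/\xi^2$ for $\xi\ne0$, extended by $g(0):=c_0$, is continuous on $\R$ (by differentiability of $\uppsi^*$ away from $0$ and by \eqref{quadratic-at-0} at $0$) and strictly positive; hence it attains a strictly positive minimum $1/K_M$ on the compact set $[-M,M]$, which gives $|\xi|^2\le K_M\uppsi^*(\xi)$ there, i.e.\ \eqref{Olli-est-2}.
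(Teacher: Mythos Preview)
Your proof is correct. The main difference from the paper lies in item~(1): the paper constructs $\psih^*$ by an explicit formula, namely $\psih^*(\zeta)=\tfrac32 c_0\zeta^2+\tfrac{\zeta^2}{r^2}\uppsi^*(\zeta)$ (with $r>0$ coming from the quadratic window around $0$), obtained via a case split $|\eta\zeta|\le r$ versus $|\eta\zeta|>r$. You instead take $\psih^*(v):=\sup_{s\in(0,1]}s^{-2}\uppsi^*(sv)$, which is precisely the \emph{smallest} function for which \eqref{needed-control-bound} can hold, and then verify its required properties; finiteness is exactly where \eqref{quadratic-at-0} is consumed, just as in the paper. Your derivation of \eqref{corollary-needed-control-bound} via the scaling identity for Legendre transforms is the same computation the paper carries out, phrased more abstractly. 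For item~(2) the paper again splits into $|\xi|\le r$ and $|\xi|>r$, whereas you use compactness of $[-M,M]$ and continuity/positivity of $\xi\mapsto\uppsi^*(\xi)/\xi^2$; both arguments are equally short. Overall, your construction is slightly cleaner conceptually (it identifies the optimal $\psih^*$), while the paper's explicit formula could be handier if one later needed quantitative control of $\psih^*$ in terms of $\uppsi^*$ and $c_0$.
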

\begin{proof}
Since $0=\uppsi^*(0) = \min_{\R} \uppsi^*$, we have $(\uppsi^*)'(0)=0$ and thus $(\uppsi^*)'(\xi)\geq 0$ for all $\xi \geq 0$. Hence, the monotonicity  statement for 
$\uppsi^*$. In what follows, we will crucially use that, by
\eqref{quadratic-at-0}, 
\begin{equation}
\label{easy-conseq}
\exists\, r>0 \ \ \forall\, x \in  [{-}r,r]\,:  \   \frac12 c_0 |x|^2 \leq \uppsi^*(x)\leq \frac32 c_0 |x|^2\,.
\end{equation}
\smallskip

\noindent
$\vartriangleright \eqref{needed-control-bound}$
We will in fact show
\begin{equation}
\label{needed-control-normal}
\uppsi^*(\eta\zeta) \leq \eta^2 \psih^*(\zeta) \qquad \text{for all } \eta \in [-1,1], \ \zeta \in \R\,,
\end{equation}
which, of course, yields \eqref{needed-control-bound}. 
We distinguish two cases: 
\begin{enumerate}
\item $|\eta \zeta| \leq r$: then 
\begin{equation}
\label{est4needed-1}
\uppsi^*(\eta \zeta) \leq  \frac32 c_0 \eta^2 \zeta^2\,. 
\end{equation}
\item $|\eta \zeta| > r$: then, 
\begin{equation}
\label{est4needed-2}
\uppsi^*(\eta \zeta) \overset{(1)}{=}\uppsi^*(|\eta \zeta|)\overset{(2)}{\leq}\uppsi^*( \zeta)  \overset{(3)}{\leq}\frac{|\eta\zeta|}{r}\uppsi^*( \zeta)    \overset{(4)}{\leq}  \frac{|\zeta|^2}{r^2} |\eta|^2 \uppsi^*( \zeta)  
\,,
\end{equation}
where {\footnotesize (1)} follows from the evenness of $\uppsi^*$, {\footnotesize (2)} from its monotonicity on $[0,+\infty)$
 and the fact that $|\eta|\leq 1$,
  and  {\footnotesize (3), (4)}  from the fact that $\tfrac{|\eta \zeta|} r>1$. 
\end{enumerate}
All in all, we conclude that 
\[
\uppsi^*(\eta \zeta) \leq  \eta^2 \left( \frac32 c_0  \zeta^2 {+}  \frac{|\zeta|^2}{r^2}  \uppsi^*( \zeta)  \right) \doteq  \eta^2{\mathfrak{g}(\zeta)} \,.
\]
 Now, the function $\mathfrak{g}  : \R \to [0,+\infty)$ is convex, even, and with superlinear growth at infinity.  It is immediate to check that $\psih: = \mathfrak{g}^* $ enjoys the same properties,  and the above estimate holds with $\psih^*=  \mathfrak{g}$, yielding  \eqref{needed-control-normal}.
\par
Then, \eqref{corollary-needed-control-bound} follows from observing that, for all
$ B>0$ and for all $ \beta \in [{-}B, B] $
\[
\begin{aligned}
\psih(w) = \sup_{\zeta \in \R} (w B\zeta - \psih^*(B\zeta))  & 
=
\left( \frac B\beta \right)^2 \sup_{\zeta \in \R} \left(\frac{\beta^2}B w \zeta - \left( \frac\beta B\right)^2\psih^*(B\zeta)\right)
\\
& \stackrel{\eqref{needed-control-bound}}\leq 
\left( \frac B\beta \right)^2 \sup_{\zeta \in \R} \left(\frac{\beta^2}B w \zeta - \uppsi^*(\beta\zeta)\right)
= \left( \frac B\beta \right)^2 \uppsi \left( w \frac\beta B\right) \,.
\end{aligned}
\]
\smallskip

\noindent
\EEE  $\vartriangleright  \eqref{Olli-est-2}:$ let $r>0$
be as in  \eqref{easy-conseq} and $\xi \in[{-}M,M] $. If $|\xi| \leq r$, then $
|\xi|^2 \leq \tfrac2{c_0}  \uppsi^*(\xi)\,.
$
Otherwise, if $|\xi|> r$, then $  \uppsi^*(\xi)\geq  \uppsi^*(r)$ by monotonicity and evenness. All in all, we conclude that 
\[
\forall\, \xi \in [{-}M,M]\,: \qquad |\xi|^2 \leq  \frac2{c_0}  \uppsi^*(\xi) + M^2 \frac{\uppsi^*(\xi)}{\uppsi^*(r)}  \doteq  K_M \uppsi^*(\xi)\,. \qedhere
\] 
\end{proof}

\subsection{The continuity equation}
Hereafter, for a given function $\mu :I \to \calM(V)$, or $\mu : I  \to \calM(\edg)$, with $I=[a,b]\subset\R$, we shall often write $\mu_t$ in place of $\mu(t)$ for a given $t\in I$ and denote the time-dependent function $\mu $ by  $(\mu_t)_{t\in I}$. For the weakest formulation of continuity equation considered in this paper, we will choose test functions in the space
 \begin{subequations}
 \label{Lip-bounded}
\begin{align}
&
\label{Lip-bounded-a}
\Lip_{\mathrm{b}}(V) := \Lip(V) \cap \mathrm{B}_{\mathrm{b}}(V),
\intertext{which we consider with the norm}
&
\label{Lip-bounded-b2}
\RNEW \| \varphi \|_{\Lip_{\mathrm{b}}(V)}: = \sup_{x\in V} |\varphi(x)| +  \sup_{x,y \in V, \, x \neq y} \frac{|\dnabla\varphi\,(x,y)|}{1{\wedge}\dV(x,y)}\,,
\end{align}
\end{subequations}
where we recall the notation 
\begin{equation}
\label{eq:def:ona-grad}
(\ona \varphi)(x,y) := \varphi(y)-\varphi(x) \qquad \text{for any Borel function }\varphi \in \B(V)\,.
\end{equation}
 As previously mentioned, the above norm in indeed equivalent to the usual norm for $\Lip_{\mathrm{b}}(V)$, defined as 
 $ \sup_{x\in V} |\varphi(x)| +  \sup_{x \neq y \in V} \frac{|\dnabla\varphi\,(x,y)|}{\dV(x,y)}$.
 In fact, we have opted for the definition in 
 \eqref{Lip-bounded-b2} only to have more transparent estimates, cf.\  Remark \ref{rmk:lip-test-functions} ahead.
 
\medskip
We are now in a position to introduce our first formulation for  the continuity equation.

\begin{definition}[Solutions to the continuity equation]
\label{def:CE}
Let $[0,T] \subset \R$.
We denote by $\CE 0T$  the set of pairs $(\rho,\jj)$ such that
\begin{itemize}
    \item[-] $\rho= (\rho_t)_{t\in [0,T]} $ is a family of time-dependent measures in $\mathcal{M}^+(V)$;
    \item[-] $\jj = (\jj_t)_{t\in [0,T]}$ is a measurable family of measures in  $\Ms(E)$ (i.e.\ with $(\jj_t^{\pm})_{t\in [0,T]}$ both Borel families) such that $\jj_t(E\setminus \Ed)=0$ for all $t\in [0,T]$, and 
    \begin{equation}
    \label{crucial-bound-j}
\int_0^T \iint_E  (1{\wedge}\dV(x,y)) \, |\jj_t|(\dd x \dd y)\,\dd t<+\infty.
    \end{equation}
    \item[-] the continuity equation holds in this sense: For all $\varphi \in \Lip_{\mathrm{b}}(V) \text{ and all } [s,t]\subset [0,T]$,
    \begin{equation}
    \label{CE}
   \begin{aligned}
		\int_V \varphi(x)\, \rho_t(\dd x)  - \int_V \varphi(x)\, \rho_s(\dd x)   = \int_s^t \iint_\Ed \dnabla\varphi\,(x,y)\,\jj_r(\dd x\dd y)\,\dd r.
  \end{aligned}
	\end{equation}
\end{itemize}	
In what follows, we will use the  short-hand notation $\bj_{{\!\Lebone}}$ for the measure on $[0,T]\times E$ given by 
$\bj_{{\!\Lebone}}(A): = \iiint_{A} \bj_t(\dd x \dd y) \, \dd t$ for all $A\in \frB([0,T]{\times} E)$. 
\end{definition}
\begin{remark}
\label{rmk:lip-test-functions}
\sl
By our choice of test functions, the integral on the right-hand side of \eqref{CE} is well defined. Indeed, for every $[s,t]\subset [0,T]$ we have 
\begin{equation}
\label{obvious-estimate}
\left| \int_s^t \iint \dnabla\varphi\,(x,y)\,\jj_r(\dd x\dd y)\,\dd r \right| \leq \| \varphi \|_{\Lip_{\mathrm{b}}(V)} \int_s^t 
 \iint_\Ed  (1{\wedge}\dV(x,y))   \, |\jj_t|(\dd x \dd y)\,\dd t<+\infty
\end{equation}
thanks to \eqref{crucial-bound-j}. Combining this with \eqref{CE} we obtain that
(recall the definition of $\|{\cdot}\|_{\BL}$ from 
\eqref{nbl-norm})
\begin{equation}
\label{j-controls-rho}
\| \rho(t){-}\rho(s)\|_{\BL} \leq \int_s^t 
 \iint_\Ed  (1{\wedge}\dV(x,y))   \, |\jj_r|(\dd x \dd y)\,\dd r\,,
\end{equation}
whence we deduce that the curve $[0,T]\ni t \mapsto \rho_t$ is absolutely continuous with values in $(\mathcal{M}^+(V), \| {\cdot}\|_{\BL})$. 
 In turn, since  the $\|{\cdot}\|_{\BL}$-norm topologizes narrow convergence, a fortiori 
we can conclude that the curve $[0,T]\ni t \mapsto \rho_t$ is narrowly continuous.
\end{remark} 

\par
It can easily checked that the concatenation of two solutions to the continuity equation, in the above sense, is again a solution to the continuity equation, and that the family of solutions is closed under time rescaling. The proof of the following results trivially adapts the argument for \cite[Lemma 8.1.3]{AGS08} and is thus omitted.

\begin{lemma}[Concatenation and time rescaling]
\label{l:concatenation&rescaling}
\begin{enumerate}
\item Let $(\rho^i,\bj^i) \in \CE 0{T_i}$, $i=1,2$, with $\rho_{T_1}^1 = \rho_0^2$. Define $(\rho_t,\bj_t)_{t\in [0,T_{1}+T_2]}$ by 
\[
\rho_t: = \begin{cases}
\rho_t^1 & \text{ if } t \in [0,T_1],
\\
\rho_{t-T_1}^2 & \text{ if } t \in [T_1,T_1+T_2],
\end{cases}
\qquad \qquad 
\bj_t: = \begin{cases}
\bj_t^1 & \text{ if } t \in [0,T_1],
\\
\bj_{t-T_1}^2 & \text{ if } t \in [T_1,T_1+T_2]\,.
\end{cases}
\]
Then, $(\rho,\bj ) \in \CE 0{T_1+T_2}$.
\item
Let $\mathsf{t} : [0,\hat{T}] \to [0,T]$ be strictly increasing and absolutely continuous, with inverse $\mathsf{s}: [0,T]\to [0,\hat{T}]$. Then, $(\rho, \bj ) \in \CE 0T$ if and only if 
$\hat \rho: = \rho \circ \mathsf{t}$ and $\hat \bj : = \mathsf{t}' (\bj  {\circ} \mathsf{t})$  fulfill $(\hat \rho, \hat \bj ) \in \CE 0{\hat T}$. 
\end{enumerate}
\end{lemma}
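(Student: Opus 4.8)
The plan is to verify, in each of the two cases, the three defining requirements of Definition~\ref{def:CE} for the new pair: that it is a family of measures of the prescribed type, that it satisfies the weighted integrability bound~\eqref{crucial-bound-j}, and that it solves the continuity equation~\eqref{CE} tested against every $\varphi\in\Lipb(V)$. The scheme is exactly that of \cite[Lemma~8.1.3]{AGS08}; the only genuinely new feature here is that the fluxes are $\sigma$-finite \emph{signed} measures on $E$, so that every manipulation must be carried out through the total-variation measures $|\bj_t|$ and the integral~\eqref{eq:fsint}, using the elementary calculus of $\Ms$ developed in Section~\ref{ss:smmeas}.

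\emph{Concatenation.} I would first observe that $(\rho_t)_t$ takes values in $\calM^+(V)$ and that $(\bj_t)_t$ is a measurable family in $\Ms(E)$ concentrated on $\Ed$, both properties being inherited piecewise from $(\rho^1,\bj^1)$ and $(\rho^2,\bj^2)$, since a map which is Borel on $[0,T_1]$ and on $[T_1,T_1{+}T_2]$ is Borel on their union. The bound~\eqref{crucial-bound-j} follows by splitting the time integral at $T_1$ and substituting $t\mapsto t-T_1$ in the second piece, which turns it into $\int_0^{T_1}\!\iint_E(1{\wedge}\dV)\,\dd|\bj^1_t|\,\dd t+\int_0^{T_2}\!\iint_E(1{\wedge}\dV)\,\dd|\bj^2_t|\,\dd t<+\infty$. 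For~\eqref{CE}, fix $\varphi\in\Lipb(V)$ and $[s,t]\subset[0,T_1{+}T_2]$: if $[s,t]$ lies inside $[0,T_1]$ or inside $[T_1,T_1{+}T_2]$ the identity is precisely~\eqref{CE} for $(\rho^1,\bj^1)$, respectively for $(\rho^2,\bj^2)$ after the shift; if $s<T_1<t$ one writes $\int_V\varphi\,\dd\rho_t-\int_V\varphi\,\dd\rho_s$ as $\bigl(\int_V\varphi\,\dd\rho_{T_1}-\int_V\varphi\,\dd\rho_s\bigr)+\bigl(\int_V\varphi\,\dd\rho_t-\int_V\varphi\,\dd\rho_{T_1}\bigr)$, uses the matching condition $\rho_{T_1}=\rho^1_{T_1}=\rho^2_0$ to apply the previous two cases, and recombines the time integrals via $\int_s^t=\int_s^{T_1}+\int_{T_1}^t$.

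\emph{Time rescaling.} For the forward implication I would set $\hat\rho_r:=\rho_{\mathsf{t}(r)}$ and $\hat\bj_r:=\mathsf{t}'(r)\,\bj_{\mathsf{t}(r)}$ (with $\hat\bj_r:=0$ on the negligible set where $\mathsf{t}'$ is undefined); since $\mathsf{t}$ is nondecreasing, $\mathsf{t}'\ge 0$ a.e., so $\hat\bj_r^{\pm}=\mathsf{t}'(r)\,\bj_{\mathsf{t}(r)}^{\pm}$ and $|\hat\bj_r|=\mathsf{t}'(r)\,|\bj_{\mathsf{t}(r)}|$, which keeps $(\hat\bj_r)_r$ a measurable family in $\Ms(E)$ concentrated on $\Ed$. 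Both~\eqref{crucial-bound-j} and~\eqref{CE} then reduce to the substitution identity
\[
\int_{r_1}^{r_2} g(\mathsf{t}(r))\,\mathsf{t}'(r)\,\dd r=\int_{\mathsf{t}(r_1)}^{\mathsf{t}(r_2)} g(\tau)\,\dd\tau,
\]
valid for $\mathsf{t}$ absolutely continuous and nondecreasing and for any Borel $g\ge 0$ or $g\in\rmL^1(0,T)$: one applies it with $g(\tau)=\iint_E(1{\wedge}\dV)\,\dd|\bj_\tau|$ to get~\eqref{crucial-bound-j}, and with $g_\varphi(\tau)=\iint_{\Ed}\dnabla\varphi\,(x,y)\,\bj_\tau(\dd x\,\dd y)$ — which belongs to $\rmL^1(0,T)$ by the estimate~\eqref{obvious-estimate} — together with~\eqref{CE} for $(\rho,\bj)$ on $[\mathsf{t}(r_1),\mathsf{t}(r_2)]$, to obtain~\eqref{CE} for $(\hat\rho,\hat\bj)$ on $[r_1,r_2]$. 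The reverse implication is symmetric: one runs the same argument with the inverse reparametrisation $\mathsf{s}$ in place of $\mathsf{t}$, using that $(\rho,\bj)$ is recovered from $(\hat\rho,\hat\bj)$ via $\rho=\hat\rho\circ\mathsf{s}$, $\bj=\mathsf{s}'(\hat\bj\circ\mathsf{s})$, and that $\mathsf{s}'(\mathsf{t}(r))\,\mathsf{t}'(r)=1$ for a.e.\ $r$.

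The one step I expect to require real care — everything else being a transcription of \cite[Lemma~8.1.3]{AGS08} read through the $\sigma$-finite framework of Section~\ref{ss:smmeas} — is the use of the change-of-variables formula for an absolutely continuous (not necessarily bi-Lipschitz) reparametrisation. Here one invokes the Luzin $N$ property to see that $\mathsf{t}\bigl(\{\mathsf{t}'=0\}\bigr)$ is Lebesgue-null in $[0,T]$, so that the vanishing set of $\mathsf{t}'$ does not contribute to the integrals over $[0,T]$; correspondingly, assuming — as in \cite[Lemma~8.1.3]{AGS08} — that the inverse $\mathsf{s}$ is absolutely continuous as well forces $\mathsf{t}'>0$ almost everywhere, which is exactly what makes the forward and backward passages mutually inverse.
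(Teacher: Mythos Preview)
Your proposal is correct and follows exactly the approach the paper intends: the paper in fact omits the proof entirely, stating only that it ``trivially adapts the argument for \cite[Lemma~8.1.3]{AGS08}'', which is precisely the scheme you carry out. Your added remarks on handling the $\sigma$-finite signed fluxes via $|\bj_t|$ and on the Luzin~$N$ property for the change of variables are the right ingredients to make that adaptation rigorous in the present framework.
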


\subsection{The primal and dual dissipation potentials}\label{ss:dissd}
Based on Assumptions   \ref{Ass:D},  and 
\ref{Ass:flux-density},  we rigorously define the primal 
dissipation
 and dual dissipation
 potentials
$
\scrR$ and $\scrR^*$ formally introduced in \eqref{formal_definition}. 
 As in \cite{PRST22}, first of all with any $\rho \in \calM^+(V)$ we associate the 
 measures
 $ \teta_{\!\rho}^\pm$ defined 
  on $E$ by 
\begin{equation}
\label{rig-def:tetarho}
\begin{aligned}
  \teta_{\!\rho}^-(\dd x\,\dd y) :=
  \rho(\dd x)\kappa(x,\dd y),\qquad
  \teta_{\!\rho}^+(\dd x\,\dd y) :=
  \rho(\dd y)\kappa(y,\dd x)=
  s_{\#}\teta_{\!\rho}^-(\dd x\,\dd y),
\end{aligned}
\end{equation}
Observe that $\teta_{\!\rho}^\pm \in  \Ms(E)$; 
we may think of them as `$\rho$-adjusted jump rates'. 
 Let us also observe that, since we have set $\kappa(x,\{x\})=0$ for every $x\in V$,
\begin{equation}\label{trivial-but-needed}
\teta_{\!\rho}^-(E{\setminus}\Ed) = \iint_{\{(x,x):\,  x\in V\}} \kappa(x,\dd y)\rho(\dd x) =0 
\end{equation}
and we clearly have $\teta_{\!\rho}^+(E{\setminus}\Ed)=0$, too.  
The following result extends 
\cite[Lemma 2.4]{PRST22}.
\begin{lemma}
\label{l:3.4}
Let $(\rho_n)_n,\rho \in \calM^+(V)$ satisfy $\rho_n\to \rho$ setwise as $n\to\infty$. Then, 
\begin{equation}
\label{1st-assertion}
(1{\wedge} \dV^2) \teta_{\!\rho_n}^\pm \to (1{\wedge} \dV^2) \teta_{\!\rho}^\pm
\qquad \text{setwise in $\calM^+(E)$.}
\end{equation}
In particular, 
\begin{equation}
\label{crucial-conseq-for-tetapm}
\teta_{\!\rho_n}^\pm \to \teta_{\!\rho}^{\pm} \qquad  \text{$\sigma$-setwise in $\Ms^+(E)$.}
\end{equation}
\end{lemma}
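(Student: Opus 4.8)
The plan is to derive \eqref{1st-assertion} from the setwise convergence $\rho_n\to\rho$ via a disintegration (Fubini) argument, and then to obtain \eqref{crucial-conseq-for-tetapm} by feeding this into Lemma~\ref{lm:sigmasetwise} with the weight $f:=1{\wedge}\dV^2$.

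For the first, more substantial step, I would fix $\varphi\in\Bb(E)$ and introduce the partial integral
\[
g_\varphi(x):=\int_V \varphi(x,y)\,(1{\wedge}\dV^2(x,y))\,\kappa(x,\dd y),\qquad x\in V.
\]
By \eqref{mitigation of singularity} one has $|g_\varphi|\le\|\varphi\|_\infty\,c_\kappa$ on $V$, so $g_\varphi$ is bounded. Its Borel measurability is not literally contained in \eqref{true-measurability}, but follows from it by a monotone-class argument: the set of $\varphi\in\Bb(E)$ for which $g_\varphi$ is measurable is a vector space, is stable under bounded monotone limits (using $\kappa(x,\cdot)\ge0$ together with the finiteness $\int_V(1{\wedge}\dV^2(x,y))\kappa(x,\dd y)\le c_\kappa$), and contains every product $f(x)h(y)$ with $f,h\in\Bb(V)$ --- for which $g_\varphi(x)=f(x)\int_V h(y)(1{\wedge}\dV^2(x,y))\kappa(x,\dd y)$ is measurable by \eqref{true-measurability}. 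Since such products generate $\mathfrak{B}(E)=\mathfrak{B}(V)\otimes\mathfrak{B}(V)$ (recall $V$ is separable metric), we conclude $g_\varphi\in\Bb(V)$; see also Appendix~\ref{ss:appB}. Now $(1{\wedge}\dV^2)\teta_{\!\rho_n}^-$ is a \emph{finite} measure on $E$ by \eqref{mitigation of singularity}, so, by the very definition of $\teta_{\!\rho_n}^-$ as the measure $\rho_n(\dd x)\kappa(x,\dd y)$ (equivalently, Fubini's theorem),
\[
\iint_E \varphi\,(1{\wedge}\dV^2)\,\dd\teta_{\!\rho_n}^- \;=\; \int_V g_\varphi\,\dd\rho_n \;\xrightarrow{\,n\to\infty\,}\; \int_V g_\varphi\,\dd\rho \;=\; \iint_E \varphi\,(1{\wedge}\dV^2)\,\dd\teta_{\!\rho}^-,
\]
the middle limit being precisely \eqref{eq:70} for $\rho_n\to\rho$ tested against $g_\varphi\in\Bb(V)$. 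As $\varphi\in\Bb(E)$ was arbitrary and the limiting measure lies in $\calM^+(E)$, this is exactly $(1{\wedge}\dV^2)\teta_{\!\rho_n}^-\to(1{\wedge}\dV^2)\teta_{\!\rho}^-$ setwise. The corresponding statement for $\teta^+$ follows either by repeating the argument with $\tilde\varphi(x,y):=\varphi(y,x)$ in place of $\varphi$ (invoking the symmetry of $\dV$), or by observing that the homeomorphism $s(x,y)=(y,x)$ transports setwise convergence and, because $1{\wedge}\dV^2$ is $s$-invariant, satisfies $(1{\wedge}\dV^2)\teta_{\!\rho}^+=s_\#\bigl((1{\wedge}\dV^2)\teta_{\!\rho}^-\bigr)$.

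For the second step, the weight $f=1{\wedge}\dV^2$ lies in $\Bb^+(E)$ and, by \eqref{trivial-but-needed}--\eqref{crucial-facts-E'}, all the measures $\teta_{\!\rho_n}^\pm,\teta_{\!\rho}^\pm$ are concentrated on $\Ed$, where $\dV>0$; hence $f$ is strictly positive almost everywhere with respect to each of them. Since the first step gives $f\teta_{\!\rho_n}^\pm\to f\teta_{\!\rho}^\pm$ setwise in $\calM(E)$, the ``in particular'' clause of Lemma~\ref{lm:sigmasetwise} delivers $\teta_{\!\rho_n}^\pm\to\teta_{\!\rho}^\pm$ $\sigma$-setwise in $\Ms^+(E)$, which is \eqref{crucial-conseq-for-tetapm}. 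The only genuinely non-routine point is the joint measurability of $g_\varphi$ for arbitrary bounded Borel $\varphi$, requiring the monotone-class reduction above; everything else is bookkeeping with the disintegration of $\teta^\pm_\rho$, the definition of setwise convergence, and Lemma~\ref{lm:sigmasetwise}.
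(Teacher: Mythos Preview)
Your proof is correct and follows essentially the same approach as the paper: disintegrate $(1{\wedge}\dV^2)\teta_{\!\rho_n}^-$ via Fubini to reduce \eqref{1st-assertion} to testing $\rho_n\to\rho$ against the bounded Borel function $g_\varphi$, then apply Lemma~\ref{lm:sigmasetwise} with $f=1{\wedge}\dV^2$ for \eqref{crucial-conseq-for-tetapm}. The only difference is that you spell out the monotone-class argument for the measurability of $g_\varphi$, whereas the paper simply invokes \eqref{true-measurability}--\eqref{mitigation of singularity} (the full justification being deferred to Lemma~\ref{lm:kmeas} in Appendix~\ref{ss:appB}).
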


\begin{proof}
$\vartriangleright \eqref{1st-assertion}:$
Clearly, it is sufficient to prove \eqref{1st-assertion} for, e.g.,  $(\teta_{\!\rho_n}^-)_n$. For any $\phi\in \Bb(E)$, then the map $x\mapsto \int_V \phi(x,y)(1{\wedge} \dV^2(x,y))\kappa(x,\dd y)$ is
a Bounded borel mapping
\EEE by \eqref{true-measurability} and \eqref{mitigation of singularity}. \EEE
 Therefore, since $\rho_n\to \rho$ setwise, we have 
\[
\begin{aligned}
&
\iint_{E} \phi(x,y) \, (1{\wedge} \dV^2(x,y)) \teta_{\!\rho_n}^- (\dd x \dd y) = \int_{V} \left(  \int_{V} \phi(x,y) \,(1{\wedge} \dV^2(x,y))\kappa(x,\dd y) \right) \, \rho_n(\dd x) 
\\
&
\longrightarrow  \int_{V} \left(  \int_{V} \phi(x,y) \,(1{\wedge} \dV^2(x,y))\kappa(x,\dd y) \right) \, \rho(\dd x)  = \iint_{E} \phi(x,y) \, (1{\wedge} \dV^2(x,y))\teta_{\!\rho}^- (\dd x \dd y) \,.
\end{aligned}
\]
$\vartriangleright \eqref{crucial-conseq-for-tetapm}:$
\EEE We now apply Lemma \ref{lm:sigmasetwise}
 with the choice 
$f(x,y) = (1{\wedge} \dV^2(x,y))$, noting that $f\in \Bb^+(E)$ and that, indeed, 
$f>0$
$\teta_{\!\rho_n}^\pm$, $\teta_{\!\rho}^\pm$-a.e.-in $E$, cf.\ \eqref{trivial-but-needed}.  \EEE
\end{proof}
\par
Based on the construction of $\teta_{\!\rho}^\pm$, 
 we may rigorously define the measure $\bnu_{\!\rho} \in  \Ms(E)$ from \eqref{formal_definition}
\RNEW  as concave transformation of the couplings $\teta_{\!\rho}^\pm$
  (cf.\ construction set forth in \eqref{alpha-mu-gamma}), \EEE namely
\begin{equation}
\label{rig-nu-rho}
\bnu_\rho: =  \Aalpha[\teta_{\!\rho}^-,\teta_{\!\rho}^+|\tetapi],
\end{equation}
where we have used the simplified notation $\Aalpha[\teta_{\!\rho}^-,\teta_{\!\rho}^+|\tetapi]$ in place of 
$\Aalpha[(\teta_{\!\rho}^-,\teta_{\!\rho}^+)|\tetapi]$. 
In the definition of $\scrR$ we will also resort to the construction from \eqref{def:F-F}.
\begin{definition}
\label{def:primal-and-dual}
We define $\scrR: \calM^+(V)\times \Ms(E) \to [0,+\infty]$ and $\scrR^*: \calM^+(V)\times \B(E) \to [0,+\infty]$ via  
\begin{align}
&
\label{rigR}
\scrR(\rho,\bj): =  \frac12 \calF_\uppsi(2\bj |\bnu_\rho)\,,
\\
&
\label{Rig-Rstar}
\scrR^*(\rho,\xi): = \frac12 \iint_{\Ed} \uppsi^*(\xi)  \,\bnu_\rho(\dd x \dd y)\,,
\end{align}
\end{definition}
The following result collects some key facts about $\scrR$ and $\scrR^*$. Firstly, we  provide an equivalent representation, cf.\  
\eqref{equivalent-upsilon} below,
for 
$\scrR$ involving the convex and lower semicontinuous function
$\Upsilon: [0,+\infty) \times [0,+\infty) \times \R \to [0,+\infty]$ given by 
\[\Upsilon(u,v,w): = \hat\uppsi (w,\upalpha(u,v)),\]
where $\hat\uppsi:\R^{2}\to[0,\infty]$ is the 1-homogeneous, convex, perspective function associated  with $\uppsi$ defined in \eqref{def:F-F}.

Based on \eqref{equivalent-upsilon} we may prove \eqref{R-lsc}, which extends the sequential lower semicontinuity of $\scrR(\rho, \cdot)$, 
proved in \cite[Lemma 4.10]{PRST22}
for setwise  convergence, to the case of 
 $\sigma$-setwise   convergence. 
\begin{lemma}
\label{lm:proprr}
The following properties hold:
\begin{enumerate}
\item
For $\scrR$ we have the equivalent representation
\begin{equation}
\label{equivalent-upsilon}
\scrR(\rho,\bj) = \frac12 \calF_\Upsilon ((\teta_{\!\rho}^-,\teta_{\!\rho}^+,2 \bj)|\tetapi);
\end{equation}
\item if $(\rho,\bj)$ fulfill $\scrR(\rho,\bj)<+\infty$ and $\rho \ll \pi$, 
 then
 \begin{itemize}
 \item[-]
 $\bj \ll \tetapi$ with $w=\frac{\dd( 2\bj)}{\dd \tetapi}$, 
\item[-] setting 
\[
 E_\upalpha: = \{ (x,y)\in \Ed\, : \ \upalpha(u(x),u(y))>0 \} \qquad \text{we have } |\bj |(\Ed {\backslash} E_\upalpha)=0\,;
\]
\item[-] $\scrR$ rewrites as 
\begin{equation}
\label{nice-representation}
\scrR(\rho,\bj)  = 
    \frac12\iint_{E_\upalpha}
    \Psi\Bigl(\frac{w(x,y)}{\upalpha(u(x),u(y))}\Bigr)\upalpha(u(x),u(y))\,\tetapi(\dd
    x,\dd y)\,.
\end{equation}
\end{itemize}

 \item For all  $(\rho_n)\, \rho \in \calM^+(V)$ and $(\bj_n)_n,\, \bj \in \Ms(E) $ there holds
 \begin{equation}
 \label{R-lsc}
\left[ \rho_n \to \rho \text{ setwise  and } \bj_n \to \bj \text{ $\sigma$-setwise} \right] \Longrightarrow \liminf_{n\to\infty} \scrR(\rho_n,\bj_n) \geq \scrR(\rho,\bj)\,.
 \end{equation}
 \item For all $\rho\in \calM^+(V)$, 
$\beta\neq 0$, and $\xi\in \Bb(E)$ with $\|\xi\|_{\infty}\leq M$,
\begin{equation}
\label{eq:disspb}
\begin{aligned}
\scrR^*(\rho,\beta \xi) &\leq \frac{c_{\upalpha}}{2M^2} \psih^*(\beta M) \iint_{\Ed} \xi^2(x,y)  (\teta_{\rho}^+{+}\teta_{\rho}^-{+}\tetapi)(\dd x 
\dd y )\\
&\leq \frac{c_{\kappa}\psih^*(\beta M)c_{\upalpha}(2\rho(V)+\pi(V))}{2M^2}\left\|\frac{\xi}{1{\wedge}\dV}\right\|_{\infty}^2
\end{aligned}
\end{equation}
with $\psih$ from Lemma \ref{l:props-uppsistar}.
\item Finally, for any $\rho\in \calM^+(V)$, $\bj \in \Ms(E)$,
\begin{equation}
\label{eq:disspbd}
\begin{aligned}
\scrR(\rho,\bj)&\geq  \frac12 \calF_\psih\left(2(1{\wedge}\dV)\bj\,\Big|\,c_{\upalpha}(1{\wedge}\dV)^2 (\teta_{\rho}^+{+}\teta_{\rho}^-{+}\tetapi\right) \\
&\geq \frac12  \hat \psih\bigl(2\|(1{\wedge}\dV)\bj\|_{\mathrm{TV}}\,,\, c_{\upalpha}c_{\kappa}(2\rho(V){+}\pi(V))\bigr).
\end{aligned}
\end{equation}
with $\hat\psih$ the perspective function of $\psih$
 \end{enumerate}
\end{lemma}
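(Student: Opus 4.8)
The plan is to establish both inequalities of item (5) through three reductions: first enlarging the reference measure from $\bnu_\rho$ to $c_\upalpha\gamma$ with $\gamma:=\teta_{\!\rho}^-+\teta_{\!\rho}^++\tetapi$; then absorbing the weight $1{\wedge}\dV$ via the scaling bound \eqref{corollary-needed-control-bound}; and finally applying Jensen's inequality \eqref{eq:jens} together with the sublinearity of the perspective function $\hat\psih$.

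First I would note that \eqref{eq:alphab} gives $\upalpha(u_1,u_2)\le c_\upalpha(1{+}u_1{+}u_2)$, hence (dividing by $t$ and letting $t\to\infty$) $\upalpha^\infty(z_1,z_2)\le c_\upalpha(z_1{+}z_2)$; feeding these bounds into the Lebesgue decomposition of $(\teta_{\!\rho}^-,\teta_{\!\rho}^+)$ with respect to $\tetapi$ that enters the construction \eqref{alpha-mu-gamma}, \eqref{rig-nu-rho} of $\bnu_\rho$ shows $\bnu_\rho\le c_\upalpha\gamma$ as $\sigma$-finite measures. Since $\uppsi(0)=\min\uppsi=0$, the monotonicity property \eqref{AC-monotonicity} then yields $\scrR(\rho,\bj)=\tfrac12\calF_\uppsi(2\bj|\bnu_\rho)\ge\tfrac12\calF_\uppsi(2\bj|c_\upalpha\gamma)$. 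We may assume the right-hand side is finite; because $\uppsi^*$ is finite-valued, $\uppsi$ is superlinear, so \eqref{superlinearity} forces $2\bj\ll c_\upalpha\gamma$ with some density $w$, concentrated on $\Ed$ since $\teta_{\!\rho}^\pm(E{\setminus}\Ed)=\tetapi(E{\setminus}\Ed)=0$.

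Next, on $\Ed$ one has $1{\wedge}\dV(x,y)\in(0,1]$, so substituting $w\mapsto w/(1{\wedge}\dV)$ in \eqref{corollary-needed-control-bound} gives the pointwise bound $(1{\wedge}\dV)^2\,\psih\bigl(w/(1{\wedge}\dV)\bigr)\le\uppsi(w)$; integrating against $c_\upalpha\gamma$ and using $(1{\wedge}\dV)^2=1{\wedge}\dV^2$ together with the identity $2(1{\wedge}\dV)\bj=\tfrac{w}{1{\wedge}\dV}\bigl(c_\upalpha(1{\wedge}\dV)^2\gamma\bigr)$ turns this into $\calF_\uppsi(2\bj|c_\upalpha\gamma)\ge\calF_\psih\bigl(2(1{\wedge}\dV)\bj\,\big|\,c_\upalpha(1{\wedge}\dV)^2\gamma\bigr)$, which, after multiplication by $\tfrac12$, is the first asserted inequality. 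For the second one I would set $\lambda:=c_\upalpha(1{\wedge}\dV)^2\gamma$ and record that $\lambda$ is a \emph{finite} measure with $\lambda(E)\le c_\upalpha c_\kappa(2\rho(V){+}\pi(V))$, which follows from \eqref{mitigation of singularity} applied to each of $(1{\wedge}\dV^2)\teta_{\!\rho}^\pm$ and $(1{\wedge}\dV^2)\tetapi$ using the symmetry of $\dV$. Assuming $\calF_\psih(2(1{\wedge}\dV)\bj|\lambda)<\infty$ (else nothing to prove), superlinearity of $\psih$ makes $2(1{\wedge}\dV)\bj$ a finite measure, absolutely continuous with respect to $\lambda$. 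Writing $E=P\sqcup N$ for a Hahn decomposition of $\bj$, I would split $\calF_\psih$ over $P$ and $N$, apply \eqref{eq:jens} on each piece (the singular parts vanish), use the evenness of $\psih$ to turn the signed masses into $\|(1{\wedge}\dV)\bj^\pm\|_{\mathrm{TV}}$, and finally collapse the two terms by the subadditivity of $\hat\psih$ (it is $1$-homogeneous and convex) and its monotonicity in the second slot (non-increasing, because $\psih$ is convex with $\psih(0)=0$, so $\psih(s)\le s\,\psih'(s)$), bounding everything below by $\hat\psih\bigl(2\|(1{\wedge}\dV)\bj\|_{\mathrm{TV}},\,c_\upalpha c_\kappa(2\rho(V){+}\pi(V))\bigr)$. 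Dividing by $2$ and chaining with the first inequality concludes.

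The step I expect to be most delicate is the last one: a direct application of Jensen on all of $E$ would only produce the \emph{signed} total mass $(2(1{\wedge}\dV)\bj)(E)$ inside $\hat\psih$, which is too weak, so one is forced to argue on the Hahn decomposition and recombine through the sublinearity of $\hat\psih$ in order to recover the total-variation norm. A minor technical point requiring care is the verification of $\bnu_\rho\le c_\upalpha\gamma$ directly for $\sigma$-finite (rather than finite, $\pi$-absolutely continuous) measures, which is why one passes through the recession function $\upalpha^\infty$ in the concave-transformation construction.
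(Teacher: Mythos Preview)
Your argument is correct. For the first inequality it is essentially a reordering of the paper's computation: the paper bundles the enlargement $\upalpha(u,v)\le c_\upalpha(1{+}u{+}v)$ and the scaling bound \eqref{corollary-needed-control-bound} into a single pointwise estimate on the perspective function $\hat\uppsi$, applied inside the integral representation \eqref{nice-representation} from item~(2) (so tacitly working under $\rho\ll\pi$). You instead separate the two steps, first passing to the reference measure $c_\upalpha\gamma$ via \eqref{AC-monotonicity}, then applying the scaling, and the recession bound $\upalpha^\infty(z_1,z_2)\le c_\upalpha(z_1{+}z_2)$ makes your version go through for general $\rho\in\calM^+(V)$ without that extra hypothesis.

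For the second inequality the paper's route is shorter than yours, and your concern about the ``delicate step'' is unwarranted. Since $\psih$ is even one has $\calF_\psih\bigl(2(1{\wedge}\dV)\bj\,\big|\,\lambda\bigr)=\calF_\psih\bigl(2(1{\wedge}\dV)|\bj|\,\big|\,\lambda\bigr)$ directly at the level of the integrand; a single application of Jensen \eqref{eq:jens} on all of $E$ to the \emph{nonnegative} measure $2(1{\wedge}\dV)|\bj|$ then yields $\hat\psih\bigl(2\|(1{\wedge}\dV)\bj\|_{\mathrm{TV}},\lambda(E)\bigr)$ immediately, and the monotonicity of $\hat\psih$ in the second slot finishes. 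The Hahn split and the subadditivity of $\hat\psih$ are correct but unnecessary detours.
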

\begin{proof}
Formula \eqref{equivalent-upsilon} follows by the same arguments as in \cite[Sec.\ 4.2]{PRST22}; likewise, the proof of the first part of Claim \textbf{(2)}
follows by   \cite[Lemma 4.10]{PRST22}. 
\par
Now, 
when $\rho \ll \pi$ then we also have $\teta_{\!\rho}^\pm \ll \tetapi$ and (recalling that $\teta_{\!\rho}^+= s_{\#}\teta_{\!\rho}^-$)
\[
\frac{\dd \teta_{\!\rho}^-}{\dd \tetapi}(x,y) = u(x), \qquad \frac{\dd \teta_{\!\rho}^+}{\dd \tetapi}(x,y) = u(y),
\]
so that 
\[
\bnu_\rho(\dd x \, \dd y) = \upalpha \left(\frac{\dd \teta_{\!\rho}^-}{\dd \tetapi},\frac{\dd \teta_{\!\rho}^+}{\dd \tetapi} \right) \,\tetapi(\dd x \dd y) = \upalpha(u(x),u(y)) \,  \tetapi(\dd x \dd y) \,.
\]
Since
 $\bj\ll \tetapi$, we can write $2\jj = w \tetapi$, so that 
 \[
\Upsilon \left(\frac{\dd \teta_{\!\rho}^-}{\dd \tetapi} ,\frac{\dd \teta_{\!\rho}^+}{\dd \tetapi},\frac{\dd 2\jj}{\dd \tetapi} \right)(x,y) =\hat{\uppsi}(w(x,y),\upalpha( u(x),u(y)))
\] 
  and \eqref{nice-representation} follows by the definition of the perspective function $\hat\uppsi$, 
 cf.\ also \cite[Sec.\ 4.2]{PRST22}.
    \par
  Next, let $(\rho_n)\, \rho \in \calM^+(V)$ and $(\bj_n)_n,\, \bj \in \Ms(E) $ be as in \eqref{R-lsc}. By Lemma \ref{l:3.4} we infer that
  $(\teta_{\!\rho_n}^-,\teta_{\!\rho_n}^+,2\bj_n) \to (\teta_{\!\rho}^-,\teta_{\!\rho}^+,2\bj)$ 
 $\sigma$-setwise 
  in $\Ms(E;\R^3)$, and the assertion follows 
  from Lemma \ref{l:crucial-F}.

  We turn to the bounds 
  \eqref{eq:disspb}
  on $\scrR^*(\rho,\beta \xi)$. Let $\xi\in \Bb(E)$ with $\|\xi\|_{\infty}\leq M$, then due to the property \eqref{needed-control-bound} we obtain the estimate 
  \[
   \scrR^*(\rho,\beta \xi) \leq \frac12 \iint_{\Ed} \uppsi^*(\beta \xi(x,y)  \,\bnu_\rho(\dd x \dd y)
  \leq \frac12 \iint_{\Ed} 
 \frac{\psih^*(\beta M) \xi^2(x,y)}{M^2}\,  \bnu_\rho (\dd x \dd y).
 \]
  The first bound in \eqref{eq:disspbd}
   now follows from \eqref{eq:alphab}, which implies 
  \[
   \bnu_\rho \leq c_{\upalpha}(\teta_{\rho}^+{+}\teta_{\rho}^-{+}\tetapi)\,.
   \]
 The second inequality follows from estimates  $\|(1{\wedge} \dV^2) \teta_{\!\rho}^\pm\|_{\mathrm{TV}}\leq c_{\kappa} \rho(V)$, $\|(1{\wedge} \dV^2) \tetapi\|_{\mathrm{TV}}\leq c_{\kappa} \pi(V)$. 
Finally, for the dual estimate \eqref{eq:disspbd}, we use the  property \eqref{corollary-needed-control-bound} and monotonicity of $\psih$, ensuring that for any $z\in (0,1]$
\begin{align*}
\hat{\Psi}\left(w,\upalpha(u,v)\right)   = \frac{1}{z^2}\hat{\Psi}\left(z^2 w,z^2 \upalpha(u,v)\right)
    &\geq z^2 \upalpha(u,v) \psih \left(\frac{zw}{z^2 \upalpha(u,v)} \right) \\
    &= \hat{\psih}(z w,z^2 \upalpha(u,v)) \geq \hat{\psih}(z w,c_{\upalpha}z^2(1{+}u{+}v)),
\end{align*}
and therefore we can derive the estimate
\begin{equation*}
\begin{aligned}
\calF_\uppsi(2\bj |\bnu_\rho)&=\iint_{E_\upalpha} \frac{1}{(1{\wedge} \dV)^2}\Psi\Bigl((1{\wedge} \dV)\frac{(1{\wedge} \dV)w(x,y)}{(1{\wedge} \dV^2)\upalpha(u(x),u(y))}\Bigr)(1{\wedge} \dV)^2\upalpha(u(x),u(y))\,\tetapi(\dd
    x\dd y) \\
&\geq \iint_{E_\upalpha} \psih\Bigl(\frac{(1{\wedge} \dV)w(x,y)}{c_{\upalpha}(1{\wedge} \dV^2)(1{+}u(x){+}u(y))}\Bigr)c_{\upalpha}(1{\wedge} \dV^2)(1{+}u(x){+}u(y))\,\tetapi(\dd  x\dd y),
\end{aligned}
\end{equation*}
from which the desired first bound follows, since $w \equiv 0$ on  the complementary $E_\upalpha$. The final bound in \eqref{eq:disspbd} is obtained from the fact that $\psih$ is monotone and $\psih(w)=\psih(|w|)$ for all $w\in \R$ by evenness of $\psih$, and an application of Jensen's inequality \eqref{eq:jens}.
\end{proof}
\paragraph{\bf The Fisher information}
Ultimately, before specifying our notions of solution we need to properly introduce the Fisher information functional. Formally, it is given by 
\[
\Fish(\rho) = \scrR^*\bigl(\rho,-\thalf \ona \upphi(u)\bigr)
=
 \frac12
\iint_\edg \Psi^*\bigl( -\thalf(\upphi'(u(y)){-}\upphi'(u(x))\bigr) \bnu_\rho(\dd x \, \dd y),\qquad \rho = u\pi\, .
\]
However, note that $\upphi$ need not be differentiable at $0$. Hence,  in order to
 rigorously define $\Fish$ in the present context, we mimic the ideas of \cite{PRST22}. Firstly, we introduce the function
$\Lambda_\upphi: \R_+\times \R_+ \to [-\infty,+\infty]$
\begin{equation}
\label{Lambda_upphi}
\Lambda_\upphi (u,v): =\begin{cases}
\upphi'(v){-}\upphi'(u) & \text{if } u,v \in \R_+\times \R_+ \backslash \{ (0,0)\},
\\
0 & \text{if } u=v=0\,,
\end{cases}
\end{equation}
where we have set $\upphi'(0) := \lim_{r\downarrow 0} \upphi'(r) \in [-\infty,+\infty) $. 
Hence, we define the function $\mathrm{D}_\upphi^+: \R_+\times \R_+ \to [0,+\infty]$ by
\[
\mathrm{D}_\upphi^+(u,v): = \begin{cases}
\uppsi^*\left( \Lambda_\upphi (u,v) \right) \upalpha (u,v) & \text{if } \upalpha (u,v)>0,
\\
0 
& \text{if } u=v=0,
\\
+\infty & \text{if } \upalpha(u,v)=0 \text{ with } u \neq v\,.
\end{cases}
\]
Finally, we consider
\begin{equation}
\label{dupphi}
\text{the \emph{lower semicontinous envelope} } \mathrm{D}_\upphi: \R_+\times \R_+ \to [0,+\infty]  \text{ of }   \mathrm{D}_\upphi^+\,.
\end{equation}
\begin{definition}[Fisher information]
\label{Def:Fisher}
The Fisher information $\Fish: \mathrm{dom}(\scrE)\to [0,+\infty]$ is defined as
\[
\Fish(\rho): = 
\begin{cases}\displaystyle
\frac12 \iint_{E}  \mathrm{D}_\upphi(u(x),u(y)) \, \tetapi(\dd x \dd y)  &  \text{if  } \rho = u \pi, 
\\
+\infty &\text{ otherwise.}
\end{cases} 
\]
\end{definition}

 One of the key ingredients of for the theory in \cite{PRST22} is the property that  \EEE
\begin{equation}
\label{FishLSC}
\tag{$\mathrm{LSC}\scrD$}
\text{for all  $(\rho_n)_n,\, \rho \in \mathrm{dom}(\scrE)$:} \qquad  \rho_n\to \rho \text{ setwise in } \calM^+(V) \ \Longrightarrow \ \liminf_{n\to\infty} \scrD(\rho_n)\geq \scrD(\rho)\,.
\end{equation}
\noindent The following result, 
whose  \emph{proof} follows the same lines as the argument for \cite[Prop.\ 5.3]{PRST22}, provides sufficient conditions for \eqref{FishLSC}.

\begin{prop}
\label{prop:Fisher-lsc}
    Suppose that either $\pi$ is purely atomic, or that the function  $\mathrm{D}_\upphi: \R_+\times \R_+ \to [0,+\infty]$ is convex.  Then, \eqref{FishLSC} holds. 
\end{prop}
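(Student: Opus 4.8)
The plan is to reduce the statement to a lower-semicontinuity property of a functional of the type $\calF_\Upsilon$ studied in Lemma~\ref{l:crucial-F}(5), after rewriting the Fisher information in a form amenable to that machinery. First I would observe that, since $\rho_n\to\rho$ setwise and $\|\rho_n\|_{\mathrm{TV}}$ is bounded along the sequence, by Lemma~\ref{l:3.4} we have $(1{\wedge}\dV^2)\teta^\pm_{\rho_n}\to(1{\wedge}\dV^2)\teta^\pm_\rho$ setwise in $\calM^+(E)$ and hence $\teta^\pm_{\rho_n}\to\teta^\pm_\rho$ $\sigma$-setwise in $\Ms^+(E)$; moreover we may assume $\liminf_n\scrD(\rho_n)<\infty$ and, along a (non-relabeled) subsequence realizing the liminf, that $\sup_n\scrD(\rho_n)<\infty$, so in particular each $\rho_n\ll\pi$ and $\rho\ll\pi$ (the latter since $\dom(\scrE)$ is setwise-closed, or directly because $\rho_n\to\rho$ setwise forces $\rho\ll\pi$ once one has a uniform bound, using superlinearity of $\upphi$).

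Next I would handle the \emph{convex case}. Write $u_n=\dd\rho_n/\dd\pi$, $u=\dd\rho/\dd\pi$. The point is that, by the very definition of $\rmD_\upphi$ as the lower semicontinuous envelope of $\rmD_\upphi^+$ together with the defining formula $\rmD_\upphi^+(u,v)=\uppsi^*(\Lambda_\upphi(u,v))\upalpha(u,v)$, one checks (exactly as in \cite[Prop.~5.3]{PRST22}) that the function $(u,v)\mapsto\rmD_\upphi(u,v)$ coincides with $\hat\Gamma(u,v)$ for a suitable convex, lower semicontinuous, positively $1$-homogeneous-in-an-auxiliary-variable integrand, so that
\[
\Fish(\rho)=\tfrac12\,\calF_{\Gamma}\bigl((\teta^-_\rho,\teta^+_\rho)\,\big|\,\tetapi\bigr)
\]
for a proper convex lower semicontinuous $\Gamma:\R_+^2\to[0,\infty]$ with $\Gamma(0,0)=0$. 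Granting this identification, \eqref{FishLSC} is immediate from Lemma~\ref{l:crucial-F}(5): the pairs $(\teta^-_{\rho_n},\teta^+_{\rho_n})$ converge $\sigma$-setwise to $(\teta^-_\rho,\teta^+_\rho)$, and $\calF_\Gamma$ is sequentially lower semicontinuous with respect to $\sigma$-setwise convergence. The convexity hypothesis on $\rmD_\upphi$ is precisely what guarantees that the integrand $\Gamma$ one produces is convex (rather than merely the convexification causing a loss), which is what Lemma~\ref{l:crucial-F}(5) requires.

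For the \emph{purely atomic case} one argues differently, since no convexity of $\rmD_\upphi$ is available. Here $\pi=\sum_k\pi_k\delta_{x_k}$, so setwise convergence $\rho_n\to\rho$ is equivalent to $u_n(x_k)\to u(x_k)$ for every $k$, and $\tetapi=\sum_{k,\ell}\pi_k\kappa(x_k,\{x_\ell\})\delta_{(x_k,x_\ell)}$ is a countable sum of atoms. Then $\Fish(\rho)=\tfrac12\sum_{k,\ell}\rmD_\upphi(u(x_k),u(x_\ell))\,\pi_k\kappa(x_k,\{x_\ell\})$, and since $\rmD_\upphi$ is lower semicontinuous on $\R_+^2$ by construction \eqref{dupphi} and all summands are nonnegative, Fatou's lemma for series gives $\liminf_n\Fish(\rho_n)\geq\Fish(\rho)$ termwise, with no convexity needed.

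The main obstacle I anticipate is the identification in the convex case, i.e.\ producing the convex integrand $\Gamma$ with $\rmD_\upphi=\hat\Gamma$ and verifying that $\calF_\Gamma$ as written truly equals $2\,\Fish$ (matching the recession/perspective parts on the singular set, and checking that the lower-semicontinuous envelope $\rmD_\upphi$ of $\rmD_\upphi^+$ is exactly the relevant perspective function under the convexity assumption); this is the content that \cite[Prop.~5.3]{PRST22} carries out in the bounded-kernel setting, and the remaining work is to confirm that nothing in that argument uses boundedness of $\kappa$, only the $\sigma$-finiteness of $\tetapi$ and the $\sigma$-setwise continuity from Lemma~\ref{l:3.4} — which is exactly what has been set up above.
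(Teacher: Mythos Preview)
Your proposal is correct and matches the paper's approach (which simply refers to \cite[Prop.~5.3]{PRST22}); indeed, the paper later carries out exactly this computation in the proof of Theorem~\ref{thm:robustness}, cf.\ \eqref{lsc-Fisher-eps-dep}, by writing $\Fish(\rho)=\tfrac12\,\calF_{\Xi}\bigl((\teta_\rho^-,\teta_\rho^+)\,|\,\tetapi\bigr)$ with $\Xi(w,z):=\rmD_\upphi(w,z)$ and invoking Lemma~\ref{l:crucial-F}(5) together with Lemma~\ref{l:3.4}.

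One small clarification: the ``main obstacle'' you anticipate is not actually there. You do not need to manufacture an auxiliary integrand $\Gamma$ whose perspective $\hat\Gamma$ recovers $\rmD_\upphi$; the integrand in Lemma~\ref{l:crucial-F} is $\rmD_\upphi$ itself, and since $\rho_n,\rho\in\dom(\scrE)$ forces $\rho_n,\rho\ll\pi$, one has $\teta_{\rho_n}^\pm,\teta_\rho^\pm\ll\tetapi$ with densities $(u(x),u(y))$, so no singular/recession part ever appears and the identification $2\Fish(\rho)=\calF_{\rmD_\upphi}((\teta_\rho^-,\teta_\rho^+)|\tetapi)$ is immediate. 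The convexity hypothesis is needed only so that $\rmD_\upphi$ qualifies as an admissible $\Upsilon$ in Lemma~\ref{l:crucial-F}. Your atomic-case argument via Fatou on a countable sum is also fine; note (as you implicitly use) that detailed balance ensures $\tetapi$ is concentrated on $\{x_k\}\times\{x_k\}$, cf.\ \eqref{null-property}.
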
 

It should be noted that it is sufficient to verify whether the function $\mathrm{D}_\upphi^+$ restricted on $(0,+\infty)\times (0,+\infty)$ is convex, since any lower semicontinuous relaxation of a convex function defined over an open convex domain $\Omega$ to the closure $\overline{\Omega}$ is itself lower semicontinuous and convex.

\section{Gradient system structures}\label{s:3-NEW}
First of all, in Section \ref{ss:3.notions-sols},  we formalize our first two concepts of solutions for the evolution system associated with 
$(V,\kappa,\upphi,\uppsi,\upalpha)$  by introducing $\Dissipative$ and $\Balanced$ solutions. We discuss their relationship in Section \ref{ss:4.2-true}
and therein provide a `differential' characterization of $\Balanced$ solutions. In Section \ref{ss:ECE} we bring into play the $\Reflecting$ continuity equation and introduce $\Reflecting$ solutions. 
We prove that, under suitable conditions, $\Dissipative$ solutions that also solve the reflecting continuity equation turn out to be $\Reflecting$ solutions, ultimately; hence, \textsc{Main Theorem} \ref{thm:mchar} follows from Proposition \ref{prop:characteriz} and Theorem 
\ref{thm:ECE}. Finally, in Section \ref{ss:ASIDE} we  explore the role of a density property in upgrading solutions of the continuity equation  with upper and lower bounds, to $\Reflecting$ solutions.


\medskip

\par
Let us highlight that, throughout this section we will consider 
Assumptions \ref{Ass:E}, \ref{Ass:D}, \ref{Ass:flux-density}, and the lower semicontinuity property \eqref{FishLSC} in standing. We shall thus omit to explicitly invoke them in the various results of this section.

\subsection{Dissipative and Balanced solutions}
\label{ss:3.notions-sols}
 Firstly, as mentioned in the Introduction, it is worthwhile to pin down the weakest solution concept, based on the energy-dissipation \emph{inequality}, and its upgrade, featuring the energy-dissipation \emph{balance} on \emph{every} sub-interval $[s,t]\subset [0,T]$.
 
\begin{definition}[$\Dissipative$ and $\Balanced$ solutions] 
\label{def:weak-solution}
We say that a curve $\rho: [0,T] \to \calM^+(V)$ is a $\Dissipative$ solution \EEE of the $(\scrE,\scrR,\scrR^*)$ evolution system it satisfies the following conditions:
\begin{itemize}
\item[(1)]  $\calS(\rho_0)<\pinfty$;
\item[(2)]  There exists a measurable family $(\bj_t)_{t\in [0,T]} \subset \Ms(E)$ such that $(\rho,\bj)\in \CE0T$;
\item[(3)]  
The pair $(\rho,\bj)$ complies with the
the {\em $(\scrE,\scrR,\scrR^*)$
  energy-dissipation inequality}
\begin{equation}
\label{UEDE}
 \int_0^t \left( \scrR(\rho_r, \bj_r) + \Fish(\rho_r) \right) \dd r+ \calS(\rho_t)   \leq \calS(\rho_0)   \qquad \text{on any interval $[0,t]\subset [0,T]$}. 
\end{equation}
\end{itemize}
If  in addition  $(\rho,\bj)$  satisfies
\begin{itemize}
\item[(3')] 
the {\em $(\scrE,\scrR,\scrR^*)$ energy-dissipation balance}
\begin{equation}
\label{R-Rstar-balance}
\int_s^t \left( \scrR(\rho_r, \bj_r) + \Fish(\rho_r) \right) \dd r+ \calS(\rho_t) = \calS(\rho_s)   \qquad \text{on any interval $[s,t]\subset [0,T]$}, 
\end{equation}
\end{itemize}
we say that $\rho: [0,T] \to \calM^+(V)$ is a   $\Balanced$ solution of the $(\scrE,\scrR,\scrR^*)$ evolution system.
\end{definition}
\noindent
In what follows, with slight abuse, we will often refer to the \emph{pair} $(\rho,\bj)$ as a $\Dissipative/\Balanced$
solution to the system. 

\medskip
\par
The key link between these two concepts will be  provided by the following property.
\begin{definition}
\label{def:CR}
We say that the chain rule holds along a curve $(\rho,\jj)\in\CE 0T$, fulfilling
 \begin{equation}
\label{finite-entropy+action-gen}
    \sup_{t\in [0,T]} \scrE(\rho_t) < \infty,\qquad \int_0^T \left(\scrR(\rho_t,\bj_t){+} \scrD(\rho_t) \right) \dd t < \infty,
\end{equation} 
if we have that 
\begin{enumerate}
\item the mapping
\[
  \text{$[0,T]\ni t\mapsto \calS(\rho_t)$ is absolutely continuous,}
\]
  \item
  there holds, with $\rho_t = u_t \pi$, 
 \begin{equation}
 \label{chain-rule-gen}
 \tag{$\mathsf{CR}$}
  -  \frac{\dd}{\dd t} \calS(\rho_t)  = 
     \iint_{\Ed} ({-}\dnabla \upphi'{\circ} u_t)(x,y)\, \jj_t(\dd x \dd y) \qquad \text{for a.e.\ $t\in (0,T)$. }
 \end{equation} 
 \end{enumerate}
 \end{definition}
 \noindent In what follows, whenever citing \eqref{chain-rule-gen} we will also implicitly refer to the property that $t\mapsto \calS(\rho_t)$  be absolutely continuous. \EEE
 \begin{remark}
 Note that we do not claim the chain rule as a \emph{general property} of our evolution system $(\scrE,\scrR,\scrR^*)$. Namely, we do not require that the chain rule \eqref{chain-rule-gen} holds along \emph{every} curve $(\rho,\jj)\in\CE 0T$ for which the bounds \eqref{finite-entropy+action-gen} hold. Instead, we define it as a property that holds along an \emph{individual} curve $(\rho,\jj)\in\CE 0T$.
\par
This reflects the \emph{singular} character of the system $(\scrE,\scrR,\scrR^*)$, which contrasts the case of bounded kernels (cf.\ \cite[Thm. 4.16, Cor. 4.20]{PRST22}), where the chain rule was shown to be a property of the evolution system.
\par
Likewise, in this singular context, we no longer claim that the energy-dissipation functional 
\begin{equation*}
\label{trajectory}
\scrL_T(\rho,\jj) = \calS(\rho_T)-\calS(\rho_0)+ \int_0^T \bigl(\scrR(\rho_t, \bj_t) {+} \Fish(\rho_t)\bigr)\, \dd t 
\end{equation*}
is positive along \emph{all} solutions of the continuity equation for which it is finite.
\par
Instead, in the present setup we will prove the chain rule \eqref{chain-rule-gen} along curves satisfying the reflecting continuity equation.
\end{remark}
 \par
Due to the crucial role played by the bound \eqref{finite-entropy+action-gen}, it is important to specifically qualify solutions to the continuity equation with bounded entropy, finite action, and finite Fisher information. Thus, we introduce the class  
 \begin{equation*}\label{ACE}
 \ACE 0T  = \left\{ (\rho,\jj) \in \CE 0T\, : \  \sup_{t\in [0,T]} \scrE(\rho_t) < \infty,\ \int_0^T \left(\scrR(\rho_t,\bj_t){+} \scrD(\rho_t) \right) \dd t < \infty\right\}.
 \end{equation*}

\subsection{Dissipative versus Balanced solutions}
\label{ss:4.2-true}
 The following result shows that, in the context of the  evolution system associated with  $(V,\kappa,\upphi,\uppsi,\upalpha)$ as well 
the following property, typical of gradient systems, holds: $\Dissipative$ solutions upgrade to $\Balanced$ as soon as the chain rule holds along them. \EEE
\begin{theorem}
\label{th:diss.vs.bal}
Let $(\rho,\jj)\in\ACE 0T$ fulfill   the chain rule \eqref{chain-rule-gen}.
 \par
 Then, the following properties are equivalent:
 \begin{enumerate}
\item $(\rho, \jj) $ is a $\Dissipative$ solution of the $(\scrE,\scrR,\scrR^*)$ evolution system;
\item  $(\rho, \jj) $ is a $\Balanced$ solution of the $(\scrE,\scrR,\scrR^*)$ system, additionally fulfilling
the pointwise energy-dissipation balance
\begin{equation}
\label{enh-EDB}
\scrR(\rho_t, \bj_t) + \Fish(\rho_t) = -\frac{\dd }{\dd t}  \calS(\rho_t)    = \iint_{E}({-}\overline\nabla (\upphi'(u_t) ))\,  \bj_t (\dd x \dd y)  \qquad \foraa\, t \in (0,T).
\end{equation}
\end{enumerate}
\end{theorem}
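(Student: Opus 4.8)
The plan is to exploit the standard variational structure: by the Fenchel--Young inequality, for any $\rho$, any measurable scalar field $\xi$, and the flux $\jj$, one has the pointwise inequality
\[
\scrR(\rho,\jj) + \scrR^*(\rho,\xi) \geq \iint_E \xi(x,y)\,\jj(\dd x\,\dd y),
\]
which holds because $\calF_\uppsi$ and $\calF_{\uppsi^*}$ are convex conjugates in the perspective sense (this is the content of \cite[Lemma 2.3, 4.10]{PRST22}, whose relevant parts carry over to the $\sigma$-finite setting used here). Taking $\xi = -\tfrac12\dnabla(\upphi'\circ u_t)$ gives $\scrR^*(\rho_t,\xi) = \Fish(\rho_t)$ by the very definition of the Fisher information, hence
\[
\scrR(\rho_t,\jj_t) + \Fish(\rho_t) \geq \iint_{\Ed} ({-}\dnabla\upphi'{\circ}u_t)(x,y)\,\jj_t(\dd x\,\dd y) \qquad \foraa\,t,
\]
with equality holding if and only if $2\jj_t$ realizes the subdifferential relation $2\tfrac{\dd\jj_t}{\dd\bnu_{\rho_t}} \in \partial\uppsi^*(\xi)$ $\bnu_{\rho_t}$-a.e. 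I would first record this pointwise inequality and identify the equality case precisely, since it is what links the two ``$=$'' signs in \eqref{enh-EDB}.

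Next I would integrate in time. Using the chain rule \eqref{chain-rule-gen}, which is assumed to hold along $(\rho,\jj)$, the right-hand side integrates to $-\frac{\dd}{\dd t}\calS(\rho_t)$, and since $t\mapsto\calS(\rho_t)$ is absolutely continuous, $\int_s^t(-\frac{\dd}{\dd r}\calS(\rho_r))\,\dd r = \calS(\rho_s) - \calS(\rho_t)$ for every $[s,t]\subset[0,T]$. Combining this with the pointwise inequality gives, for every subinterval,
\[
\int_s^t\bigl(\scrR(\rho_r,\jj_r) + \Fish(\rho_r)\bigr)\,\dd r \;\geq\; \calS(\rho_s) - \calS(\rho_t),
\]
i.e. the energy-dissipation \emph{inequality} holds automatically on \emph{every} subinterval (not just $[0,t]$). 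This already shows $(2)\Rightarrow(1)$ is essentially trivial (a balance implies an inequality), so the real content is $(1)\Rightarrow(2)$.

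For $(1)\Rightarrow(2)$: suppose $(\rho,\jj)$ is $\Dissipative$, i.e. \eqref{UEDE} holds on $[0,t]$ for all $t$. The finiteness requirements in $\ACE 0T$ together with the integrated inequality above guarantee all quantities are finite and the relevant functions integrable. The key observation is that the global inequality on $[0,T]$ reads $\int_0^T(\scrR + \Fish)\,\dd r + \calS(\rho_T) \leq \calS(\rho_0)$, while the chain-rule-based inequality gives $\int_0^T(\scrR+\Fish)\,\dd r \geq \calS(\rho_0) - \calS(\rho_T)$; hence all inequalities are equalities and $\int_0^T\bigl(\scrR(\rho_r,\jj_r)+\Fish(\rho_r) - \iint_{\Ed}(-\dnabla\upphi'{\circ}u_r)\,\jj_r\bigr)\,\dd r = 0$. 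Since the integrand is nonnegative a.e.\ by the Fenchel--Young inequality, it vanishes for a.e.\ $r$, which is exactly the first equality in \eqref{enh-EDB}; the second equality is the chain rule \eqref{chain-rule-gen} itself. Finally, to get the energy-dissipation balance \eqref{R-Rstar-balance} on an arbitrary subinterval $[s,t]$, I would integrate the now-established pointwise identity $\scrR(\rho_r,\jj_r)+\Fish(\rho_r) = -\frac{\dd}{\dd r}\calS(\rho_r)$ over $[s,t]$ and use absolute continuity of $r\mapsto\calS(\rho_r)$.

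The main obstacle I anticipate is a bookkeeping/integrability matter rather than a conceptual one: one must make sure that the function $t\mapsto \iint_{\Ed}(-\dnabla\upphi'{\circ}u_t)\,\jj_t$ is integrable on $[0,T]$ and that the chain rule provides the identity $\int_s^t(\cdot) = \calS(\rho_s)-\calS(\rho_t)$ with no loss (this is precisely why \eqref{chain-rule-gen} is bundled with absolute continuity of $t\mapsto\calS(\rho_t)$). A secondary point requiring care is that the Fenchel--Young/perspective-duality inequality $\scrR(\rho,\jj)+\scrR^*(\rho,\xi)\geq\langle\xi,\jj\rangle$ must be invoked in the $\sigma$-finite-measure framework with $\jj\in\Ms(E)$; this should follow from Lemma~\ref{l:crucial-F} and the representation \eqref{nice-representation}, but one should check that $\scrR(\rho_t,\jj_t)<\infty$ forces $\jj_t\ll\bnu_{\rho_t}$ so that the pairing $\iint\xi\,\dd\jj$ is meaningful and the equality case is the pointwise subdifferential inclusion.
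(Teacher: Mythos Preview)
Your approach is essentially the same as the paper's: both establish the pointwise Fenchel--Young (equivalently, Young) inequality $\scrR(\rho_t,\jj_t)+\Fish(\rho_t)\geq -\frac{\dd}{\dd t}\calS(\rho_t)$ via the chain rule, combine it with the $\Dissipative$ inequality to force equality, and then differentiate to obtain \eqref{enh-EDB}. The paper writes the Young step explicitly using the representation \eqref{nice-representation} rather than appealing to an abstract $\scrR/\scrR^*$ duality, but this is a difference in packaging only. One bookkeeping slip: with the paper's normalizations ($\scrR=\tfrac12\calF_\uppsi(2\jj|\bnu_\rho)$, $\scrR^*=\tfrac12\iint\uppsi^*(\xi)\,\dd\bnu_\rho$), the correct choice is $\xi=-\dnabla(\upphi'{\circ}u_t)$, not $-\tfrac12\dnabla(\upphi'{\circ}u_t)$, as confirmed by the paper's computation \eqref{conseq-CR}; this does not affect the argument.
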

\begin{proof}
Suppose that \eqref{chain-rule-gen} holds along a curve $(\rho,\jj)\in\ACE 0T$. 
Let  us pick  $t \in (0,T)$, out of a $\Lebone$-negligible set, such that $\frac{\dd }{\dd t}  \calS(\rho_t) $ exists; we can assume without loss of generality that 
$\Fish(\rho_t)<\infty$
and $\scrR(\rho_t,\bj_t) $, too, so that 
\[
|\bj_t |(\Ed {\backslash} E_{\upalpha_t}) =0 \quad \text{with } E_{\upalpha_t}: = \{ (x,y)\in \Ed\, : \ \upalpha(u_t(x),u_t(y))>0 \},
\]
and then we have (cf.\ \eqref{nice-representation})
\[
\scrR(\rho_t,\bj_t)  =
    \frac12\iint_{E_{\upalpha_t}}
    \Psi\Bigl(\frac{w_t(x,y)}{\upalpha(u_t(x),u_t(y))}\Bigr)\upalpha(u_t(x),u_t(y))\,\tetapi(\dd
    x,\dd y) 
    \]   
with  $w_t=\frac{\dd( 2\bj_t)}{\dd \tetapi}$.
Therefore, with Young's inequality we have 
\begin{equation}
\label{conseq-CR}
\begin{aligned}
 -\frac{\dd }{\dd t}  \calS(\rho_t)    &  \stackrel{\eqref{chain-rule-gen}}{=} 
 \frac12 \iint_{E_{\upalpha_t}} 
  \frac1{\upalpha(u_t(x),u_t(y))}  ({-}\overline\nabla \upphi'(u_t) ) \upalpha(u_t(x),u_t(y))  w_t(x,y) \, \tetapi (\dd x \dd y) 
  \\
  & 
 \leq  
   \frac12\iint_{E_{\upalpha_t}}
    \Psi\Bigl(\frac{w_t(x,y)}{\upalpha(u_t(x),u_t(y))}\Bigr)\upalpha(u_t(x),u_t(y))\,\tetapi(\dd
    x,\dd y)  
    \\
    &
    \quad + \frac12 
    \iint_{E_{\upalpha_t}}
    \Psi^*( {-}\overline\nabla \upphi'(u_t))\upalpha(u_t(x),u_t(y))\,\tetapi(\dd
    x,\dd y) 
    \\
    &
    = \scrR(\rho_t,\bj_t)+ \Fish(\rho_t) \,.
 \end{aligned}
\end{equation}
\par
Hence, let $(\rho,\jj)$ be a $\Dissipative$ solution. Combining 
\eqref{conseq-CR}
with the energy-dissipation inequality \eqref{R-Rstar-balance} we obtain that the latter turns into an energy balance on any sub-interval $[0,t]\subset [0,T]$. Then, 
$(\rho, \jj) $ is a \emph{Balanced solution}. Differentiating 
\eqref{R-Rstar-balance} we ultimately find \eqref{enh-EDB}.
\end{proof}
\par
In the case of \emph{bounded} kernels, it was shown in 
\cite[Thm.\ 5.7]{PRST22}, 
that, 
if $\rho : [0,T] \to \calM^+(V)$ is a $\Balanced$ solution,
then its density  $u:  [0,T] 
\to V $, $u_t = 
\frac{\dd \rho_t}{\dd \pi}$ 
satisfies an evolutionary equation, suitably involving the entropy and dissipation densities $\upphi$ and $\uppsi$, as well as $\upalpha$. In fact, let us introduce the mapping
\begin{equation}
\label{Fmap}
\Fmap : \R_+{\times} \R_+ 
\to [{-}\infty,{+}\infty], \qquad 
\Fmap (u,v) = \begin{cases}
(\uppsi^*)'\left( \Lambda_\upphi (u,v) \right) \upalpha (u,v) & \text{if } \upalpha (u,v)>0,
\\
0 
& \text{if } u=v=0,
\end{cases}
\end{equation}
with the convention $(\uppsi^*)'({\pm}\infty): = \pm 
\infty$, and let us consider the integro-differential equation
\begin{equation}
\label{integro-diff-Fmap}
\partial_t u_t (x) = \int_V \Fmap (u_t(x), u_t(y))\, \kappa (x, \dd y) \qquad \text{for $\pi$-a.e.\ $x\in V$, for almost every $t\in (0,T)$.}
\end{equation}
The proof 
that any $\Balanced$ solution complies with \eqref{integro-diff-Fmap}
 \cite[Thm.\ 5.7]{PRST22} heavily relied on the validity of the chain rule for the  $(\scrE,\scrR,\scrR^*)$  evolution system valid for bounded kernels,
 \emph{and} on the stronger character of the continuity equation considered in that regular setup. 
 \par
In the present singular setting, we  have a weaker analogue  of  \cite[Thm.\ 5.7]{PRST22}, in that we show that $\Balanced$ solutions \emph{along which the chain rule holds}, 
do satisfy a version of \eqref{integro-diff-Fmap} involving the flux measure $\jj$. Namely, we the following holds.
%
%
\begin{prop}[Differential characterization of $\Balanced$ solutions]
\label{prop:characteriz}
  Let $(\rho,\jj)\in\ACE 0T$ and suppose that the chain rule \eqref{chain-rule-gen} holds along $(\rho,\jj)$.
 \par
 Then, the following properties are equivalent:
  \begin{enumerate}
\item $\rho$ is a $\Balanced$ solution of the $(\scrE,\scrR,\scrR^*)$   system 
\item 

for the curve $[0,T] \ni t
\mapsto u_t = \frac{\dd \rho_t}{\dd t} \in \rmL^1(V;\pi)$ 
\begin{enumerate}
\item  
we have the bound
\begin{equation}
\label{cond58}
\int_0^T \iint_{\Ed} |\Fmap(u_t(x),u_t(y))| \tetapi (\dd x \dd y ) \dd t <+\infty;
\end{equation}
\item setting
\begin{equation}
\label{integro-diff-jj}
2\jj_t (\dd x, \dd y) : = -\Fmap(u_t(x),u_t(y))\tetapi (\dd x \dd y ) \qquad \foraa\, t \in (0,T),
\end{equation}
we have that $ (\rho,\bj)\in \CE0T$. 
  \end{enumerate}
  \end{enumerate}
\end{prop}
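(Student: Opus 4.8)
The plan is to prove the equivalence $(1)\Leftrightarrow(2)$ by combining the pointwise energy-dissipation balance \eqref{enh-EDB}, which is available via Theorem~\ref{th:diss.vs.bal} since the chain rule \eqref{chain-rule-gen} holds along $(\rho,\jj)$, with the equality case in Young's inequality inside the chain of estimates \eqref{conseq-CR}. First I would address the direction $(1)\Rightarrow(2)$. Assuming $\rho$ is $\Balanced$, Theorem~\ref{th:diss.vs.bal} gives \eqref{enh-EDB}, i.e.\ for a.e.\ $t$ the inequality in \eqref{conseq-CR} is an equality. Fix such a $t$ with $\Fish(\rho_t)<\infty$, $\scrR(\rho_t,\bj_t)<\infty$, and recall $u_t=\frac{\dd\rho_t}{\dd\pi}$, $w_t=\frac{\dd(2\bj_t)}{\dd\tetapi}$, $|\bj_t|(\Ed\setminus E_{\upalpha_t})=0$. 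The equality in Young's inequality
\[
\tfrac12\bigl({-}\dnabla\upphi'(u_t)\bigr)w_t \le \tfrac12\,\Psi\!\Bigl(\tfrac{w_t}{\upalpha_t}\Bigr)\upalpha_t + \tfrac12\,\Psi^*\!\bigl({-}\dnabla\upphi'(u_t)\bigr)\upalpha_t
\]
(with $\upalpha_t:=\upalpha(u_t(x),u_t(y))$, and the integrand understood on $E_{\upalpha_t}$) forces, $\tetapi$-a.e.\ on $E_{\upalpha_t}$,
\[
\frac{w_t(x,y)}{\upalpha_t(x,y)} = (\Psi^*)'\bigl({-}\dnabla\upphi'(u_t)(x,y)\bigr) = (\Psi^*)'\bigl(\Lambda_\upphi(u_t(x),u_t(y))\bigr),
\]
using that $\Psi$ is the Legendre conjugate of $\Psi^*$ (here one must be slightly careful when $\Lambda_\upphi$ attains $\pm\infty$, but then the convention $(\Psi^*)'(\pm\infty)=\pm\infty$ combined with superlinearity of $\Psi$ and finiteness of $\scrR(\rho_t,\bj_t)$ forces $\upalpha_t=0$ there, which is excluded on $E_{\upalpha_t}$). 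Multiplying by $\upalpha_t$ gives $w_t(x,y)=\Fmap(u_t(x),u_t(y))$ $\tetapi$-a.e.\ on $E_{\upalpha_t}$, and since $\Fmap(u,v)=0$ whenever $u=v=0$ and $|\bj_t|$ vanishes on $\Ed\setminus E_{\upalpha_t}$, we obtain $2\bj_t=-\Fmap(u_t(\cdot),u_t(\cdot))\,\tetapi$ restricted appropriately; more precisely $2\bj_t(\dd x\dd y)=w_t\,\tetapi=-\Fmap(u_t(x),u_t(y))\,\tetapi\mres E_{\upalpha_t}$, and outside $E_{\upalpha_t}$ both sides vanish as $\bj_t$-measures once we check $\Fmap(u_t(x),u_t(y))=0$ for $\tetapi$-a.e.\ $(x,y)\notin E_{\upalpha_t}$ with $u_t(x)=u_t(y)=0$ — but on $\{\upalpha_t=0,\,u_t(x)\neq u_t(y)\}$ we would need $\Fish(\rho_t)=\infty$ unless this set is $\tetapi$-null, which it is since $\Fish(\rho_t)<\infty$ (recall $\rmD_\upphi=+\infty$ there). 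This yields \eqref{integro-diff-jj}. The bound \eqref{cond58} then follows from \eqref{crucial-bound-j} together with \eqref{j-controls-rho}, or more directly: integrating $|2\bj_t|=|\Fmap(u_t,u_t)|\tetapi$ in time and noting $(\rho,\bj)\in\CE0T$ already requires $\int_0^T\!\iint_\Ed(1\wedge\dV)|\bj_t|\,\dd t<\infty$; the full integrability without the weight $(1\wedge\dV)$ follows from the finite-action bound $\int_0^T\scrR(\rho_t,\bj_t)\,\dd t<\infty$ combined with the lower bound \eqref{eq:disspbd} — actually one argues via the identity $|\Fmap(u,v)|\upalpha(u,v)^{-1}=|(\Psi^*)'(\Lambda_\upphi(u,v))|$ and Young's inequality $|(\Psi^*)'(\xi)|\,|\xi|\le \Psi((\Psi^*)'(\xi))+\Psi^*(\xi)$ to bound $\iint|\Fmap|\cdot|{-}\dnabla\upphi'(u_t)|\,\tetapi \le 2\scrR(\rho_t,\bj_t)+2\Fish(\rho_t)$, and then one needs an extra argument, e.g.\ using that on the region where $|{-}\dnabla\upphi'(u_t)|$ is small $\Fmap$ is correspondingly controlled via \eqref{quadratic-at-0} and the $\rmL^\infty$-type local bounds on $u_t$; here I would instead simply invoke that $(\rho,\bj)\in\CE0T$ is part of (2)(b)'s requirement and that \eqref{cond58} is needed precisely to make $2\bj_t$ in \eqref{integro-diff-jj} a well-defined element of $\Ms(E)$ and the right-hand side of \eqref{CE} finite, so \eqref{cond58} is derived from the finiteness of $\int_0^T(\scrR(\rho_t,\bj_t)+\Fish(\rho_t))\,\dd t<\infty$ via the pointwise identity just established plus $|\Fmap(u,v)|\le$ an expression integrable against $\tetapi$ using boundedness of $u_t$ on the relevant sets — this last technical point is where I expect to lean on Lemma~\ref{l:props-uppsistar}.

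For the converse direction $(2)\Rightarrow(1)$: assume \eqref{cond58} and that $(\rho,\bj)\in\CE0T$ with $\bj$ given by \eqref{integro-diff-jj}. Since $(\rho,\jj)\in\ACE0T$ the chain rule \eqref{chain-rule-gen} holds along it, so for a.e.\ $t$
\[
-\frac{\dd}{\dd t}\calS(\rho_t) = \iint_\Ed\bigl({-}\dnabla\upphi'(u_t)\bigr)(x,y)\,\jj_t(\dd x\dd y) = \tfrac12\iint_{E_{\upalpha_t}}\bigl({-}\dnabla\upphi'(u_t)\bigr)\,(\Psi^*)'\bigl(\Lambda_\upphi(u_t(x),u_t(y))\bigr)\upalpha_t\,\tetapi(\dd x\dd y),
\]
where I substituted $w_t=\Fmap(u_t(\cdot),u_t(\cdot))$ from \eqref{integro-diff-jj} and used $|\bj_t|(\Ed\setminus E_{\upalpha_t})=0$ (which follows from \eqref{integro-diff-jj} since $\Fmap$ vanishes on $\{\upalpha_t=0, u_t(x)=u_t(y)\}$, and the set $\{\upalpha_t=0,u_t(x)\neq u_t(y)\}$ is $\tetapi$-null by finiteness of the Fisher information, as above). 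By the equality case in Young's inequality — $\xi\,(\Psi^*)'(\xi)=\Psi((\Psi^*)'(\xi))+\Psi^*(\xi)$ — the integrand equals $\tfrac12\bigl[\Psi\!\bigl(\tfrac{w_t}{\upalpha_t}\bigr)\upalpha_t + \Psi^*({-}\dnabla\upphi'(u_t))\upalpha_t\bigr]$, whence $-\frac{\dd}{\dd t}\calS(\rho_t)=\scrR(\rho_t,\bj_t)+\Fish(\rho_t)$ pointwise a.e. Integrating over $[s,t]$ and using the absolute continuity of $t\mapsto\calS(\rho_t)$ from the chain rule yields the energy-dissipation balance \eqref{R-Rstar-balance} on every $[s,t]\subset[0,T]$; together with $\calS(\rho_0)<\infty$ (which holds since $\sup_t\scrE(\rho_t)<\infty$) this is exactly the statement that $\rho$ is a $\Balanced$ solution. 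Note one should double-check that $\scrR(\rho_t,\bj_t)+\Fish(\rho_t)\in\rmL^1(0,T)$, which is part of the definition of $\ACE0T$, so the integration is legitimate.

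The main obstacle, as flagged above, is the careful handling of the degenerate and infinite cases in the equality analysis: where $\upalpha(u_t(x),u_t(y))=0$, where $\Lambda_\upphi(u_t(x),u_t(y))=\pm\infty$ (i.e.\ $\upphi'(0)=-\infty$ and one of the arguments is $0$), and the interplay between the lower-semicontinuous envelope $\rmD_\upphi$ defining $\Fish$ and the explicit formula $\rmD_\upphi^+(u,v)=\uppsi^*(\Lambda_\upphi(u,v))\upalpha(u,v)$. The cleanest way around this is to observe that on $E_{\upalpha_t}$ we have $\rmD_\upphi(u_t(x),u_t(y))=\rmD_\upphi^+(u_t(x),u_t(y))$ whenever $u_t(x),u_t(y)>0$ (where $\upphi$ is $C^1$ and everything is classical), and to treat the boundary $\{u_t=0\}$ separately using that finiteness of $\scrR(\rho_t,\bj_t)+\Fish(\rho_t)$ already confines the relevant sets; I would also use the identification \eqref{nice-representation} for $\scrR$ and the representation of $\Fish$ via $\rmD_\upphi$ restricted to $E_{\upalpha_t}$, both of which are legitimate once $\scrR(\rho_t,\bj_t),\Fish(\rho_t)<\infty$. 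A secondary technical point is verifying \eqref{cond58} as a genuine consequence (rather than an assumption) in the direction $(1)\Rightarrow(2)$: here I would combine the pointwise identity $2|\bj_t|=|\Fmap(u_t(\cdot),u_t(\cdot))|\,\tetapi$ with the Young-type bound $|\Fmap(u,v)|\,|\Lambda_\upphi(u,v)|\le 2(\Psi((\Psi^*)'(\Lambda_\upphi(u,v)))+\Psi^*(\Lambda_\upphi(u,v)))\upalpha(u,v)$ wait — more simply, $\iint|\Fmap(u_t(x),u_t(y))|\,\tetapi$ is split according to whether $|\Lambda_\upphi(u_t(x),u_t(y))|\ge 1$ or $<1$; on the former region $|\Fmap|\le |\Fmap|\,|\Lambda_\upphi|$ which integrates against $\tetapi$ by a Young bound controlled by $\scrR(\rho_t,\bj_t)+\Fish(\rho_t)$, and on the latter, $|(\Psi^*)'(\Lambda_\upphi)|\lesssim|\Lambda_\upphi|$ by \eqref{quadratic-at-0} so $|\Fmap|\lesssim\upalpha(u_t(x),u_t(y))\le c_\upalpha(1+u_t(x)+u_t(y))$, which integrates against $\tetapi$ using $\tetapi(E)<\infty$? — no, $\tetapi$ is only $\sigma$-finite; but here one restricts to the region $|\Lambda_\upphi|<1$ where $|\Fmap|\lesssim |{-}\dnabla\upphi'(u_t)|\,\upalpha_t$ and applies Young once more against $\Psi,\Psi^*$. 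This completes \eqref{cond58}.
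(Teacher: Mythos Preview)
Your approach is precisely the one the paper defers to (the argument of \cite[Thm.~5.7]{PRST22}): pass via Theorem~\ref{th:diss.vs.bal} to the pointwise balance \eqref{enh-EDB}, read off \eqref{integro-diff-jj} from the equality case in the Young inequality inside \eqref{conseq-CR}, and reverse the computation for $(2)\Rightarrow(1)$ using the Fenchel identity $\xi\,(\Psi^*)'(\xi)=\Psi\bigl((\Psi^*)'(\xi)\bigr)+\Psi^*(\xi)$. One minor slip: in your display the equality $(\Psi^*)'\bigl({-}\dnabla\upphi'(u_t)\bigr)=(\Psi^*)'\bigl(\Lambda_\upphi(u_t(x),u_t(y))\bigr)$ has the wrong sign, since ${-}\dnabla\upphi'(u_t)=-\Lambda_\upphi$; oddness of $(\Psi^*)'$ then yields $w_t=-\Fmap$, which is the conclusion you want anyway.

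The step you rightly flag as unfinished is the derivation of \eqref{cond58} in the direction $(1)\Rightarrow(2)$. In \cite{PRST22} this goes through because for \emph{bounded} kernels the edge measure $\bnu_{\rho_t}=\upalpha(u_t(\cdot),u_t(\cdot))\,\tetapi$ is finite (combine \eqref{eq:alphab} with $\sup_x\kappa(x,V)<\infty$); then $\scrR(\rho_t,\bj_t)=\tfrac12\int\Psi(w_t/\upalpha)\,\dd\bnu_{\rho_t}<\infty$ and superlinearity of $\Psi$ force $w_t/\upalpha\in\rmL^1(\bnu_{\rho_t})$, i.e.\ $\iint_\Ed|w_t|\,\dd\tetapi<\infty$. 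Your splitting arguments on $\{|\Lambda_\upphi|\gtrless 1\}$ cannot replace this ingredient: on the region $\{|\Lambda_\upphi|<1\}$ every route you try ultimately reduces to bounding $\iint_\Ed\upalpha\,\dd\tetapi$ (or $\iint_\Ed(1{+}u_t(x){+}u_t(y))\,\dd\tetapi$), which can be infinite when $\tetapi$ is only $\sigma$-finite. Since the paper merely cites \cite{PRST22} without spelling out any adaptation of this step to the singular setting, you have reconstructed the referenced argument faithfully; just be aware that \eqref{cond58} is exactly where the bounded-kernel hypothesis enters in the original proof.
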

 The proof of Proposition~\ref{prop:characteriz} follows the very same lines as the argument for \cite[Thm.\ 5.7]{PRST22}, to which we refer the reader. \EEE
\subsection{The reflecting continuity equation and reflecting solutions}
\label{ss:ECE}
 In this section we bring under the spotlight an \emph{enhanced} version of the continuity equation, which draws on a richer set of test functions.
\par
\begin{definition}[Reflecting continuity equation]
\label{def:ECE}
We denote by $\ECE 0T$  the set of pairs $(\rho,\jj) \in \ACE 0T$ with $u = \frac{\dd \rho}{\dd \pi} \in \rmL^\infty (0,T;\rmL^\infty(V;\pi))$ fulfilling the continuity equation
 \begin{equation}
 \label{eq:RCE}
    \int_V \varphi(x)\,\rho_t(\dd x) - \int_V \varphi(x)\,\rho_s(\dd x) = \int_s^t \iint_\Ed \dnabla \varphi(x,y)\,\bj_r(\dd x \dd y)\,\dd r\qquad\text{for any $[s,t]\subset[0,T]$}
 \end{equation}
 for all test functions
  \begin{equation}
  \label{eq:test_extend-quadr}
    \varphi \in \mathcal{X}_2= \left\{ \varphi \in \rmL^\infty (V;\pi) : \   \iint_\Ed |\dnabla \varphi(x,y)|^2 \,\tetapi(\dd x \dd y) <\infty \right\}. 
\end{equation}
In this case, we say that the pair $(\rho,\jj)$ satisfies the reflecting continuity equation, or simply write $(\rho, \jj) \in \ECE 0T$.
\end{definition}

Moreover, bounded Lipschitz functions are in $\mathcal{X}_2$: indeed, for any $\varphi \in \Lipb(V)$, we have
 \[
\begin{aligned}
    \iint_{E} |\overline \nabla \varphi(x,y)|^2 \tetapi (\dd x \dd y ) & \leq   \| \varphi\|_{\Lipb(V)}^2 \iint_{E} (1{\wedge}\dV^2(x,y)) \tetapi (\dd x \dd y )<  +\infty\,.
\end{aligned}
\]
In fact, in  \eqref{eq:test_extend-quadr} $ |\dnabla \varphi|^2$ can be replaced by $\uppsi^*(\dnabla \varphi)$, as shown in the following result. This fact will play a crucial role for in the proof of Theorem \ref{thm:ECE} below.

\begin{lemma}
\label{l:worthwhile-highlight}
Let $\varphi \in \Bb(V)$. Then, $\varphi \in \mathcal{X}_2$ if and only if
  \begin{equation}
  \label{eq:test_extend}
\varphi \in \mathcal{X}^{\uppsi^*} = \left\{ \varphi \in \rmL^\infty (V;\pi) : \   \iint_\Ed \uppsi^*(\dnabla \varphi(x,y))\,\tetapi(\dd x \dd y)  <\infty \right\}.
\end{equation}
\end{lemma}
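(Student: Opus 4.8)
The plan is to establish a two-sided pointwise comparison between the integrands $|\dnabla\varphi(x,y)|^2$ and $\uppsi^*(\dnabla\varphi(x,y))$ when $\varphi$ is bounded, with constants depending only on $\|\varphi\|_\infty$; this immediately reduces the finiteness of one nonlocal seminorm to that of the other. First note that, since $\pi$ is a finite measure, every $\varphi\in\Bb(V)$ belongs to $\rmL^\infty(V;\pi)$ (with $\|\varphi\|_{\rmL^\infty(\pi)}\le\|\varphi\|_\infty$); hence the membership $\varphi\in\rmL^\infty(V;\pi)$ occurring in the definitions of both $\mathcal{X}_2$ and $\mathcal{X}^{\uppsi^*}$ is automatic, and only the finiteness of $\iint_\Ed|\dnabla\varphi|^2\,\tetapi$ versus $\iint_\Ed\uppsi^*(\dnabla\varphi)\,\tetapi$ has to be compared.

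So I would fix $\varphi\in\Bb(V)$, set $M:=2\|\varphi\|_\infty$, and observe that $|\dnabla\varphi(x,y)|=|\varphi(y)-\varphi(x)|\le M$ for \emph{every} $(x,y)\in E$. The analytic heart of the argument is the sandwich estimate
\[
\frac1{K_M}\,t^2 \ \le\ \uppsi^*(t)\ \le\ C_M\,t^2\qquad\text{for all }t\in[-M,M],
\]
for suitable constants $K_M,C_M>0$. The lower bound is exactly the quadratic bound from below \eqref{Olli-est-2} of Lemma~\ref{l:props-uppsistar}. For the upper bound I would invoke \eqref{easy-conseq} (a consequence of the asymptotic quadratic behaviour \eqref{quadratic-at-0}): there is $r>0$ with $\uppsi^*(t)\le\tfrac32 c_0|t|^2$ for $|t|\le r$, while for $r\le|t|\le M$ the monotonicity on $[0,\infty)$ and evenness of $\uppsi^*$ give $\uppsi^*(t)\le\uppsi^*(M)\le\tfrac{\uppsi^*(M)}{r^2}|t|^2$; taking $C_M:=\max\{\tfrac32 c_0,\,\uppsi^*(M)/r^2\}$ yields the claim on all of $[-M,M]$.

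Applying this pointwise on $\Ed$ with $t=\dnabla\varphi(x,y)$ (legitimate since $|\dnabla\varphi|\le M$ there) and integrating against $\tetapi$, one gets
\[
\frac1{K_M}\iint_\Ed |\dnabla\varphi(x,y)|^2\,\tetapi(\dd x\dd y)\ \le\ \iint_\Ed \uppsi^*(\dnabla\varphi(x,y))\,\tetapi(\dd x\dd y)\ \le\ C_M\iint_\Ed |\dnabla\varphi(x,y)|^2\,\tetapi(\dd x\dd y).
\]
All three quantities lie in $[0,+\infty]$, so the left-hand integral is finite if and only if the middle one is; combined with the automatic $\rmL^\infty(V;\pi)$-membership noted above, this gives $\varphi\in\mathcal{X}_2\iff\varphi\in\mathcal{X}^{\uppsi^*}$.

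There is no real obstacle here; the only point demanding some care is the upper bound $\uppsi^*(t)\le C_M t^2$, which genuinely relies on the asymptotically quadratic behaviour of $\uppsi^*$ at the origin \eqref{quadratic-at-0}. A mere convexity argument (using $\uppsi^*(0)=0$) would only deliver a linear estimate $\uppsi^*(t)\le\tfrac{\uppsi^*(M)}{M}|t|$, which is too weak near $t=0$, where $t^2\ll|t|$. Conversely, the superlinear growth of $\uppsi^*$ at infinity \eqref{growth-psistar} plays no role whatsoever, because the argument $\dnabla\varphi$ stays confined to the fixed bounded interval $[-M,M]$.
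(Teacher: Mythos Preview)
Your proof is correct and follows essentially the same approach as the paper: both directions rest on the pointwise sandwich $K_M^{-1}t^2\le\uppsi^*(t)\le C_M t^2$ on $[-M,M]$, with the lower bound coming from \eqref{Olli-est-2} and the upper bound from the quadratic behaviour at the origin. The only cosmetic difference is that for the upper bound the paper invokes \eqref{needed-control-normal} (whose proof is precisely your case split near and away from $r$), whereas you carry out that estimate directly.
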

\begin{proof}
$\vartriangleright [\eqref{eq:test_extend} \, \Rightarrow \, \eqref{eq:test_extend-quadr}]$: We apply estimate \eqref{Olli-est-2} from Lemma \ref{l:props-uppsistar} with $M = \|\ona\varphi\|_{\infty}$, yielding 
\[
    |\dnabla \varphi|^2 \leq K_M \uppsi^*(\dnabla \varphi)  \qquad \tetapi\text{-a.e.\ in $\Ed$.}
\]
$\vartriangleright [\eqref{eq:test_extend-quadr} \, \Rightarrow \, \eqref{eq:test_extend}]$: From \eqref{needed-control-normal} we obtain that 
\[
    \uppsi^*(\dnabla \varphi) \leq C_{\varphi} |\dnabla \varphi|^2 \qquad \tetapi\text{-a.e.\ in $\Ed$,} \quad  \text{with } C_{\varphi}: = \frac{\psih^*(\|\ona\varphi\|_\infty)}{\|\ona\varphi\|_\infty^2}\,. \qedhere
\]
\end{proof}

In view of Lemma~\ref{l:worthwhile-highlight}, one easily verifies that the $\rmL^{\infty}$-bounds of $u$ and the finite action of $(\rho,\jj)$, imply that the integrals of \eqref{eq:RCE} are well-defined and finite, since it results in the existence of a constant $C=C(\|u\|_{\rmL^\infty(\lambda\otimes\pi)})>0$, such that
\begin{equation*}
\begin{aligned}
\iiint_{[0,T]\times E} |\dnabla \varphi|(x,y)  \, |\bj_{\!\Lebone}|(\dd t \dd x \dd y) 
\leq \int_0^T \scrR(\rho_t,\bj_t)\, \dd t +C T\iint_{E} |\dnabla \varphi|^2(x,y) \, \tetapi(\dd x \dd y).
\end{aligned}
\end{equation*}

\medskip
We are now in a position to introduce our strongest solution concept.
\begin{definition}[$\Reflecting$ solutions]
We say that a curve $\rho: [0,T] \to \calM^+(V)$ is a  $\Reflecting$ solution \EEE of the $(\scrE,\scrR,\scrR^*)$ evolution system it satisfies the following conditions:
\begin{enumerate}
\item $\calS(\rho_0)<\pinfty$;
\item There exists a measurable family $(\bj_t)_{t\in [0,T]} \subset \Ms(E)$ such that $(\rho,\bj)\in \ECE0T$;
\item The pair $(\rho,\bj)$ complies with the the {\em $(\scrE,\scrR,\scrR^*)$ energy-dissipation balance} \eqref{R-Rstar-balance}  on any interval $[s,t]\subset [0,T]$.
\end{enumerate}

\end{definition}
\par
The following result sheds light into the properties of curves in the class $ \ECE 0T$ if in addition, we assume convexity of the Fisher information functional $\Fish$.

\begin{theorem}
\label{thm:ECE}
 Suppose that
\begin{equation}
\label{convexity-Fisher-info}
\text{the Fisher information functional $\Fish\colon\mathrm{dom}(\scrE)\to [0,+\infty]$ is convex.}
 \end{equation}
Then, the set $ \ECE 0T$ is convex, i.e., for any $(\rho^0,\jj^0), \,  (\rho^1,\jj^1) \in  \ECE 0T$ and $\theta \in (0,1)$:
 \begin{equation}
 \label{convexityRCE}
    (\rho^\theta,\jj^\theta): = (1{-}\theta) (\rho^0,\jj^0)+\theta (\rho^1,\jj^1)\in  \ECE 0T \,.
 \end{equation}
 \par
 Moreover, if $(\rho,\jj) \in  \ECE 0T$, then the chain rule from Definition \ref{def:CR} holds along $(\rho, \jj)$. 
 In particular, if $(\rho,\jj) \in \ECE 0T$ is a $\Dissipative$ solution, then it is a $\Reflecting$ solution, fulfilling additionally the \emph{pointwise energy-dissipation balance}
\eqref{enh-EDB}. 
\end{theorem}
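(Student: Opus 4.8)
The plan is to prove \eqref{thm:ECE} in three stages: first the convexity of $\ECE0T$, then the chain rule along curves in $\ECE0T$, and finally the upgrade of $\Dissipative$ solutions to $\Reflecting$ solutions.

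\textbf{Convexity of $\ECE0T$.} Given $(\rho^0,\jj^0),(\rho^1,\jj^1)\in\ECE0T$ and $\theta\in(0,1)$, set $(\rho^\theta,\jj^\theta):=(1{-}\theta)(\rho^0,\jj^0)+\theta(\rho^1,\jj^1)$. The reflecting continuity equation \eqref{eq:RCE} is linear in $(\rho,\jj)$, so it holds for $(\rho^\theta,\jj^\theta)$, and the bound $\sup_t\|u_t^\theta\|_{\rmL^\infty(\pi)}<\infty$ follows from convexity of the norm (with $u^\theta = (1{-}\theta)u^0 + \theta u^1$). It remains to check finite action and finite Fisher information. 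For the Fisher information, $\int_0^T\scrD(\rho_t^\theta)\,\dd t\le (1{-}\theta)\int_0^T\scrD(\rho_t^0)\,\dd t + \theta\int_0^T\scrD(\rho_t^1)\,\dd t<\infty$ by the assumed convexity \eqref{convexity-Fisher-info}. For the action $\int_0^T\scrR(\rho_t^\theta,\jj_t^\theta)\,\dd t$, I would use joint convexity of $\scrR$ in $(\rho,\jj)$. Indeed, by \eqref{equivalent-upsilon}, $\scrR(\rho,\jj)=\tfrac12\calF_\Upsilon((\teta_{\!\rho}^-,\teta_{\!\rho}^+,2\jj)|\tetapi)$; the maps $\rho\mapsto\teta_{\!\rho}^\pm$ are linear, $\Upsilon(u,v,w)=\hat\uppsi(w,\upalpha(u,v))$ is jointly convex (perspective of convex $\uppsi$ composed with the concave $\upalpha$ in the appropriate arguments — precisely the structure for which $\scrR(\cdot,\cdot)$ is jointly convex, as already exploited in \cite{PRST22}), and $\calF_\Upsilon(\cdot|\tetapi)$ is convex. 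Hence $\scrR(\rho_t^\theta,\jj_t^\theta)\le(1{-}\theta)\scrR(\rho_t^0,\jj_t^0)+\theta\scrR(\rho_t^1,\jj_t^1)$ pointwise in $t$, and integrating gives finite action. Thus $(\rho^\theta,\jj^\theta)\in\ECE0T$.

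\textbf{The chain rule along curves in $\ECE0T$.} Fix $(\rho,\jj)\in\ECE0T$ with $u_t=\tfrac{\dd\rho_t}{\dd\pi}$, so $0\le u_t\le M:=\sup_t\|u_t\|_{\rmL^\infty(\pi)}$ $\pi$-a.e. The strategy is to first establish the chain rule under an additional \emph{lower} bound $u_t\ge\delta>0$, and then remove it by the convexity argument just proven, replacing $\rho$ with $\rho^\theta$ obtained from mixing with the stationary solution $\pi$ (note $(\pi,0)\in\ECE0T$ trivially). Under the two-sided bound $\delta\le u_t\le M$, the key point is that $\upphi'(u_t)$ is now a \emph{bounded} function (since $\upphi\in C^1(]0,+\infty])$), hence $\dnabla(\upphi'\circ u_t)\in\rmL^\infty(\tetapi)$; moreover, its second moment against $\tetapi$ is controlled by the Fisher information. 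Because $\uppsi^*$ is asymptotically quadratic near zero and superlinear at infinity, the bound $\int_0^T\scrD(\rho_t)\,\dd t<\infty$ together with $\delta\le u_t\le M$ gives $\int_0^T\iint_\Ed\uppsi^*(\dnabla(\upphi'\circ u_t))\,\tetapi<\infty$, so by Lemma~\ref{l:worthwhile-highlight} we have $\upphi'\circ u_t\in\mathcal{X}_2=\mathcal{X}^{\uppsi^*}$ for a.e.\ $t$. Now one tests \eqref{eq:RCE} with $\varphi=\upphi'\circ u_t$. The remaining work is the standard mollification-in-time argument: writing the difference quotient of $t\mapsto\calS(\rho_t)$, using $\frac{\upphi(b)-\upphi(a)}{}$-type convexity estimates to sandwich it between difference quotients of $\int_V\upphi'(u_s)\,\rho_\cdot$, and passing to the limit using the reflecting continuity equation with the (time-frozen) test function $\upphi'\circ u_t$; here the finiteness of $\iiint|\dnabla\varphi||\jj_{\!\Lebone}|$ guaranteed by the displayed estimate just before the theorem is exactly what makes the passage legitimate. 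This yields absolute continuity of $t\mapsto\calS(\rho_t)$ and the identity \eqref{chain-rule-gen}. To remove the lower bound: for $\theta\in(0,1)$, $(\rho^\theta,\jj^\theta)=(1{-}\theta)(\rho,\jj)+\theta(\pi,0)$ has density $u^\theta_t=(1{-}\theta)u_t+\theta\ge\theta$, so the chain rule holds along $(\rho^\theta,\jj^\theta)$; one then lets $\theta\downarrow0$, using lower semicontinuity of $\scrR$ and $\scrD$, convexity of $\scrE$, and monotone/dominated convergence to recover the chain rule along $(\rho,\jj)$. This is essentially the scheme flagged in the paragraph ``Outlook to generalizations'' (``the lower bound can be removed via a convexity argument'').

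\textbf{From $\Dissipative$ to $\Reflecting$.} Suppose $(\rho,\jj)\in\ECE0T$ is a $\Dissipative$ solution. Since $(\rho,\jj)\in\ECE0T\subset\ACE0T$ and the chain rule \eqref{chain-rule-gen} holds along it by the previous step, Theorem~\ref{th:diss.vs.bal} applies: the energy-dissipation \emph{inequality} upgrades to the energy-dissipation \emph{balance} \eqref{R-Rstar-balance} on every subinterval, and differentiating gives the pointwise balance \eqref{enh-EDB}. As $(\rho,\jj)\in\ECE0T$ and \eqref{R-Rstar-balance} holds, $(\rho,\jj)$ is by definition a $\Reflecting$ solution. \QED

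\textbf{Main obstacle.} The delicate step is the chain rule proof under the two-sided bound, specifically justifying the time-mollification passage to the limit with $\varphi=\upphi'\circ u_t$ as test function in \eqref{eq:RCE}: one must show the relevant difference quotients converge and that the ``frozen'' test function approximates $\upphi'\circ u_s$ suitably near $s=t$, controlling the flux term via the action bound. The measurability of $t\mapsto\upphi'\circ u_t$ as an $\mathcal{X}_2$-valued map and the uniform (in $t$) domination needed for dominated convergence — drawing on Lemma~\ref{l:basic-props}(3) and Lemma~\ref{l:crucial-F} — are the technical crux. The lower-bound removal, by contrast, is routine once convexity of $\ECE0T$ and the lower semicontinuity of the functionals are in hand.
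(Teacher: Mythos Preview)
Your proposal is correct and follows the paper's strategy almost verbatim: convexity of $\ECE0T$ via joint convexity of $\scrR$ and $\scrD$; the chain rule first under a two-sided density bound by testing \eqref{eq:RCE} with $\upphi'\circ u_t\in\mathcal{X}_2$ and using convexity estimates on difference quotients (the paper invokes \cite[Lemma~2.9]{Ambrosio-Gigli-Savare14} for absolute continuity rather than any genuine time-mollification); then removing the lower bound by mixing with $(\pi,0)$ and letting $\theta\downarrow0$ via the generalized dominated convergence theorem; and finally applying Theorem~\ref{th:diss.vs.bal}. The only slip is terminological---what you call a ``mollification-in-time argument'' is in fact the direct difference-quotient sandwich you describe immediately afterward, with no mollifiers involved.
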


\begin{remark}
\label{rmk:atomic-vs-cvx}
\sl
It will be apparent from 
the
proof of Theorem \ref{thm:ECE}  that the required convexity of $\scrD$ becomes redundant if, in addition,  the density $u$ enjoys a lower bound as well, cf.\
\eqref{additional-lower-bound} below. 
 \end{remark}

\begin{proof}
$\vartriangleright$ First of all, we address the proof of \eqref{convexityRCE}. 
 By convexity of $\calS$, $\scrR$, and $\Fish$,
\[
\begin{cases}
\displaystyle \sup\nolimits_{t\in [0,T]} \scrE(\rho_t^\theta) \leq (1{-}\theta)   \sup\nolimits_{t\in [0,T]} \scrE(\rho_t^0) + \theta  \sup\nolimits_{t\in [0,T]} \scrE(\rho_t^1) <\pinfty,
\smallskip
\\
\displaystyle  \int_0^T \scrR(\rho_t^\theta,  \bj_t^\theta)\, \dd t \leq (1{-}\theta)\int_0^T \scrR(\rho_t^0, \bj_t^0)\, \dd t + \theta \int_0^T \scrR(\rho_t^1, \bj_t^1)\, \dd t  <\pinfty\,,
\smallskip
\\
\displaystyle  \int_0^T \scrD(\rho_t^\theta)\, \dd t \leq (1{-}\theta)\int_0^T \scrD(\rho_t^0)\, \dd t  +\theta \int_0^T \scrD(\rho_t^1)\, \dd t  <\pinfty\,.
\end{cases}
\]
Hence, the pair  $(\rho^\theta, \jj^\theta) $ has uniformly bounded entropy, and finite action and Fisher information and, evidently,
$u^\theta = \frac{\dd \rho^\theta}{\dd \pi} =  (1{-}\theta) u^0 +\theta u^1 \in  \rmL^\infty (0,T;\rmL^\infty(V;\pi))$. 
 Clearly, $(\rho^\theta,\jj^\theta)$ satisfies the continuity equation for all
test functions $\varphi 
\in \mathcal{X}_2$.  Thus, $ (\rho^\theta,\jj^\theta)\in \ECE 0T$. 
\par
$\vartriangleright$
We will split the proof of the chain rule into two steps: in {\bf \emph{Step $1$}}, we claim the validity of the chain rule \eqref{chain-rule-gen} along curves $(\rho,\jj) \in  \ECE 0T$ satisfying,
 in addition, 
the lower bound 
 \begin{equation}
\label{additional-lower-bound}
 \exists\,  \underline U>0 \, : \quad   u_t(x) \geq \underline U   \text{ for $\pi$-a.e.\ $x \in V$ and for all } t\in [0,T].
\end{equation}
in {\bf \emph{Step $2$}}, 
we exploit the convexity of $\Fish$  to 
 remove the lower bound \eqref{additional-lower-bound}. \EEE 
\medskip 

\par
{\bf \emph{Step $1$}}: Suppose that  \eqref{additional-lower-bound} holds. 
Then, we have that 
    \begin{equation}
    \label{needed-4-rigour}
     \upalpha(u_t(x),u_t(y)) \geq \underline{\upalpha}: = \min_{(u,v)\in [\underline U, \overline U]{\times} [\underline U, \overline U]} \upalpha(u,v)>0
     \quad \text{for } \pi\text{-a.e. } x,y \in V \quad \text{for all } t \in [0,T]\,.
         \end{equation}
\par
 We begin by showing the absolute continuity of the function  $[0,T]\ni t\mapsto \calS(\rho_t)$. 
     Fix any $t\in (0,T]$.  Note that without loss of generality we can assume that $\Fish(\rho_t)<\infty$.
      Then, by convexity of $\upphi$, we obtain for all $0\leq s \leq t$,
    \begin{align*}
       &  \calS(\rho_t) - \calS(\rho_s)
       \\
        &\le \int_V \upphi'(u_t(x)) [u_t(x)-u_s(x)]\,\pi(\dd x) \\
        &\stackrel{(1)}{=} \int_s^t \iint_{E} (\dnabla \upphi'{\circ} u_t)(x,y)\,\jj_r(\dd x \dd y) \dd r \\
        &\stackrel{(2)}{=} \int_s^t \iint_{E}    \frac1{\upalpha(u_r(x),u_r(y))}  (\dnabla \upphi'{\circ} u_t)(x,y) 
         \upalpha(u_r(x),u_r(y)) 
        \,\jj_r(\dd x \dd y) \dd r \\ 
        &\stackrel{(3)}\le \int_s^t \scrR(\rho_r,\jj_r)\,dr +\frac12  \int_s^t \iint_{E}  \uppsi^*({-}\dnabla (\upphi'{\circ} u_t)(x,y))\,\upalpha(u_r(x),u_r(y))\,\tetapi(\dd x \dd y) \dd r \\
        &\le \int_s^t \scrR(\rho_r,\jj_r)\,dr +  \frac12    \int_s^t \iint_{E}  \uppsi^*((\dnabla \upphi'{\circ} u_t)(x,y))\, \upalpha(u_t(x),u_t(y)) \frac{\upalpha(u_r(x),u_r(y))}{\upalpha(u_t(x),u_t(y))}
        \,\tetapi(\dd x \dd y) \dd r  \\
        & \leq \int_s^t \scrR(\rho_r,j_r)\,dr +  \frac{C_{\overline{U}}}{ \underline{\upalpha}} \EEE  \Fish(\rho_t)|t-s|\,.
    \end{align*}
    Here,
    {\footnotesize (1)} follows from choosing the test function $\varphi =  \upphi'{\circ} u_t $ in the continuity equation: 
   in fact, due to $u \in  \rmL^\infty (0,T;\rmL^\infty(V;\pi))$ and the additional lower bound \eqref{additional-lower-bound}, we have $\upphi'{\circ} u_t \in \rmL^\infty (V;\pi)$. Furthermore, $\Fish(\rho_t)<\infty$  yields
   \[
   \underline{\upalpha}  \iint_{E}
    \Psi^*( {-}\overline\nabla \upphi'(u_t))\,\tetapi(\dd
    x,\dd y)  \leq
    \iint_{E}
    \Psi^*( {-}\overline\nabla \upphi'(u_t))\upalpha(u_t(x),u_t(y))\,\tetapi(\dd
    x,\dd y) <+\infty,
   \]
   and thus $\upphi'{\circ} u_t \in  \mathcal{X}_2$ by Lemma \ref{l:worthwhile-highlight}.  
    Estimate {\footnotesize (2)} is justified by \eqref{needed-4-rigour}, while {\footnotesize (3)} ensues from Young's inequality (cf.\ \eqref{conseq-CR}) and from the fact that $\uppsi^*$ is an even function. All in all, for the place-holder $\mathcal{W}(t): = \int_0^t \scrR(\rho_r,j_r)\,dr -\calS(\rho_t)$ we deduce the estimate
\[
    \mathcal{W}(s)-\mathcal{W}(t) \leq \frac{C_{\overline{U}}}{ \underline{\upalpha}} \EEE  \Fish(\rho_t)|t-s|\,.
\]
Then, we are in a position to apply \cite[Lemma 2.9]{Ambrosio-Gigli-Savare14} and infer that the mapping $[0,T]\ni t \mapsto \calS(\rho_t)$ is in $\AC([0,T])$.  

\par
Let us  now fix a point $t \in (0,T)$, out of a negligible set, where $\frac{\dd}{\dd t}  \calS(\rho_t)$ exists.
With the same calculations as above 
we obtain for $h>0$
\begin{align*}
    \frac{1}{h}\bigl[\calS(\rho_t) - \calS(\rho_{t+h})\bigr] &  \leq \frac1h \int_V \upphi'(u_t(x)) [u_t(x)-u_{t+h}(x)]\,\pi(\dd x) 
    \\
    &
    =    \int_{t}^{t+h} \iint_{E} {-}\dnabla (\upphi'{\circ} u_t)(x,y)\,\jj_r(\dd x \dd y) \dd r \EEE 
    \end{align*}
    and, analogously,
    \[
     \frac{1}{h}\bigl[\calS(\rho_t) - \calS(\rho_{t+h})\bigr] \geq   \int_{t}^{t+h} \iint_{E} {-}\dnabla (\upphi'{\circ} u_{t+h}))(x,y)\,\jj_r(\dd x \dd y) \dd r 
    \]
    Letting $h\down0$, we conclude  the desired \eqref{chain-rule-gen}. 
\medskip

\par
{\bf \emph{Step $2$}}:
In order to remove the additional condition \eqref{additional-lower-bound}  on the pair $(\rho,\jj)$, we may argue as follows. 
Let $(\bar\rho, \bar\jj)$ be given by  $\bar\rho :[0,T] \to  \calM^+(V)$, $\bar{\rho}_t \equiv \pi$ for all $t\in [0,T]$, and $\bar\jj :=0 $ . 
It is immediate to check that $(\bar\rho, \bar\jj) \in \ECE 0T$.   
 Now consider the pair $(\rho^\theta, \jj^\theta)$ 
\begin{equation}
\label{clever-trick-lowerbound}
(\rho^\theta, \jj^\theta):= (1{-}\theta) (\rho, \jj) +\theta (\bar\rho, \bar\jj) \ \theta \in [0,1]\,. 
\end{equation}
By the previously proved convexity of $\ECE 0T$, we have that $(\rho^\theta, \jj^\theta) \in \ECE 0T$ for all $\theta \in [0,1]$. 
Furthermore, by construction, $u_t^\theta =\frac{\dd \rho_t^\theta}{\dd \pi} $ satisfies the lower bound  
 \[
 u_t^\theta(x) \geq \theta \qquad \text{for $\pi$-a.e.\ $x\in V$, \ for all $t\in [0,T]$.} 
 \]
Thus, the chain-rule 
\eqref{chain-rule-gen} holds along $(\rho^\theta, (1{-}\theta) \jj)$, yielding for all $0 \leq s \leq t \leq T$
\begin{equation}
\label{jasper}
\begin{aligned}
 \calS(\rho_t^\theta) - \calS(\rho_s^\theta)    = 
   \int_s^t\iint_{\Ed} ({-}\dnabla \upphi'{\circ} u_r^\theta)(x,y)\, \jj^\theta_r(\dd x \dd y) 
     \dd r  \,.
    \end{aligned}
\end{equation}
\par
Let us now send $\theta \downarrow 0$ in the first line of \eqref{jasper}. We clearly have 
\[
\calS(\rho_r)  \leq \lim_{\theta \down 0} \calS(\rho_r^\theta)  
\leq \lim_{\theta \down 0}  (1{-}\theta) \calS(\rho_r)=  \calS(\rho_r) \qquad \text{for all } r \in [0,T]\,.
\]
As for the right-hand side, using that  $\jj^\theta_t = \tfrac12 w_t^\theta \tetapi  = \tfrac12(1{-}\theta) w_t \tetapi$, with 
$w= \frac{\dd (2\jj)}{\dd\tetapi}$, 
we rewrite the integral as
\[
  \int_s^t\iint_{\Ed} ({-}\dnabla \upphi'{\circ} u_r^\theta)(x,y)\, \jj^\theta_r(\dd x \dd y) \dd r =  \int_s^t \iint_{\Ed} \frac12 ({-}\dnabla \upphi'{\circ} u_r^\theta)(x,y) w_r^\theta(x,y)\, \tetapi(\dd x \dd y)  \dd r
\]
The integrand on the right-hand side pointwise converges to 
$\tfrac12 ({-}\dnabla \upphi'{\circ} u_t) w_t$ for almost every $t\in (0,T)$ and 
$\tetapi$-a.e.\ in $\Ed$. Moreover, mimicking the calculations in Step $1$ we obtain the 
 estimate for $\tetapi$-a.e.\ $(x,y) \in \Ed$ and a.e.\ $r\in (0,T)$,
\[
\begin{aligned}
&
\frac12|(\dnabla \upphi'{\circ} u_r^\theta)(x,y) w_r^\theta(x,y)| \\
        &\quad
        \leq
     \frac12   \uppsi \left( \frac{\frac12 w_r^\theta(x,y)}{\upalpha(u_r^\theta(x),u_r^\theta(y))}  \right) \upalpha(u_r^\theta(x),u_r^\theta(y)) + \frac12    \uppsi^*\left(( {-}
 \dnabla \upphi'{\circ} u_r^\theta)(x,y)) \right)  \upalpha(u_r^\theta(x),u_r^\theta(y)) \,,
 \end{aligned}
\]
where the estimate for the modulus derives from  the evenness of $\uppsi$ and $\uppsi^*$. Now, as $\theta \down 0$, the above dominants converge a.e.\ to their analogues for $(\rho,\jj)$. Moreover, by lower semicontinuity and convexity
\[
\begin{aligned}
\int_s^t  \scrR(\rho_r,\jj_r) 
\,\dd r \leq \liminf_{\theta\down 0} \int_s^t \scrR(\rho_r^\theta,\jj^\theta_r)\, \dd r \leq  \liminf_{\theta\down 0}(1{-}\theta)\int_s^t \scrR(\rho_r,\jj_r)\, \dd r = \int_s^t  \scrR(\rho_r,\jj_r)\,  \dd r\,.
\end{aligned}
\]
Analogously, we prove that 
\[
\lim_{\theta\down 0}\int_s^t \scrD(\rho_r^\theta)  \dd r = \int_s^t \scrD(\rho_r)\, \dd r \,.
\]
We may once again apply the dominated convergence theorem \cite[\S 2, Thm.\ 2.8.8]{Bogachev07} to conclude that 
\[
  \int_s^t\iint_{\Ed} ({-}\dnabla \upphi'{\circ} u_r^\theta)(x,y)\, \jj^\theta_r(\dd x \dd y)\, \dd r \longrightarrow 
   \int_s^t\iint_{\Ed} ({-}\dnabla \upphi'{\circ} u_r)(x,y)\, \jj_r(\dd x \dd y)\, \dd r \,.
\]
 All in all, taking the limit in \eqref{jasper} we have shown that 
\[
 \calS(\rho_t) - \calS(\rho_s)    = 
   \int_s^t\iint_{\Ed} ({-}\dnabla \upphi'{\circ} u_r)(x,y)\, \jj_r(\dd x \dd y) 
     \dd r   \qquad \text{for all } 0 \leq s \leq t \leq T.
\]
Therefore, the chain rule \eqref{chain-rule-gen} ensues. \EEE
\par
The very last part of the statement follows from Theorem \ref{th:diss.vs.bal}, which ensures that the pair $(\rho,\jj) \in \ECE 0T$ complies with the \emph{pointwise energy-dissipation balance} \eqref{enh-EDB}. Whence, it is a $\Reflecting$ solution. 
 \end{proof}
 
 As a straightforward corollary of Theorem \ref{thm:ECE} we even have convexity of the class of $\Reflecting$ solutions emanating from the 
 \emph{same} initial datum. 
 \begin{cor}
 \label{cor:cvx-refl}
 Suppose that the Fisher information functional $\Fish$ is convex. 
\par
 Then, the family of $\Reflecting$ solutions of the $(\scrE,\scrR,\scrR^*)$ evolution system emanating from a given initial datum $\rho_0 = u_0 \pi $, with $u_0 \in \rmL^\infty (V;\pi)$, is convex.
 \end{cor}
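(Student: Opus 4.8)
\textbf{Proof strategy for Corollary~\ref{cor:cvx-refl}.} The plan is to leverage two facts already at our disposal: the convexity of the set $\ECE 0T$ established in the first part of Theorem~\ref{thm:ECE}, and the fact that, under convexity of $\Fish$, every element of $\ECE 0T$ satisfies the chain rule, so that being a $\Reflecting$ solution is equivalent (via Theorem~\ref{th:diss.vs.bal} and the characterization in Proposition~\ref{prop:characteriz}) to satisfying the energy-dissipation \emph{inequality} $\scrL_t(\rho,\jj)\le 0$ for all $t$. I would therefore reduce the claim to showing that the functional $(\rho,\jj)\mapsto \scrL_t(\rho,\jj)$ is convex along the segment joining two $\Reflecting$ solutions emanating from the same datum.

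First I would fix two $\Reflecting$ solutions $(\rho^0,\jj^0)$, $(\rho^1,\jj^1)$ with $\rho^0_0=\rho^1_0=\rho_0=u_0\pi$, $u_0\in\rmL^\infty(V;\pi)$, take $\theta\in(0,1)$, and set $(\rho^\theta,\jj^\theta):=(1{-}\theta)(\rho^0,\jj^0)+\theta(\rho^1,\jj^1)$. By \eqref{convexityRCE} in Theorem~\ref{thm:ECE}, $(\rho^\theta,\jj^\theta)\in\ECE 0T$; moreover the initial datum is $\rho^\theta_0=(1{-}\theta)\rho_0+\theta\rho_0=\rho_0$, and $\calS(\rho^\theta_0)=\calS(\rho_0)<\pinfty$. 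Next I would use convexity of the building blocks: $\calS$ is convex since $\upphi$ is convex; $\scrR(\cdot,\cdot)$ is jointly convex (it is $\tfrac12\calF_\uppsi(2\cdot|\bnu_\cdot)$ and, via the representation \eqref{equivalent-upsilon}, $\scrR(\rho,\jj)=\tfrac12\calF_\Upsilon((\teta_{\!\rho}^-,\teta_{\!\rho}^+,2\jj)|\tetapi)$ with $\Upsilon$ convex, and $\rho\mapsto(\teta_{\!\rho}^-,\teta_{\!\rho}^+)$ linear); and $\Fish$ is convex by the standing assumption \eqref{convexity-Fisher-info}. Hence for every $t\in[0,T]$,
\begin{equation*}
\scrL_t(\rho^\theta,\jj^\theta)\le (1{-}\theta)\,\scrL_t(\rho^0,\jj^0)+\theta\,\scrL_t(\rho^1,\jj^1)=0,
\end{equation*}
where the last equality uses that both $(\rho^i,\jj^i)$ are $\Reflecting$, hence $\Balanced$, so $\scrL_t(\rho^i,\jj^i)=0$ for all $t$; note here that $\calS(\rho^\theta_0)=\calS(\rho_0)$ exactly because the data coincide, which is why the convex combination of the \emph{entropy at time $0$} collapses to the common value and no spurious term appears. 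Thus $(\rho^\theta,\jj^\theta)$ satisfies the energy-dissipation inequality, i.e.\ it is a $\Dissipative$ solution lying in $\ECE 0T$.

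Finally, I would invoke the last assertion of Theorem~\ref{thm:ECE}: a pair in $\ECE 0T$ that is a $\Dissipative$ solution is automatically a $\Reflecting$ solution (the chain rule holds along it, so the inequality upgrades to the balance \eqref{R-Rstar-balance} on every subinterval). Since $(\rho^\theta,\jj^\theta)\in\ECE 0T$ is $\Dissipative$, it is $\Reflecting$, and it emanates from $\rho_0$; this proves the convex combination stays in the family. I do not expect a genuine obstacle here — the only point requiring a little care is making sure the convexity of $\scrR$ in the \emph{pair} $(\rho,\jj)$ is correctly read off from \eqref{equivalent-upsilon} (rather than just convexity in $\jj$ for fixed $\rho$), and that the coincidence of initial data is used to keep $\calS(\rho^\theta_0)$ at the common value so that the convex estimate for $\scrL_t$ closes with a clean $\le 0$. $\QED$
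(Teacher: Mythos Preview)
Your proposal is correct and follows essentially the same approach as the paper's proof: both use the convexity of $\ECE 0T$ from Theorem~\ref{thm:ECE}, then exploit convexity of $\calS$, $\scrR$, $\Fish$ together with the coincidence of initial data to conclude $\scrL_t(\rho^\theta,\jj^\theta)\le 0$, and finally invoke the last part of Theorem~\ref{thm:ECE} to upgrade the $\Dissipative$ solution in $\ECE 0T$ to a $\Reflecting$ one. The only cosmetic difference is that the paper packages the convexity step as convexity of $(\rho,\jj)\mapsto \scrL_t(\rho,\jj)+\calS(\rho(0))$, whereas you argue directly that along the segment (where $\rho^0_0=\rho^1_0=\rho_0$) the $-\calS(\rho_0)$ term is constant and hence causes no trouble; these are the same observation.
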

 \begin{proof}
 Let $(\rho^i,\jj^i)$, $i = 0,1$, be $\Reflecting$ solutions, and consider their convex combination 
 $(\rho^\theta, \jj^\theta): = (1{-}\theta)(\rho^0,\jj^0)+\theta(\rho^1,\jj^1) $, which is in $\ECE 0T$. It remains to prove that the pair $(\rho^\theta, \jj^\theta)$ complies with the 
 energy-dissipation balance. For this,  we use that $\scrL_t(\rho_i,\jj_i) =0$ for $i=0,1$. 
 By the convexity  of the trajectory functional $(\rho, \jj) \mapsto \scrL_t (\rho, \jj)+\calS(\rho(0))$, we then have 
\[
\begin{aligned}
\scrL_t (\rho^\theta, \jj^\theta) + \calS(\rho^\theta(0))  & \leq (1{-}\theta)\scrL_t(\rho^0,\jj^0)
+
(1{-}\theta) \calS(\rho^0(0))
+\theta\scrL_t(\rho^1,\jj^1)
 +\theta \calS(\rho^1(0)) 
 \\
 & = 0+  \calS(\rho_0) =  \calS(\rho^\theta(0))
\qquad \text{for all } t \in [0,T],
\end{aligned}
\]
where we have used that $\rho^0(0) = \rho^1(0) = \rho^\theta(0) =\rho_0$. All in all, we have $\scrL_t (\rho^\theta, \jj^\theta)  =0$, 
 giving that $ (\rho^\theta, \jj^\theta)\in \ECE 0T $ is a $\Dissipative$ solution. By Theorem \ref{thm:ECE} it is a $\Reflecting$ solution.
 \end{proof}

\par
We conclude this section addressing uniqueness in the case the entropy density is strictly convex.  In the case of \emph{bounded} kernels, a uniqueness result for $\Balanced$ solutions was proved in \cite[Thm.\ 5.9]{PRST22}. 
Instead, in the present context, uniqueness holds, as an immediate consequence of Corollary \ref{cor:cvx-refl}, only within the subclass of $\Balanced$ solutions precisely given by $\Reflecting$ solutions. The argument for the proof of Proposition \ref{prop:uniqueness-too} mimicks that for \cite[Thm.\ 5.9]{PRST22}.

\begin{prop}
\label{prop:uniqueness-too}
Suppose that the Fisher information functional  $\Fish$ is convex and the entropy density $\upphi$ is \emph{strictly} convex. 
Let $(\rho^1,\jj^1), \, (\rho^2,\jj^2) \in \ECE 0T$ be two $\Reflecting$ solutions of the $(\scrE,\scrR,\scrR^*)$ evolution system, emanating from the same initial datum $\rho_0$.
\par
Then, $\rho_t^1=\rho_t^2$ for every $t\in [0,T]$.
\end{prop}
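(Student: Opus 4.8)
The plan is to derive uniqueness from the convexity of the class of $\Reflecting$ solutions with fixed initial datum (Corollary~\ref{cor:cvx-refl}) together with the strict convexity of the entropy $\calS$, following the strategy of \cite[Thm.~5.9]{PRST22}. First I would form the midpoint $(\rho^{1/2},\jj^{1/2}) := \tfrac12(\rho^1,\jj^1)+\tfrac12(\rho^2,\jj^2)$. By Corollary~\ref{cor:cvx-refl} this is again a $\Reflecting$ solution emanating from $\rho_0$, hence it satisfies the energy-dissipation balance $\scrL_t(\rho^{1/2},\jj^{1/2}) = 0$ for all $t\in[0,T]$, and in particular the dissipative inequality $\scrL_T(\rho^{1/2},\jj^{1/2}) \le 0$.

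Next I would exploit convexity of the trajectory functional $\scrL_T$ together with the strict convexity of the entropy term. Since $\rho^1$ and $\rho^2$ are $\Reflecting$ solutions with the same initial datum, $\scrL_T(\rho^i,\jj^i) = 0$ for $i=1,2$. Convexity of $\calS$, $\scrR(\cdot,\cdot)$, and $\Fish$ gives
\begin{equation*}
0 = \scrL_T(\rho^{1/2},\jj^{1/2}) \le \tfrac12 \scrL_T(\rho^1,\jj^1) + \tfrac12 \scrL_T(\rho^2,\jj^2) = 0,
\end{equation*}
so all these convex-combination inequalities must be equalities. Focusing on the endpoint entropy term $\calS(\rho^{1/2}_T) \le \tfrac12\calS(\rho^1_T)+\tfrac12\calS(\rho^2_T)$, equality forces, by strict convexity of $\upphi$, that $u^1_T = u^2_T$ $\pi$-a.e., i.e.\ $\rho^1_T = \rho^2_T$.

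Finally, since $T>0$ was arbitrary — the energy-dissipation balance holds on every subinterval $[0,t]$, so the same argument applies with $T$ replaced by any $t\in[0,T]$ — I conclude $\rho^1_t = \rho^2_t$ for every $t\in[0,T]$. The main obstacle is making the equality-in-Jensen argument rigorous at the level of the functionals: one must check that equality in the convexity inequality for $\scrL_T$ genuinely propagates to equality in the pointwise Jensen inequality $\upphi\bigl(\tfrac12 u^1_t(x) + \tfrac12 u^2_t(x)\bigr) \le \tfrac12\upphi(u^1_t(x)) + \tfrac12\upphi(u^2_t(x))$ for $\pi$-a.e.\ $x$ (so that strict convexity can be invoked), which requires knowing that the other two terms, $\int_0^t \scrR\,\dd r$ and $\int_0^t\Fish\,\dd r$, are finite along all three curves — this is guaranteed since $(\rho^i,\jj^i)\in\ECE0T$ and $(\rho^{1/2},\jj^{1/2})\in\ECE0T$ — and then subtracting the (finite) common bounds to isolate the entropy terms. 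One should also take care that $\scrE(\rho_0)<\pinfty$ so no $\infty-\infty$ ambiguity arises; this holds because $\rho_0\in\dom(\scrE)$ by hypothesis.
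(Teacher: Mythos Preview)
Your proof is correct and follows essentially the same approach as the paper: form the midpoint, invoke Corollary~\ref{cor:cvx-refl} to see it is again a $\Reflecting$ solution with $\scrL_t=0$, and then use convexity of $\scrL_t+\calS(\rho_0)$ together with strict convexity of $\calS$ (via $\upphi$) to force $\rho^1_t=\rho^2_t$. Your additional remarks about finiteness of the action and Fisher terms and about $\calS(\rho_0)<+\infty$ are the right justifications for isolating the entropy term in the equality case.
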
 
\begin{proof}
By Corollary \ref{cor:cvx-refl}, we have that the pair $(\widehat\rho,\widehat\jj) = \left(\frac12(\rho^1{+}\rho^2) ,\frac12(\jj^1{+}\jj^2) \right)$ is a $\Reflecting$ solution. Therefore,
\[
0= \scrL_t(\widehat{\rho},\widehat{\jj}) \qquad \text{for all } t \in [0,T]\,.
\]
and, a fortiori $\calS(\widehat{\rho}_t) \equiv 0$.
Since $\calS$ is strictly convex, we gather $\rho_t^1=\rho_t^2$ for all $t\in [0,T]$. 
\end{proof}

\subsection{Enhancing the continuity equation via a density property}
\label{ss:ASIDE}
In this section we explore a situation in which all curves $(\rho,\jj)\in \CE 0T$ such that the curve  $u_t = \frac{\dd \rho_t}{\dd \pi}$ is bounded from above \emph{and below}, upgrade to $(\rho,\jj)\in \ECE 0T$. As we will show, this holds as soon as the space of bounded Lipschitz functions is dense in the space $\mathcal{X}_2$ \eqref{eq:test_extend-quadr}. 

\par
Let us emphasize that all of our existence  results for the singular $(\calS, \scrR,\scrR^*)$ evolution system are independent of such density property and the forthcoming discussion does not directly contribute to the existence of $\Reflecting$ solutions. Yet, we deem Theorem \ref{thm:density-vindicated} ahead noteworthy because it conveys the gap in generality that occurs when passing from evolution systems in $V=\R^d$, where the validity of density condition is ensured by convolution kernels, to the analysis of systems in more general ambient spaces, cf.\ also Remark \ref{rmk:density-gap} ahead. 

\par
Prior to stating this density property, we we need to settle some preliminary definitions.
 \subsubsection*{\bf Orlicz spaces and the density condition}
  Let $\Young : \R \to [0,+\infty)$ be a  \emph{Young} function, i.e.\  even, convex, with superlinear growth at infinity  and $\Young (0)=0$.  \EEE
  On the measure space 
  $(E, \mathfrak{B}(E), \tetapi  )$
  we introduce the \emph{Orlicz space}    associated with the \emph{Young} function $\Young$, namely
\[
 \rmL^{\Young}(E;\tetapi): = \left\{ \zeta \in \rmL^1(E;\tetapi)\, : \ \exists\, \ell> 0 \ \iint_{E} \Young\left(\frac{\zeta(x,y)}{\ell}\right) \, \tetapi(\dd x \dd y) <+\infty\right\},
 \]
with the associated Luxemburg norm
\[
\|\zeta \|_{\rmL^{\Young}(E;\tetapi)} : = \inf\left\{ \ell > 0 \, : \   \iint_{E} \Young\left(\frac{\zeta(x,y)}{\ell}\right) \, \tetapi(\dd x \dd y)  \leq 1\right \}.
\]
  While referring to, e.g.,  \cite{RaoRen} for a comprehensive  presentation of Orlicz spaces, let us pin down a key fact for later use: 
 for any $(\zeta_n)_n \subset \rmL^{\Young} (E; \tetapi)$ we have that 
\begin{equation}
\label{characterization-of-Orlicz}
\|\zeta_n \|_{ \rmL^{\Young} (E; \tetapi)} \to 0 \quad \text{iff} \quad \lim_{n\to\infty}\iint_{E} \Young(\beta   \zeta_n(x,y)) \, \tetapi (\dd x \dd y) =0 \text{ for all } \beta >0\,,
 \end{equation} 
  see e.g.\ \cite[Lemma 1.16]{Leo-unpubl}. \EEE
\par
 We will also consider the space
 \begin{equation}
 \label{X-L-Orli}
 \rmX^{\Young}: = \{ \varphi \in \rmL^\infty (V;\pi)\,: \ \overline{\nabla}\varphi \in \rmL^{\Young}(E;\tetapi)\}\,.
 \end{equation}
 Furthermore, let us  introduce the 
 \emph{small Orlicz} space
\[
 \mathcal{M}^{\Young}(E;\tetapi): = \left\{ \varphi \in \rmL^1(E;\tetapi)\, : \ \forall\, \ell> 0 \ \iint_{E} \Young\left(\frac{\varphi(x,y)}{\ell}\right) \tetapi(\dd x \dd y) <+\infty\right\},
 \]
\noindent and, accordingly,  
  \begin{equation}
 \label{X-M-Orli}
 \mathcal{X}^{\Young}: = \bigl\{ \varphi \in \rmL^\infty (V;\pi)\,: \ \overline{\nabla}\varphi \in \mathcal{M}^{\Young}(E;\tetapi)\bigr\}\,.
 \end{equation}
Clearly, $\mathcal{M}^{\Young}(V;\pi)$ is a subspace of $\rmL^{\Young}(E;\tetapi)$, so that $ \mathcal{X}^{\Young}\subset \rmX^{\Young}$. The space $\mathcal{X}^{\uppsi^*}$ from \eqref{eq:test_extend} falls within this class, as does $\mathcal{X}_2$ from \eqref{eq:test_extend-quadr}, corresponding to the \emph{quadratic} Young function $ \Young(\xi)= \frac12 |\xi|^2$. Our density condition \eqref{Ass:F-bis} below involves $\Lipb(V)$ and $\mathcal{X}_2$. 
\par
We are now in a position to  state the main result of this section.
To avoid overburdening the exposition, we  postpone its \emph{proof} 
to Appendix \ref{app:density-vindicated}. 
\begin{theorem}
\label{thm:density-vindicated}
Assume that $\Lipb(V)$ is dense in $\mathcal{X}_2$ in the following sense: 
 \begin{equation}
  \label{Ass:F-bis}
  \begin{aligned}
  &
 \text{for every  $\varphi \in \mathcal{X}_2$ there exists a sequence $(\varphi_n)_n \subset \Lipb(V)$ such that as $n\to \infty$,}
\\
& \text{$\varphi_n\to\varphi$ in $\rmL^1(V;\pi)$,}
\\
& \text{$\overline\nabla \varphi_n \to \overline\nabla \varphi$ in $\rmL^2(E;\tetapi)$.}
\end{aligned}
 \end{equation}
 Let $(\rho,
 \jj) \in \ACE0T$ with $u =  \frac{\dd \rho}{\dd \pi} \in \rmL^\infty (0,T; \rmL^\infty(V;\pi))$. 
Then, $(\rho,
 \jj) \in \ECE0T$. 
\end{theorem}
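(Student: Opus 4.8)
The plan is to upgrade the validity of the continuity equation from the class of bounded Lipschitz test functions to the full space $\mathcal{X}_2$ by a density argument, exploiting the density hypothesis \eqref{Ass:F-bis} together with the $\rmL^\infty$-bound on $u$ and the finite-action bound, which jointly control all the relevant integrals. Fix $(\rho,\jj)\in\ACE0T$ with $u = \frac{\dd\rho}{\dd\pi}\in\rmL^\infty(0,T;\rmL^\infty(V;\pi))$. Since $(\rho,\jj)\in\CE0T$, the identity \eqref{CE} holds for all $\varphi\in\Lipb(V)$. The first (easy) step is to verify that for any $\varphi\in\mathcal{X}_2$, each term in \eqref{eq:RCE} is well-defined and finite: the left-hand side because $\varphi\in\rmL^\infty(V;\pi)$ and $\rho_t = u_t\pi$ with $u_t$ uniformly bounded; the right-hand side by the estimate already recorded right after Lemma~\ref{l:worthwhile-highlight}, namely
\[
\iiint_{[0,T]\times E} |\dnabla\varphi|(x,y)\,|\jj_{\!\Lebone}|(\dd t\dd x\dd y) \leq \int_0^T\scrR(\rho_t,\jj_t)\,\dd t + CT\iint_E|\dnabla\varphi|^2(x,y)\,\tetapi(\dd x\dd y)<+\infty,
\]
using $\int_0^T\scrR(\rho_t,\jj_t)\,\dd t<\infty$ from $(\rho,\jj)\in\ACE0T$ and $\varphi\in\mathcal{X}_2$.

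The second step is the passage to the limit. Given $\varphi\in\mathcal{X}_2$, pick $(\varphi_n)_n\subset\Lipb(V)$ as in \eqref{Ass:F-bis}, so $\varphi_n\to\varphi$ in $\rmL^1(V;\pi)$ and $\overline\nabla\varphi_n\to\overline\nabla\varphi$ in $\rmL^2(E;\tetapi)$. For the left-hand side of \eqref{CE}, write $\int_V\varphi_n\,\rho_t(\dd x) = \int_V\varphi_n u_t\,\dd\pi$; since $\|u_t\|_{\rmL^\infty(\pi)}$ is bounded uniformly in $t$ and $\varphi_n\to\varphi$ in $\rmL^1(V;\pi)$, we get $\int_V\varphi_n\,\rho_t(\dd x)\to\int_V\varphi\,\rho_t(\dd x)$ for every $t\in[0,T]$. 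For the right-hand side, the decomposition $2\jj_r = w_r\tetapi$ (valid for a.e.\ $r$ since $\scrR(\rho_r,\jj_r)<\infty$ and $\rho_r\ll\pi$, by Lemma~\ref{lm:proprr}(2)) lets us write the double integral as $\frac12\iint_E\dnabla\varphi_n(x,y)\,w_r(x,y)\,\tetapi(\dd x\dd y)$, and then
\[
\left|\int_s^t\!\!\iint_E \dnabla(\varphi_n{-}\varphi)\,\jj_r(\dd x\dd y)\,\dd r\right| \leq \tfrac12\int_s^t\|\dnabla(\varphi_n{-}\varphi)\|_{\rmL^2(\tetapi)}\,\|w_r\|_{\rmL^2(\tetapi)}\,\dd r.
\]
Here $\|\dnabla(\varphi_n-\varphi)\|_{\rmL^2(\tetapi)}\to0$ by \eqref{Ass:F-bis}, while $\int_0^T\|w_r\|_{\rmL^2(\tetapi)}\,\dd r<\infty$: indeed the finite action $\int_0^T\scrR(\rho_r,\jj_r)\,\dd r<\infty$, the representation \eqref{nice-representation}, the lower bound $u\ge 0$ together with $\uppsi$ being asymptotically quadratic near zero (equivalently the quadratic lower bound \eqref{Olli-est-2} on $\uppsi^*$, dualized), and the $\rmL^\infty$-bound $\alpha(u_r(x),u_r(y))\le c_\alpha(1{+}2\|u\|_\infty)$ control $\iint_E |w_r|^2\,\tetapi \lesssim \scrR(\rho_r,\jj_r)+\text{const}$ on the set $E_\alpha$ where $w_r$ is supported; integrating in $r$ gives an $\rmL^1(0,T)$ bound on $r\mapsto\|w_r\|_{\rmL^2(\tetapi)}$. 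Hence the right-hand side converges, and passing to the limit in \eqref{CE} yields \eqref{eq:RCE} for the given $\varphi\in\mathcal{X}_2$. Since $\varphi$ was arbitrary and the bounds and finite-action/Fisher-information conditions defining $\ACE0T$ are already assumed, this gives $(\rho,\jj)\in\ECE0T$.

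The main obstacle I anticipate is making rigorous the claim that the flux density $w_r = \frac{\dd(2\jj_r)}{\dd\tetapi}$ lies in $\rmL^2(E;\tetapi)$ with an $\rmL^1(0,T)$-integrable norm, controlled by the action. The subtlety is that $\uppsi$ (the primal dissipation density) is only known to be superlinear, not to have a genuine quadratic \emph{lower} bound globally, so one cannot directly bound $\iint|w_r|^2\,\tetapi$ by $\scrR(\rho_r,\jj_r)$ alone; one must instead split into the region where $|w_r|/\alpha$ is small (using the near-zero quadratic behaviour encoded in \eqref{corollary-needed-control-bound}/\eqref{eq:disspbd}, and the boundedness of $\alpha$ via the $\rmL^\infty$-bound on $u$) and the region where it is large (using superlinearity of $\psih$ to absorb the quadratic term). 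The bounds \eqref{eq:disspbd} from Lemma~\ref{lm:proprr} are tailored precisely for this, giving $\scrR(\rho_r,\jj_r)\gtrsim\frac12\hat\psih\bigl(2\|(1{\wedge}\dV)\jj_r\|_{\mathrm{TV}},c_\alpha c_\kappa(2\rho_r(V){+}\pi(V))\bigr)$; combined with $\rho_r(V)$ bounded uniformly in $r$ (from narrow continuity of $t\mapsto\rho_t$, Remark~\ref{rmk:lip-test-functions}) and $\psih$ superlinear, one deduces a uniform-in-$r$ bound on a suitable norm of $\jj_r$ on $\{\dV\ge1\}$, while on $\{\dV<1\}$ one uses that $\varphi$ is Lipschitz-approximable precisely in the $\rmL^2(\tetapi)$ sense weighted by $(1{\wedge}\dV^2)$, matching the structure of $\mathcal{X}_2$. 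Threading these estimates together carefully — keeping track of which set each bound lives on and ensuring the Lipschitz approximation converges in exactly the norm dual to the available flux bound — is where the real work lies; everything else is routine.
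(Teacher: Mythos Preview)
Your overall strategy (density plus passage to the limit) is the right one, and the left-hand side is handled correctly. The gap is in your treatment of the right-hand side: you try to use Cauchy--Schwarz and claim that $w_r=\frac{\dd(2\jj_r)}{\dd\tetapi}\in\rmL^2(E;\tetapi)$ with $\int_0^T\|w_r\|_{\rmL^2(\tetapi)}\,\dd r<\infty$. This is \emph{not} implied by finite action. The assumption \eqref{quadratic-at-0} only gives $\uppsi(s)\sim c\,s^2$ for \emph{small} $s$; for large $s$ one merely has superlinearity, and in the $\cosh$ case $\uppsi(s)\sim s\log s$, which is sub-quadratic. Since $\hat\uppsi(w,a)$ is decreasing in $a$ and $\alpha\le\bar\alpha:=c_\alpha(1+2\|u\|_\infty)$, the action only controls $\int\uppsi(w_r/\bar\alpha)\,\dd\tetapi$. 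On the set where $|w_r|$ is large this is genuinely weaker than $\int w_r^2\,\dd\tetapi$: take $w$ equal to $n$ on sets of $\tetapi$-measure $\sim 1/(n\log n)$ to see that $\int\uppsi(w)\,\dd\tetapi<\infty$ while $\int w^2\,\dd\tetapi=\infty$. Your proposed split by ``$|w_r/\alpha|$ small vs.\ large'' or by ``$\dV\le1$ vs.\ $\dV>1$'' does not close this: on the ``large'' region, superlinearity only yields an $\rmL^1$-type bound on $w_r$, not $\rmL^2$, and the $(1{\wedge}\dV)$-weighted bound \eqref{eq:disspbd} controls the wrong quantity.

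The paper's remedy is to replace Cauchy--Schwarz by the Fenchel--Young inequality for the \emph{matched} pair $(\uppsi,\uppsi^*)$, with a free parameter $\beta>0$:
\[
\Big|\textstyle\iint_{E}\dnabla(\varphi_n{-}\varphi)\,\jj_r\Big|
\le \frac{1}{2\beta}\iint_{E_{\upalpha}^r}\uppsi^*\bigl(\beta\,\dnabla(\varphi_n{-}\varphi)\bigr)\,\upalpha(u_r(x),u_r(y))\,\dd\tetapi
+\frac{1}{\beta}\,\scrR(\rho_r,\jj_r).
\]
The first term is bounded by $\frac{\bar C}{2\beta}\iint_E\uppsi^*(\beta\,\dnabla(\varphi_n{-}\varphi))\,\dd\tetapi$ with $\bar C:=\sup\upalpha(u_r(x),u_r(y))<\infty$ thanks to the $\rmL^\infty$-bound on $u$; by Lemma~\ref{cor:Oliver} (which transfers the $\rmL^2$ density \eqref{Ass:F-bis} to convergence of $\iint\uppsi^*(\beta\,\cdot)\,\dd\tetapi$ for every fixed $\beta$, via \eqref{est-condensed}) this tends to $0$ as $n\to\infty$. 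The second term is controlled by the action uniformly in $n$. Sending first $n\to\infty$ and then $\beta\to\infty$ gives \eqref{RHSto0}. The point is that this argument uses the action bound \emph{as is}, without ever needing $w_r\in\rmL^2(\tetapi)$.
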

 As a direct consequence of Theorems  \ref{thm:ECE} and 
\ref{thm:density-vindicated}, we have the following.

\begin{cor}
\label{cor:density-vindicated}
Assume \eqref{Ass:F-bis} and suppose that the Fisher information functional  $\Fish\colon\mathrm{dom}(\scrE)\to [0,+\infty]$ is convex. Then, any $\Dissipative$ solution $(\rho, \jj)$ with $u = \frac{\dd \rho}{\dd \pi} \in \rmL^\infty (0,T; \rmL^\infty(V;\pi))$ is a $\Reflecting$ solution.
\end{cor}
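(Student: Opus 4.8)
The plan is to combine the two preceding results in a completely straightforward way, so the ``proof'' amounts essentially to invoking them in the right order; the substantive work has already been done in Theorems~\ref{thm:ECE} and~\ref{thm:density-vindicated}. First I would start from a \textsc{Dissipative} solution $(\rho,\jj)$ whose density $u = \frac{\dd\rho}{\dd\pi}$ satisfies $u \in \rmL^\infty(0,T;\rmL^\infty(V;\pi))$. By definition of \textsc{Dissipative} solution, $(\rho,\jj)\in\CE0T$, $\calS(\rho_0)<\pinfty$, and the energy-dissipation inequality \eqref{UEDE} holds; in particular $\int_0^T(\scrR(\rho_t,\bj_t){+}\scrD(\rho_t))\,\dd t<\infty$ and $\sup_{t\in[0,T]}\scrE(\rho_t)<\infty$ (the latter because $t\mapsto\calS(\rho_t)$ is monotone nonincreasing along a \textsc{Dissipative} solution and $\scrE(\rho_0)<\pinfty$). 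Hence $(\rho,\jj)\in\ACE0T$ with the required $\rmL^\infty$-bound on $u$, so the hypotheses of Theorem~\ref{thm:density-vindicated} are met under assumption \eqref{Ass:F-bis}.

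Next I would apply Theorem~\ref{thm:density-vindicated} to conclude $(\rho,\jj)\in\ECE0T$: the density property \eqref{Ass:F-bis} upgrades the validity of the continuity equation from test functions in $\Lipb(V)$ to test functions in $\mathcal{X}_2$, which is exactly what membership in $\ECE0T$ additionally requires on top of $(\rho,\jj)\in\ACE0T$. At this point we have a \textsc{Dissipative} solution lying in $\ECE0T$, and the convexity hypothesis on $\Fish$ is in force. This is precisely the situation covered by the final assertion of Theorem~\ref{thm:ECE}: if $(\rho,\jj)\in\ECE0T$ is a \textsc{Dissipative} solution and $\Fish$ is convex, then the chain rule of Definition~\ref{def:CR} holds along $(\rho,\jj)$, and consequently (via Theorem~\ref{th:diss.vs.bal}) the energy-dissipation inequality \eqref{UEDE} self-improves to the energy-dissipation balance \eqref{R-Rstar-balance} on every subinterval $[s,t]\subset[0,T]$. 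Since $(\rho,\jj)\in\ECE0T$ already, this makes $(\rho,\jj)$ a \textsc{Reflecting} solution by definition, which is the claim.

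I do not anticipate any genuine obstacle here, since the corollary is designed to be an immediate consequence of the two cited theorems; the only things to be careful about are bookkeeping points, namely (i) checking that the bounded-entropy and finite-action/Fisher-information conditions defining $\ACE0T$ really do follow from the \textsc{Dissipative} solution property together with $\rho_0\in\dom(\scrE)$ — this uses that \eqref{UEDE} forces both the entropy to stay finite and the dissipation integral to be finite — and (ii) making sure the $\rmL^\infty$-in-time-and-space bound on $u$ is exactly the hypothesis required by both Theorem~\ref{thm:density-vindicated} and the definition of $\ECE0T$. Both are routine. The proof is therefore essentially one line of chaining: \textsc{Dissipative} with bounded density and \eqref{Ass:F-bis} $\Rightarrow$ $\ECE0T$ by Theorem~\ref{thm:density-vindicated} $\Rightarrow$ \textsc{Reflecting} by Theorem~\ref{thm:ECE}.
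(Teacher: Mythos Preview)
Your proposal is correct and matches the paper's approach exactly: the paper simply states that the corollary is ``a direct consequence of Theorems~\ref{thm:ECE} and~\ref{thm:density-vindicated}'' without giving a separate proof, and your chaining \textsc{Dissipative} with bounded density $\Rightarrow$ $\ACE0T$ $\Rightarrow$ $\ECE0T$ (via Theorem~\ref{thm:density-vindicated}) $\Rightarrow$ \textsc{Reflecting} (via Theorem~\ref{thm:ECE}) is precisely that chain spelled out. The bookkeeping points you flag are exactly the right ones and are indeed routine.
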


\begin{remark}[Broader character of property \eqref{Ass:F-bis}]
    In fact, by resorting to the assumed condition \eqref{quadratic-at-0} on $\uppsi^*$, in Lemma \ref{cor:Oliver} in Appendix \ref{app:density-vindicated} we will show that \eqref{Ass:F-bis} guarantees the validity of the same approximation property, with the space $\mathcal{X}_2$ replaced by the space $\mathcal{X}^{\uppsi^*}$ induced by the Young function $\Young = \uppsi^*$, and with the convergence $\overline\nabla \varphi_n \to \overline\nabla \varphi$ in $\rmL^2(E;\tetapi)$ replaced by that in $\rmL^{\uppsi^*}(E;\tetapi)$. 
\end{remark}

\begin{remark}[Density gap]
\label{rmk:density-gap}
    In Appendix \ref{app:examples-density} we will exhibit  examples of setups in which property \eqref{Ass:F-bis} holds: besides the obvious choices of $V$ as $\R^d$ or as a bounded (Lipschitz) domain $\Omega\subset \R^d$, we will show its validity in the $d$-dimensional torus.
\par
    The well-groundedness of \eqref{Ass:F-bis} in ambient spaces more general than $\R^d$ remains an open problem. We have chosen not to explore this direction further, because, indeed, all of our existence results for the singular $(\calS, \scrR,\scrR^*)$ evolution system do not, in fact, \emph{depend} on \eqref{Ass:F-bis}. 
\end{remark}

\section{Approximation via regularized kernels and robustness of Reflecting solutions}
\label{s:4}
\noindent
As mentioned in the Introduction, we will prove our existence results  for the evolution system associated with $(V,\kappa,\upphi,\uppsi,\upalpha)$, by approximating the singular
kernels $(\kappa(x,\cdot))_{x\in V}$ via a family of regularized kernels (see Theorems \ref{th:main-0}, \ref{th:main} \& \ref{thm:robustness} ahead, corresponding to \textsc{Main Theorems} \ref{thm:mdissex}, \ref{thm:mrexist} \& \ref{thm:mrobust} respectively).
\par
We set up the approximate systems in Section \ref{ss:4.1} below, and state our convergences results in Sec.\ \ref{ss:4.2}. The cornerstone of their proofs will be a suitable robustness result for $\Reflecting$ solutions, stated in Sec.\ \ref{ss:robustness} ahead. 
\par
  Hereafter, in addition to our standing Assumptions \ref{Ass:E},  \ref{Ass:D} \& \ref{Ass:flux-density}, we shall require the following.
  \begin{assumption}
  \label{ass:dupphi} The function 
   $\mathrm{D}_\upphi: \R_+\times \R_+ \to [0,+\infty]$
   from \eqref{dupphi}
    is convex.
  \end{assumption} 
  As we will see, on the one hand, Assumption~\ref{ass:dupphi} will ensure a pivotal lower semicontinuity property for the Fisher information functionals, which, in this context, depend on the approximating parameter since we will approximate the kernel $\kappa$, cf.\ \eqref{eps-Fisher} below. On the other hand, Assumption~\ref{ass:dupphi} clearly implies the convexity condition \eqref{convexity-Fisher-info} in Theorem \ref{thm:ECE} which, in turn, will be at the core of the proof of Theorem \ref{th:main}.

\subsection{Setup of the approximation}
\label{ss:4.1}
Let us consider a sequence $(a_{n})_{n}$  of nonnegative \emph{symmetric} bounded functions
$a_n :E\to \R_{+}$  such that 
\begin{equation}
\label{eq:an-1}
\sup_{n} \|a_{n}\|_{\rmL^\infty(\tetapi)}< \infty, \quad \mbox{$a_{n}(x,y) \to 1 $ for  $\tetapi$-a.e.\ $(x,y) \in E$.}
\end{equation}
Henceforth we will assume 
\[\sup_{n}  \|a_{n}\|_{\rmL^\infty(\tetapi)} \leq 1,
\]
but all the statements contained in this section are still valid, upon to minor modification of the estimates involved, when $\sup_{n}  \|a_{n}\|_{\rmL^\infty(\tetapi)}$ is estimated by a generic positive constant. 
\par
For each $n\in \N$ we define the  kernels $(\kappa_n(x,\cdot))_{x\in V} \subset \Ms^+(V)$ via 
\begin{equation}
\label{kappa-eps-reg-n}
	\kappa_n(x,\dd y) := a_{n}(x,y)\kappa(x,\dd y) \qquad \text{for all } x \in V,
\end{equation}
which in turn induce the couplings  $(\tetapin)_n$  on $E$  given by
\begin{equation}
\label{tetapin}
\tetapin (\dd x \dd y) :=\kappa_n(x, \dd y  ) \pi (\dd x )\,.
\end{equation}
Observe that, by the symmetry of $a_n$ each $\tetapin$ still  satisfies the detailed balance \eqref{DBC}. 
\par
It is immediate to check that, by their very definition, the kernels $(\kappa_n)_n$ satisfy 
\begin{equation}
\label{mitigated-bounde}
\forall\, n \in \N\, : 
\quad
\begin{cases}
\displaystyle
\kappa^{n}\leq \kappa, 
\\
\displaystyle 
 \sup_{x\in V} \int_{V}  (1{\wedge}\dV^2(x,y)) \kappa_{n}(x,\dd y) \leq c_\kappa
 \end{cases}
\end{equation}
with $c_\kappa$ the very same constant as in \eqref{mitigation of singularity},  as  we have for convenience supposed $\sup_{n \in \N}\|a_n\|_{\infty}\leq 1$.
Moreover, 
 $(\kappa_n)_n$  is  a measurable family of measures, cf.\ Lemma \ref{eq:knass} ahead. 
\par
We now aim to consider the evolution systems $(\scrE,\scrR^n,(\scrR^n)^*)_n$ arising in connection with the structures $(V,\mathsf{d}, \kappa_n)$ and use their solutions
to approximate a $\Dissipative$/$\Reflecting$ solution to the original system $(\scrE,\scrR,\scrR^*)$ associated with  the singular kernel $\kappa$. 
For this, we need to ensure that the systems 
$(\scrE,\scrR^n,(\scrR^n)^*)_n$ do possess solutions. That is why, in Section 
\ref{ss:4.2} ahead
we will additionally suppose that 
the kernels $\kappa_n$ are \emph{bounded}, namely
\begin{equation}
\label{kan-back-2-bounded-kernel}
\tag{\textrm{bnd}\,$\kappa_n$}
\forall\,n \in \N\, : \quad  \sup_{x \in V} \kappa_n(x,V) <+\infty\,,
\end{equation}
so that the couplings $\tetapin$ are now \emph{finite} measures  on $E$. 
To fix ideas, one may think of the following prototypical example.
\begin{example}
\label{kappa-eps-reg}
Let $(\eps_n)_n$ be a null sequence and   set 
\[
a_{\eps_n}(x,y) =  \frac{1\wedge\dV^2(x,y)}{\varepsilon_n + (1{\wedge}\dV^2(x,y))} \qquad \text{for all } (x,y) \in E\,.
\]
Consider the kernels $\kappa_{\eps_n}(x,\dd y): = a_{\eps_n}(x,y)  \kappa (x, \dd y)$. For every  $x
\in V$, $  \kappa_{\eps_n}(x,\dd y)$ is a \emph{finite} measure on the whole of $V$, and by \eqref{mitigation of singularity}   there holds
\begin{equation}
\label{back-2-bounded-kernel}
\sup_{x \in V} \int_{V} \kappa_{\eps_n}(x, \dd y) 
= \sup_{x\in V} \int_V \frac1{\varepsilon_n + 1{\wedge}\dV^2(x,y)} (1{\wedge}\dV^2(x,y))  \kappa(x,\dd y) 
\leq \frac1{\eps_n} c_\kappa\,.
\end{equation}
\end{example}

\par
Let $\scrR^n: \calM^+(V) \times \calM(E) \to [0,+\infty]$ and $(\scrR^n)^*  : \calM^+(V)\times \Cb(E) \to [0,+\infty]$ the primal and dual dissipation potentials associated with the couplings $\tetapin$ via Definition \ref{def:primal-and-dual} (with  $\Ed$ replaced  by $E$), and let $\scrD^n$ the corresponding Fisher information functionals
\begin{equation}
    \label{eps-Fisher}
    \Fish^n(\rho): = 
\begin{cases}
\frac12 \iint_{E}  \mathrm{D}_\upphi(u(x),u(y)) \, \tetapin(\dd x \dd y) \qquad \text{if  } \rho = u \pi \text{ and  $ \mathrm{D}_\upphi(u(x),u(y)) \in L^1(E,\tetapin)$.}
\\
+\infty \text{ otherwise.}
\end{cases} 
\end{equation}
\par
Under the standing Assumptions  \ref{Ass:E}, \ref{Ass:D}, \ref{Ass:flux-density}, \ref{ass:dupphi}, and in view of the boundedness property \eqref{kan-back-2-bounded-kernel}, the  regularized system $(V,\kappa_n,\upphi,\uppsi,\upalpha)$ complies with the conditions of \cite[Thm.\ 5.7]{PRST22}. The latter ensures 
 \begin{itemize}
 \item[-]
the existence of 
a $\Balanced$ $(\rho,\jj)$ solution to the $(\scrE,\scrR^n,(\scrR^n)^*)$ evolution system,
\item[-]
that fulfills the continuity equation along any sub-interval $ [s,t]\subset [0,T]$, with arbitrary test functions $\varphi \in \Bb(V)$. 
\end{itemize}
Thus, a fortiori we have the existence of a $\Reflecting$ solution as soon as the density  $u = \frac{\dd \rho}{\dd \pi}$  is bounded. 
More precisely, we have  the following statement, which also encompasses the maximum principle proved in \cite[Thm.\ 6.5]{PRST22} and the uniqueness
result from \cite[Thm.\ 5.9]{PRST22}. 
\begin{prop}{\cite[Thms\ 5.9, 5.10, \& 6.5]{PRST22}}
\label{prop:fromPRST}
 For $n\in \N$  fixed, let the kernel $\kappa_n$ be defined by \eqref{eq:an-1} and \eqref{kappa-eps-reg-n}. The following statements hold:
 \begin{enumerate}
 \item For every $\rho^0\in \dom(\scrE)$, there is a $\Balanced$ solution $(\rho,\jj)$ of the $(\scrE,\scrR^n,(\scrR^n)^*)$ system, which is \emph{unique} if $\upphi$ is strictly convex, fulfilling the continuity equation for all $\varphi \in \Bb(V)$. 
%
%
 \item If $\rho^0 = u^0 \pi$ and the initial density  $u^0 \in L^\infty(V,\pi)$ fulfills 
 \begin{equation}
 \exists\, 0 \leq  \underline{U} < \overline{U} \ \text{ for $\pi$-a.e.\ $x\in V$:} \qquad 
  0 \leq \underline{U} \leq u^0(x) \leq \overline{U}\,,
  \end{equation}
  then 
  $\rho = u \pi$ satisfies
  \begin{equation}
  \label{boundedness}
   0 \leq \underline{U} \leq u_t(x) \leq \overline{U} \qquad \text{for } (\Lebone{\times}\pi)\text{-a.e. } (t,x) \in (0,T){\times}V\,.
  \end{equation}
  As a consequence, $(\rho, \jj)$ is also a $\Reflecting$ solution  of the 
   $(\scrE,\scrR^n,(\scrR^n)^*)$ system.
 \end{enumerate}
\end{prop}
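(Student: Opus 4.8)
The statement to prove is Proposition~\ref{prop:fromPRST}, which is essentially a citation of three results in \cite{PRST22} transferred to the regularized kernels $\kappa_n$. So my plan is to verify that each $\kappa_n$ fits the framework of \cite{PRST22} and then quote the corresponding theorems there, with minor bookkeeping.

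\textbf{Step 1: Check that $(V,\kappa_n,\upphi,\uppsi,\upalpha)$ is an admissible system in the sense of \cite{PRST22}.} For fixed $n$, by \eqref{kan-back-2-bounded-kernel} the kernel $\kappa_n$ is bounded, so the coupling $\tetapin$ of \eqref{tetapin} is a finite measure on $E$, satisfying the detailed balance \eqref{DBC} by symmetry of $a_n$ (observed right after \eqref{tetapin}), and being a measurable family of measures (Lemma~\ref{eq:knass}). The entropy density $\upphi$ satisfies Assumption~\ref{Ass:E}, the dissipation density $\uppsi^*$ satisfies Assumption~\ref{Ass:D}, and $\upalpha$ satisfies Assumption~\ref{Ass:flux-density}; moreover, Assumption~\ref{ass:dupphi} gives convexity of $\mathrm{D}_\upphi$. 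These are exactly the hypotheses under which \cite[Thm.~5.7]{PRST22} provides existence of a $\Balanced$ solution, \cite[Thm.~5.9]{PRST22} gives uniqueness for strictly convex $\upphi$, \cite[Thm.~5.10]{PRST22} yields the continuity equation tested against all $\varphi \in \Bb(V)$, and \cite[Thm.~6.5]{PRST22} gives the maximum principle. Invoking these results with $\kappa$ replaced by $\kappa_n$ and $\tetapi$ by $\tetapin$ gives item (1) verbatim, including the fact that the continuity equation holds for all bounded Borel test functions.

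\textbf{Step 2: Maximum principle (first part of item (2)).} With $\rho^0 = u^0\pi$ and $\underline U \le u^0 \le \overline U$ $\pi$-a.e., apply \cite[Thm.~6.5]{PRST22} to the system $(\scrE,\scrR^n,(\scrR^n)^*)$ to conclude \eqref{boundedness}, i.e.\ the density $u_t$ of the $\Balanced$ solution satisfies $\underline U \le u_t \le \overline U$ for $(\Lebone{\times}\pi)$-a.e.\ $(t,x)$.

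\textbf{Step 3: Upgrade to a $\Reflecting$ solution (second part of item (2)).} Here I would argue as follows. By \eqref{boundedness} we have $u \in \rmL^\infty(0,T;\rmL^\infty(V;\pi))$, with $\sup_{t}\|u_t\|_{\rmL^\infty(\pi)} \le \overline U$, so condition (1) in the definition of $\ECE 0T$ holds (with $\Ed$ replaced by $E$, which is harmless since $\tetapin(E\setminus\Ed)=0$ by $\kappa_n \le \kappa$ and \eqref{crucial-facts-E'}). Since $(\rho,\jj)$ is a $\Balanced$ solution it has finite action $\int_0^T \scrR^n(\rho_t,\jj_t)\dd t < \infty$ and, being balanced, finite Fisher information $\int_0^T \scrD^n(\rho_t)\dd t < \infty$ (both follow from \eqref{R-Rstar-balance} together with $\scrS(\rho_0)<\infty$, which holds as $u^0 \in \rmL^\infty(\pi)$ and $\pi$ is finite), so condition (2) holds. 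For condition (3), the key point is that the continuity equation from \cite[Thm.~5.10]{PRST22} holds for \emph{all} $\varphi \in \Bb(V)$; since every $\varphi \in \mathcal{X}_2$ lies in $\rmL^\infty(V;\pi)$, hence $\pi$-a.e.\ equal to a bounded Borel function, and since the pairing $\iint_{E}\dnabla\varphi\,\jj_r$ only depends on $\varphi$ through its $\pi$-a.e.\ class (as $\jj_r \ll \tetapin$ with marginals absolutely continuous w.r.t.\ $\pi$, using $\scrR^n(\rho_r,\jj_r)<\infty$ and \eqref{nice-representation}), the continuity equation extends to all $\varphi \in \mathcal{X}_2$ — in fact $\Bb(V) \cap \rmL^\infty(V;\pi) \supset \mathcal{X}_2$ up to $\pi$-null modification, so no density argument is needed. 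This places $(\rho,\jj) \in \ECE 0T$, and together with \eqref{R-Rstar-balance} it is a $\Reflecting$ solution.

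\textbf{Main obstacle.} The only genuinely non-routine point is Step~3: making sure that testing against $\mathcal{X}_2$ is not actually stronger than testing against $\Bb(V)$ in the bounded-kernel regime. This is where it matters that $\kappa_n$ is bounded — then $\mathcal{X}_2 \subset \rmL^\infty(V;\pi)$ and, modulo $\pi$-null sets, $\mathcal{X}_2$ is contained in $\Bb(V)$, so the \cite[Thm.~5.10]{PRST22} continuity equation already covers it; one just has to check that both sides of \eqref{eq:RCE} are insensitive to $\pi$-a.e.\ modification of $\varphi$, which follows from $\rho_t = u_t\pi$ and from $\jj_r$ being concentrated on $E_{\upalpha_r} \subset \Ed$ with density w.r.t.\ $\tetapin$, whose marginals are dominated by multiples of $\pi$. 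Everything else is a direct citation.
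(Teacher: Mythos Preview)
Your proposal is correct and matches the paper's approach: the paper does not give an independent proof of Proposition~\ref{prop:fromPRST} either, but simply observes (in the paragraph preceding the statement) that the boundedness \eqref{kan-back-2-bounded-kernel} places $(V,\kappa_n,\upphi,\uppsi,\upalpha)$ within the framework of \cite{PRST22} and then cites \cite[Thms.~5.7, 5.9, 5.10, 6.5]{PRST22} directly. Your Step~3 spells out in more detail what the paper compresses into ``a fortiori we have the existence of a $\Reflecting$ solution as soon as the density $u$ is bounded'', and your observation that for bounded kernels testing against $\mathcal{X}_2$ is already covered by testing against $\Bb(V)$ (up to $\pi$-null modification) is the right justification.
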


\subsection{Our convergence results}
\label{ss:4.2}
Now, let  $(\rho^{n},\jj^n)_n$ be a sequence of ($\Balanced$ solutions of the $(\scrE,\scrR^n,(\scrR^n)^*)$ evolution systems, starting from initial data $(\rho_0^n)_n \subset \calM^+(V)$ satisfying suitable conditions---their existence is ensured by Proposition \ref{prop:fromPRST}.
\par
Our first main result  addresses the convergence of (a subsequence of)  $(\rho^{n},\jj^{n})_n$, to a $\Dissipative$ solution, in the sense of Definition \ref{def:weak-solution}, of the $(\scrE,\scrR,\scrR^*)$ system. 
In the following statement and thereafter, we  will use this notation: denoting $\Lebone$ the Lebesgue measure $\Lebone|_{(0,T)}$, a measurable family $\nu = (\nu_t)_{t\in [0,T]} \in \Ms(Y)$, with $Y\in \{ V,E\}$, induces a measure in $\Ms([0,T]{\times} Y)$ by setting 
\begin{equation}
\label{Lebintegrated}
\nu_{\!\Lebone} (\dd t \dd x \dd y): = \nu_t(\dd x \dd y) \,\Lebone (\dd t).
\end{equation}
\begin{theorem}[Convergence to $\Dissipative$ solutions]
\label{th:main-0} 
 Under Assumptions  \ref{Ass:E},  \ref{Ass:D}, \ref{Ass:flux-density} \& \ref{ass:dupphi}, consider a family of kernels $(\kappa_n)_n$ defined from functions $(a_n)_n$ as in \eqref{eq:an-1} \& \eqref{kappa-eps-reg-n}.
 Moreover, suppose that  the kernels satisfy the boundedness condition \eqref{kan-back-2-bounded-kernel}. 
 
 Further, let $(\rho_0^n)_n, \rho_0 \in \dom(\scrE)$ satisfy 
 \begin{equation}
  \label{initial-thm:main}
 \begin{cases}
 &
 \rho_0^n \to \rho_0 \text{ setwise in } \calM^+(V) \text{ as } n \to \infty\,,
 \\
 & 
 \scrE(\rho_0^n)  \to \scrE(\rho_0)  \text{ as } n \to \infty\,.
 \end{cases}
 \end{equation}
 Let $(\rho^{n},\bj^{n})_n$ be $\Balanced$ solutions to the $(\scrE,\scrR^{n},(\scrR^{n})^*)$ systems emanating from $( \rho_0^n)_n$. 
 \par
 Then,
  there exist $(\rho,\bj) \in \CE 0T$ and (a not relabeled) subsequence such that the following convergences hold as $n\to\infty$
  \begin{subequations}
  \label{cvg-thm1}
\begin{align}
\label{DBL}
&
\rho_t^{n}\to \rho_t  && \text{setwise in } \calM^+(V) \quad \text{for all } t\in [0,T];
\\
& 
\label{cvg-j}
\bj_{\!\Lebone}^{n}\to \bj_{\!\Lebone}  =\bj_t \Lebone   &&  
\text{$\sigma$-setwise in } \Ms([0,T]{\times}E);
\end{align}
\end{subequations}
the measure $\rho$ satisfies
$\rho_t  = u_t \pi$  for all $t\in [0,T]$, 
for some $u\in \rmL^1(0,T;\rmL^1(V;\pi))$ and the pair
$(\rho,\bj)$ is a  $\Dissipative$ solution \EEE of the $(\scrE,\scrR,\scrR^*)$ evolution system.
\end{theorem}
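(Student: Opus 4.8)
The plan is a standard compactness-plus-lower-semicontinuity argument, organized around the energy-dissipation inequality that each approximate solution satisfies. Since every $(\rho^n,\bj^n)$ is a $\Balanced$ solution of the $(\scrE,\scrR^n,(\scrR^n)^*)$ system emanating from $\rho_0^n$, it satisfies
\begin{equation}
\label{proof-approx-EDB}
\int_0^t\!\bigl(\scrR^n(\rho^n_r,\bj^n_r)+\scrD^n(\rho^n_r)\bigr)\,\dd r+\scrE(\rho^n_t)=\scrE(\rho_0^n)\qquad\text{for all }t\in[0,T].
\end{equation}
The convergence $\scrE(\rho_0^n)\to\scrE(\rho_0)<\infty$ then gives a uniform bound on $\sup_t\scrE(\rho^n_t)$, on the integrated actions $\int_0^T\scrR^n(\rho^n_t,\bj^n_t)\,\dd t$, and on $\int_0^T\scrD^n(\rho^n_t)\,\dd t$. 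The first step is to extract convergent subsequences. From the superlinearity of $\upphi$ (Assumption~\ref{Ass:E}) and de la Vallée-Poussin, the bound on $\scrE(\rho^n_t)$ gives equi-integrability of the densities $u^n_t$ with respect to $\pi$, hence relative compactness of $(\rho^n_t)_n$ in the topology of setwise convergence, for each fixed $t$. To upgrade this to convergence for \emph{all} $t$ simultaneously, I would use the uniform $\BL$-equicontinuity of the curves $t\mapsto\rho^n_t$, which follows from \eqref{j-controls-rho}: the action bound together with the estimate \eqref{eq:disspbd} (which controls $\|(1{\wedge}\dV)\bj^n_t\|_{\mathrm{TV}}$ by $\scrR^n(\rho^n_t,\bj^n_t)$ and $\rho^n_t(V)=\rho_0^n(V)$, a conserved quantity) yields $\int_0^T\!\iint_E(1{\wedge}\dV)|\bj^n_t|\,\dd t<\infty$ uniformly, hence an $\AC$ modulus for $t\mapsto\rho^n_t$ in $\|\cdot\|_{\BL}$ that is uniform in $n$. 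A refined Arzelà-Ascoli argument (as in \cite{AGS08}) then gives a subsequence with $\rho^n_t\to\rho_t$ narrowly for all $t$; combined with the pointwise-in-$t$ setwise relative compactness and a diagonal argument over a countable dense set of times, one identifies the limit and upgrades narrow to setwise convergence for all $t$, establishing \eqref{DBL}. For the fluxes, the bound $\int_0^T\!\iint_E(1{\wedge}\dV)|\bj^n_t|\,\dd t<\infty$ together with the superlinear lower bound \eqref{eq:disspbd} (which controls $\calF_\psih$ of $(1{\wedge}\dV)\bj^n_{\!\Lebone}$ against a finite measure, $\psih$ superlinear) makes $((1{\wedge}\dV)\bj^n_{\!\Lebone})_n$ relatively compact for setwise convergence on $[0,T]\times E$ via Lemma~\ref{l:crucial-F}(6); then Lemma~\ref{lm:sigmasetwise} converts this back into $\sigma$-setwise convergence $\bj^n_{\!\Lebone}\to\bj_{\!\Lebone}$, and a disintegration/measurable-selection argument gives the representation $\bj_{\!\Lebone}=\bj_t\Lebone$ with $(\bj_t)$ a measurable family in $\Ms(E)$, establishing \eqref{cvg-j}.

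\textbf{Passing to the limit in the continuity equation.} Next I would check $(\rho,\bj)\in\CE0T$. For a fixed $\varphi\in\Lipb(V)$ and $[s,t]\subset[0,T]$, the approximate solutions satisfy
\begin{equation}
\label{proof-CEn}
\int_V\varphi\,\rho^n_t-\int_V\varphi\,\rho^n_s=\int_s^t\!\iint_E\dnabla\varphi(x,y)\,\bj^n_r(\dd x\dd y)\,\dd r.
\end{equation}
The left-hand side converges by \eqref{DBL} (setwise convergence tested against the bounded function $\varphi$). For the right-hand side, note that $\dnabla\varphi$ is bounded by $\|\varphi\|_{\Lipb(V)}(1{\wedge}\dV)$, so $\boldsymbol 1_{[s,t]}\dnabla\varphi$ is dominated by an $L^1$ function with respect to the common dominating measure $(1{\wedge}\dV)\bj^n_{\!\Lebone}$; since $(1{\wedge}\dV)\bj^n_{\!\Lebone}\to(1{\wedge}\dV)\bj_{\!\Lebone}$ setwise and $\dnabla\varphi/(1{\wedge}\dV)$ is bounded Borel (where defined, i.e.\ on $\Ed$, which carries full mass by \eqref{crucial-facts-E'}), an application of the improved setwise-convergence statement \eqref{eq:pointset}---or directly the setwise convergence tested against the bounded function $\boldsymbol 1_{[s,t]}\dnabla\varphi/(1{\wedge}\dV)$---yields the convergence of the right-hand side. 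The $\sigma$-finiteness technicalities are handled by working on a common exhaustion $(A_k)_k$ of $[0,T]\times E$ and passing to the limit $k\to\infty$ afterwards, using that the integrand is genuinely $|\bj_{\!\Lebone}|$-integrable. Together with the finiteness bound \eqref{crucial-bound-j} for $\bj$ (inherited from the uniform bound above by lower semicontinuity of the total variation), this gives $(\rho,\bj)\in\CE0T$, and in particular $\rho$ is narrowly continuous with $\rho_t(V)\equiv\rho_0(V)$, so $\rho_t\ll\pi$ follows from $\sup_t\scrE(\rho_t)<\infty$ and superlinearity of $\upphi$, i.e.\ $\rho_t=u_t\pi$ with $u\in\rmL^1(0,T;\rmL^1(V;\pi))$.

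\textbf{Lower semicontinuity of the dissipation terms.} The final step is to pass to the limit in \eqref{proof-approx-EDB} as an inequality. For the entropy term at the final time, $\scrE(\rho_t)\le\liminf_n\scrE(\rho^n_t)$ by lower semicontinuity of $\scrE$ under setwise convergence. For the action, I would use the lower semicontinuity of $\calF_\uppsi$ under $\sigma$-setwise convergence (Lemma~\ref{l:crucial-F}(5)), combined with the fact that $\teta^{\pm}_{\rho^n}$ (built from $\kappa_n$) converge appropriately: since $a_n\to1$ $\tetapi$-a.e.\ and $\rho^n\to\rho$, the measures $(1{\wedge}\dV^2)\teta^{\pm}_{\rho^n,\kappa_n}$ converge setwise to $(1{\wedge}\dV^2)\teta^{\pm}_{\rho,\kappa}$ (adapting Lemma~\ref{l:3.4}, using dominated convergence on the kernel and \eqref{mitigated-bounde}), hence $\bnu^n_{\rho^n}\to\bnu_\rho$ $\sigma$-setwise by the concave-transformation construction; then the representation \eqref{equivalent-upsilon} and Lemma~\ref{l:crucial-F}(5) give $\int_0^t\scrR(\rho_r,\bj_r)\,\dd r\le\liminf_n\int_0^t\scrR^n(\rho^n_r,\bj^n_r)\,\dd r$, after first passing to a limit on a common exhaustion. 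For the Fisher information term, convexity of $\mathrm D_\upphi$ (Assumption~\ref{ass:dupphi}) is exactly what is needed: $\mathrm D_\upphi$ convex and lower semicontinuous lets us write $\scrD^n$ in a $\calF_\Upsilon$-type form and invoke the same $\sigma$-setwise lower semicontinuity, so $\int_0^t\scrD(\rho_r)\,\dd r\le\liminf_n\int_0^t\scrD^n(\rho^n_r)\,\dd r$. Adding the three lower-semicontinuity inequalities and using $\scrE(\rho_0^n)\to\scrE(\rho_0)$ yields \eqref{UEDE}, and $\scrE(\rho_0)<\infty$ is given; hence $(\rho,\bj)$ is a $\Dissipative$ solution. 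The main obstacle I anticipate is bookkeeping the $\sigma$-finiteness throughout---in particular, ensuring that the exhaustions can be chosen compatibly for all the measures $\bj^n_{\!\Lebone}$, $\bnu^n_{\rho^n}$, and their limits simultaneously, so that the lower-semicontinuity lemmas (which are stated for $\sigma$-setwise convergence on a \emph{common} exhaustion) genuinely apply, and that the monotone passage $k\to\infty$ does not lose mass; this is where the estimates \eqref{eq:disspb}--\eqref{eq:disspbd} tying everything back to the finite measures $(1{\wedge}\dV^2)\tetapi$, $(1{\wedge}\dV^2)\teta^{\pm}_\rho$ do the real work.
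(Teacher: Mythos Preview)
Your proposal is correct and follows essentially the same approach as the paper: the paper deduces Theorem~\ref{th:main-0} from a more general robustness result (Theorem~\ref{thm:robustness}), whose proof in Section~\ref{s:5} proceeds exactly as you outline---a priori bounds from the energy-dissipation balance, compactness via a refined Arzel\`a--Ascoli argument (Proposition~\ref{prop:compactness2}) using the estimates \eqref{eq:disspbd}--\eqref{eq:disspbdn}, and then lower semicontinuity of each term in \eqref{proof-approx-EDB} via the $\calF_\Upsilon$-representation \eqref{equivalent-upsilon} and Lemma~\ref{l:crucial-F}(5), with the convexity of $\mathrm D_\upphi$ (Assumption~\ref{ass:dupphi}) handling the Fisher term precisely as you say. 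One small simplification the paper makes that you might adopt: rather than upgrading narrow to setwise convergence via a diagonal argument over a dense set of times, the paper observes that on the entropy sublevel set $\mathsf{K}$ of \eqref{energy-sublevel-jasp} narrow and setwise convergence already coincide (by superlinearity of $\upphi$ and the de la Vall\'ee-Poussin characterization), so the Arzel\`a--Ascoli output is automatically setwise for all $t$.
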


The proof of Theorem~\ref{th:main-0} is a straightforward consequence of the robustness result stated in Theorem \ref{thm:robustness} ahead. 
\begin{remark}
\label{rmk:atomic-vs-convexII}
The convexity of $\mathrm{D}_\upphi$ will play  a pivotal role in the proof of Theorem~\ref{th:main-0}. Nonetheless, let us observe that 
it could be replaced by 
 the  condition that $\pi$ is atomic.
\end{remark}
 \par
 Let us now additionally assume a uniform upper bound on the  initial densities for the approximate systems, which in particular ensures that the approximate $\Balanced$ solutions $(\rho^{n},\jj^{n})_n$ obtained in \cite{PRST22} are also $\Reflecting$ (cf.\ Proposition \ref{prop:fromPRST}). With our second main result we will prove their convergence to a $\Reflecting$ solution of the $(\scrE,\scrR,\scrR^*)$ system.
 

\begin{theorem}[Convergence to a $\Balanced$ solution]
\label{th:main}
Under the assumptions of Theorem \ref{th:main-0}, suppose moreover that  $(\rho_0^n)_n \subset  \dom(\scrE)$ satisfy
  \begin{equation}
  \label{strong-initial}
  \exists\, \overline{U}>0  \quad \forall\, n \in \N \text{ and  for $\pi$-a.e.\ $x\in V$:} \qquad 
  0 
 \leq   u_0^n(x) \leq \overline{U}\,.
\end{equation}
Let $(\rho^n,\bj^n)_n $ be $\Reflecting$ solutions of the $(\scrE,\scrR^n,(\scrR^n)^*)$ systems, with initial data $(\rho_0^n)_n$.
\par
Then,  convergences \eqref{cvg-thm1} hold, up to a subsequence, to  a  pair
$(\rho,\bj)  \in \CE0T$ such that 
\begin{enumerate}
\item $(\rho,\bj)$ is a $\Reflecting$ solution (the \underline{unique} $\Reflecting$ solution, if $\upphi$ is strictly convex), of the $(\scrE,\scrR,\scrR^*)$ evolution system and,
  in addition, 
\item it satisfies the energy-dissipation balance in the pointwise form \eqref{enh-EDB}.
 \end{enumerate}
Furthermore, we have the \emph{enhanced} convergences
\begin{subequations}
\label{enh-cvg-thm4.3}
\begin{gather}
\int_s^t  \scrR^n(\rho_r^n, \bj_r^n)\, \dd r \longrightarrow \int_s^t  \scrR(\rho_r, \bj_r)\, \dd r \,,
\\
\int_s^t 
 \Fish^n(\rho_r^n)\, \dd r  \longrightarrow \int_s^t   \Fish(\rho_r)\, \dd r \,,
 \\
  \calS(\rho_t^n)  \longrightarrow   \calS(\rho_t) 
\end{gather}
\end{subequations}
for all $0\leq s \leq t \leq T$. 
\end{theorem}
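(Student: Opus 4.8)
The plan is to run the usual compactness–plus–lower-semicontinuity scheme for the regularized $\Reflecting$ solutions $(\rho^n,\jj^n)$, pass to the limit in the (reflecting) continuity equation and in the energy-dissipation balance \eqref{R-Rstar-balance}, and finally invoke the chain-rule Theorem~\ref{thm:ECE} to recognize the limit as a $\Reflecting$ solution. \emph{Compactness.} First I would collect the a priori bounds. Since each $(\rho^n,\jj^n)$ satisfies \eqref{R-Rstar-balance} and $\scrE(\rho_0^n)\to\scrE(\rho_0)<\infty$, one gets $\sup_n\sup_{t}\scrE(\rho^n_t)<\infty$ and $\sup_n\int_0^T(\scrR^n(\rho^n_t,\jj^n_t)+\Fish^n(\rho^n_t))\,\dd t<\infty$; testing the continuity equation with $\varphi\equiv1$ gives mass conservation and hence a uniform bound on $\rho^n_t(V)$; and the maximum principle of Proposition~\ref{prop:fromPRST}(2) yields $\|u^n\|_{\rmL^\infty(0,T;\rmL^\infty(V;\pi))}\le\overline{U}$. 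Superlinearity of $\upphi$ with the entropy bound makes $(u^n_t)$ equi-integrable w.r.t.\ $\pi$, uniformly in $n,t$; the dual estimate \eqref{eq:disspbd}, the domination $c_\upalpha(1{\wedge}\dV)^2(\teta_{\rho^n_t}^{-}+\teta_{\rho^n_t}^{+}+\tetapin)\le c\,(1{\wedge}\dV^2)\tetapi$ (valid since $u^n\le\overline{U}$, $a_n\le1$) together with \eqref{AC-monotonicity} give $\int_0^T\calF_{\psih}\bigl(2(1{\wedge}\dV)\jj^n_t\,\big|\,c(1{\wedge}\dV^2)\tetapi\bigr)\,\dd t\le C$, so by \eqref{j-controls-rho} the curves $t\mapsto\rho^n_t$ are equi-absolutely continuous in $\|\cdot\|_{\BL}$, and by Lemma~\ref{l:crucial-F}(6) the family $\bigl((1{\wedge}\dV)\jj^n_{\!\Lebone}\bigr)_n$ is setwise relatively compact in $\calM([0,T]{\times}E)$. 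A refined Helly/Arzel\`a--Ascoli argument extracts a subsequence with $\rho^n_t\to\rho_t$ setwise for every $t\in[0,T]$; since $\rho^n_t\le\overline{U}\pi$, the limit has $\rho_t=u_t\pi$ with $0\le u_t\le\overline{U}$. Finally Lemma~\ref{lm:sigmasetwise} with $f=1{\wedge}\dV$ (strictly positive on $\Ed$, on which the $\jj^n$ are concentrated) upgrades the setwise convergence of $\bigl((1{\wedge}\dV)\jj^n_{\!\Lebone}\bigr)_n$ to $\jj^n_{\!\Lebone}\to\jj_{\!\Lebone}=\jj_t\Lebone$ $\sigma$-setwise in $\Ms([0,T]{\times}E)$.

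\emph{Passage to the limit in the continuity equation.} For $\varphi\in\Lipb(V)$ one writes $\dnabla\varphi=(1{\wedge}\dV)g$ with $g$ bounded Borel and passes to the limit in \eqref{CE} using setwise convergence of $\rho^n_t$ and of $(1{\wedge}\dV)\jj^n_{\!\Lebone}$, obtaining $(\rho,\jj)\in\CE 0T$. To reach $(\rho,\jj)\in\ECE 0T$ I would use that, by Proposition~\ref{prop:fromPRST}(1), each $(\rho^n,\jj^n)$ satisfies the continuity equation for all bounded Borel test functions, in particular (choosing bounded Borel representatives) for every $\varphi\in\mathcal{X}_2$ — note $\iint_\Ed|\dnabla\varphi|^2\tetapin\le\iint_\Ed|\dnabla\varphi|^2\tetapi<\infty$ since $\tetapin\le\tetapi$. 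The delicate point is the right-hand side: $\jj^n_{\!\Lebone}$ is not a finite measure, and $\iiint\dnabla\varphi\,\jj^n_{\!\Lebone}$ is only absolutely convergent through a Fenchel--Young bound. Using $a_n\le1$, the maximum principle and \eqref{eq:alphab} one has $\bnu^n_{\rho^n_t}\le c_\upalpha(1{+}2\overline{U})\tetapi$, whence $(\scrR^n)^*(\rho^n_t,\dnabla\varphi)\le\tfrac12 c_\upalpha(1{+}2\overline{U})\iint_\Ed\uppsi^*(\dnabla\varphi)\tetapi<\infty$ uniformly in $n,t$ by Lemma~\ref{l:worthwhile-highlight}; combining this with the uniform action bound, the near-diagonal equi-integrability furnished by the $\calF_{\psih}$-estimate of the previous step (and superlinearity of $\psih$), and the $\sigma$-setwise convergence of the fluxes, one passes to the limit and obtains \eqref{eq:RCE} for all $\varphi\in\mathcal{X}_2$, so that $(\rho,\jj)\in\ECE 0T$. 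I expect this uniform control of the continuity-equation term against flux measures that are only $\sigma$-finite (and only $L^1$ against $\dnabla\varphi$ through Young's inequality) to be the main obstacle of the proof.

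\emph{Lower semicontinuity, energy balance, and conclusion.} The representations \eqref{equivalent-upsilon} and \eqref{eps-Fisher} (the latter using Assumption~\ref{ass:dupphi}, i.e.\ convexity of $\mathrm{D}_\upphi$) rewrite $\int_0^t\scrR^n$ and $\int_0^t\Fish^n$ as $\calF_\Upsilon$-functionals of time-integrated measures on $[0,t]{\times}E$; since $\rho^n_r\to\rho_r$ setwise for all $r$ and $a_n\to1$ force $\teta_{\rho^n}^{\pm}\to\teta_{\rho}^{\pm}$ and $\tetapin\to\tetapi$ $\sigma$-setwise (Lemma~\ref{l:3.4}), while the fluxes converge by the compactness step, Lemma~\ref{l:crucial-F}(5) gives $\liminf_n\int_0^t\scrR^n(\rho^n_r,\jj^n_r)\,\dd r\ge\int_0^t\scrR(\rho_r,\jj_r)\,\dd r$ and $\liminf_n\int_0^t\Fish^n(\rho^n_r)\,\dd r\ge\int_0^t\Fish(\rho_r)\,\dd r$. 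With the setwise lower semicontinuity of the relative entropy and $\scrE(\rho_0^n)\to\scrE(\rho_0)$, taking $\liminf$ in \eqref{R-Rstar-balance} for $(\rho^n,\jj^n)$ on $[0,t]$ yields the energy-dissipation inequality \eqref{UEDE} for $(\rho,\jj)$; thus $(\rho,\jj)\in\ECE 0T$ is a $\Dissipative$ solution, and since Assumption~\ref{ass:dupphi} makes $\Fish$ convex, Theorem~\ref{thm:ECE} upgrades it to a $\Reflecting$ solution satisfying the pointwise balance \eqref{enh-EDB}. If moreover $\upphi$ is strictly convex, Proposition~\ref{prop:uniqueness-too} identifies the limit as the unique $\Reflecting$ solution with datum $\rho_0$, whence the whole sequence converges.

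\emph{Enhanced convergences.} Writing \eqref{R-Rstar-balance} for $(\rho^n,\jj^n)$ on $[0,T]$ and taking $\liminf$ term by term, the three lower-semicontinuity inequalities above, $\scrE(\rho_0^n)\to\scrE(\rho_0)$, and the limiting balance on $[0,T]$ force, by the elementary fact that $a_n+b_n+c_n\to D$ with $\liminf a_n\ge A$, $\liminf b_n\ge B$, $\liminf c_n\ge C$, $A{+}B{+}C=D$, implies $a_n\to A$, $b_n\to B$, $c_n\to C$, the convergences $\int_0^T\scrR^n(\rho^n_r,\jj^n_r)\,\dd r\to\int_0^T\scrR(\rho_r,\jj_r)\,\dd r$, $\int_0^T\Fish^n(\rho^n_r)\,\dd r\to\int_0^T\Fish(\rho_r)\,\dd r$, and $\scrE(\rho^n_T)\to\scrE(\rho_T)$. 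Writing $\int_0^T(\cdot)=\int_0^s(\cdot)+\int_s^T(\cdot)$ and using again the lower-semicontinuity inequalities on $[0,s]$ and on $[s,T]$, one deduces $\int_0^s\scrR^n\to\int_0^s\scrR$ and $\int_0^s\Fish^n\to\int_0^s\Fish$ for every $s\in[0,T]$, hence \eqref{enh-cvg-thm4.3} on every $[s,t]\subset[0,T]$; rearranging the balance gives $\scrE(\rho^n_t)\to\scrE(\rho_t)$, concluding the proof.
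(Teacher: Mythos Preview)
Your proposal is correct and follows essentially the same route as the paper: the paper deduces Theorem~\ref{th:main} directly from the robustness Theorem~\ref{thm:robustness}, whose proof combines the compactness Proposition~\ref{prop:compactness2} (your ``Compactness'' and ``Passage to the limit in the continuity equation'' steps, including the near-diagonal/away-from-diagonal splitting that you correctly flag as the main obstacle), the $\liminf$ estimates via Lemma~\ref{l:crucial-F}(5), the upgrade via Theorem~\ref{thm:ECE}, and the standard ``sum of $\liminf$'s'' argument for the enhanced convergences. One small citation fix: since the kernels $\kappa_n$ vary with $n$, the convergence $\teta_{\rho^n}^{\pm}\to\teta_{\rho}^{\pm}$ and $\tetapin\to\tetapi$ is supplied by Lemma~\ref{eq:knass} rather than Lemma~\ref{l:3.4}.
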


The proof of Theorem~\ref{th:main} will be again directly deduced from the ensuing Theorem \ref{thm:robustness}.

\subsection{Robustness of Dissipative and  Reflecting solutions}
\label{ss:robustness}
As already anticipated, 
Theorems \ref{th:main-0} \& \ref{th:main} will be a consequence of the following  more general result, addressing the 
robustness of the notions of $\Dissipative$ and $\Reflecting$ solutions 
when the singular kernel $\kappa$ is approximated by a family  $(\kappa_n)_n$,  related to $\kappa$ via \eqref{eq:an-1}
\eqref{kappa-eps-reg-n}. 
\par
Let us emphasize that, in Theorem  \ref{thm:robustness} ahead, we will drop the requirement that the kernels $\kappa_n$ be bounded in the sense of 
\eqref{kan-back-2-bounded-kernel}: they will only satisfy \eqref{mitigated-bounde}, which again allows for the `tamed' singularity of $\kappa$. In fact, 
\eqref{kan-back-2-bounded-kernel} was required  in Section 
\ref{ss:4.2} only to ensure the applicability of the existence result from \cite{PRST22}, and thus the existence of $\Balanced$ (and a fortiori $\Reflecting$) solutions to the
$(\scrE,\scrR^n,(\scrR^n)^*)$ systems. 
In the more general setup of  \eqref{eq:an-1} \& \eqref{kappa-eps-reg-n}, we are nonetheless in a position to prove the following convergence result for a family of $\Dissipative$/$\Reflecting$ solutions (whose existence is not addressed) to the $(\scrE,\scrR^n,(\scrR^n)^*)$ system.

\begin{theorem}
\label{thm:robustness}
Under Assumptions \ref{Ass:E},  \ref{Ass:D}, \ref{Ass:flux-density} \& \ref{ass:dupphi}, consider a family of kernels $(\kappa_n)_n$ defined from functions $(a_n)_n$ as in \eqref{eq:an-1} \& \eqref{kappa-eps-reg-n}.
 
 Let $(\rho_0^n)_n, \rho_0 \in \dom(\scrE)$ satisfy the boundedness and well-preparedness condition \eqref{initial-thm:main}.
\begin{enumerate}
\item Let $(\rho^{n},\bj^{n})_n$ be $\Dissipative$ solutions to the $(\scrE,\scrR^{n},(\scrR^{n})^*)$ systems emanating from $( \rho_0^n)_n$. 
 \par
 Then, there exist a (not relabeled) subsequence, and a curve $(\rho,\jj) \in \ACE0T$, such that convergences \eqref{DBL}
 hold, and $(\rho,\jj)$ is a  $\Dissipative$ solution to the $(\scrE,\scrR,\scrR^*)$ system.
 \item 
 Suppose that  $(\rho^{n},\bj^{n})_n$ are $\Reflecting$ solutions to the $(\scrE,\scrR^{n},(\scrR^{n})^*)$ systems emanating from data $( \rho_0^n)_n$ such that, in addition, 
\eqref{strong-initial} holds. Moreover, suppose that
\begin{equation}
\label{eq:cb2-required}
\sup_{n\in \N} \sup_{t\in [0,T]} \|u_t^n\|_{\rmL^\infty(\pi)} <\infty
\end{equation}
Then, there exist a (not relabeled) subsequence and a pair $(\rho,\jj) \in \ACE0T$, such that convergences \eqref{DBL} \& \eqref{enh-cvg-thm4.3}
 hold, and $(\rho,\jj)$ is a $\Reflecting$ solution to the $(\scrE,\scrR,\scrR^*)$ system.
 \par
 Finally, if $\upphi$ is strictly convex, then the above convergences hold for the entire sequence $(\rho^{n},\bj^{n})_n$.
\end{enumerate}
\end{theorem}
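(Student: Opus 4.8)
The plan is to carry out a standard compactness-plus-lower-semicontinuity argument, adapted to the $\sigma$-finite measure setting and the approximation of kernels from below. First I would establish \emph{a priori} bounds. Since each $(\rho^n,\jj^n)$ satisfies the energy-dissipation inequality for $(\scrE,\scrR^n,(\scrR^n)^*)$ and $\scrE(\rho_0^n)\to\scrE(\rho_0)<\infty$, I get a uniform bound $\sup_n\sup_{t}\scrE(\rho^n_t)<\infty$ together with $\sup_n\int_0^T(\scrR^n(\rho^n_t,\jj^n_t)+\Fish^n(\rho^n_t))\,\dd t<\infty$. The uniform entropy bound, combined with the superlinearity in \eqref{ass-phi}, gives via Lemma~\ref{l:crucial-F}(6) that $(\rho^n_t)_n$ is relatively compact w.r.t.\ setwise convergence for each $t$; a refined Arzel\`a--Ascoli argument using the equi-$\|\cdot\|_{\BL}$-continuity from \eqref{j-controls-rho} (which is uniform in $n$ because of \eqref{mitigated-bounde} and the dissipation bound, via \eqref{eq:disspbd}) upgrades this to convergence $\rho^n_t\to\rho_t$ setwise for all $t\in[0,T]$ along a subsequence, and $\rho_t=u_t\pi$ by the final statement in Lemma~\ref{l:crucial-F}(6). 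For the fluxes: the bound on $\scrR^n(\rho^n_t,\jj^n_t)$ controls $\iint_E(1\wedge\dV)|\jj^n_t|$ via the perspective-function estimate \eqref{eq:disspbd}, so $(\jj^n_{\Lebone})_n$, after multiplying by $(1\wedge\dV)$, is relatively compact for setwise convergence on $[0,T]\times E$ by the analogue of Lemma~\ref{l:crucial-F}(6); then Lemma~\ref{lm:sigmasetwise} converts this into $\sigma$-setwise convergence $\jj^n_{\Lebone}\to\jj_{\Lebone}$, and a disintegration/measurable-selection argument writes $\jj_{\Lebone}=\jj_t\Lebone$ with $(\jj_t)$ a measurable family in $\Ms(E)$.

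Next I would pass to the limit in the continuity equation. Fix $\varphi\in\Lip_{\mathrm b}(V)$. The left-hand side of \eqref{CE} for $(\rho^n,\jj^n)$ converges by setwise convergence of $\rho^n_t$ and boundedness of $\varphi$. For the right-hand side, the integrand is $\dnabla\varphi(x,y)=(1\wedge\dV(x,y))\cdot\frac{\dnabla\varphi(x,y)}{1\wedge\dV(x,y)}$, where the second factor is a bounded Borel function on $E$; since $(1\wedge\dV)\jj^n_{\Lebone}\to(1\wedge\dV)\jj_{\Lebone}$ setwise (possibly after passing to a common exhaustion and using \eqref{eq:pointset} for the cut-off tails, which vanish uniformly by the $(1\wedge\dV)$-integrability bound), the right-hand side converges to the corresponding expression for $(\rho,\jj)$. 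Hence $(\rho,\jj)\in\CE0T$. To see $(\rho,\jj)\in\ACE0T$ I use joint lower semicontinuity: $\sup_t\scrE(\rho_t)\le\liminf\sup_t\scrE(\rho^n_t)<\infty$ by setwise lower semicontinuity of $\scrE$, and for the time-integrated dissipation and Fisher information I invoke $\sigma$-setwise lower semicontinuity of $\scrR$ (Lemma~\ref{lm:proprr}(3), applied with the fixed reference coupling $\tetapi$ since $\tetapin\le\tetapi$) and of $\Fish$; here the monotonicity \eqref{AC-monotonicity} together with $\tetapin\le\tetapi$ is what lets me compare $\scrR^n$ with $\scrR$, i.e.\ $\scrR(\rho,\jj)\le\liminf\scrR^n(\rho^n,\jj^n)$, and similarly $\Fish\le\liminf\Fish^n$ using convexity of $\mathrm D_\upphi$ (Assumption~\ref{ass:dupphi}) exactly as in Proposition~\ref{prop:Fisher-lsc}. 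Passing to the liminf in the integrated energy-dissipation inequality and using $\scrE(\rho_0^n)\to\scrE(\rho_0)$ yields \eqref{UEDE} for $(\rho,\jj)$, proving part (1).

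For part (2), I additionally have \eqref{strong-initial} and the uniform $\rmL^\infty$-bound \eqref{eq:cb2-required}, which passes to the limit: $\sup_t\|u_t\|_{\rmL^\infty(\pi)}\le\sup_n\sup_t\|u^n_t\|_{\rmL^\infty(\pi)}<\infty$ by lower semicontinuity of the $\rmL^\infty$-norm under setwise convergence $\rho^n_t\to u_t\pi$. Thus $(\rho,\jj)$ has bounded density, finite action and finite Fisher information; to conclude $(\rho,\jj)\in\ECE0T$ I must upgrade the continuity equation from test functions in $\Lip_{\mathrm b}(V)$ to test functions in $\mathcal X_2$. \emph{This is the main obstacle}, since $\Lip_{\mathrm b}(V)$ need not be dense in $\mathcal X_2$ (Examples~\ref{ex:Jasp-intro-1}--\ref{ex:Jasp-intro-2}). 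The resolution is that the approximating $\Reflecting$ solutions already satisfy the \emph{reflecting} continuity equation for the kernels $\kappa_n$, i.e.\ \eqref{eq:RCE} for all $\varphi\in\mathcal X_2^n$, and $\mathcal X_2\subset\mathcal X_2^n$ because $\tetapin\le\tetapi$; so for fixed $\varphi\in\mathcal X_2$ I pass to the limit directly in the $n$-dependent reflecting continuity equation. The only delicate point is the convergence of the flux term $\int_s^t\iint_{\Ed}\dnabla\varphi\,\jj^n_r\,\dd r$ when $\dnabla\varphi$ is merely $\rmL^2(\tetapi)$ and not bounded: here I split into $\{|\dnabla\varphi|\le R\}$, where $\sigma$-setwise convergence of $\jj^n_{\Lebone}$ applies, and $\{|\dnabla\varphi|>R\}$, whose contribution is controlled uniformly in $n$ by Cauchy--Schwarz using $\iint(\dnabla\varphi)^2\tetapi<\infty$ and the uniform action bound (via the estimate displayed after Lemma~\ref{l:worthwhile-highlight}, with $\tetapin\le\tetapi$), then let $R\to\infty$. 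This gives $(\rho,\jj)\in\ECE0T$. Finally, to obtain the energy-dissipation \emph{balance} rather than just the inequality, I combine the already-proven \eqref{UEDE} with the chain rule of Theorem~\ref{thm:ECE} (applicable since $\Fish$ is convex by Assumption~\ref{ass:dupphi} and $(\rho,\jj)\in\ECE0T$): a $\Dissipative$ solution lying in $\ECE0T$ is automatically $\Reflecting$ and satisfies \eqref{enh-EDB}. The enhanced convergences \eqref{enh-cvg-thm4.3} then follow from comparing the limiting balance with the liminf inequalities for each term separately (lower semicontinuity forces each inequality to be an equality $\Lebone$-a.e., and hence in integrated form on every $[s,t]$). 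Uniqueness for strictly convex $\upphi$ is Proposition~\ref{prop:uniqueness-too}, and since the limit is then unique the whole sequence converges.
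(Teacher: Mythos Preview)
Your strategy coincides with the paper's: a~priori bounds from the EDI, Arzel\`a--Ascoli for the densities, perspective-function estimates for the fluxes, the inclusion $\mathcal X_2\subset\mathcal X_2^n$ (from $\tetapin\le\tetapi$) to pass the reflecting continuity equation to the limit, and Theorem~\ref{thm:ECE} to upgrade $\Dissipative$ to $\Reflecting$. Two implementation points need correction.

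\emph{First}, in the limit passage for \eqref{eq:RCE} you say the difficulty is that $\dnabla\varphi$ ``is merely $\rmL^2(\tetapi)$ and not bounded'' and propose to split on $\{|\dnabla\varphi|\le R\}$. But $\varphi\in\mathcal X_2$ entails $\varphi\in\rmL^\infty(V;\pi)$, so $\dnabla\varphi$ \emph{is} bounded and your split is vacuous once $R\ge 2\|\varphi\|_\infty$. The genuine obstacle is that $\jj^n_{\Lebone}$ converges only $\sigma$-setwise, along the exhaustion dictated by $(1\wedge\dV)$: on $A_m=\{\dV>1/m\}$ the restrictions $\jj^n_{\Lebone}\mres([0,T]{\times}A_m)$ are finite and converge setwise, while on $A_m^c$ one needs a uniform tail estimate. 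The paper (Claim~4 in the proof of Proposition~\ref{prop:compactness2}) splits on $A_m$ and controls the near-diagonal part via Young's inequality, the uniform action bound, and \eqref{eq:disspbn2} (which uses \eqref{eq:cb2}), obtaining for every $\beta>0$
\[
\iiint_{[0,T]\times A_m^c}|\xi|\,\dd|\jj^n_{\Lebone}| \le \frac1\beta\Bigl(\int_0^T\scrR^n(\rho^n_t,\jj^n_t)\,\dd t + C\,\psih^*(\beta)\iint_{A_m^c}\xi^2\,\dd\tetapi\Bigr);
\]
the last integral tends to $0$ as $m\to\infty$ since $\xi=\dnabla\varphi\in\rmL^2(\tetapi)$, and $\beta$ is arbitrary. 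Your ingredients are the right ones, only the split must be on distance, not on $|\dnabla\varphi|$.

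\emph{Second}, the monotonicity route works for the action---$\bnu^n_{\rho^n}\le\bnu_{\rho^n}$ gives $\scrR(\rho^n,\jj^n)\le\scrR^n(\rho^n,\jj^n)$ by \eqref{AC-monotonicity}, and then Lemma~\ref{lm:proprr}(3) closes the argument---but it fails for the Fisher information: $\Fish^n(\rho)=\tfrac12\iint\mathrm D_\upphi(u,u)\,\dd\tetapin$ is \emph{increasing} in $\tetapin$, so $\tetapin\le\tetapi$ yields $\Fish^n(\rho^n)\le\Fish(\rho^n)$, which together with Proposition~\ref{prop:Fisher-lsc} does \emph{not} produce $\Fish(\rho)\le\liminf_n\Fish^n(\rho^n)$. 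The correct argument (the paper's display \eqref{lsc-Fisher-eps-dep}) rewrites $\int_0^t\Fish^n(\rho^n_r)\,\dd r=\tfrac12\calF_\Xi(\bbeta^n_{\Lebone_t}\,|\,\tetapin\otimes\Lebone_t)$ with $\Xi=\mathrm D_\upphi$ convex by Assumption~\ref{ass:dupphi}, and invokes Lemma~\ref{l:crucial-F}(5) jointly in both arguments, using the $\sigma$-setwise convergences $\bbeta^n_{\Lebone_t}\to\bbeta_{\Lebone_t}$ and $\tetapin\otimes\Lebone_t\to\tetapi\otimes\Lebone_t$ supplied by Lemma~\ref{eq:knass}. (The paper handles the action in the same joint fashion via \eqref{equivalent-upsilon}; your monotonicity alternative is valid for that term, but not for Fisher.)
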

The proof of Theorem~\ref{thm:robustness} will be carried out in Section \ref{s:5}.

\medskip
Whenever the approximating kernels $(\kappa_n)_n$ are bounded, \eqref{eq:cb2-required} is a redundant requirement. In fact, the maximum principle proven in \cite{PRST22} ensures that the uniform bound \eqref{strong-initial} for the initial densities $(u_0^n)_n$ transfers along the flow to the densities $(u_t^n)_n$ of $\Balanced$ ($=\Reflecting$) solutions. Nonetheless, we emphasize that the kernels $(\kappa_n)_n $ in the statement of Theorem~\ref{thm:robustness} need not be bounded. Therefore, \eqref{eq:cb2-required} has to be assumed in the general case.

However, for strictly convex $\upphi$, assumption \eqref{eq:cb2-required} may be alleviated via a maximum principle for the reflecting solutions themselves.
\begin{cor}[Maximum principle]
\label{cor:max}
Under the assumptions of Theorem \ref{thm:robustness} and strict convexity of $\upphi$, the maximum principle holds for $(u_t)_{t\in [0,T]}$ for any $\Reflecting$ solution $(\rho,\jj)$. In particular, condition \eqref{eq:cb2-required} is redundant.
\end{cor}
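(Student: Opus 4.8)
The plan is to deduce Corollary \ref{cor:max} from the uniqueness asserted in Theorem \ref{thm:mrexist}/Proposition \ref{prop:uniqueness-too} together with the robustness statement of Theorem \ref{thm:robustness}(2), used with a cleverly chosen family of approximating kernels. Fix a $\Reflecting$ solution $(\rho,\jj)$ of the original $(\scrE,\scrR,\scrR^*)$ system with initial datum $\rho_0 = u_0\pi$, $\|u_0\|_{\rmL^\infty(\pi)}\le \overline U$. First I would invoke the maximum principle for \emph{bounded} kernels from \cite{PRST22} (recalled in Proposition \ref{prop:fromPRST}(2)): for each $n$, taking the prototypical cut-offs $(a_n)_n$ of Example \ref{kappa-eps-reg} (so that $\kappa_n$ is bounded and satisfies \eqref{kan-back-2-bounded-kernel}, \eqref{mitigated-bounde}), the unique $\Balanced$ ($=\Reflecting$) solution $(\rho^n,\jj^n)$ of the $(\scrE,\scrR^n,(\scrR^n)^*)$ system with the \emph{same} initial datum $\rho_0$ satisfies $0\le u_t^n(x)\le \overline U$ for $(\Lebone{\times}\pi)$-a.e.\ $(t,x)$. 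In particular \eqref{eq:cb2-required} holds for this sequence with constant $\overline U$, uniformly in $n$.

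Next I would apply Theorem \ref{thm:robustness}(2) with $\rho_0^n\equiv \rho_0$ (for which \eqref{initial-thm:main} and \eqref{strong-initial} trivially hold). This yields, along a (not relabeled) subsequence, convergence \eqref{DBL} of $(\rho^n)_n$ to some $(\bar\rho,\bar\jj)\in\ACE 0T$ which is a $\Reflecting$ solution of the original $(\scrE,\scrR,\scrR^*)$ system with initial datum $\rho_0$. The key point is that the uniform bound $u_t^n\le \overline U$ passes to the limit under setwise convergence: for every $t$, $\rho_t^n\to\bar\rho_t$ setwise with $\rho_t^n = u_t^n\pi \le \overline U\,\pi$, hence $\bar\rho_t\le\overline U\,\pi$ as well, i.e.\ $\bar u_t(x)\le\overline U$ for $\pi$-a.e.\ $x$, for every $t\in[0,T]$. (Passing the bound to the limit is routine: for any $B\in\frB(V)$, $\bar\rho_t(B)=\lim_n\rho_t^n(B)\le\overline U\,\pi(B)$, whence $\bar u_t\le\overline U$ $\pi$-a.e.) Thus $\bar u = \frac{\dd\bar\rho}{\dd\pi}\in\rmL^\infty(0,T;\rmL^\infty(V;\pi))$ with $\|\bar u\|_{\rmL^\infty(\Lebone\otimes\pi)}\le\overline U$.

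Now I would close the argument with uniqueness. Since $\upphi$ is strictly convex, Proposition \ref{prop:uniqueness-too} (equivalently Theorem \ref{thm:mrexist}) applies: $(\rho,\jj)$ and $(\bar\rho,\bar\jj)$ are both $\Reflecting$ solutions of $(\scrE,\scrR,\scrR^*)$ emanating from $\rho_0$, so $\rho_t=\bar\rho_t$ for all $t\in[0,T]$. Consequently $u_t=\bar u_t$ satisfies $0\le u_t(x)\le\overline U$ for $\pi$-a.e.\ $x$ and all $t$, which is the asserted maximum principle. Finally, to see that \eqref{eq:cb2-required} is then redundant in Theorem \ref{thm:robustness}(2): given any sequence $(\rho^n,\jj^n)_n$ of $\Reflecting$ solutions of the $(\scrE,\scrR^n,(\scrR^n)^*)$ systems with initial data $(\rho_0^n)_n$ satisfying \eqref{strong-initial}, one applies the maximum principle \emph{at the level of each regularized system} — each $\kappa_n$, being itself a kernel of the type covered by our theory (obtained from $\kappa_n$ via the trivial cut-off $a\equiv1$, or by noting $\kappa_n$ inherits \eqref{mitigation of singularity}) — to get $\|u_t^n\|_{\rmL^\infty(\pi)}\le\overline U$ for every $n$ and every $t$; hence \eqref{eq:cb2-required} holds automatically with constant $\overline U$.

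The main obstacle I anticipate is a subtle circularity to be avoided: the maximum principle for the \emph{approximating} systems cannot itself be invoked from Corollary \ref{cor:max} (that would be circular), so one must be careful to anchor it either on the $\PRST$ maximum principle when $\kappa_n$ is bounded, or — for the final ``redundancy'' claim with possibly unbounded $\kappa_n$ — on the very conclusion of the Corollary applied to the system $(\scrE,\scrR^n,(\scrR^n)^*)$, which is legitimate because that system is of exactly the same type and its maximum principle has, at that point, already been established via the bounded-kernel approximation of $\kappa_n$ (a second, inner application of Theorem \ref{thm:robustness}(2) combined with uniqueness). Making this two-layer bootstrap precise, while checking that strict convexity of $\upphi$ (hence uniqueness of $\Reflecting$ solutions) is in force at both layers, is the only genuinely delicate point; everything else is the routine passage of a pointwise a.e.\ bound through setwise convergence.
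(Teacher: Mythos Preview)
Your proposal is correct and follows essentially the same route as the paper's proof: approximate by bounded kernels with the same initial datum, invoke the PRST maximum principle on the approximants to get the uniform bound, pass to the limit via Theorem \ref{thm:robustness}(2), and use uniqueness (strict convexity of $\upphi$) to identify the limit with the given $\Reflecting$ solution. The paper's argument is more terse but identical in substance; your added discussion of the two-layer bootstrap for the redundancy claim is a reasonable elaboration that the paper leaves implicit.
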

\begin{proof}
Suppose that $u_0\leq M$, $\pi$-a.e., i.e., $\|u_0\|_{\rmL^\infty(\pi)}\leq M$, and consider the sequence of approximating solutions $(\rho^n)_{n\in \N}$ of Theorem \ref{th:main-0} corresponding to \emph{bounded} kernels, with same initial data. By the maximum principle proven in \cite{PRST22}, we find that $u_t^n \leq M$ for all $t\in [0,T]$, $\pi$-a.e.\ in $V$. By uniqueness, we obtain via Theorem \ref{thm:robustness} the convergences $u_t^n\rightharpoonup u_t$ weakly in $\rmL^1(\pi)$ for all $t\in [0,T]$. Consequently, by lower semicontinuity, we deduce that $\|u_t\|_{\rmL^\infty(\pi)} \leq M$ for all $t\in [0,T]$ as well. 
\end{proof}

\EEE
\section{An application in  configuration spaces}
\label{s:8}
In this section we introduce a possible setting for the application of our results to the (singular) dynamics of \emph{weakly} interacting particle systems. Indeed, the systems we focus on, which arise in models of theoretical biology and ecology involving birth, death, and mutation processes, are such that the interaction of a single particle with the other particles in a \emph{trait space} $\trait$ only depends on the empirical measure, a rescaled discrete measure describing the position of all particles. The dynamics of these systems consist of particles being created, dispersed, or annihilated, and can be described in terms of jump processes on the space of probability measures on $\trait$.

\par
The variational and gradient flow representation for the law of these processes has been discussed, under suitable regularity assumptions, including boundedness,  on the measure-dependent birth, death, and jump kernels, in \cite{HT2023,HHT2024}
\par
In this section, we will demonstrate how the results of our current work can apply to \emph{singular} jump kernels, on the configuration space $V$ over a generic metric space $\trait$, that are defined via a suitable lifting procedure. In particular, we will show that the singular kernel given in \eqref{widehat-kappa} below does comply with \eqref{mitigation of singularity}.

\medskip
\par
To set forth our construction, let us consider 
\begin{itemize}
\item[-] a separable metric measure space $(\trait, \dV)$, also satisfying the \emph{Radon property} (i.e., every finite Borel measure on $\trait$ is inner regular), endowed with a reference measure $\pi$;
\item[-] $(\kappa(x,\cdot))_{x \in \trait} \subset \Ms^+(\trait)$ a Borel family of measures satisfying the assumptions of Section \ref{s:3}: Without loss of generality $\kappa(x,\{x\})=0$ for every $x\in \trait$, $\kappa$ satisfies detailed balance \eqref{DBC} with respect to $\pi$, and both 
\begin{equation}\label{true-measurabilityc}
\mbox{the map $\displaystyle \trait\ni x \mapsto \int_{\trait\backslash\{x\}} f(y) (1{\wedge}\dV^2(x,y)) \, \kappa (x,\dd y)$ is measurable}
 \end{equation}
 for any $f\in \Bb(\trait)$, and 
 \begin{equation}
\label{mitigation of singularityc}
\sup_{x\in \trait}\int_{\trait}  (1{\wedge}\dV^2(x,y)) \,\kappa(x,\dd y) =: c_\kappa <+\infty.
\end{equation}
\end{itemize}
These objects can then be lifted to the space $\boldsymbol{V}:=  \mathcal{P}(\trait) $ in the following sense (in what  follows, we will use $\boldsymbol{boldsymbol}$ for the `lifted objects' and $\,\widehat{\cdot}\,$ to denote functionals/measures over them):
\begin{itemize}
\item[--] For $N\geq 2$,  we introduce the mapping
\[
\Lambda_N : \trait^N \to  \mathcal{P}(\trait), \qquad  \boldsymbol{\mathfrak{z}} = (\mathfrak{z}_1,\mathfrak{z}_2,\ldots, \mathfrak{z}_N) \mapsto \mu = \Lambda_N(\boldsymbol{\mathfrak{z}})
:= \frac1N \sum_{i=1}^N \delta_{\mathfrak{z}_i};
\]
\item[-] On $\boldsymbol{V}= \mathcal{P}(\trait)$ we consider the finite measure 
\[
\widehat{\boldsymbol{\pi}}: = (\Lambda_N)_{\#} \pi^N \qquad \text{with } \pi^N \text{ the product measure } \pi{\otimes}\ldots{\otimes}\pi \text{ on } \trait^N\,,
\]
where the above push-forward measure is defined by 
$(\Lambda_N)_{\#} \pi^N  (A) : = \pi^N (\Lambda_N^{-1}(A))$ for any Borel subset $A \subset V$. 
\item[-] For every $\nu \in \boldsymbol{V} $ we define $\widehat{\boldsymbol{\kappa}}(\nu,\cdot)$ as the Borel measure
 \begin{equation}
 \label{widehat-kappa}
\widehat{\boldsymbol{\kappa}}(\nu,\dd \eta) : =
\begin{cases}\displaystyle
 \int_\trait \left(  \int_{\trait}  \delta_{\nu{+}\tfrac1N (\delta_y {-}\delta_x)} (\dd \eta) \,\kappa(x, \dd y) 
 \right)  \nu(\dd x) 
& \text{if } \nu \in \Lambda_N(\trait^N),
 \\
 0 & \text{otherwise}\,.
 \end{cases}
\end{equation}
\end{itemize}
The measure $\widehat{\boldsymbol{\kappa}}(\nu,\cdot)$ is non-trivial if and only if $\nu$ is itself an empirical measure,
 i.e., 
 \begin{equation}
 \label{nu-empirical}
 \nu = \frac1N \sum_{i=1}^N \delta_{\mathfrak{z}_i} \qquad \text{ for some $\mathfrak{z}_1,\ldots, \mathfrak{z}_N \in \trait$.}
 \end{equation}
  In that case, 
 \begin{equation}
 \label{explicit-representation}
 \begin{aligned}
 \widehat{\boldsymbol{\kappa}}(\nu,\dd \eta)  
   & = \frac1N \sum_{i=1}^N   \int_{\trait{\backslash} \{\mathfrak{z}_i\}}  \delta_{\tfrac1N \sum_{j \in \{1,\ldots, N\} {\backslash}\{ i\}} \delta_{\mathfrak{z}_j} {+}\tfrac1N \delta_y} (\dd \eta) \, \kappa(\mathfrak{z}_i, \dd y) 
 \end{aligned}
 \end{equation}
\par
We now equip $V = \mathcal{P}(\trait)$ with the Wasserstein distance $W_2$ induced by $\dV$: For any $\mu_1,\, \mu_2 \in \mathcal{P}(\trait)$,  
 \begin{equation}
 \label{Wasserstein2}
 \begin{gathered}
 W_2^2(\mu_1,\mu_2) := \min \left\{ \iint_{\trait{\times}\trait} \dV^2(x,y)  \, \boldsymbol{\gamma}(\dd x \dd y )\, : \  \boldsymbol{\gamma} \in \Gamma (\mu_1,\mu_2)\right\}\qquad \text{with }
 \\
  \Gamma (\mu_1,\mu_2) = \{  \boldsymbol{\gamma}\in  \mathcal{P}(\trait{\times}\trait)\, : \ (\mathsf{p}_i)_{\#} \boldsymbol{\gamma} = \mu_i \text{ for } i=1,2\}\,
  \end{gathered}
 \end{equation}
 where $ \mathsf{p}_i: \trait \times\trait \to \trait$ denotes the projection on the $i$th-component. For notational consistency,  hereafter 
 we will write $\boldsymbol{\dV}$ in place of $W_2$.

 \begin{prop}
 The collection $(\boldsymbol{V}, \boldsymbol{\dV} ,\widehat{\boldsymbol{\pi}},\widehat{\boldsymbol{\kappa}})$ satisfies the assumptions of Section \ref{s:3}. Moreover, 
 \begin{equation}\label{mitigation of singularity-lift}
\sup_{\nu\in \boldsymbol{V}}\int_{\boldsymbol{V}} (1{\wedge} 
 \boldsymbol{\dV}^2(\nu,\eta)) \,\widehat{\boldsymbol{\kappa}}(\nu,\dd \eta) \leq  c_{\kappa}.
\end{equation}
 \end{prop}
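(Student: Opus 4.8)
The strategy is to reduce the supremum over $\nu\in\boldsymbol V$ to a pointwise estimate using the explicit representation \eqref{explicit-representation}, bound the Wasserstein distance $\boldsymbol{\dV}(\nu,\eta)$ between an empirical measure $\nu$ and the perturbed measure $\eta=\nu+\tfrac1N(\delta_y-\delta_x)$ by a single transport of mass, and then recognize the resulting integral as exactly the one controlled by \eqref{mitigation of singularityc}. The verification of the measurability and detailed-balance conditions from Section~\ref{s:3} should be handled separately (detailed balance for $\widehat{\boldsymbol\kappa}$ with respect to $\widehat{\boldsymbol\pi}$ follows from the symmetry built into $\Lambda_N$ and \eqref{DBC} for $\kappa$; measurability from \eqref{true-measurabilityc} and the measurability of $\Lambda_N$), but the heart of the matter — and the displayed inequality \eqref{mitigation of singularity-lift} — is the quantitative bound, so I focus on that.

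\textbf{Key steps.} First, fix $\nu\in\boldsymbol V$. If $\nu$ is not an empirical measure of the form \eqref{nu-empirical}, then $\widehat{\boldsymbol\kappa}(\nu,\cdot)\equiv 0$ and the integral vanishes, so we may assume $\nu=\tfrac1N\sum_{i=1}^N\delta_{\mathfrak z_i}$ and use \eqref{explicit-representation}. Second, for each $i$ and each $y\in\trait$, set $\eta_{i,y}:=\tfrac1N\sum_{j\neq i}\delta_{\mathfrak z_j}+\tfrac1N\delta_y$; this differs from $\nu$ only in that the atom $\tfrac1N\delta_{\mathfrak z_i}$ has been moved to $\tfrac1N\delta_y$. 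The coupling that keeps all atoms $\delta_{\mathfrak z_j}$, $j\neq i$, fixed and transports $\tfrac1N$ mass from $\mathfrak z_i$ to $y$ is admissible in \eqref{Wasserstein2}, whence
\[
\boldsymbol{\dV}^2(\nu,\eta_{i,y}) = W_2^2(\nu,\eta_{i,y}) \le \frac1N\,\dV^2(\mathfrak z_i,y).
\]
Third, since $t\mapsto 1\wedge t$ is concave and nondecreasing with $1\wedge(t/N)\le 1\wedge t$ for $N\ge1$, we get $1\wedge\boldsymbol{\dV}^2(\nu,\eta_{i,y})\le 1\wedge\tfrac1N\dV^2(\mathfrak z_i,y)\le 1\wedge\dV^2(\mathfrak z_i,y)$. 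Fourth, substituting into \eqref{explicit-representation} and using the pushforward structure of $\widehat{\boldsymbol\kappa}(\nu,\cdot)$,
\[
\int_{\boldsymbol V}(1{\wedge}\boldsymbol{\dV}^2(\nu,\eta))\,\widehat{\boldsymbol\kappa}(\nu,\dd\eta)
= \frac1N\sum_{i=1}^N\int_{\trait\setminus\{\mathfrak z_i\}}(1{\wedge}\boldsymbol{\dV}^2(\nu,\eta_{i,y}))\,\kappa(\mathfrak z_i,\dd y)
\le \frac1N\sum_{i=1}^N\int_{\trait}(1{\wedge}\dV^2(\mathfrak z_i,y))\,\kappa(\mathfrak z_i,\dd y).
\]
Fifth, by \eqref{mitigation of singularityc} each summand is at most $c_\kappa$, so the average over $i$ is at most $c_\kappa$, uniformly in $\nu$; taking the supremum over $\nu\in\boldsymbol V$ yields \eqref{mitigation of singularity-lift}. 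Since $\widehat{\boldsymbol\pi}$ is a pushforward of the finite measure $\pi^N$, it is finite, and \eqref{mitigation of singularity-lift} is precisely the compatibility condition \eqref{mitigation of singularity} for the lifted data, completing the proof of the proposition modulo the routine structural checks.

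\textbf{Main obstacle.} The only genuinely delicate point is the Wasserstein estimate in the second and third steps: one must argue carefully that moving a single atom of mass $\tfrac1N$ is an admissible transport plan and that no cheaper global rearrangement is needed for an \emph{upper} bound (it is not — any admissible plan gives an upper bound on the minimum). A subtlety worth flagging is that if $y$ coincides with one of the other atoms $\mathfrak z_j$ (or with $\mathfrak z_i$ itself, but that is excluded by the domain $\trait\setminus\{\mathfrak z_i\}$), the measure $\eta_{i,y}$ simply has a heavier atom there; this causes no problem for the transport estimate, since the plan that fixes the $\mathfrak z_j$-atoms and sends $\tfrac1N$ from $\mathfrak z_i$ to $y$ remains a valid coupling of $\nu$ and $\eta_{i,y}$. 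The concavity/monotonicity manipulation of $1\wedge(\cdot)$ is elementary but should be stated explicitly, as it is what lets the factor $1/N$ be discarded cleanly and makes the bound $N$-independent.
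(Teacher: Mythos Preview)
Your argument for the key inequality \eqref{mitigation of singularity-lift} is correct and matches the paper's exactly: the same single-atom transport plan gives $\boldsymbol{\dV}^2(\nu,\eta_{i,y})\le \tfrac1N\dV^2(\mathfrak z_i,y)$, followed by the same averaging against \eqref{mitigation of singularityc}. The paper additionally spells out the ``routine structural checks'' you defer---separability of $(\boldsymbol V,\boldsymbol{\dV})$ via \cite[Prop.~7.1.5]{AGS08}, measurability via Lemma~\ref{lm:kmeas}, and detailed balance by citing \cite[Lemma~3.6]{HHT2024}---but the quantitative core is identical.
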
 
 \begin{proof}
 It follows from \cite[Prop.\ 7.1.5]{AGS08} that  the metric space $(\boldsymbol{V}, \boldsymbol{\dV})$ is separable. In order to check \eqref{mitigation of singularity-lift}, let us preliminarily introduce the following subset of $ \mathcal{P}(\trait){\times}\mathcal{P}(\trait)$: 
 \[
 \begin{aligned}
 &
(\nu,\eta) \in 
 \mathfrak{S}_N \quad \text{if and only if }
 \\
 &    \begin{cases}
 \nu = \frac1N  \sum_{i=1}^N \delta_{\mathfrak{z}_i} &\text{for  } \mathfrak{z}_1,\ldots, \mathfrak{z}_N \in \trait,
 \\
 \eta = \eta_{\mathfrak{y}}^i = \frac1N  \sum_{j \in \{1,\ldots, N\} {\backslash}\{ i\}}  \delta_{\mathfrak{z}_j} +  \frac1N \delta_{\mathfrak{y}}  & 
\text{for some } i =1,\ldots, N \text{ and } {\mathfrak{y}} \in \trait {\backslash}\{ \mathfrak{z}_1,\ldots, \mathfrak{z}_N\}.
 \end{cases}
 \end{aligned}
 \]
 We have the key estimate
 \begin{equation}
 \label{key-est}
  \boldsymbol{\dV}^2(\nu,\eta) \leq 
 \frac1N  \dV^2(\mathfrak{z}_i,\mathfrak{y})
  \qquad \text{for all }   (\nu,\eta) \in 
 \mathfrak{S}_N \,.
 \end{equation} 
Indeed,  it is sufficient to choose as competitor in the formula for $ W_2(\nu,\eta)$ the plan given by 
\[
\boldsymbol{\gamma}(\dd x \dd y) := \frac1N \sum_{j \in \{1,\ldots, N\} {\backslash}\{ i\}}  \delta_{\mathfrak{z}_j}(
\dd x) \delta_{\mathfrak{z}_j}(
\dd y)  + \frac1N  \delta_{\mathfrak{z}_i} (\dd x)  \delta_{\mathfrak{y}}(\dd y)\,.
\]
Then, we have 
 \[
 \begin{aligned}
  \boldsymbol{\dV}^2(\nu,\eta) &  \leq \frac1N  \sum_{j \in \{1,\ldots, N\} {\backslash}\{ i\}}  \iint_{\trait{\times}\trait} \dV^2(x,y) \,  \delta_{\mathfrak{z}_j}(
\dd x) \delta_{\mathfrak{z}_j} (\dd y) 
 + \frac1N   \iint_{\trait{\times}\trait} \dV^2(x,y)  \, \delta_{\mathfrak{z}_i} (\dd x)  \delta_{\mathfrak{y}}(\dd y)
\\
&=  \frac1N  
\dV^2(\mathfrak{z}_i, \mathfrak{y}) 
\end{aligned}
 \]
Estimate \eqref{key-est} and the assumed \eqref{mitigation of singularityc} imply that, for every $\nu \in \Lambda_N(\trait^N)$, there holds
\[
\begin{aligned}
&\int_{\boldsymbol{V}} 
(1{\wedge} 
\boldsymbol{\dV}^2(\nu,\eta)) \,\widehat{\boldsymbol{\kappa}}(\nu,\dd \eta)
\\
&\qquad = \frac1N  
 \sum_{i=1}^N   \int_{\trait{\backslash} \{\mathfrak{z}_i\}} \left( 
  \int_{\boldsymbol{V}} 
 (1{\wedge} \boldsymbol{\dV}^2(\nu,\eta))\,
  \delta_{\tfrac1N \sum_{j \in \{1,\ldots, N\} {\backslash}\{ i\}} \, \delta_{\mathfrak{z}_j} {+}\tfrac1N \delta_y} (\dd \eta) \right) \kappa(\mathfrak{z}_i, \dd  y) 
  \\
  &\qquad\leq \frac1N 
   \sum_{i=1}^N 
   \int_{\trait{\backslash} \{\mathfrak{z}_i\}} (1{\wedge}\dV^2(\mathfrak{z}_i, y))  \, \kappa(\mathfrak{z}_i, \dd  y) 
   \leq c_{\kappa}\,,
  \end{aligned}
\]
establishing the bound \eqref{mitigation of singularity-lift}.

It remains to prove the measurability property \eqref{true-measurability}. Having obtained \eqref{mitigation of singularity-lift}, we may now resort to Lemma \ref{lm:kmeas}, which guarantees that it is equivalent to prove that for every $H \in \B^+(\boldsymbol{E}; \overline{\R}) $ (with $\boldsymbol{E}  = \boldsymbol{V}{\times} \boldsymbol{V}$), the map 
\begin{equation}
\label{measurab-config}
 \boldsymbol{V}\ni \nu \mapsto \int_{\boldsymbol{V}} H(\nu,\eta)\, \widehat{\boldsymbol{\kappa}} (\nu,\dd \eta)  \quad \text{
is in $\B^+( \boldsymbol{V}; \overline{\R})$}.
\end{equation}
Now, for all $\nu  = \frac1N \sum_{i=1}^N \delta_{\mathfrak{z}_i}   \in  \Lambda_N(\trait^N) $ we have 
\[
 \int_{\boldsymbol{V}} H(\nu,\eta)\, \widehat{\boldsymbol{\kappa}} (\nu,\dd \eta)  = 
  \sum_{i=1}^N   \int_{\trait{\backslash} \{\mathfrak{z}_i\}} H(  \mathfrak{z}_i, \eta) \,  \delta_{\tfrac1N \sum_{j \in \{1,\ldots, N\} {\backslash}\{ i\}} \delta_{\mathfrak{z}_j} {+}\tfrac1N \delta_y} (\dd \eta) \, \kappa(\mathfrak{z}_i, \dd y) 
  \]
and exploiting this representation, and the fact that  $\{\kappa(x,\cdot)\}$ is a Borel family, 
 we may prove  \eqref{measurab-config}.

 Finally, the reversibility \eqref{DBC} follows from the same calculation as \cite[Lemma 3.6]{HHT2024}.
 \end{proof}

\par
In the functional-analytic setup of $(V =\mathcal{P}(\trait) ,W_2,\widehat{\boldsymbol{\pi}},\widehat{\boldsymbol{\kappa}})$,  we may now consider triples $(\upphi, \uppsi, \upalpha)$ of entropy, dissipation, and flux densities  complying with Assumptions \ref{Ass:E}, \ref{Ass:D}, and \ref{Ass:flux-density}, respectively, (cf.\ Example \ref{admissible-triples-intro} for concrete choices of $(\upphi, \uppsi, \upalpha)$), and define the entropy functional
\[
	\mathcal{M}^+(V) \ni \rho\mapsto \widehat{\scrE}(\rrho) = 
 \int_{\mathcal{P}(\trait)} \upphi\left(\frac{\dd\rrho}{\dd \widehat{\boldsymbol{\pi}}} \right)\, \dd \widehat{\boldsymbol{\pi}} \qquad \text{if } \rrho \ll \widehat{\boldsymbol{\pi}}
\]
with $\widehat{\scrE}(\rrho) = +\infty$ otherwise. Likewise, we may introduce the primal and dual dissipation potentials $\widehat{\scrR}$ and $\widehat{\scrR}^*$ via Definition \ref{def:primal-and-dual}. The results from Sections \ref{s:3-NEW} and \ref{s:4} then apply to the evolution system $(\widehat{\scrE},\widehat{\scrR}, \widehat{\scrR}^*)$: in particular, there exist $\Dissipative$ and $\Reflecting$ solutions to the corresponding Forward Kolmogorov equation \eqref{FKEm}.

\section{Proof of Theorem \ref{thm:robustness}}
\label{s:5}
Recall that the kernels $\kappa_n(x, \dd y) = a_n(x,y) \kappa (x, \dd y)$ involve functions
$(a_n)_n$ satisfying \eqref{eq:an-1}.   In addition to the couplings
$(\tetapin)_n$ from \eqref{tetapin}, we introduce the couplings associated with
$(\kappa_n)_n$ and a given  sequence $(\rho^n)_n\subset \mathcal{M}^+(V)$ via
\begin{equation}
\label{kan-def:tetarho}
\begin{aligned}
  \teta_{\!\rho^n}^-(\dd x\,\dd y) := \rho^n(\dd x)\kappa_n(x,\dd y), \qquad
  \teta_{\!\rho^n}^+(\dd x\,\dd y) := \rho^n(\dd y)\kappa_n(y,\dd x)= s_{\#}\teta_{\!\rho^n}^-(\dd x\,\dd y)
\end{aligned}
\end{equation}
(with slight abuse of notation). 

\medskip
With our first result, which is the analogue of Lemma \ref{l:3.4}, we are going to check that the couplings $(\tetapin)_n$ and 
$ \teta_{\!\rho^n}^{\pm}$ (associated with a  converging sequence $(\rho^n)_n$), converge appropriately. 

\begin{lemma}
\label{eq:knass}
Let the kernels $(\kappa_n)_n$ satisfy \eqref{eq:an-1} \& \eqref{kappa-eps-reg-n}. Let us also consider
a sequence $(\rho^{n})_{n}\subset \dom(\scrE)$ with $\rho^{n}\ll \pi$ for all $n \in \N$, 
and $\rho^n \to \rho$ setwise in $\ \calM^+(V)$. 
 \par
 Then,  $(\kappa_n)_n$ is a measurable family in the sense of \eqref{ass:kappa}, and 
 the following convergences hold:
\begin{equation}
\label{not-negligible-n}
\begin{cases}
(1{\wedge} \dV^2) \tetapin \to (1{\wedge} \dV^2) \tetapi
\\
(1{\wedge} \dV^2) \teta_{\!\rho^{n}}^\pm \to (1{\wedge} \dV^2) \teta^{\pm}_{\!\rho} 
 \end{cases}
\qquad   \text{setwise in $\calM^+(E)$ as $n\to \infty$.}
\end{equation}
In particular, 
\begin{equation}
\label{not-negligible-2n}
\begin{cases}
\tetapin \to \tetapi
\\
 \teta_{\!\rho^n}^\pm \to \teta_{\!\rho} 
 \end{cases}
\qquad   \text{$\sigma$-setwise in $\Ms^+(E)$ as $n\to \infty$.}
\end{equation}
\end{lemma}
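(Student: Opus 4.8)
The statement to prove is Lemma~\ref{eq:knass}, asserting that (i) the approximate kernels $(\kappa_n)_n$ form a measurable family in the sense of \eqref{ass:kappa}, and (ii) the convergences \eqref{not-negligible-n} and \eqref{not-negligible-2n} hold. The plan is to treat the measurability claim first, then deduce the setwise convergences \eqref{not-negligible-n} directly from the setwise convergence $\rho^n\to\rho$ together with the pointwise convergence $a_n\to 1$, and finally to upgrade to the $\sigma$-setwise convergences \eqref{not-negligible-2n} by invoking Lemma~\ref{lm:sigmasetwise}, exactly as in the proof of Lemma~\ref{l:3.4}.

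\textbf{Measurability.} For \eqref{true-measurability} applied to $\kappa_n$, fix $f\in\Bb(V)$. Since $a_n$ is bounded and jointly $\mathfrak B(E)$-measurable, for each fixed $x$ the function $y\mapsto f(y)a_n(x,y)$ lies in $\Bb(V)$, and the map
\[
V\ni x\mapsto \int_{V\backslash\{x\}} f(y)(1{\wedge}\dV^2(x,y))\,\kappa_n(x,\dd y) = \int_{V\backslash\{x\}} f(y)a_n(x,y)(1{\wedge}\dV^2(x,y))\,\kappa(x,\dd y)
\]
is measurable: this follows from the measurability hypothesis \eqref{true-measurability} on the family $\kappa$ together with a monotone-class / approximation argument, approximating $f\cdot a_n$ by finite sums of products of $\mathfrak B(V)$-measurable functions in each variable (this is precisely the content invoked via Lemma~\ref{lm:kmeas} in Section~\ref{s:8}). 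The bound \eqref{mitigation of singularity} for $\kappa_n$ is immediate from $\|a_n\|_\infty\le 1$, as already recorded in \eqref{mitigated-bounde}. Hence $(\kappa_n)_n$ satisfies \eqref{ass:kappa}.

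\textbf{Setwise convergence \eqref{not-negligible-n}.} It suffices to treat $(1{\wedge}\dV^2)\tetapin$ and $(1{\wedge}\dV^2)\teta^-_{\!\rho^n}$; the case of $\teta^+_{\!\rho^n}$ follows by applying the symmetry map $s$. For any $\phi\in\Bb(E)$, by \eqref{teta-coupling} and \eqref{kappa-eps-reg-n},
\[
\iint_E \phi(x,y)(1{\wedge}\dV^2(x,y))\,\tetapin(\dd x\dd y) = \int_V\Bigl(\int_V \phi(x,y)a_n(x,y)(1{\wedge}\dV^2(x,y))\,\kappa(x,\dd y)\Bigr)\pi(\dd x).
\]
Since $|a_n\phi|\le\|\phi\|_\infty$ and $a_n\phi\to\phi$ pointwise, the inner integral converges to $\int_V\phi(x,y)(1{\wedge}\dV^2(x,y))\,\kappa(x,\dd y)$ for every $x$ by the dominated convergence theorem (the dominating function $x\mapsto\|\phi\|_\infty\int_V(1{\wedge}\dV^2(x,y))\,\kappa(x,\dd y)$ is bounded by $\|\phi\|_\infty c_\kappa$ and $\pi$-integrable as $\pi$ is finite), and a second application of dominated convergence in the outer $\pi$-integral gives convergence to $\iint_E\phi(1{\wedge}\dV^2)\,\tetapi$. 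For $\teta^-_{\!\rho^n}$ one has instead
\[
\iint_E \phi(x,y)(1{\wedge}\dV^2)\,\teta^-_{\!\rho^n}(\dd x\dd y) = \int_V\Bigl(\int_V \phi(x,y)a_n(x,y)(1{\wedge}\dV^2)\,\kappa(x,\dd y)\Bigr)\rho^n(\dd x);
\]
here one must pass to the limit in a product $\int g_n\,\dd\rho^n$ where $g_n\to g$ pointwise, $\|g_n\|_\infty$ uniformly bounded (by $\|\phi\|_\infty c_\kappa$) and $\rho^n\to\rho$ setwise, which is licensed by \eqref{eq:pointset}. This yields $\iint_E\phi(1{\wedge}\dV^2)\,\teta^-_{\!\rho}$, proving \eqref{not-negligible-n}.

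\textbf{Upgrade to $\sigma$-setwise convergence \eqref{not-negligible-2n}.} Apply Lemma~\ref{lm:sigmasetwise} with $f(x,y)=(1{\wedge}\dV^2(x,y))\in\Bb^+(E)$, noting $f>0$ holds $\tetapin$-, $\tetapi$-, $\teta^\pm_{\!\rho^n}$- and $\teta^\pm_{\!\rho}$-a.e.\ in $E$ by \eqref{trivial-but-needed} and the analogous identity for $\kappa_n$ (recall $\kappa_n(x,\{x\})=0$). Since $f\tetapin\to f\tetapi$ and $f\teta^\pm_{\!\rho^n}\to f\teta^\pm_{\!\rho}$ setwise in $\calM^+(E)$ by \eqref{not-negligible-n}, Lemma~\ref{lm:sigmasetwise} delivers the $\sigma$-setwise convergences \eqref{not-negligible-2n}.

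\textbf{Main obstacle.} The only genuinely delicate point is the simultaneous passage to the limit in the double integral for $\teta^-_{\!\rho^n}$, where \emph{both} the measure $\rho^n$ and the integrand (through $a_n$) vary. This is handled cleanly by the improved setwise-convergence statement \eqref{eq:pointset}, provided one first checks the uniform boundedness and pointwise convergence of the inner integrals $g_n(x):=\int_V\phi(x,y)a_n(x,y)(1{\wedge}\dV^2(x,y))\,\kappa(x,\dd y)$; the uniform bound $\|g_n\|_\infty\le\|\phi\|_\infty c_\kappa$ comes directly from \eqref{mitigation of singularity}, and pointwise convergence $g_n(x)\to g(x)$ from dominated convergence against the fixed finite measure $(1{\wedge}\dV^2(x,\cdot))\kappa(x,\cdot)$. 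Everything else is a routine adaptation of the proof of Lemma~\ref{l:3.4}.
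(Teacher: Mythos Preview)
Your proof is correct and follows essentially the same route as the paper: measurability via Lemma~\ref{lm:kmeas} with $g_n(x,y)=f(y)a_n(x,y)$, setwise convergence by combining dominated convergence in the inner integral with \eqref{eq:pointset} for the outer one, and the upgrade to $\sigma$-setwise via Lemma~\ref{lm:sigmasetwise} exactly as in Lemma~\ref{l:3.4}. The only minor imprecision is that $a_n\to 1$ holds only $\tetapi$-a.e.\ (cf.\ \eqref{eq:an-1}), so the inner integrals converge for $\pi$-a.e.\ $x$ rather than every $x$; this is harmless since $\pi$ dominates all $\rho^n$ and is the measure in the outer integral, so \eqref{eq:pointset} and dominated convergence still apply.
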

\begin{proof}
As for the measurability assertion, notice that the property that 
\begin{equation*}
\mbox{the map $\displaystyle V\ni x \mapsto \int_{V} f(y) (1{\wedge}\dV^2(x,y)) \, \kappa (x,\dd y)$ is measurable for any $f\in \Bb(V)$,}
 \end{equation*}
implies that 
\begin{equation*}
\mbox{$\displaystyle V\ni x \mapsto \int_{V} f(y) a_n(x,y) (1{\wedge}\dV^2(x,y)) \, \kappa (x,\dd y)$}
 \end{equation*}
is measurable, by choosing $g_n(x,y)=f(y)a_n(x,y)$ in Lemma \ref{lm:kmeas}.
\par
It is obviously sufficient to prove 
\eqref{not-negligible-n}
for, e.g., the sequence $(\teta_{\!\rho_n}^-)_n$. With this aim,  as in the proof of Lemma \ref{l:3.4}, 
let us fix  $\phi\in \Bb(E)$, and observe that for the  bounded Borel mappings
 \[
 \begin{cases}
 \displaystyle
 x\mapsto f_n(x) :=  \int_V \phi(x,y)(1{\wedge} \dV^2(x,y))\kappa_n(x,\dd y)
  =  \int_V a_n(x,y) \phi(x,y)(1{\wedge} \dV^2(x,y))\kappa(x,\dd y)
  \\
 \displaystyle
 x\mapsto f(x) :=  \int_V \phi(x,y)(1{\wedge} \dV^2(x,y))\kappa(x,\dd y)
  \end{cases}  
  \]
there holds $f_n \to f$ pointwise in $V$ $\pi$-almost everywhere  as $n\to \infty$, with 
\[
|f_n(x) | \leq \int_V |\phi(x,y)|(1{\wedge} \dV^2(x,y))\kappa(x,\dd y) \qquad \text{for $\pi$-a.e.\ $x\in V$.}
\] 
Then, we may combine this with the setwise convergence of $(\rho^n)_n$ thanks to  \eqref{eq:pointset}, and conclude that 
$\langle f_n, \rho^n\rangle \to \langle f, \rho\rangle$. Namely,
\[
\begin{aligned}
&
\iint_{E} \phi(x,y) \, (1\wedge \dV^2(x,y)) \teta_{\!\rho^n}^- (\dd x \dd y) = \int_{V} \left(  \int_{V} a_n(x,y)\phi(x,y) \,(1\wedge \dV^2(x,y))\kappa(x,\dd y) \right) \, \rho^n(\dd x) 
\\
&
\longrightarrow  \int_{V} \left(  \int_{V} \phi(x,y) \,(1\wedge \dV^2(x,y))\kappa(x,\dd y) \right) \, \rho(\dd x)  = \iint_{E} \phi(x,y) \, (1\wedge \dV^2(x,y))\teta_{\!\rho}^- (\dd x \dd y) \,.
\end{aligned}
\]
and \eqref{not-negligible-n} follows. A fortiori we have \eqref{not-negligible-2n} by the same argument as in the proof of Lemma \ref{l:3.4}. 
\end{proof}

The core argument for the proof of the robustness Theorem \ref{thm:robustness} will be 
developed in the upcoming Section \ref{ss:compactness}, dealing with the compactness properties of a 
sequence $(\rho^{n},\bj^{n})_n  $ with 
\[
(\rho^n,\jj^n) \in \ACEn0T  \qquad \text{for every $n\in \N$},
\]
i.e., with bounded action and entropy for the $(\scrE,\scrR^{n},(\scrR^{n})^*)$ system. In Section \ref{ss:5.2}, we will take $(\rho^n,\jj^n)_n$ to be a sequence of $\Dissipative$/$\Reflecting$ solutions, as in the statement of Theorem  \ref{thm:robustness}, and then conclude its proof.

\subsection{Compactness}
\label{ss:compactness}
Consider a sequence $(\rho^{n},\bj^n) \in \ACEn 0T$. 
We will use that
\begin{equation}
\label{absolute-continuity-n}
\rho^{n} = u^{n} \pi \quad 
\text{ and  } \quad  2\bj^{n} = w^{n} \tetapin 
\end{equation}
 (since $\rho_t^{n} \ll \pi$ and $\bj_t^{n} \ll \tetapin$
for $\Lebone$-a.e.\ $t\in (0,T)$, by 
 \eqref{nice-representation})
for some $(u^{n})_n  \subset
\rmL^1(0,T; \rmL^1(V;\pi)) $ and  \ $(w^{n})_n \subset \rmL^1(0,T; \rmL^1(E,\tetapin)) $.

For later convenience we remark that the action integrals $ \int_0^T \scrR^{n}(\rho^{n}, \bj^{n}) \dd t $ rewrite
taking into account that, by  \eqref{nice-representation}
 we have for $\Lebone$-a.e.\ $t\in (0,T)$
 \begin{equation}
 \label{def-set-Eeps}
  |\bj_t^{n} |(E{\backslash} E_t^{n}) =0 \qquad \text{with }  E_t^{n} =  \{ (x,y)\in E\, : \ \upalpha(u_t^{n}(x),u_t^{n}(y))>0  \},
 \end{equation}
 and hence for $\Lebone$-a.e.\ $t\in (0,T)$
 \begin{equation}
 \label{starting-point-later}
 \scrR^n(\rho_t^{n},\bj_t^{n})  = 
    \displaystyle
    \frac12\iint_{E_t^{n}}
    \uppsi\Bigl(\frac{w_t^{n}(x,y)}{\upalpha(u_t^{n}(x),u_t^{n}(y))}\Bigr)\upalpha(u_t^{n}(x),u_t^{n}(y))\,\tetapin(\dd
    x,\dd y)\,. \EEE 
 \end{equation}

We are now in a position to state the main result of this section, collecting the compactness properties of the sequence $(\rho^n,j^n)_n$; in the second part of the statement,
we will additionally suppose that the pairs $(\rho^n,j^n)$ satisfy the reflecting continuity equation for the systems $(\scrE,\scrR^n,(\scrR^n)^*)$ and accordingly write
 $(\rho^n,j^n)\in \ECEn 0T$.
\begin{prop}[Compactness]
\label{prop:compactness2}
Consider a sequence $(\rho^n,j^n)\in \ACEn 0T$ such that 
$(\rho^n(0))_n$ complies with \eqref{initial-thm:main}, and satisfying
\begin{equation}
\label{eq:compa1}
\begin{aligned}
\sup_{n\in \N} \sup_{t\in [0,T]} \scrE(\rho^n_t) <\infty,\ \quad \sup_{n\in \N} \int_0^T  \scrR^n(\rho^n_t,\bj^n_t)\, \dd t <\infty.
\end{aligned}
\end{equation}
Then,
\begin{enumerate}
\item
 there exist  
a curve $(\rho,j) \in \ACE 0T$ 
(i.e., with finite entropy and action for the system $(\scrE,\scrR,\scrR^*)$ associated with the kernel $\kappa$),
such that, along a (not relabeled) subsequence, 
\begin{subequations}
  \label{cvg-thm1mod}
\begin{align}
&
\rho_t^{n}\to \rho_t  && \text{setwise in } \calM^+(V) \quad \text{for all } t\in [0,T];
\\
& 
(1{\wedge} \dV )\bj_{\!\Lebone}^{n}\to (1{\wedge} \dV )\bj_{\!\Lebone}  =(1{\wedge} \dV )\bj_t \Lebone   && 
\text{setwise in } \calM([0,T]{\times}E)
\end{align}
(cf.\ \eqref{Lebintegrated} for the notation $\bj_{\!\Lebone}^{n}$, $\bj_{\!\Lebone}$). 
\end{subequations}
In particular, 
\[\bj_{\!\Lebone}^{n}\to \bj_{\!\Lebone} \quad 
\text{$\sigma$-setwise in } \Ms([0,T]{\times}E).\]
\item
If, in addition, $(\rho^n,j^n)\in \ECEn 0T$ for all $n\in \N$ and the densities $u^n = \frac{\dd \rho^n}{\dd \pi}$ satisfy
\begin{equation}
\label{eq:cb2}
\sup_{n\in \N} \sup_{t\in [0,T]} \|u_t^n\|_{\rmL^\infty(\pi)} <\infty
\end{equation}
then there exists a curve $(\rho,j)\in \ECE 0T$ such that, along a (not relabeled) subsequence, convergences \eqref{cvg-thm1mod} hold and, moreover,
\begin{equation}
\label{eq:comce1}
\begin{aligned}
\lim_{n\to \infty} 
\iiint_{[0,T]\times E} \xi(x,y) \, \dd \bj_{\!\Lebone}^{n}  (\dd x \dd y \dd t) & = \iiint_{[0,T]\times E} \xi(x,y) \,  \bj_{\!\Lebone} (\dd x \dd y \dd t) 
\\
& \quad \mbox{for every $\xi \in \Bb(E)$ with} \iint_{E}\xi^2 \, \dd \tetapi < \infty.
\end{aligned}
\end{equation}
\end{enumerate}
\end{prop}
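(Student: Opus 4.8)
The overall strategy is to establish compactness separately for the density part $(\rho^n)_n$ and the flux part $(\jj^n)_n$, identify the limit, and then verify that the limiting pair lies in $\ACE 0T$ (resp.\ $\ECE 0T$) by passing to the limit in the continuity equation together with the lower semicontinuity of the action and Fisher information. The key structural tools are: the Dunford--Pettis/setwise compactness criterion for the densities coming from the entropy bound (via Lemma \ref{l:crucial-F}(6) applied to the superlinear $\upphi$), the a priori bound \eqref{crucial-bound-j}--type estimate that controls $(1{\wedge}\dV)\jj^n$ through the action via \eqref{eq:disspbd}, the $\sigma$-setwise lower semicontinuity of $\scrR$ (Lemma \ref{lm:proprr}(3)) and of $\Fish$ under the convexity Assumption \ref{ass:dupphi} (Proposition \ref{prop:Fisher-lsc}, with the kernel dependence handled by Lemma \ref{eq:knass}), and the narrow-continuity/equicontinuity estimate \eqref{j-controls-rho} that upgrades pointwise-in-$t$ setwise convergence of $\rho^n_t$ from convergence of a time-averaged version.

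\textbf{Step 1 (compactness of the densities).} From $\sup_n\sup_t \scrE(\rho^n_t)<\infty$ and the superlinearity of $\upphi$, for each fixed $t$ the family $(\rho^n_t)_n$ is relatively compact w.r.t.\ setwise convergence (Lemma \ref{l:crucial-F}(6), with $\nu_n=\nu=\pi$). Combined with the equicontinuity estimate \eqref{j-controls-rho}, which gives $\|\rho^n_t-\rho^n_s\|_{\BL}\le \int_s^t\!\iint_{\Ed}(1{\wedge}\dV)|\jj^n_r|\,\dd r$ and whose right-hand side is uniformly bounded in $n$ thanks to \eqref{eq:disspbd} and the action bound in \eqref{eq:compa1}, a refined Arzel\`a--Ascoli / Helly-type argument (exactly as in the proof of \cite[Prop.\ 4.12 or similar]{PRST22}, or by a diagonal argument over a countable dense set of times and then the BL-equicontinuity) yields a subsequence and a narrowly continuous curve $\rho$ with $\rho^n_t\to\rho_t$ setwise for \emph{every} $t\in[0,T]$. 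By lower semicontinuity of $\scrE$, $\sup_t\scrE(\rho_t)<\infty$, so $\rho_t=u_t\pi$ with $u\in\rmL^1(0,T;\rmL^1(V;\pi))$.

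\textbf{Step 2 (compactness of the fluxes).} Working on the measure space $([0,T]\times E,\Leb\otimes\tetapin)$, the first inequality of \eqref{eq:disspbd} together with the action bound gives $\sup_n \calF_{\psih}\!\big(2(1{\wedge}\dV)\jj^n_{\!\Leb}\,\big|\,c_\upalpha(1{\wedge}\dV^2)(\teta^+_{\rho^n}{+}\teta^-_{\rho^n}{+}\tetapin)\big)<\infty$, where the dominating measures converge setwise in $\calM^+([0,T]\times E)$ by Lemma \ref{eq:knass} (and boundedness of $\rho^n(V)$). Since $\psih$ is superlinear, Lemma \ref{l:crucial-F}(6) applies and yields relative compactness of $((1{\wedge}\dV)\jj^n_{\!\Leb})_n$ w.r.t.\ setwise convergence in $\calM([0,T]\times E)$; extract a further subsequence with limit $\boldsymbol{\ell}\in\calM([0,T]\times E)$. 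One checks $\boldsymbol{\ell}$ vanishes on $\{\dV=0\}$, so it is of the form $(1{\wedge}\dV)\jj_{\!\Leb}$ for a $\sigma$-finite $\jj_{\!\Leb}=\jj_t\Leb$, giving $\jj^n_{\!\Leb}\to\jj_{\!\Leb}$ $\sigma$-setwise; a disintegration-in-time argument (as in \cite{PRST22}) produces the measurable family $(\jj_t)_t\subset\Ms(E)$. Passing to the limit in \eqref{CE} for each fixed $\varphi\in\Lipb(V)$ — using Step 1 on the left and the $\sigma$-setwise convergence of $\jj^n_{\!\Leb}$ (with $\dnabla\varphi/(1{\wedge}\dV)$ bounded) on the right — shows $(\rho,\jj)\in\CE 0T$; lower semicontinuity of $\scrR$ via \eqref{R-lsc} (and of $\Fish$, see Step 4) then gives $(\rho,\jj)\in\ACE 0T$. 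This proves part (1).

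\textbf{Step 3 (the enhanced convergence \eqref{eq:comce1}).} Now assume in addition $(\rho^n,\jj^n)\in\ECEn 0T$ and \eqref{eq:cb2}. The point is to improve the convergence of $\jj^n_{\!\Leb}$ from testing against $(1{\wedge}\dV)$-weighted bounded functions to testing against all $\xi\in\Bb(E)$ with $\iint\xi^2\,\dd\tetapi<\infty$. Given such $\xi$, for $\eps>0$ write $\xi=\xi\One_{\{1{\wedge}\dV\ge\eps\}}+\xi\One_{\{1{\wedge}\dV<\eps\}}$. On the first piece, $\xi\One_{\{1{\wedge}\dV\ge\eps\}}/(1{\wedge}\dV)$ is bounded, so convergence follows from the $\sigma$-setwise convergence of $\jj^n_{\!\Leb}$ established in Step 2. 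For the second piece, use Young's inequality in the form $|\xi|\,|w^n_t|\,\alpha\le \Psi^*(\delta\xi)\alpha/\delta+\delta^{-1}\Psi(w^n_t/\alpha)\alpha$ together with the bound $\Psi^*(\delta\xi)\le C\delta^2\xi^2/(1{\wedge}\dV^2)$ valid on $\{1{\wedge}\dV<\eps\}$ after absorbing $\|u^n\|_\infty$ (here \eqref{eq:cb2} and \eqref{needed-control-bound} enter, controlling $\Psi^*(\delta\xi)\,\alpha(u^n(x),u^n(y))\le C\|u^n\|_\infty \delta^2\xi^2/(1{\wedge}\dV^2)\cdot(1{\wedge}\dV^2)$), so that $\iiint_{\{1{\wedge}\dV<\eps\}}|\xi|\,\dd|\jj^n_{\!\Leb}|\le C\eps\|\xi\|_{L^2(\tetapi)}^2 + C\,\mathrm{action}(\rho^n,\jj^n)$ — hmm, more carefully, the $\eps$-smallness must come from the concentration of $\iint_{\{1{\wedge}\dV<\eps\}}\xi^2\,\dd\tetapi\to0$ as $\eps\to0$ by dominated convergence, combined with the uniform action bound; a standard $\eps$-$\delta$ diagonal argument then closes it. The limit is the analogously split integral against $\jj_{\!\Leb}$, using the lower semicontinuity $\iint\xi^2\,\dd\tetapi<\infty$ and Fatou on the limit side. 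Finally $u\in\rmL^\infty(0,T;\rmL^\infty(V;\pi))$ by lower semicontinuity of the $\rmL^\infty$ norm under setwise convergence of $\rho^n_t$.

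\textbf{Step 4 (upgrading to $\ECE 0T$ and the main obstacle).} It remains to pass to the limit in \eqref{eq:RCE} for all $\varphi\in\mathcal{X}_2$ (not just $\Lipb(V)$) and to verify the finite-action and finite-Fisher conditions; the latter use Lemma \ref{lm:proprr}(3) and Proposition \ref{prop:Fisher-lsc}/Lemma \ref{eq:knass} (the kernel-dependent Fisher functionals $\Fish^n$ $\Gamma$-liminf to $\Fish$ by convexity of $\mathrm{D}_\upphi$ and $\sigma$-setwise convergence $\tetapin\to\tetapi$). For the continuity equation with a general $\varphi\in\mathcal{X}_2$: since $\varphi\in\rmL^\infty(V;\pi)$ and $\rho^n_t,\rho_t\ll\pi$ with uniformly bounded densities, the left-hand side passes to the limit by setwise convergence of $\rho^n_t$ plus the remark after \eqref{eq:70}; the right-hand side is handled by \eqref{eq:comce1} with $\xi=\dnabla\varphi$, which indeed satisfies $\iint(\dnabla\varphi)^2\,\dd\tetapi<\infty$ by definition of $\mathcal{X}_2$. \emph{The main obstacle} is precisely Step 3 — upgrading the flux convergence from the coarse $\sigma$-setwise topology (which only sees $(1{\wedge}\dV)$-weighted bounded test functions) to duality with the unbounded, merely $\rmL^2(\tetapi)$ test functions $\dnabla\varphi$. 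This requires the uniform $\rmL^\infty$ bound \eqref{eq:cb2} on the densities to convert the action bound into equi-integrability of $\jj^n_{\!\Leb}$ against $\dnabla\varphi$ near the diagonal, via the quadratic-near-zero behaviour \eqref{quadratic-at-0} of $\uppsi^*$ encoded in \eqref{needed-control-bound}; without such a bound (as for general $\Dissipative$ solutions) this step genuinely fails, which is exactly why part (2) needs the extra hypotheses.
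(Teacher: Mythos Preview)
Your proposal is correct and follows essentially the same route as the paper: compactness of $(\rho^n_t)$ via the entropy sublevel set plus the BL-equicontinuity from \eqref{j-controls-rho} and a refined Ascoli--Arzel\`a argument (the paper's Proposition \ref{thm:Ascoli}); compactness of $(1{\wedge}\dV)\jj^n_{\!\Lebone}$ via the $\psih$-superlinearity bound \eqref{eq:disspbd} and Lemma \ref{l:crucial-F}(6); and the enhanced convergence in Part (2) via the near/far-diagonal splitting with Young's inequality and the $\rmL^\infty$ density bound. Your Step 3 write-up is a bit garbled before the ``hmm, more carefully'', but the corrected version (smallness of the near-diagonal piece from $\iint_{\{1\wedge\dV<\eps\}}\xi^2\,\dd\tetapi\to0$ combined with the parameter $\delta\to\infty$ to kill the action term) is exactly the paper's argument in Claim 4.
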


Preliminarily, for the reader's convenience we recall a refined version of the Ascoli-Arzel\`a theorem in metric spaces that will be used in the proof of Proposition \ref{prop:compactness2}.
\begin{prop}{\cite[Prop.\ 3.3.1]{AGS08}}
\label{thm:Ascoli}
Let $(\mathscr{S}, d)$ be a complete  metric space, also endowed with a topology $\serifsigma$ compatible with $d$ in the sense that
for all $(x_n)_n,\, (y_n)_n \subset \mathscr{S}$ there holds
\begin{equation}
\label{compatibility-top-metr}
(x_n,y_n) \stackrel{\serifsigma}{\longrightarrow} (x,y) \ \Longrightarrow  \ \liminf_{n\to\infty} d(x_n,y_n) \geq d(x,y)\,. 
\end{equation}
Let $\mathsf{K}$ be a $\serifsigma$-sequentially compact subset of  $\mathscr{S}$, and let
$(v_n)_n$
be a sequence of curves  $v_n : [0,T]\to \mathscr{S}  $ such that 
\begin{subequations}
\label{conditions-for-Ascoli}
\begin{align}
\label{compactness}
& v_n(t) \in \mathsf{K} && \text{for all } t \in [0,T], \ n \in \N,
\\
& 
\label{equicontinuity}
\limsup_{n\to\infty} d(v_n(t),v_n(s)) \leq \omega(s,t) && \text{for all } s,\, t \in [0,T],
\end{align}
\end{subequations}
where  $\omega : [0,T]{\times}[0,T]\to [0,\infty)$ is a   symmetric function for which there exists an (at most) countable subset $\mathscr{C}$ of $[0,T]$ such that
\[
\lim_{(s,t) \to (r,r)}\omega(s,t) = 0 \quad \text{for all } r\in [0,T]{\backslash}\mathscr{C}.
\]
Then, there exist an increasing subsequence $(n_k)_k$ and a limit curve $v: [0,T]\to \mathscr{S}$ such that 
\[
v_{n_k}(t) \stackrel{\serifsigma}{\to} v(t) \text{ for all } t \in [0,T], \ \text{ and } \ v: [0,T]\to \mathscr{S} \text{ is } d\text{-continuous on } [0,T]{\backslash}\mathscr{C}\,. 
\]
\end{prop}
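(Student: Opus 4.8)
This is a classical Ascoli--Arzel\`a-type statement (\cite[Prop.\ 3.3.1]{AGS08}), and the plan is to reproduce its proof, keeping careful track of the exceptional set $\mathscr{C}$.

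\emph{Step 1 (diagonal extraction on a countable set).} First I would fix a set $D = \{t_j\}_{j\in\N} \subset [0,T]$ that is \emph{dense} in $[0,T]$, contains $\{0,T\}$, and, crucially, \emph{contains} $\mathscr{C}$; this is legitimate because $\mathscr{C}$ is at most countable. Since for each fixed $j$ the sequence $(v_n(t_j))_n$ lies in the $\serifsigma$-sequentially compact set $\mathsf{K}$, a standard diagonal argument yields an increasing subsequence $(n_k)_k$ and points $v(t_j) \in \mathsf{K}$ with $v_{n_k}(t_j) \stackrel{\serifsigma}{\to} v(t_j)$ for every $j$. Then the coordinatewise $\serifsigma$-convergence $(v_{n_k}(t_i),v_{n_k}(t_j)) \stackrel{\serifsigma}{\to} (v(t_i),v(t_j))$, combined with the compatibility \eqref{compatibility-top-metr} and the equicontinuity \eqref{equicontinuity}, gives the fundamental bound
\[
  d(v(t_i),v(t_j)) \le \liminf_{k\to\infty} d(v_{n_k}(t_i),v_{n_k}(t_j)) \le \omega(t_i,t_j) \qquad \text{for all } i,j\in\N.
\]

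\emph{Step 2 (extension of $v$ and upgrade of the convergence).} For $t\in\mathscr{C}$ there is nothing to do, since $t\in D$. For $t\in[0,T]\setminus\mathscr{C}$ I would pick any sequence $D\ni s_m\to t$; by Step~1, $d(v(s_m),v(s_{m'})) \le \omega(s_m,s_{m'}) \to 0$ as $m,m'\to\infty$, because $(s_m,s_{m'})\to(t,t)$ and $t\notin\mathscr{C}$. Hence $(v(s_m))_m$ is $d$-Cauchy and, by completeness of $(\mathscr{S},d)$, converges in $d$ to a point that I call $v(t)$, and an interleaving argument shows this value is independent of the chosen sequence. To see that $v_{n_k}(t)\stackrel{\serifsigma}{\to}v(t)$, I would take an arbitrary subsequence of $(v_{n_k}(t))_k$: by $\serifsigma$-sequential compactness of $\mathsf{K}$ it has a further subsequence $\serifsigma$-converging to some $w\in\mathsf{K}$, and then $d(w,v(s_m)) \le \omega(t,s_m) \to 0$ forces $w=v(t)$; since every subsequential $\serifsigma$-limit equals $v(t)$, sequential compactness gives that the whole sequence $\serifsigma$-converges to $v(t)$, and in particular $v(t)\in\mathsf{K}$ for every $t\in[0,T]$.

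\emph{Step 3 (continuity).} Finally I would check $d$-continuity of $v$ on $[0,T]\setminus\mathscr{C}$: fix $t_0\notin\mathscr{C}$ and let $s\in[0,T]$ be arbitrary. From $v_{n_k}(t_0)\stackrel{\serifsigma}{\to}v(t_0)$ and $v_{n_k}(s)\stackrel{\serifsigma}{\to}v(s)$ (both from Step~2) and again \eqref{compatibility-top-metr}, \eqref{equicontinuity}, one gets $d(v(t_0),v(s)) \le \limsup_{k\to\infty} d(v_{n_k}(t_0),v_{n_k}(s)) \le \omega(s,t_0)$, which tends to $0$ as $s\to t_0$ since $t_0\notin\mathscr{C}$. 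The two delicate points are the treatment of $\mathscr{C}$ --- resolved by absorbing it into $D$, so no special care is needed at those instants --- and the passage from $d$-Cauchyness of $(v(s_m))_m$ to $\serifsigma$-convergence of the full subsequence $(v_{n_k}(t))_k$ at every $t$: this cannot rely on metrizability of $\serifsigma$ and must instead go through uniqueness of subsequential $\serifsigma$-limits together with sequential compactness of $\mathsf{K}$. This is the main obstacle to a careless argument.
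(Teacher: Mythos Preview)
The paper does not give its own proof of this proposition: it is quoted verbatim from \cite[Prop.~3.3.1]{AGS08} as a preliminary tool, so there is nothing to compare against beyond the original reference. Your argument is correct and is precisely the standard proof of this Ascoli--Arzel\`a variant: diagonal extraction on a countable dense set containing $\mathscr{C}$, extension by $d$-completeness off $\mathscr{C}$, identification of subsequential $\serifsigma$-limits via \eqref{compatibility-top-metr}, and the final continuity estimate $d(v(t_0),v(s))\le\omega(s,t_0)$. The two points you flag as delicate---absorbing $\mathscr{C}$ into $D$, and upgrading from ``every subsequential limit is $v(t)$'' to ``the full subsequence $\serifsigma$-converges''---are handled correctly; the latter uses only that failure of $\serifsigma$-convergence would produce a subsequence avoiding a fixed $\serifsigma$-neighbourhood of $v(t)$, which is then contradicted by sequential compactness of $\mathsf{K}$.
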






\begin{proof}[Proof of Proposition \ref{prop:compactness2}, Part $(1)$.]
Let us 
consider a sequence 
$(\rho^n,\bj^n)_n$ fulfilling \eqref{eq:compa1}.
\par
By \eqref{initial-thm:main} we have that $\sup_n  \rho_0^n(V)  \leq M$ for some $M>0$
and thus, by 
the mass conservation property $\rho_t^{n}(V) = \rho_0^{n}(V)$ for all $t\in [0,T]$ we gather that 
 \[
 \sup_n \sup_{t\in[0,T]} \rho_t^n(V)\leq M\,.
 \]
 We gain additional bounds by resorting to the estimates from Lemma \ref{lm:proprr}. In particular, 
 \eqref{eq:disspb}, combined with  $\kappa^n \leq  \kappa $, give 
for every $n \in \N$, $t\in [0,T]$,  and $\xi\in \Bb(E)$ with $\|\xi\|_{\infty}\leq 1$
\begin{equation}
\label{eq:disspbn}
\begin{aligned}
 (\scrR^n)^*(\rho^n_t,\beta \xi) &\leq \frac{c_{\upalpha}}{2} \psih^*(\beta) \iint_{E} \xi^2(x,y) \, (\teta_{\rho_t^n}^+{+}\teta_{\rho_t^n}^-{+}\tetapin)(\dd x\dd y)\\
&\leq \frac{c_{\kappa}\psih^*(\beta)c_{\upalpha}(2\rho_t^n(V){+}\pi(V))}{2}\left\|\frac{\xi}{1{\wedge}\dV}\right\|_{\infty}^2 .
\end{aligned}
\end{equation}
In turn, from \eqref{eq:disspbd} and
again $\kappa_n\leq \kappa$ we obtain
\begin{equation}
\label{eq:disspbdn}
\begin{aligned}
\scrR(\rho_t^n,\bj_t^n)&\geq  \frac12 \calF_\psih\left(2(1{\wedge}\dV)\bj_t^n\Big|c_{\upalpha}(1{\wedge}\dV)^2 (\teta_{\rho_t^n}^+{+}\teta_{\rho_t^n}^-{+}\tetapin\right)\\
&\geq \frac12  \hat \psih\Big(2\|(1{\wedge}\dV)\bj_t^n\|_{\mathrm{TV}}\,,\, c_{\upalpha}c_{\kappa}(2\rho_ t^n(V){+}\pi(V))\Big).
\end{aligned}
\end{equation}
Finally, let $S:=\sup_{n} \sup\nolimits_{t\in [0,T]} \scrE(\rho^n_t)$
and consider the energy sublevel
\begin{equation}
\label{energy-sublevel-jasp}
\mathsf{K}:=\left\{ \rho \in \calP(V) : \scrE(\rho) = \int_V \upphi \left( \frac{\dd\rho}{\dd \pi}\right) \dd \pi \leq S\right\}.
\end{equation}
Because of the superlinear growth of $\upphi$, and of the characterization of setwise compactness provided  by \eqref{eq:73}, $\mathsf{K}$ is sequentially compact with respect to setwise convergence, in particular with respect to narrow convergence, and any narrowly converging sequence in $\mathsf{K}$ converges setwise as well.

We will now split the proof into a number of claims. \medskip

\paragraph{\bf Claim $1$:} \emph{There exists a narrowly continuous curve $(\rho_t)_{t\in [0,T]}\subset \mathsf{K}$ such that, along a (not relabeled) subsequence}
\begin{equation}
\rho_t^{n}\to \rho_t \quad \text{narrowly and setwise in } \calM^+(V) \quad \text{for all } t\in [0,T].
\label{compactness-jasper}
\end{equation}
To prove \eqref{compactness-jasper} we will resort to Proposition \ref{thm:Ascoli}. In fact,
\eqref{compactness} follows from the fact that $\rho_t^n \in \mathsf{K}$ for every $n \in \N$ and $t\in [0,T]$. 
\par
In order to verify the equicontinuity condition \eqref{equicontinuity}, we argue as follows:
Let $h_t^n:=\|(1{\wedge}\dV)\jj_t^n\|_{\mathrm{TV}}$. By  estimate \eqref{eq:disspbdn} we obtain 
\[\limsup_{n\to \infty} \int_0^T \frac12  \hat \psih\Big(2h_t^n\,,\, c_{\upalpha}c_{\kappa}(2 M+\pi(V))\Big)\, \dd  t < \infty.
\]
Due to the superlinearity of $\psih$ and Lemma 
\ref{l:crucial-F}, 
 up to non-relabeled subsequence  the measures $(h_t^n \Lebone)_n$ converge setwise in $\calM^+([0,T])$ to some measure $H\ll \Lebone$, or equivalently, $h_t^n$ converges weakly in $L^1([0,T])$ to some $h\in L^1([0,T])$. In particular, 
\begin{equation}
\label{cvg-hn}
\lim_{n\to \infty}\int_s^t h^n(r) \, \dd  t = \int_s^t h(r) \, \dd t, \qquad \mbox{for $[s,t]\in [0,T]$.}
\end{equation}
We set $\omega(s,t):=\int_s^t h(r)\, \dd t$. Clearly, $\omega(s,t)$ is symmetric, bounded and satisfies $\lim_{s\to t}\omega(s,t)=0$. Both $\mathsf{K}$ and $\omega$ will play their corresponding roles in applying Proposition \ref{thm:Ascoli} on $\calM^+(V)$ with the bounded Lipschitz metric (see \eqref{nbl-norm}). What remains to show is that 
\begin{equation}
\limsup_{n\to \infty} \| \rho_t^n{-}\rho_s^n\|_{\BL} \leq \omega(s,t), \quad \mbox{for all $[s,t]\subset[0,T]$,}  
\end{equation}
but this follows directly from combining the estimate 
\[
 \| \rho_t^n{-}\rho_s^n\|_{\BL} \leq      \int_s^t 
 \| (1{\wedge}\dV)   \jj_r^n \|_{\mathrm{TV}} \dd r 
 \] 
 (cf.\   \eqref{j-controls-rho})
 with \eqref{cvg-hn}.
\medskip

\paragraph{\bf Claim $2$:} \emph{
There exists a measurable family $(\bj_t)_{t\in [0,T]}\subset \Ms(E)$ with $\bj_t(E\setminus \Ed)=0$ for all $t\in [0,T]$, and 
    \begin{equation}
    \label{jasper-j-bound}
\int_0^T \iint_E  (1{\wedge}\dV(x,y))\, |\bj_t|(\dd x \dd y)\,\dd t<+\infty,
    \end{equation}
such that, along a (not relabeled) subsequence}
\begin{equation}
\label{Jasper-jn}
(1{\wedge} \dV )\bj_{\!\Lebone}^{n}\to (1{\wedge} \dV )\bj_{\!\Lebone}  =(1{\wedge} \dV )\bj_t \Lebone  \quad 
\text{setwise in } \calM([0,T]{\times}E). 
\end{equation}

From \eqref{eq:disspbdn} we obtain the upper bound 
\begin{align*}
\limsup_{n\to \infty} \frac12  \calF_\psih\left(2(1{\wedge}\dV)\bj_t^n \Lebone\Big|c_{\upalpha}(1{\wedge}\dV)^2 (\teta_{\rho_t^n}^+{+}\teta_{\rho_t^n}^-{+}\tetapin)\Lebone\right)\leq \limsup_{n\to \infty}  \int_0^T \scrR(\rho_t^n,\bj_t^n)\, \dd t < \infty\,.
\end{align*}
Now, by Lemma \eqref{eq:knass} we have 
\[(1{\wedge}\dV)^2 (\teta_{\rho_t^n}^+{+}\teta_{\rho_t^n}^-{+}\tetapin)\Lebone \,\longrightarrow \,(1{\wedge}\dV)^2 (\teta_{\rho_t}^+{+}\teta_{\rho_t}^-{+}\tetapi) \Lebone \quad \text{setwise in } \calM([0,T]{\times}E). \]
Then, recalling 
 Lemma \ref{l:crucial-F} we obtain 
\[(1{\wedge}\dV) \bj_{t}^{n}\Lebone \to (1{\wedge}\dV) \nu, \, \mbox{setwise in $\calM([0,T]\times E)$ for some }
\begin{cases}
\nu\in \Ms([0,T]\times E) \text{ with}
\\
\nu\ll (\teta_{\rho_t}^+{+}\teta_{\rho_t}^-{+}\tetapi) \Lebone \ll \Lebone
\end{cases}
.\]
 Therefore,  $\nu=\jj_t \Lebone$ for some measurable family $(\jj_t)_{t\in [0,T]}\subset \Ms(Y)$, and this establishes 
 convergence \eqref{Jasper-jn}.
\medskip

\paragraph{\bf Claim $3$:}
\emph{We have $(\rho,j)\in \CE 0T$ and
part $(1)$ of Proposition \ref{prop:compactness2} is proven.}
\par\noindent
We fix 
$\varphi \in \Lipb(V)$ and 
  take the limit as $n\to\infty$ in 
\[
	\int_V \varphi(x)\, \rho_t^{n}(\dd x)  - \int_V \varphi(x)\, \rho_s^{n}(\dd x)   = \int_s^t \iint_E \dnabla\varphi\,(x,y)\,\jj_r^{n}(\dd x\dd y)\,\dd r \qquad \text{for all } 0 \leq s \leq t \leq T\,.
	\]\
	It suffices to comment the limit passage on the right-hand side, based on the fact that $\dnabla\varphi$ can be rewritten as $\zeta (1{\wedge}\dV)$, with $\zeta 
	\in \Bb(E)$,
    and on convergence \eqref{Jasper-jn}.
\end{proof}

\begin{proof}[Proof of Proposition \ref{prop:compactness2}, Part $(2)$.]
Le us now additionally suppose that 
$(\rho^n,j^n)\in \ECE 0T$, with  the bound \eqref{eq:cb2} for the densities
$(u_n)_n$. We will 
 establish the following 
\medskip

\paragraph{\bf Claim $4$:} \emph{
There exists a measurable family $(\bj_t)_{t\in [0,T]}\subset \Ms(E)$ 
satisfying \eqref{jasper-j-bound} such that, in addition, for the aforementioned subsequence, convergence \eqref{eq:comce1} holds.}

\par\noindent
Without loss of generality, we may confine ourselves to proving \eqref{eq:comce1}
for all test functions
$\xi \in \Bb(V)$ with $\|\xi\|_\infty\leq 1$
and $\iint_{E}\xi^2 \, \dd \tetapi < \infty$. 
\par
We combine the  intermediate estimate in \eqref{eq:disspbn}, i.e.,
\[
(\scrR^n)^*(\rho^n_t,\beta \xi) \leq \frac{c_{\upalpha}}{2} \psih^*(\beta) \iint_{E} \xi^2(x,y) \, (\teta_{\rho_t^n}^+{+}\teta_{\rho_t^n}^-{+}\tetapin)(\dd x \dd y)
 \qquad \text{for all } \beta>0\,,
\]
 with the bound
$ \sup_{n} \sup\nolimits_{t\in [0,T]} \|u_t^n\|_{\infty} =: C <\infty$
and the estimate $\kappa_n \leq \kappa$, to infer 
\begin{equation}
\label{eq:disspbn2}
 (\scrR^n)^*(\rho^n_t,\beta \xi) \leq \frac{c_{\upalpha}(2C{+}1) \psih^*(\beta)}{2}  \iint_{\Ed} \xi^2(x,y)\, \tetapi(\dd x \dd y) \qquad \text{for all } \beta>0\,.
\end{equation}
For fixed $m\geq 1$, 
Let 
\[
A_m=\{(x,y) \subset E : 
\dV(x,y)>\tfrac{1}{m}\}, \qquad  B_m:=[0,T]{\times} A_m, \qquad \xi_m:=\boldsymbol{1}_{A_m} \xi
\]
with 
 $\xi\in \Bb(E)$
 the aforementioned test function. 
  Since  the sequence of measures 
  $((1{\wedge}\dV) \bj_{t}^{n}\Lebone)_n$ converges setwise to $(1{\wedge}\dV)\bj_{t}\Lebone$
  and the function $1{\wedge}\dV$ is 
  is bounded from above and below on 
$B_m$, we have that
\begin{equation*}
\begin{aligned}
\forall\, m \geq 1\, : \quad 
\lim_{n\to \infty} \iiint_{[0,T]\times E]} \xi_m(x,y) \,  \bj_{\!\Lebone}^{n}(\dd t \dd x \dd y) & =
\lim_{n\to \infty} \iiint_{B_m} \frac{\xi(x,y)}{1{\wedge}\dV(x,y)}(1{\wedge}\dV(x,y)) \,  \bj_{\!\Lebone}^{n}(\dd t \dd x \dd y)
\\ &
= \iiint_{B_m} \frac{\xi(x,y)}{1{\wedge}\dV(x,y)}(1{\wedge}\dV(x,y)) \,  \bj_{\!\Lebone}(\dd t \dd x \dd y) \\ & = 
\iiint_{[0,T]\times E} \xi_m(x,y) \,  \bj_{\!\Lebone} (\dd t \dd x \dd y)\,.
\end{aligned}
\end{equation*}
Therefore, to establish the desired convergence 
\eqref{eq:comce1} it is sufficient to show that 
\begin{equation}
\label{needed-1-jasp}
\lim_{m\to \infty} \iiint_{[0,T]\times E} \xi_m(x,y) \,  \bj_{\!\Lebone}( \dd t \dd x \dd y)=\iiint_{[0,T]\times E} \xi(x,y)  \, \bj_{\!\Lebone} (\dd t \dd x \dd y)
\end{equation}
and the vanishing estimate on the complementary 
$B_m^c = [0,T]{\times} A_m^c$
\begin{equation}
\label{needed-2-jasp}
\lim_{m\to \infty} \limsup_{n\to \infty} \iiint_{B_m^c} |\xi|(x,y) \, |\bj_{\!\Lebone}|(\dd t \dd x \dd y) = 0.
\end{equation}
We start from 
 the latter estimate,
Now, by \eqref{eq:disspbn2} we have for any $\beta>0$ and $m,n\geq 1$,
\begin{equation*}
\begin{aligned}
\forall\, \beta>0\,: \qquad 
&\iiint_{B_m^c} |\xi_m|(x,y)  \, |\bj_{\!\Lebone}|(\dd t \dd x \dd y) 
\\
&\leq \frac{1}{\beta}\left(\int_0^T (\scrR^n(\rho_t^n,\bj_t^n)\, \dd t + \int_0^T (\scrR^n)^*(\rho_t^n, \boldsymbol{1}_{A_m^c}|\xi_m|)\, \dd t \right)\\
&\leq \frac{1}{\beta}\left(\int_0^T (\scrR^n(\rho_t^n,\bj_t^n)\, \dd t +\frac{c_{\upalpha}(2C{+}1) \psih^*(\beta)}{2} \int_0^T\!\!\iint_{E} \boldsymbol{1}_{A_m^c}\xi^2(x,y) \, \tetapi(\dd x \dd y) \, \dd t\right).
\end{aligned}
\end{equation*}
Taking successive limits as $n\to\infty$
(note that the rightmost term in the above estimate no longer depends on $n$),
and then as  $m\to\infty$,  we then obtain 
\[
\forall\, 
\beta>0\,: \qquad
\limsup_{m\to\infty}\limsup_{n\to \infty} \iiint_{B_m^c} |\xi_m|(x,y) \,  |\bj_{\!\Lebone}| 
(\dd t \dd x \dd y)
\leq \frac{1}{\beta} \limsup_{n\to \infty}\int_0^T (\scrR^n(\rho_t^n,\bj_t^n)\, \dd t \leq \frac{C}{\beta},
\]
the last estimate due to the assumed bound on the action.
Since $\beta$ is  arbitrary,  estimate \eqref{needed-2-jasp} follows.  Finally,  \eqref{needed-1-jasp}
 since the former follows by dominated convergence, again exploiting  the fact that $(\rho,\bj)$ has finite action. 
\medskip

\paragraph{\bf Conclusion of the proof.} Relying on
convergence \eqref{eq:comce1},
 one can take the limit as $n\to \infty$ in the reflecting continuity equation \eqref{eq:RCE} fulfilled by the curves $(\rho^n,\jj^n)$. In fact,
 \eqref{eq:comce1} allows us to take the limit in the integral
 \[
 \int_s^t \!\!\iint_E \xi(x,y)\,\jj_r^{n}(\dd x\dd y)\,\dd r 
 \]
 for any $\xi \in \Bb(E)$ such that
\[\iint_{E}\xi^2 \, \dd \tetapi < +\infty, \qquad
\text{which  implies}  \qquad \iint_{E}\xi^2 \, \dd \tetapin < +\infty\,.
\]
In particular, we may choose $\xi:=\dnabla \varphi$ with $\varphi $ in the space $ \calX_2$ associated with the kernel $\kappa$. This ensures that that $(\rho,j)\in \ECE 0 T$.
\end{proof}

\subsection{Proof of Theorem \ref{thm:robustness}}
\label{ss:5.2}
We are in a position to carry out the 
\begin{proof}[Proof of Theorem \ref{thm:robustness}, Part $(1)$.]
Let $(\rho^n,\jj^n)_n$ be a sequence of
$\Dissipative$ solutions as in the statement.
From the upper energy-dissipation estimate 
\begin{equation}
\label{UEDE-7n}
\int_0^t \left( \scrR^n(\rho_r^n, \bj_r^n) + \Fish^n(\rho_r^n) \right) \dd r+ \calS(\rho_t^n)   \leq \calS(\rho_0^n)  
\end{equation}
and the assumed bound $\sup_n \calS(\rho_0^n) <+\infty$ we immediately deduce that estimate \eqref{eq:compa1} holds for $(\rho^n,\jj^n)_n$. Therefore, we are in a position to apply the first part of Proposition \ref{prop:compactness2} and deduce that there exists $(\rho,j) \in \ACE 0T$ (i.e., with finite entropy and action for the limiting system $(\scrE,\scrR,\scrR^*)$), such that, along a (not relabeled) subsequence, convergences \eqref{cvg-thm1mod} hold. 
  \par
  In order to conclude that $(\rho,\jj)$ is itself a $\Dissipative$ solution, it remains to take the limit as $n\to\infty$ in \eqref{UEDE-7n}. 
  For the energy terms, the well-preparedness condition \eqref{initial-thm:main} guarantees that $\calS(\rho_0^n)\to \calS(\rho_0)$, while on the left-hand side, we have by \eqref{DBL}:
 \begin{equation}
\label{lsc-energies}
\scrE(\rho_t) \leq \liminf_{n\to\infty} \scrE(\rho_t^{n}) \qquad \text{for all } t \in [0,T]
\end{equation}  
\par
Let us now fix $t\in (0,T)$, let $\Lebone_t$ be the Lebesgue measure restricted to $(0,t)$, and let us introduce the vector measures
\[
\bbeta_{\!\Lebone_t}^{n} : = (\teta_{\!\rho^{n}}^-,\teta_{\!\rho^{n}}^+ ) \Lebone_t = 
\int_0^t (\teta_{\!\rho^{n}}^-(r),\teta_{\!\rho^{n}}^+(r) )\, \dd r, 
\quad
\bbeta_{\!\Lebone_t}: = 
(\teta_{\!\rho}^-,\teta_{\!\rho}^+ ) \Lebone_t 
=
\int_0^t (\teta_{\!\rho}^-(r),\teta_{\!\rho}^+(r) )\, \dd r 
\]
(namely, we consider the construction \eqref{Lebintegrated}, with the Lebesgue measure now restricted to $(0,t)$). 
It is not difficult to derive from Lemma \ref{eq:knass}
that 
\begin{equation}
\label{setwise4beta}
\bbeta_{\!\Lebone_t}^{n} \to \bbeta_{\!\Lebone_t} \quad \sigma\text{-setwise in } \calM([0,t]{\times}E;\R^2)\,,
\end{equation}
and we have the analogous convergence  for  the measures $\tetapin {\otimes}\Lebone_t $ to  $\tetapi {\otimes}\Lebone_t$. 
Now, let us resort to  the representation formula \eqref{equivalent-upsilon} for $\scrR$, rewritten, with slight abuse of notation, in terms of the measures $(\bbeta_{\!\Lebone_t}^{n} )_n$
and $(\bj_{\!\Lebone_t}^{n})_n$:
thus, we have
\[
\begin{aligned}
\liminf_{n\to\infty}\int_0^t \scrR^{n}(\rho_r^{n}, \jj_r^{n})\, \dd r &=\frac12 \liminf_{n\to\infty}  \calF_\Upsilon \left((\bbeta_{\!\Lebone_t}^{n} ,2 \bj_{\!\Lebone_t}^{n})|\tetapin{\otimes}\Lebone_t\right)
\\
& \stackrel{(1)}\geq 
\frac12 \calF_\Upsilon \left((\bbeta_{\!\Lebone_t} ,2 \bj_{\!\Lebone_t})|\tetapi{\otimes}\Lebone_t\right)
 = \int_0^t \scrR(\rho_r, \jj_r)\, \dd r\,,
\end{aligned}
\]
where {\footnotesize (1)} follows from convergences \eqref{cvg-j}, 
\eqref{not-negligible-2n}, and 
\eqref{setwise4beta}
for $(\bj_{\!\Lebone_t}^{n})_n$, $(\tetapin{\otimes}\Lebone_t)_n$, and  $\bbeta_{\!\Lebone_t}^{n})_n$, respectively, and from the lower semicontinuity result in Lemma \ref{l:crucial-F}. 

Finally, in order to take the limit as $n\to\infty$ in the Fisher information term, we use that, by 
  Definition \ref{Def:Fisher}, 
 \begin{equation}
 \label{lsc-Fisher-eps-dep}
\begin{aligned}
\liminf_{n\to\infty}\int_0^t \Fish(\rho_r^{n})\, \dd r  &= \frac12 \liminf_{n\to\infty}\int_0^t 
 \iint_{E'}  \mathrm{D}_\upphi(u_r^{n}(x),u_r^{n}(y)) \, \tetapin(\dd x \dd y) \dd r
 \\
 & = 
\frac12  \liminf_{n\to\infty}  \iiint_{[0,t]{\times}E'}   \mathrm{D}_\upphi\left( \frac{\dd \bbeta_{\!\Lebone_t}^{n}}{\dd (\tetapin{\otimes}\Lebone_t)} \right) 
\, (\tetapin{\otimes}\Lebone_t)
(\dd r \dd x \dd y) 
\\
& \doteq \frac12  \liminf_{n\to\infty}  \calF_{\Xi} (\bbeta_{\!\Lebone_t}^{n}|(\tetapin{\otimes}\Lebone_t))  \stackrel{(2)}{\geq} \frac12  \calF_{\Xi} (\bbeta_{\!\Lebone_t}|(\tetapi{\otimes}\Lebone_t)) 
= \int_0^t \Fish(\rho_r)\, \dd r
\end{aligned}
\end{equation}
where we have used the functional $\calF_\Xi$ induced by the (\emph{convex}, by Assumption \ref{ass:dupphi}) mapping  $\Xi(w,z): = \mathrm{D}_\upphi(w,z)$,  and deduced estimate  {\footnotesize (2)} from convergences \eqref{setwise4beta} and Lemma \ref{l:crucial-F}. 
\par
All in all, we have completed the passage to the limit in \eqref{UEDE-7n}, and the upper energy-dissipation inequality \eqref{UEDE} follows. Thus, $(\rho,\jj)$ is a 
$\Dissipative$ solution.
\end{proof}
\par
Finally, let us address the
\begin{proof}[Proof of Theorem \ref{thm:robustness}, Part $(2)$.]
Let us now suppose that $(\rho^{n},\bj^{n})_n$ are $\Reflecting$ solutions to the $(\scrE,\scrR^{n},(\scrR^{n})^*)$ systems such that, in addition, \eqref{eq:cb2-required} holds. Then, we may apply
the second part of Theorem \ref{thm:robustness} and deduce that there exist a curve
$(\rho,\jj) \in \ECE 0T$ and a 
not relabeled subsequence along which convergences \eqref{cvg-hn} holds.
\par
By Part (1) of Theorem \ref{thm:robustness},
$(\rho,\jj)$ is a $\Dissipative$ solution.
Then, thanks to Theorem \ref{thm:ECE}
$(\rho,\jj)$ is also a $\Reflecting$ solution. 
\par
The enhanced convergences \eqref{enh-cvg-thm4.3}
ensue from a standard argument based on the comparison between the approximate energy-dissipation balance fulfilled by the curves $(\rho^n,\jj^n)_n$ and  the balance
for $(\rho,\jj)$.
We choose to omit the details, referring, e.g., to the proof of \cite[Thm.\ 3.11]{MRS13}. 
\par
Finally, the very last claim in the statement, under the strict convexity of $\upphi$, is an immediate consequence of the uniqueness result from Proposition 
\ref{prop:uniqueness-too}. 
\end{proof}

\appendix 

\section{Proofs of some results from Section \ref{s:prelims}}
\label{app:preliminary}
\begin{proof}[Proof of Lemma \ref{lm:sigmap}]
\emph{(1)} Consider such pairs $\mu^{\pm},\nu^{\pm}$, and construct an increasing sequence $(A_n)_{n\in \N}$ such that $\lim_{n\to\infty} A_n=Y$ and all measures are finite on $A_n$ for every $n\in \N$. Now, fix $n\in \N$, and consider all Borel sets $B$ contained in $A_n$. Then the restrictions 
$(\mu^+\mres A_n)-(\mu^-\mres A_n)$ and $(\nu^+\mres A_n)-(\nu^-\mres A_n)$ are both finite signed measures over $A_n$ that coincide for all $B\subset A_n$, with $(\mu^+\mres A_n) \perp (\mu^-\mres A_n)$ and $(\nu^+\mres A_n)\perp ( \nu^-\mres A_n)$. By the Jordan decomposition theorem it follows that $\mu^{\pm} \mres A_n=\nu^{\pm} \mres A_n$. Since for any measurable set $B$ we have the equality 
\[\mu^{\pm}(B)=\lim_{n\to \infty} \mu^{\pm}(B\cap A_n) = \lim_{n\to \infty} (\mu^{\pm}\mres A_n)(B \cap A_n) = \lim_{n\to \infty} (\nu^{\pm}\mres A_n)(B \cap A_n) = \nu^{\pm}(B),\]
we obtain that $\mu^\pm = \nu^\pm$. 

\emph{(2)} Suppose we  have a sequence $(A_n)_{n}$ and a signed set-function $\nu$ with the aforementioned properties. Since for every $n$ the signed set-function $\nu\mres A_n$ is a finite signed Borel measure,
 considering its Hahn decomposition 
we obtain positive finite and mutually singular measures $\nu^{\pm}_n$ defined on $\mathfrak{B}(A_n)$, satisfying the consistency relation $\nu^{\pm}_{n'}(B)=\nu^{\pm}_{n}(B)$ if $B\in A_{n'}$ and $n'\leq n$. We define the set-functions
\[\mu^{\pm}(B):=\lim_{n\to \infty} \nu_n^{\pm}(B\cap A_n).  \]
It is clear the 
 the sequence $(\nu_n^{\pm}(B\cap A_n)_n$
is monotone,   allowing us to show that $\mu^{\pm}$ are $\sigma$-additive and are indeed Borel measures, and in addition $\sigma$-finite, due to the finiteness on every $A_n$. Now, suppose that $\nu^+,\nu^-$ are not mutually singular. Then there exists some $B$ with positive measure for both $\mu^{\pm}$. However, by their construction, this implies that there exists some $n\in \N$ with both $\nu^{\pm}_n(B\cap A_n)$ positive, leading to a contradiction. Hence, $\mu^+\perp \mu^-$. 
\end{proof}

\begin{proof}[Proof of Lemma \ref{l:basic-props}]
\emph{(1)} 
 Let $(A_n)_n$ be an exhaustion of $Y$ for which all the measures $\mu^{\pm},\nu^{\pm}$ are finite, consider $\cup_{n=1}^{\infty} \mathfrak{B}(A_n)$ and define 
the set function $\bar \eta: \cup_{n=1}^{\infty} \mathfrak{B}(A_n) \to \R$ 
via $\bar \eta = \mu + \nu$. By 
statement (2) from Lemma \ref{lm:sigmap}, $\bar 
\eta$ extends to an element $\eta \in \calM_{\sigma}(Y)$.  

\emph{(2)} Let $(A_n)_n$ be a exhaustion of $Y$ corresponding to $\nu$. 
Since $|f|$ is
$|\nu|$-a.e.\ finite, it is also 
$\nu^{\pm}$-a.e.\ finite, and the sets $B_0:=\{x : |f|=+\infty\}$, $B_m:=\{x : |f|<m\}$ for $m\in \N$
provide an exhaustion for $\nu^+$ and $\nu^-$. 
Constructing $C_{n,m}:=A_n \cap B_m$,  we find that $|f| |\nu|\in \calM_{\sigma}^+(Y)$, and the mutual singularity of $(f \nu)^{\pm}$ is straightforward to verify. 


\emph{(3)} Due to the mutual singularity of $\nu^{\pm}$ we can find disjoint sets $P,Q$ such that $Y=P\cup Q$, $\nu^-(P)=0$, $\nu^+(Q)=0$. Since $g_n$ converges $|\nu|$-a.e.\  to $g$ and $\int g_n \, \dd |\nu|$ converges to $\int g\, \dd |\nu|$, we find by Fatou's Lemma that  $\int_B g_n \, \dd |\nu|$ converges to $\int_B g \, \dd |\nu|$ for any measurable set $B$. Setting $B=P$ and $B=Q$, we deduce that $\int_Y g_n \, \dd \nu^{\pm}$ converge to $\int_Y g \, \dd \nu^{\pm}$. 

By a modified version of the dominated convergence theorem, \cite[\S 2, Thm.\ 2.8.8]{Bogachev07}, we can then conclude that the integrals $\int_Y f_n \, \dd \nu^{\pm}$ converge as well, and the desired result now follows after taking limits in \eqref{eq:fsint}.
\end{proof}

\begin{proof}[Proof of Lemma \ref{lm:sigmasetwise}]
Suppose we have two exhaustions $(A_k)_{k\in \N}$ and $(B_m)_{m\in \N}$ corresponding respectively to $\mu,\nu\in \Ms(Y)$, with $\mu_n {\mres} A_k$ converging setwise to $\mu {\mres} A_k$ and $\mu_n {\mres} B_m$ converging setwise to $\nu {\mres} B_m$. Then, $\mu$ and $\nu$ coincides on the sets $A_k\cap B_m$, ensuring the equality $\mu=\nu$ due to Lemma~\ref{lm:sigmap}(2). 

Next, suppose that for some
$f$ satisfying our assumptions
(yielding that  $f\mu_n$ are \emph{finite} measures on $Y$), we have that $f \mu_n$ converges setwise to $\nu$. Then clearly 
\[\nu(\{x:f=0\}) = \lim_{n\to \infty} \int_{f=0} f\mu_n =0,\]
allowing us to write $\nu=f \mu$ with $\mu(\dd x) :=\boldsymbol{1}_{f>0} (x) f^{-1}(x) \nu(\dd x )$. The previous setwise convergence yields 
$\sup_n \int_Y f \dd |\mu_n|< \infty $.
Now, let us consider 
 the sets $A_k:=\{ x : f(x)\geq k^{-1} \}$:  we have that 
 \[
|\mu|(A_k) \leq \int_{A_k} k f(x) |\mu|(\dd x) \leq  k  | f \mu|(Y)\,.
 \]
 Therefore, the sets $(A_k)_k$ provide an exhaustion for $\mu$,
 and we observe that  for any  $g\in \B_b(Y)$ 
\[ \lim_{n\to\infty} \int_{A_k} g \,  \dd \mu_n = \lim_{n\to\infty} \int_{Y} \boldsymbol{1}_{A_k}(x) \frac{g(x)}{f(x)} (f(x)\,\mu_n)(\dd x) \stackrel{(\star)}{=}  \int_{A_k} g(x)  \,  \dd \mu,\]
where {\footnotesize $(\star)$} follows from the fact that on $A_k$  both $f$ and $1/f$ are bounded.
Then, $\mu_n$ converges 
$\sigma$-setwise to $\mu$. 

Vice versa, suppose that $(\mu_n)_{n\in \N}\subset \Ms(Y)$ converges $\sigma$-setwise to $\mu\in \Ms(Y)$, with again the exhaustion $(A_k)_{k\in \N}$. Let us define 
\begin{equation}
\label{ck-constants}
c_k:=\sup_{n} |\mu_n|(A_k)
\end{equation}
and note 
$|\mu|(A_k)\leq c_k$. We can then construct the strictly positive function
\[ f(x):=\sum_{k=1}^{\infty} \frac{2^{-k}}{1+c_k} \boldsymbol{1}_{A_k}(x).\]
Clearly, $\sup_n \int_Y f \dd |\mu_n|,\int_Y f \dd |\mu| < \infty $, and 
\[\boldsymbol{1}_{A_k^c}(x) f(x) \leq  \sum_{\ell=k+1}^{\infty} \frac{2^{-\ell}}{1+c_{\ell}} \boldsymbol{1}_{A_{\ell}}(x),\]
and, in particular,
\[\lim_{k\to \infty} \sup_{n} \int_{A_k^c} f |\dd \mu_n|\leq \lim_{k\to \infty} \sup_{n} \sum_{\ell=k+1}^{\infty} \frac{2^{-\ell}}{1+c_{\ell}} |\mu_n|(A_{\ell}) \leq \lim_{k\to \infty} \sum_{\ell=k+1}^{\infty} 2^{-\ell} = 0.  \]
We have, for any measurable set $B$, and $k,n\in \N$,
\begin{equation}\label{eq:sigmalim}
\int_B f \dd \mu^n =\int_Y \boldsymbol{1}_{B\cap A_k} f \dd \mu^n + \int_Y \boldsymbol{1}_{B\cap A_k^c} f \dd \mu^n
\end{equation}
Note that since $f$ is bounded and $B\cap A_k \subset A_k$, by $\sigma$-setwise convergence of $\mu^n$ to $\mu$, and a dominated convergence argument (via  Lemma \ref{lm:sigmap}),
\[\lim_{k\to \infty} \lim_{n\to \infty} \int_Y \boldsymbol{1}_{B\cap A_k} f \dd \mu^n = \int_Y \boldsymbol{1}_{B} f \dd \mu\]
Moreover, by the previous bounds, 
\[\limsup_{k\to \infty} \limsup_{n\to \infty} \left| \int_Y \boldsymbol{1}_{B\cap A_k^c} f \dd |\mu|^n \right| = 0.\]
Since the choice of $k$ in \eqref{eq:sigmalim} was arbitrary, we find 
\[\lim_{n\to \infty}\int_B f \dd \mu^n = \lim_{k\to \infty} \lim_{n\to\infty} \int_Y \boldsymbol{1}_{B\cap A_k} f \dd \mu^n = \int_B  f \dd \mu.\]
We have thus shown that $(f\mu^n)_n$ setwise converges to $f\mu$.
\end{proof}

\section{Measurability properties of the kernels}
\label{ss:appB}
Our construction of the coupling measure $\tetapi$ \ref{teta-coupling} relies on the following property: that for every (Borel) measurable set $C\subset E$
\begin{equation}
\label{equiv-measu}
V \ni x \mapsto \int_V {\boldsymbol 1}_{C}(x,y) \kappa(x,\dd y ) \in [0,+\infty]
\end{equation}
is a nonnegative extended Borel mapping. If that holds, then 
\[\tetapi(C):=\int_V \left(\int_V {\boldsymbol 1}_{C}(x,y) \kappa(x,\dd y )\right) \pi(\dd x)\]
defined a (countably additive) measure by a monotone convergence argument.  In passing, we also note that, by our definition $\tetapi $ satisfies
\begin{equation}
\label{null-property}
\text{for all } N,B \in \mathfrak{B}(V) \text{ with } N \cap B = \emptyset\,: 
\quad \pi(N) =0 \ \Rightarrow \ \tetapi(N{\times}B) = \tetapi(B{\times}N)=0\,.
\end{equation}
\par
In the following result we gain further insight into the measurability condition involving \eqref{equiv-measu}, and in particular show that it is equivalent to property \eqref{true-measurability}, required throughout the paper. 
\begin{lemma}\label{lm:kmeas}
Let $\kappa(x,\cdot)$ be a family of Borel measures
with $\kappa(x,\{x\})=0$ for all $x\in V$ and satisfying  \eqref{mitigation of singularity}, i.e.\ 
 \begin{equation}
 \label{mitigation of singularity-APP}
\sup_{x\in V}\int_{V} (1{\wedge} \dV^2(x,y)) \,\kappa(x,\dd y) =: c_\kappa <+\infty.
\end{equation}
 Then, $\kappa(x,\cdot)\in \Ms^+(V)$ for all $x \in V$, and  the following measurability properties are equivalent 
 \begin{enumerate}
\item Property \eqref{true-measurability} holds for any $f \in \Bb(V)$,
 now integrated on the whole of $V$, i.e.,
 \begin{equation}
\mbox{the map $\displaystyle V\ni x \mapsto \int_{V} f(y) (1{\wedge}\dV^2(x,y)) \, \kappa (x,\dd y)$ is a bounded Borel mapping.}
 \end{equation}
 \item The family $\left((1{\wedge}\dV^2)\kappa(x,\cdot)\right)_{x\in V}$ is a Borel family.
 \item For any $g\in \Bb(E)$ 
 \begin{equation}
\mbox{the map $\displaystyle V\ni x \mapsto \int_{V} g(x,y)(1{\wedge}\dV^2(x,y)) \, \kappa (x,\dd y) $ is a bounded Borel mapping.}
 \end{equation}
 \item  For any $h\in \B^+(E ; \overline{\R})$
 \begin{equation}
\mbox{the map $\displaystyle V\ni x \mapsto \int_{V} h(x,y)\, \kappa (x,\dd y) $ is in  $\B^+(V;  \overline{\R})$}.
 \end{equation}
 \item  For any  $C\in \mathfrak{B}(E)$ 
 \begin{equation}
\mbox{the map $\displaystyle V\ni x \mapsto \int_{V} {\boldsymbol 1}_{C}(x,y) \, \kappa (x,\dd y) $ is in $\B^+(V;  \overline{\R})$}.
 \end{equation}
 \item The family $\left(\kappa(x,\cdot)\right)_{x\in V}$ is a Borel family. 
 \end{enumerate}
\end{lemma}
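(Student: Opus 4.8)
The statement is a chain of equivalences among six measurability conditions on the family $\bigl(\kappa(x,\cdot)\bigr)_{x\in V}$. The natural strategy is to prove the circular implications $(1)\Rightarrow(2)\Rightarrow(3)\Rightarrow(4)\Rightarrow(5)\Rightarrow(6)\Rightarrow(1)$, together with the preliminary observation that \eqref{mitigation of singularity-APP} plus $\kappa(x,\{x\})=0$ forces each $\kappa(x,\cdot)$ to be $\sigma$-finite (decompose $V$ into the sets $\{y: \dV(x,y)>1/m\}$ on which $(1{\wedge}\dV^2)\geq (1{\wedge} m^{-2})$, so $\kappa(x,\cdot)$ is finite there, together with a $\kappa(x,\cdot)$-null point $\{x\}$ that can be absorbed). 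Here ``Borel family'' should be read in the sense that the maps $x\mapsto \mu_x(C)$ are Borel for every $C\in\mathfrak B(V)$ (resp.\ $\mathfrak B(E)$), which is exactly condition $(2)$ spelled out, so $(2)$, $(5)$ and $(6)$ are really the same kind of statement applied to the measure $(1{\wedge}\dV^2)\kappa(x,\cdot)$, an arbitrary $C\in\mathfrak B(E)$, and $\kappa(x,\cdot)$ respectively.

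First I would record the easy structural steps. The implication $(4)\Rightarrow(5)$ is immediate (take $h={\boldsymbol 1}_C$), and $(5)\Rightarrow(6)$ is immediate too once one notes that $x\mapsto\kappa(x,C)$ for $C\in\mathfrak B(V)$ is the same as $x\mapsto\int_V{\boldsymbol 1}_{V\times C}(x,y)\,\kappa(x,\dd y)$, i.e.\ a special case of $(5)$ with the rectangle $E\cap(V\times C)\in\mathfrak B(E)$. For $(6)\Rightarrow(1)$: given $f\in\Bb(V)$, the integrand $(x,y)\mapsto f(y)(1{\wedge}\dV^2(x,y))$ is bounded and jointly Borel, and $x\mapsto\int_V f(y)(1{\wedge}\dV^2(x,y))\,\kappa(x,\dd y)$ is Borel by a monotone-class / standard approximation argument: first for $f(y)(1{\wedge}\dV^2(x,y))$ an indicator of a measurable rectangle (where Borel measurability in $x$ follows from $(6)$ and joint Borel measurability of $\dV$), then for simple functions by linearity, then for the general bounded case by uniform/monotone limits; boundedness of the resulting map by the constant $c_\kappa$ of \eqref{mitigation of singularity-APP}. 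The steps $(1)\Rightarrow(2)$ and $(2)\Rightarrow(3)$ follow the same approximation pattern applied to the \emph{finite} measures $(1{\wedge}\dV^2)\kappa(x,\cdot)$: $(1)$ says $x\mapsto\int g(y)\,d[(1{\wedge}\dV^2)\kappa(x,\cdot)]$ is Borel for bounded Borel $g=g(y)$, which upgrades first to indicators of Borel subsets of $V$ (hence $(2)$, using that these measures are finite with uniformly bounded total mass $c_\kappa$, via a Dynkin/monotone-class argument), and then to bounded jointly Borel $g=g(x,y)$ (hence $(3)$) by the rectangle-then-simple-then-limit scheme. Finally $(3)\Rightarrow(4)$: for $h\in\B^+(E;\overline\R)$, write $h=h\cdot{\boldsymbol 1}_{\{\dV>0\}}$ modulo a $\kappa(x,\cdot)$-null set (using $\kappa(x,\{x\})=0$ and \eqref{crucial-facts-E'}), so that $\int_V h(x,y)\,\kappa(x,\dd y)=\int_V \frac{h(x,y)}{1{\wedge}\dV^2(x,y)}\,(1{\wedge}\dV^2(x,y))\,\kappa(x,\dd y)$; now $g_k:=\bigl(h\wedge k\bigr)/(1{\wedge}\dV^2)$ — truncated in $h$ but \emph{not} multiplied by any cutoff yet — is still unbounded near the diagonal, so one also truncates $1{\wedge}\dV^2\geq 1/m$, i.e.\ one uses $g_{k,m}:=(h\wedge k){\boldsymbol 1}_{\{1{\wedge}\dV^2\geq 1/m\}}/(1{\wedge}\dV^2)\in\Bb(E)$, applies $(3)$ to each $g_{k,m}$, and lets $m\to\infty$ then $k\to\infty$ by monotone convergence to conclude that $x\mapsto\int_V h(x,y)\,\kappa(x,\dd y)$ is a Borel map into $\overline\R_{\geq0}$.

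I expect the main obstacle to be bookkeeping rather than a genuine difficulty: the only real subtlety is handling the singularity of $\kappa(x,\cdot)$ near the diagonal when passing from the ``tamed'' measures $(1{\wedge}\dV^2)\kappa(x,\cdot)$ (finite, total mass $\leq c_\kappa$) to $\kappa(x,\cdot)$ itself, which is why the step $(3)\Rightarrow(4)$ needs the double truncation above and why one must first establish that each $\kappa(x,\cdot)\in\Ms^+(V)$. All the measurability upgrades (bounded Borel functions of $y$ $\leadsto$ indicators of Borel sets $\leadsto$ bounded jointly Borel functions $\leadsto$ nonnegative extended jointly Borel functions) are standard monotone-class/Dynkin arguments, relying throughout on the joint Borel measurability of $\dV:E\to[0,\infty)$ and on the uniform mass bound $c_\kappa$ to justify the limit passages; I would state these lemmas once and reuse them, rather than repeating the approximation argument in each implication.
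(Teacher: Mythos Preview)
Your cycle $(1)\Rightarrow(2)\Rightarrow\cdots\Rightarrow(6)\Rightarrow(1)$ matches the paper's logic (the paper organises it as the loop $(1)\Leftrightarrow(2)\Leftrightarrow(3)\Leftrightarrow(4)$ together with $(4)\Leftrightarrow(5)$ and $(5)\Leftrightarrow(6)$), and your treatment of $(1)\Rightarrow(2)\Rightarrow(3)\Rightarrow(4)\Rightarrow(5)\Rightarrow(6)$ is correct and essentially identical to the paper's. The gap is at $(6)\Rightarrow(1)$, which you dismiss as a ``standard monotone-class / approximation argument'' from indicators of rectangles to general bounded Borel integrands on $E$. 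That argument does not go through here, because the measures $\kappa(x,\cdot)$ are not finite: the class $\mathcal A=\{C\in\mathfrak B(E):\ x\mapsto\kappa(x,C_x)\text{ is Borel}\}$ contains rectangles and is stable under increasing unions, but closure under proper differences fails whenever $\kappa(x,D_x)=\kappa(x,C_x)=+\infty$, so $\mathcal A$ is not a $\lambda$-system. Equivalently, the functional monotone class theorem needs a vector space of test functions, yet $F\mapsto\int_V F(x,y)\,\kappa(x,\dd y)$ is undefined on all of $\Bb(E)$. The bound $c_\kappa$ you invoke only controls $\int(1{\wedge}\dV^2)\,\dd\kappa(x,\cdot)$, and you cannot reduce from (6) to the finite measures $(1{\wedge}\dV^2)\kappa(x,\cdot)$ either: showing that \emph{those} form a Borel family already requires integrating the $x$-dependent weight $1{\wedge}\dV^2(x,\cdot)$ against $\kappa(x,\cdot)$, which is precisely a (5)-type statement.

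The paper singles this out as the nontrivial direction $(6)\Rightarrow(5)$ and supplies the missing ingredient: using separability of $(V,\dV)$ together with $\kappa(x,\{x\})=0$, it constructs an exhaustion $C_n\uparrow E$ where each $C_n$ is the diagonal $C_0$ union a \emph{countable} union of products of open balls (hence $C_n\in\mathcal A$ from (6) and increasing limits), chosen so that $(x,y)\in C_n\setminus C_0$ forces $\dV(x,y)$ to be bounded below by a positive constant depending on $n$; combined with \eqref{mitigation of singularity-APP} this gives $\sup_x\kappa(x,(C_n)_x)<\infty$. On each $C_n$ the restricted measures are uniformly finite, so the $\pi$--$\lambda$ argument applies and yields $\{C\in\mathfrak B(E):C\subset C_n\}\subset\mathcal A$; letting $n\to\infty$ gives (5). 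This exhaustion---not the double truncation in your $(3)\Rightarrow(4)$---is where the real work lies, and it is the step you need to add.
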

\begin{proof}
 First, fix any $x\in V$. Since the function $V\ni y\mapsto (1{\wedge}\dV^2(x,y))$ is measurable and positive $\kappa(x,\cdot)$-a.e., the bound \eqref{mitigation of singularity-APP} implies that $\kappa(x,\cdot)\in \Ms^+(V)$. Next, let us sketch the necessary equivalences.
\\
\noindent
$(1)\implies (2)$ This follows via setting $f(y)={\boldsymbol 1}_B(y)$ for any Borel set $B\subset V$. \\
$(2)\implies (3)$ The family $\left((1{\wedge}\dV^2)\kappa(x,\cdot)\right)_{x\in V}$ is a Borel family such that 
\[\sup_{x\in V} (1{\wedge}\dV^2)\kappa(x,\cdot)(V) < \infty,\]
and hence the arguments of  \cite[Sec.\ 2.4]{PRST22}  apply. \\
$(3)\implies (4)$ One can take 
\[h_{\eps}:=\frac{1{\wedge}\dV^2}{\eps+1{\wedge}\dV^2} (h\wedge \eps^{-1}), \qquad g_{\eps}:=\frac{1}{\eps+1{\wedge}\dV^2} (h\wedge \eps^{-1})\]
and let $\eps\to 0$. \\
$(4)\implies (1)$ Substitute $h(x,y)=(1{\wedge}\dV^2)(x,y)f(y)$ and use \eqref{mitigation of singularity-APP}. \\
$(4)\iff (5)$ This follows from the fact that any measurable nonnegative $h$ is a monotone limit of simple functions, and vice versa, taking $h={\boldsymbol 1}_C$. \\
$(5) \iff (6)$ The direction $(5)\implies (6)$ is clear. Now, suppose that $\left(\kappa(x,\cdot)\right)_{x\in V}$ is a Borel family with $\kappa(x,\{x\})=0$ for every $x\in V$ and \eqref{mitigation of singularity-APP} holds. In particular, $V\ni x\mapsto \kappa(x,\{x\})$ is Borel, so with $C_0:=E\setminus E'$, we obtain that the mapping $x\mapsto \int_V {\boldsymbol 1}_{C_0}(x,y) \kappa(x,\dd y)$ is measurable, with 
\[\int_V {\boldsymbol 1}_{C_0}(x,y) \kappa(x,\dd y) \equiv 0.\]
 Moreover, for any $C=A\times B$, the mapping 
\[x\mapsto \int_V {\boldsymbol 1}_{C}(x,y) \kappa(x,\dd y)={\boldsymbol 1}_A(x)\kappa(x,B),\]
is measurable. We now lift from product sets to general sets, using the bounds on $\kappa(x,\dd y)$, similar as for simple products of $\sigma$-finite measures. Let us introduce
\[\calA := \left\{ C\subset \mathfrak{B}(E) \,: \,  x\mapsto \int_V {\boldsymbol 1}_{C}(x,y) \kappa(x,\dd y) \mbox{ is measurable}\right\},\]
which, at least, will contain any countable union of product sets. 

Now, by separability of $(V,\dV)$ we can construct product sets $B_{r_i}(x_j)\times B_{r_k}(x_l)$ using open balls $B_{r_i}(x_j)$ centered at $x_j$ with radius $r_i$, for some dense sequences $(x_j)_{i\in N}$ in $V$ and $(r_j)_{n\in \N}$ in $(0,+\infty)$, that cover the whole of $E$. Moreover, for every $n\geq 1$, let 
\[C_n:=C_0 \cup \bigcup_{i,j,k,l} \left\{ B_{r_i}(x_j)\times B_{r_k}(x_l) \, : \, \dV(x_j,x_l)>\left(\frac1n{-}r_i{-}r_k\right)   \right\}.\]
Note that the sequence  $(C_n\setminus C_0)_n$ exhausts $E'$ as $n\to \infty$, with $\dV(x,y)>n^{-1}$ for every $x,y\in C_n\setminus C_0$. Note that $(C_n)_n$ exhausts $E$ as $n\to \infty$, $C_n\in \calA$ for every $n\in \N$, and 
\[\int_{V} {\boldsymbol 1}_{C_n}(x,y) \kappa(x,y) \leq n c_{\kappa} < \infty, \mbox{ for all $x\in V$}.  \]
Via a monotone class argument on $\calA \cap \{ C \in \mathfrak{B}(E): C\subset C_n \}$, similar to \cite[Theorem 14.5]{Schilling2017measures}, we derive that $\{ C \in \mathfrak{B}(E): C\subset C_n \} \subset \calA$ for every $n$, and this can be extended to arbitrary $C\in \mathfrak{B}(E)$ by taking $C\cap C_n$ and the limit $n\to \infty$. We conclude $\calA=\mathfrak{B}(E)$.
\end{proof}

\section{Proof of Theorem \ref{thm:density-vindicated}}
\label{app:density-vindicated}
First of all, we  improve the approximation property
required in   \eqref{Ass:F-bis}. Namely,  Lemma  \ref{l:cutoff} below we show that, if    \eqref{Ass:F-bis} holds,
we can indeed
construct a sequence $(\widehat{\varphi}_n)_n $ such that, in addition to the convergences
in Ass.\ \ref{Ass:F-bis}, we have that  $ \| \widehat{\varphi}_n \|_\infty$ and 
$\|\overline\nabla \widehat{\varphi}_n \|_\infty = \sup_{x,y\in V} |\widehat{\varphi}_n(x,y)|$ are \EEE  uniformly bounded. 
As it will be clear from its proof (in particular, the arguments of Claim 2 therein),  Lemma \ref{l:cutoff} could be extended to the space $\mathcal{X}^{\Young} $, for \emph{any} Young function $\Young$.  

\begin{lemma}
\label{l:cutoff} 
Assume property \ref{Ass:F-bis}. 
For every $\varphi \in \mathcal{X}_2$ there exists a sequence $(\widehat{\varphi}_n)_n \subset \Lipb(V)$ such that as $n\to \infty$
 \begin{enumerate}
 \item   $\sup_{n
\in \N} \| \widehat{\varphi}_n \|_\infty
 \leq 2\|\varphi\|_{\infty}$; 
  \item $ \sup_{n
\in \N} \|\overline\nabla \widehat{\varphi}_n \|_\infty
 \leq 4\|\varphi\|_{\infty}$;
 \item $\widehat{\varphi}_n\to\varphi$ in $\rmL^1(V;\pi)$;
 \item $\overline\nabla \widehat{\varphi}_n \to \overline\nabla \varphi$ in  $\rmL^{2}(E;\tetapi)$.
 \end{enumerate}
 \end{lemma}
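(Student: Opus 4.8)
\textbf{Proof plan for Lemma \ref{l:cutoff}.}
The plan is to start from the approximating sequence $(\varphi_n)_n \subset \Lipb(V)$ provided by assumption \eqref{Ass:F-bis}, and to post-compose it with a suitable bounded Lipschitz truncation so as to gain the uniform sup-bounds in items (1)--(2) without destroying the convergences (3)--(4). Concretely, let $M := \|\varphi\|_\infty$ and fix a Lipschitz truncation function $T_M \colon \R \to \R$ with $T_M(s) = s$ for $|s|\le M$, $T_M(s) = (\sign s)\, 2M$ for $|s|\ge 2M$, $|T_M|\le 2M$ everywhere, and $T_M$ $1$-Lipschitz (for instance the standard piecewise-affine cutoff). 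Then set $\widehat\varphi_n := T_M \circ \varphi_n$. Since $T_M$ is bounded Lipschitz and $\varphi_n \in \Lipb(V)$, we have $\widehat\varphi_n \in \Lipb(V)$, and item (1) is immediate from $\|T_M\|_\infty \le 2M$; item (2) follows since $|\dnabla \widehat\varphi_n(x,y)| = |T_M(\varphi_n(y)) - T_M(\varphi_n(x))| \le |\widehat\varphi_n(x)| + |\widehat\varphi_n(y)| \le 4M$.

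For item (3), observe that $|\widehat\varphi_n(x) - \varphi(x)| = |T_M(\varphi_n(x)) - T_M(\varphi(x))| \le |\varphi_n(x) - \varphi(x)|$ for $\pi$-a.e.\ $x$, using that $T_M$ is $1$-Lipschitz and $T_M\circ\varphi = \varphi$ since $|\varphi|\le M$. Hence $\widehat\varphi_n \to \varphi$ in $\rmL^1(V;\pi)$ directly from the convergence $\varphi_n \to \varphi$ in $\rmL^1(V;\pi)$. For item (4), the key pointwise inequality is
\[
|\dnabla \widehat\varphi_n(x,y) - \dnabla\varphi(x,y)| \le |\dnabla\varphi_n(x,y) - \dnabla\varphi(x,y)| + 2\,|\varphi_n(x)-\varphi(x)| \chi_{\{\cdot\}} \,,
\]
but a cleaner route is as follows: write $\dnabla\widehat\varphi_n(x,y) - \dnabla\varphi(x,y) = \bigl(T_M(\varphi_n(y)) - T_M(\varphi(y))\bigr) - \bigl(T_M(\varphi_n(x)) - T_M(\varphi(x))\bigr)$ and use $1$-Lipschitzness of $T_M$ to get the bound $|\varphi_n(y)-\varphi(y)| + |\varphi_n(x)-\varphi(x)|$. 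This is not square-summable against $\tetapi$ in general (since $\tetapi$ need not be finite), so instead I would argue by splitting: on the "near-diagonal" part $\{1\wedge\dV^2(x,y) \le \delta\}$ one controls the $\rmL^2(\tetapi)$-mass of both $\dnabla\widehat\varphi_n$ and $\dnabla\varphi$ uniformly by $\|\varphi\|_{\mathcal X_2}$-type quantities (using items (1)--(2) and the finiteness of $(1\wedge\dV^2)\tetapi$), making that contribution small uniformly in $n$; on the complementary set $\{1\wedge\dV^2(x,y) > \delta\}$, the measure $\tetapi$ is finite, $T_M$ is $1$-Lipschitz, and $\varphi_n \to \varphi$ in $\rmL^1(V;\pi)$ with the $\rmL^2(E;\tetapi)$-convergence $\dnabla\varphi_n \to \dnabla\varphi$; passing to a subsequence converging $\tetapi$-a.e.\ and using a generalized dominated convergence argument (with dominating functions built from $|\dnabla\varphi_n|^2$, whose integrals converge), one concludes $\dnabla\widehat\varphi_n \to \dnabla\varphi$ in $\rmL^2$ on that set. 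Combining and letting $\delta \downarrow 0$ gives item (4).

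The main obstacle I anticipate is precisely the $\rmL^2(E;\tetapi)$-convergence in item (4): the truncation $T_M$ is nonexpansive but the naive bound $|\dnabla\widehat\varphi_n - \dnabla\varphi| \le |\varphi_n(x)-\varphi(x)| + |\varphi_n(y)-\varphi(y)|$ only lives in $\rmL^1(V;\pi)$ after integrating one variable, not in $\rmL^2(E;\tetapi)$, because $\tetapi$ is merely $\sigma$-finite. One must therefore genuinely exploit that $\dnabla\varphi_n \to \dnabla\varphi$ in $\rmL^2(E;\tetapi)$ \emph{and} that $T_M\circ\varphi_n$ has discrete gradient dominated by $|\dnabla\varphi_n|$ up to the truncation, so that a Vitali/uniform-integrability argument (of the type already used in Lemma \ref{l:basic-props}(3) and Lemma \ref{l:crucial-F}(6)) applies. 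Alternatively one can invoke the standard fact that composition with a fixed $C^1$ (or Lipschitz, via approximation) nonexpansive map is continuous on the nonlocal Dirichlet-form domain $\mathcal X_2$ equipped with the $\rmL^1(\pi) + \rmL^2(\tetapi)$-type convergence, which is again proved by exactly this splitting. I would spell out the Vitali argument since it keeps the proof self-contained.
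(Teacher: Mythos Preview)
Your approach---truncation of the approximating sequence by a $1$-Lipschitz cutoff---is exactly the paper's, and your argument for item (3) via $|T_M(\varphi_n)-T_M(\varphi)|\le|\varphi_n-\varphi|$ is in fact cleaner than the paper's (which tracks the measure of $\{|\varphi_n|>2M\}$).

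For item (4), though, your near/far-diagonal splitting has a gap: on $\{1\wedge\dV^2\le\delta\}$ the measure $\tetapi$ is \emph{infinite} (this is precisely where the singularity sits), so items (1)--(2) and the finiteness of $(1\wedge\dV^2)\tetapi$ do \emph{not} give uniform-in-$n$ smallness of $\int |\dnabla\widehat\varphi_n|^2\,\dd\tetapi$ there. What does give it is the bound $|\dnabla\widehat\varphi_n|\le|\dnabla\varphi_n|$ (from $1$-Lipschitzness of $T_M$) together with the $\rmL^2(\tetapi)$-convergence of $\dnabla\varphi_n$, i.e.\ equi-integrability---but once you have that bound, the splitting is unnecessary. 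The paper (and your own fallback) simply combines $|\dnabla\widehat\varphi_n|\le|\dnabla\varphi_n|$ with
\[
|\dnabla\widehat\varphi_n-\dnabla\varphi|^2 \le 2|\dnabla\widehat\varphi_n|^2+2|\dnabla\varphi|^2 \le 4|\dnabla\varphi_n-\dnabla\varphi|^2+6|\dnabla\varphi|^2,
\]
and applies the generalized dominated convergence theorem (pointwise a.e.\ convergence along a subsequence from the $\rmL^1$ convergence, dominants converging in $\rmL^1(\tetapi)$). Drop the splitting and go straight to this; it is a two-line argument.
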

 
 \begin{remark}
\label{rmk:a-fortiori}
Indeed, because of the uniform  bounds  on $\| \widehat{\varphi}_n \|_\infty$ and $ \|\overline\nabla \widehat{\varphi}_n \|_\infty$, the convergences in items (2) \& (3) improve to 
\[
\text{$\widehat{\varphi}_n\to\varphi$ in $\rmL^p(V;\pi)$ and  $\overline\nabla \widehat{\varphi}_n \to \overline\nabla \varphi$ in  $\rmL^{p}(E;\tetapi)$ \emph{for every} $p\in [1,\infty)$.}
\]
 \end{remark}
 \begin{proof}[Proof of Lemma \ref{l:cutoff}]
Obviously, we may suppose that $\varphi \not\equiv 0$. 
Let $(\varphi_n)_n \subset \Lipb(V)$ approximate $\varphi$ as in \eqref{Ass:F-bis}. 
Let us introduce the truncation operator
\[
\tau_\ell: \R \to \R, \quad 
\tau_\ell(x):= \begin{cases}
x & \text{if } |x|\leq \ell,
\\
\ell & \text{if } x >\ell,
\\
-\ell  &  \text{if } x <-\ell,
\end{cases}
\quad \text{with  the place-holder } \ell:= 2\|\varphi\|_{\infty},
\]
and let us  define 
\begin{equation}
\label{truncated-phin}
\widehat{\varphi}_n(x):= \tau_\ell(\varphi_n(x))\,.
\end{equation}
Clearly, $ \widehat{\varphi}_n \subset \Lipb(V)$, with   $ \sup_{n
\in \N} \|\overline\nabla \widehat{\varphi}_n \|_\infty
\leq 2\ell$. 
 Thus, properties $(1)$ and  $(2)$ follow.
 We now check the remaining items of the statement in separate claims.  
\smallskip

\par
\noindent
{\sl \textbf{Claim $1$:} we have  $\widehat{\varphi}_n\to\varphi$ in $\rmL^1(V;\pi)$.}
\\
Since $\ell =2 \|\varphi\|_{\infty}$, we have that 
\[
O_n:= \{ x\in V\, : \ |\varphi_n(x) | >\ell \} \subset\left \{ x\in V\, : \ |\varphi_n(x){-} \varphi(x) | >\frac{\ell}2 \right \}\,.
\]
Therefore, since $\varphi_n \to \varphi$ in $\rmL^1(V;\pi)$, we infer that 
\[
\lim_{n\to \infty} \pi (O_n) \leq \lim_{n\to \infty} \pi \left(\left\{ x\in V\, : \ |\varphi_n(x){-} \varphi(x) | >\frac{\ell}2 \right \}\right) =0,
\]
so that 
\[
 \int_{O_n} |\widehat{\varphi_n}{-}\varphi(x)| \, \pi(\dd x) \longrightarrow  0 \text{ as } n \to \infty\,.
\]
In this way, we have that 
\[
\lim_{n\to\infty}\| \widehat{\varphi}_n{-} \varphi\|_{\rmL^1(V;\pi)} = \lim_{n\to\infty} \int_{V{\backslash}O_n} |\varphi_n(x){-}\varphi(x)| \, \pi(\dd x) =0\,.
\]
\smallskip

\par
\noindent
{\sl \textbf{Claim $2$:} we have that}
\begin{equation}
 \label{cvg-small-Orl}
 \lim_{n\to\infty} \iint_{\Ed}  \big|\overline\nabla \widehat{\varphi}_n(x,y){-} \overline\nabla \varphi(x,y) \big|^2 \, \tetapi (\dd x \dd y) =0
 \end{equation}
\\
First of all, we observe that, since $\widehat{\varphi}_n\to \varphi$ $\pi$-almost everywhere in $V$, by \eqref{null-property}
we have  the pointwise convergence
\begin{equation}
\label{pointwise-cvg-gradients}
\overline\nabla \widehat{\varphi}_n  \to  \overline\nabla \varphi \qquad \tetapi\text{-a.e.\ in } E\,.
\end{equation}
A direct calculation shows that 
\begin{equation}
    \label{ptw-ineq-nablas}
|\overline\nabla \widehat{\varphi}_n(x,y)| \leq |\overline\nabla \varphi_n (x,y)| \qquad 
\text{for every $(x,y) \in E$,}
\end{equation}
therefore we have for all $(x,y) \in \Ed$
\begin{equation}
\label{domination}
\begin{aligned}
|\overline\nabla (\widehat{\varphi}_n {-} \varphi)(x,y))|^2 & \stackrel{(1)}{\leq} 
2|\overline\nabla \widehat{\varphi}_n(x,y)|^2+ 
2|\overline\nabla \varphi(x,y)|^2
\\   & \leq
2| \overline\nabla \varphi_n(x,y)|^2+ + 
2|\overline\nabla \varphi(x,y)|^2
\\
& \leq
4| \overline\nabla \varphi_n(x,y) {-} \ona\varphi(x,y) |^2+ 
4|\overline\nabla \varphi(x,y)|^2
\,,
\end{aligned}
\end{equation}
where the very last estimate follows from 
$|\overline\nabla \varphi_n | \leq 
|\overline\nabla \varphi_n {-} \overline\nabla\varphi| +|\overline\nabla\varphi| $.
Combining convergence \eqref{pointwise-cvg-gradients} with the estimate \eqref{domination}, taking into account that 
$\overline\nabla \varphi_n \to \ona\varphi$ in $\rmL^2(E;\tetapi)
$ and resorting to the 
previously mentioned  dominated convergence theorem  \cite[\S 2, Thm.\ 2.8.8]{Bogachev07}, we conclude 
\eqref{cvg-small-Orl}. 
This concludes the proof of Claim $2$.
\end{proof} 
\par
Our next result shows that 
the density property \eqref{Ass:F-bis}
in fact extends to $\mathcal{X}^{\uppsi^*}$. 
\begin{lemma}
\label{cor:Oliver}
There hold
\begin{align}
&
\label{e:coincidentX} 
  \mathcal{X}^{\uppsi^*} =  \rmX^{\uppsi^*}   =   \mathcal{X}_2\,,
\end{align}
and the following estimates 
\begin{equation}
\label{est-condensed}
\begin{aligned}
\forall\, M>0  \ 
\forall\, \beta>0 \ \  & \exists\, c_M,\,  C_{M,\beta} >0  \ \ \forall\,  \eta \in \rmL^\infty(E;\tetapi) \text{ with } \|\eta\|_{\rmL^\infty(\tetapi)}
\leq M 
 \\
 & 
 \begin{cases}
\displaystyle \iint_{E}\uppsi^*(\beta\eta(x,y))\, \tetapi (\dd x \dd y) \leq C_{M,\beta}
 \iint_E |\eta(x,y)|^2 \, \tetapi (\dd x \dd y) \,,
 \\
\displaystyle \iint_E |\eta(x,y)|^2 \, \tetapi (\dd x \dd y) \leq c_M  \iint_E \uppsi^*(\eta(x,y))  \, \tetapi (\dd x \dd y)
 \,. 
 \end{cases}
\end{aligned}
\end{equation}
In particular, 
the density property \eqref{Ass:F-bis} holds in 
$\mathcal{X}_2$ if and only if it holds in $\mathcal{X}^{\uppsi^*}$, 
 with the convergence $\overline\nabla \varphi_n \to \overline\nabla \varphi$ in 
  $\rmL^2(\Ed;\tetapi)$ replaced by that in 
 $\rmL^{\uppsi^*}(\Ed;\tetapi)$. 
\end{lemma}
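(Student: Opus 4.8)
The plan is to prove Lemma~\ref{cor:Oliver} by exploiting the two pointwise bounds on $\uppsi^*$ already isolated in Lemma~\ref{l:props-uppsistar}, namely the sub-quadratic control \eqref{needed-control-normal} near the origin and the quadratic lower bound \eqref{Olli-est-2} on bounded intervals, and then transferring the corresponding norm estimates to the approximating sequence supplied by \eqref{Ass:F-bis} (in its strengthened form from Lemma~\ref{l:cutoff}).

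First I would establish \eqref{est-condensed}. For the upper estimate, fix $M>0$, $\beta>0$, and $\eta\in\rmL^\infty(E;\tetapi)$ with $\|\eta\|_{\rmL^\infty(\tetapi)}\leq M$. Writing $\beta\eta(x,y) = (\beta M)\cdot\tfrac{\eta(x,y)}{M}$ and applying \eqref{needed-control-normal} with $\zeta = \beta M$ and $\eta_{\mathrm{par}} = \eta(x,y)/M \in[-1,1]$ gives $\uppsi^*(\beta\eta(x,y)) \leq \tfrac{\eta(x,y)^2}{M^2}\psih^*(\beta M)$ pointwise $\tetapi$-a.e.; integrating over $E$ yields the first line of \eqref{est-condensed} with $C_{M,\beta} := \psih^*(\beta M)/M^2$. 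For the lower estimate, I apply \eqref{Olli-est-2} with the same $M$: there is $K_M>0$ with $|\xi|^2\leq K_M\uppsi^*(\xi)$ for all $\xi\in[-M,M]$, so $|\eta(x,y)|^2\leq K_M\uppsi^*(\eta(x,y))$ $\tetapi$-a.e., and integrating gives the second line with $c_M := K_M$. The equalities \eqref{e:coincidentX} then follow: $\mathcal{X}^{\uppsi^*}\subset\rmX^{\uppsi^*}$ always holds, and for any $\varphi\in\rmL^\infty(V;\pi)$ one has $\|\overline\nabla\varphi\|_\infty\leq 2\|\varphi\|_\infty =: M$, so applying \eqref{est-condensed} with $\eta=\overline\nabla\varphi$ shows that $\overline\nabla\varphi\in\rmL^2(E;\tetapi)$ iff $\iint_E\uppsi^*(\beta\,\overline\nabla\varphi)\,\dd\tetapi<\infty$ for one (equivalently, using the upper bound, for all) $\beta>0$; the ``for all $\beta$'' version is exactly membership in the small Orlicz space $\mathcal{M}^{\uppsi^*}$, giving $\rmX^{\uppsi^*}\subset\mathcal{X}^{\uppsi^*}$ as well, and the middle space coincides with $\mathcal{X}_2$ by Lemma~\ref{l:worthwhile-highlight}. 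Alternatively one notes $\mathcal{X}_2 = \mathcal{X}^{\uppsi^*}$ is already Lemma~\ref{l:worthwhile-highlight}, and \eqref{e:coincidentX} just records that the big and small Orlicz variants agree on $\rmL^\infty$-bounded functions via the two-sided comparison with the quadratic Young function on the relevant range.

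Finally, for the transfer of the density property: suppose \eqref{Ass:F-bis} holds in $\mathcal{X}_2$ and let $\varphi\in\mathcal{X}^{\uppsi^*}=\mathcal{X}_2$. By Lemma~\ref{l:cutoff} and Remark~\ref{rmk:a-fortiori} there is $(\widehat\varphi_n)_n\subset\Lipb(V)$ with $\sup_n\|\widehat\varphi_n\|_\infty\leq 2\|\varphi\|_\infty$, $\sup_n\|\overline\nabla\widehat\varphi_n\|_\infty\leq 4\|\varphi\|_\infty =: M$, $\widehat\varphi_n\to\varphi$ in $\rmL^1(V;\pi)$, and $\overline\nabla\widehat\varphi_n\to\overline\nabla\varphi$ in $\rmL^2(E;\tetapi)$. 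Set $\eta_n := \overline\nabla\widehat\varphi_n - \overline\nabla\varphi$; then $\|\eta_n\|_{\rmL^\infty(\tetapi)}\leq 2M =: M'$ and, by the first line of \eqref{est-condensed} applied with this $M'$ and arbitrary $\beta>0$, $\iint_E\uppsi^*(\beta\eta_n)\,\dd\tetapi\leq C_{M',\beta}\iint_E|\eta_n|^2\,\dd\tetapi\to 0$; by the characterization \eqref{characterization-of-Orlicz} of Luxemburg-norm convergence this is precisely $\|\eta_n\|_{\rmL^{\uppsi^*}(E;\tetapi)}\to 0$, i.e.\ $\overline\nabla\widehat\varphi_n\to\overline\nabla\varphi$ in $\rmL^{\uppsi^*}(\Ed;\tetapi)$. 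The converse implication is symmetric, using instead the second line of \eqref{est-condensed} to pass from $\rmL^{\uppsi^*}$-convergence of the gradients back to $\rmL^2$-convergence. I expect the only mild subtlety to be bookkeeping the various constants' dependence on $\|\varphi\|_\infty$ and making sure the $\rmL^\infty$-bounds of $\eta_n$ (which involve the sum of bounds for $\overline\nabla\widehat\varphi_n$ and $\overline\nabla\varphi$) stay uniform so that a single $M'$ works for all $n$; everything else is a direct invocation of Lemmas~\ref{l:props-uppsistar}, \ref{l:worthwhile-highlight}, \ref{l:cutoff} and the Orlicz-convergence criterion \eqref{characterization-of-Orlicz}.
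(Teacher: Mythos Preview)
Your proposal is correct and follows essentially the same route as the paper, deriving \eqref{est-condensed} from the pointwise bounds in Lemma~\ref{l:props-uppsistar} and then combining these with the truncated approximants of Lemma~\ref{l:cutoff} and the Orlicz criterion \eqref{characterization-of-Orlicz} to transfer the density property. The one organizational difference is that the paper first isolates an intermediate scaling estimate $\uppsi^*(\beta\xi)\le K_{\beta,M}\,\uppsi^*(\xi)$ on $[-M,M]$ (proved by a short case analysis using \eqref{easy-conseq}), which it uses to show $\rmX^{\uppsi^*}\subset\mathcal{X}^{\uppsi^*}$ directly and then chains with \eqref{needed-control-bound} to obtain the upper bound in \eqref{est-condensed}; you instead obtain that upper bound in one step from \eqref{needed-control-normal} and deduce the equality of all three spaces simultaneously from the two-sided quadratic comparison, which is slightly more economical.
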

\begin{proof}
Preliminarily, observe that 
\begin{equation}
\label{Olli-est}
\forall\, \beta>0 \ \forall\, M>0 \ \  \exists\, K_{\beta,M}>0  \ \  \forall\, \xi \in [{-}M,M]\,: \quad \uppsi^*(\beta\xi)\leq K_{\beta,M} \uppsi^*(\xi)\,.
\end{equation}
Indeed, 
if $|\beta|\leq 1$ then $\uppsi^*(\beta \xi) \leq \uppsi^*(\xi)$ since $\uppsi^* $ is non-decreasing. Now let $|\beta | >1$ and let $r>0$
be as in  \eqref{easy-conseq}.
We  estimate $\uppsi^*(\beta \xi)$ distinguishing two cases:
\begin{enumerate}
\item $|\beta \xi| \leq r$: then $|\xi| \leq \tfrac{r}{|\beta|} <r$, since $|\beta|>1$. By  \eqref{easy-conseq}  we have
\begin{equation}
\label{easy-case1}
\uppsi^*(\beta \xi) \leq  \frac32 c_0 |\beta\xi|^2  \leq  \frac32 c_0 |\beta|^2  \frac{2}{c_0} \uppsi^*(\xi) = 3  |\beta|^2 \uppsi^*(\xi) \,.
\end{equation}
\item $|\beta \xi| > r$: then, $|\xi|> \frac r\beta$. By monotonicity of $\uppsi^*$, we find $\uppsi^*(\tfrac r{\beta}) 
\leq \uppsi^*(\xi)$, and thus
\begin{equation}
\label{easy-case2}
\uppsi^*(\beta \xi) \leq  \frac{\uppsi^*(\xi)}{\uppsi^*(\tfrac r{\beta}) }\uppsi^*(\beta \xi) \leq \frac{\uppsi^*(\xi)}{\uppsi^*(\tfrac r{\beta}) }\uppsi^*(\beta M)
\,,
\end{equation}
where for the last estimate we have used that $|\xi| \leq M$.
\end{enumerate}
 Combining \eqref{easy-case1} and \eqref{easy-case2} we conclude estimate \eqref{Olli-est}
with
 $
K_{\beta,M}= \max\left\{ 3  |\beta|^2, \frac{\uppsi^*(\beta M)}{\uppsi^*(\tfrac r{\beta}) }\right\}.
$
\par
$\vartriangleright $  \eqref{est-condensed}: 
We may continue \eqref{Olli-est} by using that, thanks \eqref{needed-control-bound},
\[
\uppsi^*(\xi) \leq \frac1{M^2} \xi^2  \psih^*(M ).
\]
All in all, we have shown that 
\[
\forall\, \beta>0 \ \forall\, M>0 \ \ 
 \forall\, \xi \in [{-}M,M]\,:
\quad \uppsi^*(\beta\xi)\leq K_{\beta,M} \frac{\psih^*(M )}{M^2} \xi^2\,,
\]
and the first of estimates \eqref{est-condensed} follows.
\par
In turn,  the second of estimates \eqref{est-condensed}  is an immediate
consequence of 
\eqref{Olli-est-2}. 

\par
$\vartriangleright \, \mathcal{X}^{\uppsi^*} =  \rmX^{\uppsi^*}$: 
it clearly suffices to prove that $  \rmX^{\uppsi^*} \subset \mathcal{X}^{\uppsi^*}$.
With this aim, 
 let us fix 
$\varphi \in  \rmX^{\uppsi^*}$.
Since $\varphi \in \rmL^\infty(V;\pi)$, we have that $\overline\nabla \varphi 
\in \rmL^\infty(E;\tetapi)$  and thus we can apply estimate \eqref{Olli-est} with 
$
M=  \|\overline \nabla \varphi\|_{\rmL^\infty(\tetapi)},$
which then ensures that 
\[
\forall\, \beta>0: \ \ 
\iint_{\Ed} \uppsi^*(\beta \overline\nabla \varphi (x,y))
\tetapi(\dd x \dd y )\leq K_{\beta,M}
\iint_{\Ed} \uppsi^*(\overline\nabla \varphi (x,y))
\tetapi(\dd x \dd y )<\infty\,.
\]
Hence, $\overline \nabla \varphi \in \mathcal{M}^{\uppsi^*}(\Ed;\tetapi)$, and thus $\varphi \in  \mathcal{X}^{\uppsi^*}$.
\medskip

$\vartriangleright\, \mathcal{X}^{\uppsi^*} = \mathcal{X}_2$:
 let  $\varphi \in \mathcal{X}^{\uppsi^*}$.
From  the second of estimates \eqref{est-condensed}
we immediately deduce that 
$
 \iint_{\Ed}  |\overline \nabla \varphi(x,y)|^2
 \tetapi (\dd x \dd y ) <\infty$, 
 whence
 $\varphi \in \mathcal{X}_2$. 
 Analogously, relying on the first of  \eqref{est-condensed} we show that 
$ \mathcal{X}_2 \subset \mathcal{X}^{\uppsi^*} $. 
\par
The last part of the statement is, again, 
an immediate consequence of \eqref{est-condensed}. Indeed, 
the latter estimates
 apply 
to any approximating sequence $(\varphi_n)_n \subset \Lipb(V)$ since,
by Lemma  \ref{l:cutoff} (recall that the lemma  holds in 
$\calX^{\Young}$ for \emph{any} Young function $\Young$), we may suppose
that $\sup_n\|\varphi_n\|_\infty<+\infty$. 
\end{proof}

We are now in a position to carry out the 
\begin{proof}[Proof of Theorem \ref{thm:density-vindicated}]
 By the assumed density condition \eqref{Ass:F-bis}, for every 
$\varphi \in \mathcal{X}_2$
 there exists a sequence $(\varphi_n)_n \subset 
\Lipb(V)$ suitably
approximating $\varphi$, with $\sup_n \|\varphi_n\|_\infty<\infty$ thanks to Lemma
\ref{l:cutoff}. 
 We now send $n\to \infty$ in  the continuity equation tested by $\varphi_n$, i.e.\ in 
  \[
        \int_V\varphi_n(x)\,\rho_t (\dd x) - \int_V\varphi_n(x)\, \rho_s  (\dd x) = \iiint_{[s,t]{\times}E} \dnabla\varphi_n(x,y)\, \jj_{\!\Lebone}(\dd r \dd x \dd y) \   \text{ for all } 0\leq s \leq t\leq T\,.
            \]
            Since $\varphi_n \weakto \varphi$ in $\rmL^1(V;\pi)$, we pass to  the limit on the left-hand side. 
            As for the right-hand side, 
            we will show that 
            \begin{equation}
            \label{RHSto0}
             \iiint_{[s,t]{\times}E}  \dnabla\varphi_n(x,y)  \, \jj_{\!\Lebone}(\dd r \dd x \dd y)  \longrightarrow      \iiint_{[s,t]{\times}\Ed}   \dnabla\varphi(x,y)  \, \jj_{\!\Lebone}(\dd r \dd x \dd y) \,.
            \end{equation}
          For this, we use that 
            \[
            \begin{aligned}
          & \iiint_{[s,t]{\times}E}   \left|\dnabla\varphi_n(x,y){-} \dnabla\varphi(x,y) \right|  \, \jj_{\!\Lebone}(\dd r \dd x \dd y) 
         \\
         & =\frac1{\beta}\int_s^t  \iint_{E_\upalpha^r} \beta   \left|\dnabla\varphi_n(x,y){-} \dnabla\varphi(x,y) \right|
          \frac{\tfrac12 w_r(x,y)}{\upalpha(u_r(x), u_r(y))} \upalpha(u_r(x), u_r(y)) \tetapi(\dd x \dd y ) \, \dd r
         \\
         & \stackrel{(1)}\leq  \frac1{2\beta} \EEE \int_s^t  \iint_{E_\upalpha^r} \uppsi^*(\beta (\dnabla\varphi_n(x,y){-} \dnabla\varphi(x,y) )) \upalpha(u_r(x), u_r(y)) \tetapi(\dd x \dd y ) \, \dd r
        \\
        & \quad 
         +   \frac1{2\beta} \EEE \int_s^t  \iint_{E_\upalpha^r} \uppsi\left( \frac{w_r(x,y)}{\upalpha(u_r(x), u_r(y))} \right) \upalpha(u_r(x), u_r(y)) \tetapi(\dd x \dd y ) \, \dd r
         \end{aligned}
                     \]
          with the set   $E_\upalpha^r = \{ (x,y)\in E\, : \ \upalpha(u_r(x), u_r(y))>0 \} $ fulfilling $|\bj_r|(E{\backslash} E_\upalpha^r )=0$ \EEE
           for   $\Lebone$-a.e.\ $r\in (0,T)$, cf.\ \eqref{nice-representation}, and {\footnotesize (1)} due to Young's inequality.
       Now, for every \emph{fixed} $\beta>0$ we have
       \[
       \begin{aligned}
       &
       \int_s^t  \iint_{E_\upalpha^r} \uppsi^*(\beta (\dnabla\varphi_n(x,y){-} \dnabla\varphi(x,y) )) \upalpha(u_r(x), u_r(y)) \, \tetapi(\dd x \dd y ) \, \dd r
       \\
       &\qquad 
       \leq \overline{C} (t{-}s)  \iint_{E} \uppsi^*(\beta (\dnabla\varphi_n(x,y){-} \dnabla\varphi(x,y) )) \, \tetapi(\dd x \dd y ) \, \dd r \doteq  \overline{C}  (t{-}s) I_n
       \end{aligned}
       \]
      with $ \overline{C} : = \sup_{t\in [0,T]} \sup_{(x,y) \in E} \upalpha(u_t(x), u_t(y))<+\infty$ as $u \in \rmL^\infty(0,T;\rmL^\infty(\pi))$.
      
       All in all, we conclude that for every $\beta>0$
       \[
       \begin{aligned}
       &
   \limsup_{n\to \infty}     \iiint_{[s,t]{\times}E}   \left|\dnabla\varphi_n(x,y){-} \dnabla\varphi(x,y) \right|  \, \jj_{\!\Lebone}(\dd r \dd x \dd y) \\ 
&\hspace{4em} \leq  \frac{\overline{C}}{2\beta}  (t{-}s) \lim_{n\to\infty} I_n + \frac1{\beta}\int_s^t   \scrR(\rho_r,\bj_r)\, \dd r
   =  \frac1{\beta}\int_s^t   \scrR(\rho_r,\bj_r)\, \dd r\,.
   \end{aligned}
       \]
       Indeed, $I_n \to 0$ since the density property also holds in $\mathcal{X}^{\uppsi^*}$
       by Lemma \ref{cor:Oliver}. \EEE
         By the arbitrariness of $\beta$
         we conclude \eqref{RHSto0}, which finishes the proof of the validity of the reflecting continuity equation. 
\end{proof}       
\EEE

\section{Examples for the density property}
\label{app:examples-density}
In this section, we revisit the prototypical example of the introduction to show how the density property \eqref{Ass:F-bis} can fail, and then proceed by providing various examples of functional analytic setups in which the density property does in fact hold. 

\subsection{Failure of the density property}
\label{ss:7.1}


Let us consider Example \ref{ex:Jasp-intro-2}, where for convenience we repeat the setting here. We set $V:=\Omega=[-1,1]$ with the usual Euclidean metric $\dV(x,y):=|x-y|$ and corresponding topology, take the uniform measure $\pi:=\Lebone$, and split up the space in $\Omega_1=[-1,0)$ and $\Omega_2=(0,1]$, and define the kernel
\[\kappa(x,\dd y) = a(x,y) |x{-}y|^{-(1{+}2s)}, \qquad a(x,y):=1_{\Omega_1\times \Omega_1 \cup \Omega_2 \times \Omega_2}(x,y).\]

Recall that the space $\mathcal{X}_2$ is given by 
\begin{equation}
\label{X2introA2}
    \mathcal{X}_2 = \left\{ \varphi \in \rmL^\infty( V;\pi)  \, : \ \text{Borel, bounded, with }  \iint_{V{\times}V} |\dnabla \varphi(x,y)|^2\ \tetapi(\dd x \dd y)<+\infty \right\},
\end{equation}
and in particular contains all functions that are constant on $\Omega_1$ and $\Omega_2$ but possibly discontinuous at the boundary, i.e.
\[ \left\{f : \, f=a \boldsymbol{1}_{[-1,0)}+b \boldsymbol{1}_{\{0\}} + c \boldsymbol{1}_{(0,1]}, \quad (a,b,c)\in \R^3. \right\}\]

We will establish that any closure of Lipschitz functions will automatically consist of continuous functions, for sufficiently singular kernels, showing that the density property does not hold. 

\begin{prop}
 \label{failure}
Let $s>\tfrac{1}{2}$ and consider any sequence of Lipschitz functions $(\varphi_n)_{n} \subset \mathrm{Lip}_b(V)$ converging to a Lipschitz function  $\varphi \in \mathcal{X}_2$ in the sense of \eqref{Ass:F-bis}. Then $\varphi\in \mathrm{C}_{\mathrm{b}}(\Omega)$.  
 \end{prop}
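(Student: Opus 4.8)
The plan is to show that the $\rmL^1(\pi)$-limit of a sequence of Lipschitz functions whose discrete gradients converge in $\rmL^2(E;\tetapi)$ must be continuous at the origin, which is the only point of possible discontinuity for a Lipschitz function on $\Omega_1 \cup \Omega_2$. The key is that the singularity of $\kappa$ near the diagonal, restricted to each of $\Omega_1 \times \Omega_1$ and $\Omega_2 \times \Omega_2$, forces the $\rmL^2(\tetapi)$-boundedness of $\dnabla\varphi_n$ to control a fractional Sobolev seminorm of order $s$ on each half-interval, and for $s > \tfrac12$ this controls the boundary trace at $0$ from each side.

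\textbf{Key steps.} First I would observe that $\tetapi(\dd x\, \dd y) = a(x,y)|x-y|^{-(1+2s)}\,\Lebone(\dd x)\,\Lebone(\dd y)$, so that for any $\psi \in \mathcal{X}_2$,
\[
\iint_{E}|\dnabla\psi(x,y)|^2\,\tetapi(\dd x\,\dd y) = [\psi]_{H^s(\Omega_1)}^2 + [\psi]_{H^s(\Omega_2)}^2,
\]
up to the universal constant relating the Gagliardo seminorm to the double integral, where $[\,\cdot\,]_{H^s(I)}$ denotes the Sobolev--Slobodeckij seminorm on the interval $I$. Hence convergence $\dnabla\varphi_n \to \dnabla\varphi$ in $\rmL^2(E;\tetapi)$ is exactly convergence of $(\varphi_n)_n$ in $H^s(\Omega_1)$ and $H^s(\Omega_2)$ (modulo constants, but since we also have $\varphi_n \to \varphi$ in $\rmL^1(\pi)$, in fact convergence in the full $H^s$-norm on each half). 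Second, since $s > \tfrac12$, the trace operators $H^s(\Omega_1) \to \R$, $\psi \mapsto \psi(0^-)$, and $H^s(\Omega_2) \to \R$, $\psi \mapsto \psi(0^+)$, are well-defined and continuous (a one-dimensional Sobolev embedding: $H^s(I) \hookrightarrow \mathrm{C}^{0,s-1/2}(\overline I)$ for $s \in (\tfrac12,1)$, and trivially for $s \geq 1$). Each $\varphi_n$ is globally Lipschitz on $\Omega = [-1,1]$, hence continuous, so $\varphi_n(0^-) = \varphi_n(0^+) = \varphi_n(0)$. Third, I would pass to the limit: by trace continuity on each side,
\[
\varphi(0^-) = \lim_{n\to\infty}\varphi_n(0^-) = \lim_{n\to\infty}\varphi_n(0) = \lim_{n\to\infty}\varphi_n(0^+) = \varphi(0^+),
\]
so the one-sided limits of $\varphi$ at $0$ agree. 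Since $\varphi$ is assumed Lipschitz on $\Omega_1$ and $\Omega_2$ separately (being a Lipschitz function in $\mathcal{X}_2$, i.e. the claimed limit lies in $\Lipb(V)$ in the sense appropriate to the excerpt — here I interpret the hypothesis as $\varphi$ Lipschitz on each piece with the matching one-sided values), the agreement of the one-sided limits at the unique potential jump point upgrades $\varphi$ to a function in $\mathrm{C}_{\mathrm{b}}(\Omega)$. I would also note that boundedness is automatic since $\varphi \in \rmL^\infty(\pi)$ and continuity then gives genuine sup-bounds.

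\textbf{Main obstacle.} The delicate point is the trace argument: one must be careful that the decomposition of $\tetapi$ genuinely yields two \emph{decoupled} Gagliardo seminorms over $\Omega_1$ and $\Omega_2$ (this uses crucially that $a$ vanishes on the mixed blocks $\Omega_1 \times \Omega_2$ and $\Omega_2 \times \Omega_1$, so there is no interaction term across $0$), and that the one-dimensional fractional trace theorem applies on a bounded interval with the correct constant — in particular that $s > \tfrac12$ is exactly the threshold making point evaluation at an endpoint a bounded functional. A secondary subtlety is extracting, from $\rmL^2(E;\tetapi)$-convergence of the gradients \emph{together with} $\rmL^1(\pi)$-convergence of the functions, genuine $H^s$-norm convergence (not just seminorm) on each half-interval, so that the trace values converge; this follows from the Poincaré-type inequality on each bounded interval controlling the $\rmL^2$-norm by the $\rmL^1$-norm plus the seminorm, but it should be spelled out. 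Once these are in place, the conclusion $\varphi \in \mathrm{C}_{\mathrm{b}}(\Omega)$ is immediate, and this precisely exhibits the failure of \eqref{Ass:F-bis}, since $\mathcal{X}_2$ contains genuinely discontinuous functions.
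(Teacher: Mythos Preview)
Your proposal is correct and follows essentially the same route as the paper: both arguments hinge on the fact that, for $s>\tfrac12$ in one dimension, the fractional Sobolev space $H^s$ on each half-interval $\Omega_i$ embeds into $\mathrm{C}^{0,s-1/2}(\overline{\Omega_i})$, and then exploit the continuity of each $\varphi_n$ at the origin to match the two sides. The only (cosmetic) difference is in how the limit is taken: the paper extracts uniform $\mathrm{C}^{0,\alpha}$ bounds on the approximants $\varphi_n$ via the Morrey inequality on each $\Omega_i$, extends these across $0$ using continuity of $\varphi_n$, and then concludes that the $\rmL^1$-limit inherits H\"older continuity; you instead upgrade the hypotheses to genuine $H^s(\Omega_i)$-convergence and pass the one-sided traces at $0$ to the limit via continuity of the trace (equivalently, the embedding) operator. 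Both are valid, and your identification of the two subtleties --- the decoupling of $\tetapi$ into two independent Gagliardo seminorms because $a$ vanishes on the mixed blocks, and the need to control the full $H^s$-norm rather than just the seminorm --- is on point. One small remark: the phrase ``Lipschitz function $\varphi$'' in the statement is evidently a slip for ``function $\varphi$''; your interpretation that $\varphi$ need only lie in $\mathcal{X}_2$ is the intended one, and the Morrey embedding already gives continuity of $\varphi$ on each closed half-interval, so the origin is indeed the only point requiring the matching argument.
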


 \begin{proof}
Our notion of convergence implies that, in particular, 
\[\sup_{n\in \N} \iint_{\Omega_i} |\dnabla \varphi_n(x,y)|^2\ \tetapi(\dd x \dd y) < +\infty, \quad \sup_{n\in \N} \|\varphi_n\|_{\infty} < +\infty.\]
Applying fractional Morrey-type inequalities (see \cite[Theorem 8.2]{DiNezza2012}) and the uniform bound on $\varphi_n$, we find that there exists a constant $C>0$ such that 
\[ \sup_{n\in \N} \|\varphi_n\|_{\rmC^{0,\alpha}(\Omega_i)} \leq C,\]
with $\alpha:=s-\tfrac{1}{2}$. But since the functions $\varphi_n$ are continuous at $0$, the uniform continuity over $[0,1)$ and $(0,1]$ is enough to obtain uniform continuity over $x=0$ as well (with a slightly larger constant $2C$ as upper bound), from which we can deduce that the limiting function $\varphi$ is continuous. 
 \end{proof}

As hinted at in the Introduction, the singularity $s>\tfrac{1}{2}$ is precisely where certain nonlocal analogues of partial integration (but still \emph{local} boundary terms) and traces for fractional kernels play, as seen in \cite{Guan2006} and subsequent works. Moreover, from a stochastic perspective, even though a pure jump process can not `hit' a boundary, for singular enough kernels the infinite amount of  small jumps allow it to still come infinitesimally close to the boundary (see \cite{Applebaum_2009}): this is why   boundary conditions to come into play. A more physical perspective of the various notions of fractional processes in bounded domains (restricted, censored, conditioned, taboo, etc.) and notions of reflection can be found in \cite{Garbaczewski2019}. 

\medskip

Finally, it is clear from the above example that using a different metric over $\Omega$, namely 
\[\dV_2(x,y) := \left\{ \begin{aligned}
&|x-y|, &\qquad &\mbox{if either $(x,y)\in \Omega_1\times \Omega_1$ or $(x,y)\in \Omega_2\times \Omega_2$, } \\
&+\infty, & \qquad &\mbox{otherwise,}
\end{aligned}\right.\]
leads to a separable metric space $(V,\dV_2)$ where Lipschitz functions \emph{are} dense in $\calX_2$, and in that sense the metric $\dV_2$ better connects to the reflecting setting, revealing any hidden reflecting interfaces. 

Unfortunately, even though any Dirichlet form can be made regular (see \cite{Fukushima1994}) by changing the topology, which connects to density of continuous functions, there do not exist, as far as the authors are aware, any results that show that for any active reflected Dirichlet form there exists a metric such that corresponding Lipschitz functions are dense. 

This contrasts with the local setting, i.e.\ the case of metric measure spaces, where under suitable assumptions there exists a unique metric associated with any Dirichlet form, and is partly due to the fact that for any two metrics $\dV_1$ and $\dV_2$ satisfying the fundamental estimate \eqref{eq:mainm}, their maximum $\dV_1{\vee} \dV_2$ need not satisfy \eqref{eq:mainm}, as  discussed in \cite{Frank2014}.

 \subsection{Assumption   \ref{Ass:F-bis} in a bounded domain of $\R^d$}
  \label{ss:7.2}
 %
 
 \noindent
 Firstly, we recall some basics on \emph{nonlocal integrodifferential operators of L\'evy type} on \emph{bounded} domains,
  in particular following 
the PhD dissertation \cite{GuyDiss} (see also the references therein) and the paper \cite{FogKas24}. 
We start by   considering a positive function $\lds: \R^d{\backslash} \{0\} \to [0,\infty) $
(recall  that  $\Lebone^d$ is the Lebesgue measure on $\R^d$)
fulfilling the following conditions:
\begin{enumerate}
\item \emph{L\'evy-integrability}:
\begin{equation}
\label{lds1}
\tag{$\lds.1$} 
\int_{\R^d} (1{\wedge}|h|^2) \lds(h) \, \Lebone^d(\dd h) \doteq  C_{\nu} < +\infty;
\end{equation}
\item symmetry:
\begin{equation}
\label{lds2}
\tag{$\lds.2$} 
\lds(h) = \lds({-}h) \quad \text{for all } h \in \R^d\,.
\end{equation}
\end{enumerate}
Namely, the  function $\lds$ is the density of the \emph{symmetric L\'evy measure}  $\lds \, \Lebone^d$. 
Let us now consider
\[
\text{a bounded Lipschitz domain } \Omega \subset \R^d\,.
\]
It can be shown 
(cf.\ the discussion in \cite[Chap.\ 2]{GuyDiss}),
that 
for every $v,\, w \in \rmC^\infty (\Omega)$ the integral
\[
\DRd_{\lds}(v,w) := \frac12 \iint_{\Omega{\times}\Omega}   (v(x){-}v(y)) (w(x){-}w(y))\, \lds(x{-}y)  \Lebone^d(\dd y)  \Lebone^d(\dd x) 
\]
is finite.  It is thus natural to introduce the space 
\[
\mathrm{H}_{\lds}(\Omega) : = \{ v \in \rmL^2(\Omega)\, : \ \DRd_{\lds}(v,v)<+\infty\}\,,
\]
and endow it with the norm
\[
\|v\|_{\mathrm{H}_{\lds}(\Omega)}^{2}: = \| v \|_{\rmL^2(\Omega)}^2 + \DRd_{\lds}(v,v)\,.
\]
 The space $\mathrm{H}_{\lds}(\R^d)$ is analogously defined. 
\par
As shown in 
\cite[Thm.\ 3.46]{GuyDiss} (cf.\ also
\cite[Rmk.\ 2.8]{FogKas24},  $(\mathrm{H}_{\lds}(\Omega), \| {\cdot}\|_{\mathrm{H}_{\lds}(\Omega)} )$ is a separable Hilbert space.
Moreover,  \cite[Lemma 3.47]{GuyDiss}, 
$H^1(\Omega) \subset \mathrm{H}_{\lds}(\Omega)$ continuously. The prototypical example  is given the fractional Sobolev space $H^{\mathrm{s}}(\Omega)$. 
\begin{example}[The fractional space and the fractional Laplacian]
\upshape
Starting from the  even  function $\lds_{\mathrm{s}}:\R^d{\backslash}\{0\} \to  [0,\infty) $ given by 
\[
\lds_{\mathrm{s}}(h) = C_{d,\mathrm{s}} \frac{1}{\displaystyle  |h|^{d{+}2\mathrm{s}}}, \qquad  \mathrm{s} \in (0,1),
\]
with $ C_{d,\mathrm{s}}$  a suitable constant, one obtains the 
Sobolev-Slobodeckij space $\mathrm{H}^{\mathrm{s}}(\R^d)$ via this construction. 
The corresponding energy form is 
\[
\DRd_{\lds_{\mathrm{s}}}(v,w) := \frac12 \iint_{\Omega{\times}\Omega}  C_{d,\mathrm{s}} \frac{1}{\displaystyle  |x{-}y|^{d{+}2\mathrm{s}}} \,
(v(x){-}v(y)) (w(x){-}w(y))\,  \Lebone^d(\dd y)  \Lebone^d(\dd x) \,.
\]
 The fractional Laplacian 
$\mathrm{A}_{\mathrm{s}}:  \mathrm{H}^{\mathrm{s}}(\Omega) \to \mathrm{H}^{\mathrm{s}}(\Omega)^*$ is the associated operator
\[
\pairing{\mathrm{H}^{\mathrm{s}}(\Omega)^*}{\mathrm{H}^{\mathrm{s}}(\Omega)}{\mathrm{A}_{\mathrm{s}} v}{w}: = \DRd_{\lds_{\mathrm{s}}}(v,w) \qquad \text{for all } v,\,w \in \mathrm{H}^{\mathrm{s}}(\Omega) \,.
\]
\end{example}
\noindent A Meyer-Serrin density type result holds for $\mathrm{H}_{\lds}(\Omega)$, 
\cite[Thm.\ 3.67]{GuyDiss}, \cite[Thm.\ 2.12]{GuyDiss}. 
 \begin{prop}
 \label{meyers-serrin}
 Let $\lds$ fulfill \eqref{lds1} \& \eqref{lds2}. Then,
 the space $\mathrm{C}^\infty(\Omega)$ is dense in $\mathrm{H}_{\lds}(\Omega)$. 
 \end{prop}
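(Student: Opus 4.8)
\textbf{Proof plan for the Meyers--Serrin density result (Proposition~\ref{meyers-serrin}).} The plan is to adapt the classical Meyers--Serrin argument to the nonlocal energy form $\DRd_{\lds}$, exploiting the fact that $\lds$ satisfies only the L\'evy-integrability condition \eqref{lds1} rather than a pointwise lower bound. The key structural observation is that \eqref{lds1} splits the interaction into a \emph{near-diagonal} part, where $|x-y|$ is small and the quadratic term $(v(x)-v(y))^2$ controls the singularity through the $|h|^2$ factor in \eqref{lds1}, and a \emph{far} part, where $\lds(x-y) \, \Lebone^d(\dd y)$ behaves like a bounded (even finite-total-mass) kernel so that $\rmL^2$-control suffices. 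First I would set up the standard machinery: fix $v \in \mathrm{H}_{\lds}(\Omega)$, take an open exhaustion $(\Omega_k)_k$ with $\overline{\Omega_k} \subset \Omega_{k+1} \ssubset \Omega$, a subordinate partition of unity $(\zeta_k)_k \subset \Cc^\infty(\Omega)$, and a standard mollifier $(\rho_\eps)_\eps$. For each $k$ one regularizes $\zeta_k v$ by convolution, choosing the mollification parameter $\eps_k$ small enough that $\rho_{\eps_k} * (\zeta_k v)$ is supported in $\Omega_{k+1} \setminus \overline{\Omega_{k-1}}$ and is close to $\zeta_k v$ in the $\mathrm{H}_{\lds}(\Omega)$-norm up to $2^{-k}\delta$; summing over $k$ then produces a smooth function within $\delta$ of $v$.

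The heart of the matter is therefore the single estimate: given $w \in \mathrm{H}_{\lds}(\Omega)$ compactly supported in $\Omega$, show $\rho_\eps * w \to w$ in $\mathrm{H}_{\lds}(\Omega)$ as $\eps \downarrow 0$. The $\rmL^2$-convergence is classical. For the energy part, I would write the Gagliardo-type seminorm of the difference, $\DRd_{\lds}(\rho_\eps * w - w, \rho_\eps * w - w)$, and split the domain of integration into $\{|x-y| \le r\}$ and $\{|x-y| > r\}$ for a small parameter $r>0$. On the far region, $\iint_{|x-y|>r} \lds(x-y)\,\Lebone^d(\dd y)\,\Lebone^d(\dd x) < \infty$ by \eqref{lds1}, so this term is bounded by $C_r \|\rho_\eps * w - w\|_{\rmL^2(\Omega)}^2 \to 0$. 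On the near region, one uses the translation-averaging structure of convolution: writing $\rho_\eps * w - w = \int \rho_\eps(z)\,(\tau_z w - w)\,\dd z$ (with $\tau_z$ translation), Jensen's inequality and the symmetry \eqref{lds2} reduce the near-diagonal Gagliardo energy of $\rho_\eps * w - w$ to an average over $|z| \le \eps$ of the near-diagonal energies of $\tau_z w - w$, each of which is dominated by $2\iint_{|x-y|\le r} |\ona w(x,y)|^2 \lds(x-y)$ restricted to a collar of $\partial(\supp w)$ plus a term that tends to $0$ with $r$ by absolute continuity of the finite measure $(1\wedge|h|^2)\lds(h)\,\Lebone^d$ against the energy density of $w$; choosing first $r$ small and then $\eps$ small kills both. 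Equivalently, and perhaps more cleanly, one can bound the whole near-diagonal difference energy directly by the tail $\iint_{|x-y|\le r}|\ona w(x,y)|^2\lds(x-y)\,\Lebone^d(\dd y)\,\Lebone^d(\dd x)$, which is the tail of the convergent integral $\DRd_{\lds}(w,w)$ and hence $\to 0$ as $r \to 0$ uniformly in $\eps$, using that convolution is a contraction on the Gagliardo seminorm (a consequence of Minkowski's integral inequality together with translation invariance of $\lds\,\Lebone^d$).

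The main obstacle is precisely this last point: unlike in the local Sobolev case, where $\nabla(\rho_\eps * w) = \rho_\eps * \nabla w$ makes the gradient-convergence immediate, here the nonlocal ``gradient'' $\ona w$ does not commute with mollification on a \emph{bounded} domain, because translating $w$ moves mass across $\partial\Omega$ and the kernel $\lds(x-y)$ is not literally translation-invariant as a kernel \emph{on} $\Omega\times\Omega$ (the domain of integration changes). This is why the compact-support reduction via the partition of unity is essential: it confines all the relevant translations to a fixed compact subset of $\Omega$, on which $\lds(x-y)$ genuinely is translation-invariant in the required sense, so that the Minkowski-inequality contraction argument goes through. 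Once this is handled, the passage from $H^1(\Omega)\subset \mathrm{H}_{\lds}(\Omega)$ (cited as \cite[Lemma 3.47]{GuyDiss}) is not even needed; the argument is self-contained given only \eqref{lds1}, \eqref{lds2} and the separability of $\mathrm{H}_{\lds}(\Omega)$. I would close by remarking that $\rho_\eps * (\zeta_k v)$, being a convolution of an $\rmL^\infty_{\mathrm{loc}}$ (indeed $\rmL^2$ with compact support) function with a smooth kernel, is itself in $\rmC^\infty(\Omega)$, so the constructed approximant lies in $\rmC^\infty(\Omega)$, completing the proof.
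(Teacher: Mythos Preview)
The paper does not give its own proof of this proposition: it simply cites \cite[Thm.\ 3.67]{GuyDiss} and \cite[Thm.\ 2.12]{FogKas24}. Your plan is a correct outline of the standard Meyers--Serrin argument for nonlocal energy spaces, and it is essentially the argument carried out in those references.

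Two remarks on the exposition of the near-diagonal step. First, your two descriptions of it are not really ``equivalent'': the first (averaging the energies of $\tau_z w - w$ via Jensen/Minkowski) is the actual convergence argument, since it reduces the problem to continuity of translations $z \mapsto \tau_z w$ in the Gagliardo seminorm; the second (contraction of convolution on the seminorm) only gives uniform boundedness, not convergence, so the tail bound $[\rho_\eps * w]_{|x-y|\le r} \le [w]_{|x-y|\le r}$ by itself does not send the difference to zero. You should state explicitly the step that makes the first version work: after extending the compactly supported $w$ by zero to $\R^d$ (which costs only a finite contribution to the seminorm because $\mathrm{dist}(\supp w,\partial\Omega)>0$ and $\int_{|h|\ge d}\lds(h)\,\dd h<\infty$), one has $[\tau_z w - w]_{\mathrm H_\lds(\R^d)} = \|F(\cdot - (z,z)) - F\|_{L^2(\R^{2d})}$ with $F(x,y)=(w(x)-w(y))\sqrt{\lds(x-y)}$, and this tends to $0$ as $z\to 0$ by strong continuity of translations in $L^2$. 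That is the clean substitute for the commutation $\nabla(\rho_\eps * w)=\rho_\eps * \nabla w$ used in the local case, and it makes the near/far splitting unnecessary.
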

\noindent A fortiori, one indeed has that $\DRd$ is a \emph{Dirichlet form} on $\mathrm{H}_{\lds}(\Omega)$, cf.\ \cite[Prop.\ 2.25]{FogKas24}. 
%
\medskip

\paragraph{\bf Density property in a bounded domain of $\R^d$.}
Let us consider 
\begin{subequations}
\label{Rdsetup}
\begin{itemize}
\item[-]
 the measure space
\[
 (V,\pi) = (\Omega, \Lebone^d \restr{\Omega} )
\]
 \item[-]
 the kernels $(\kappa(x,\cdot))_{x \in \Omega} \subset  \Ms^+(\Omega{\backslash} \{x\})$ \EEE  defined by 
  \[
 \kappa(x, \dd y  ) = \lds (x{-}y) \, \Lebone^d \restr{\Omega} (\dd y)
  \]
   and then extended to the whole of $\Omega$ via $\kappa(x,\{x\})=0$.
 \end{itemize}
 \end{subequations}
 Observe that they comply with \eqref{mitigation of singularity}, since
 for every $x\in \Omega$ we have that 
 $\int_{\Omega} (1{\wedge}|x{-}y|^2)  \kappa(x, \dd y  ) = \int_{\Omega} (1{\wedge}|x{-}y|^2)   \lds (x{-}y)
 \, \Lebone^d (\dd y) = C_{\lds}<+\infty$ by \eqref{lds1}, and that the detailed balance condition \eqref{DBC} holds, since the function $\lds$ is symmetric.
 \par
Now, it is immediate to check that 
the space $\mathcal{X}_2$ from \eqref{eq:test_extend-quadr}  is indeed
\[
\mathcal{X}_2 = \rmL^\infty(\Omega) \cap \mathrm{H}_{\lds}(\Omega) : = \bigl\{ \varphi \in \rmL^\infty(\Omega)\, : \ \DRd_{\lds}(\varphi,\varphi)<+\infty \bigr\}\,.
\]
Thanks to Proposition 
\ref{meyers-serrin}, 
for every  $\varphi \in \mathcal{X}_2 \subset  \mathrm{H}_{\lds}(\Omega)$ there exists $(\varphi_n)_n \subset \mathrm{C}^\infty(\Omega) \subset \Lipb(\Omega)$ such that 
$\|\varphi_n{-}\varphi\|_{ \mathrm{H}_{\lds}(\Omega)} \to 0 $, namely
\[
\begin{cases}
\displaystyle \|\varphi_n{-}\varphi\|_{ \rmL^2(\Omega)} \longrightarrow 0,
\\
\displaystyle  \DRd_{\lds}(\varphi_n{-}\varphi,\varphi_n{-}\varphi) := \frac12 \iint_{\Omega{\times}\Omega}   |\varphi_n(x){-}\varphi(x){-}\varphi_n(y){+}\varphi(y) |^2
\, \lds(x{-}y)  \Lebone^d(\dd y)  \Lebone^d(\dd x)  \longrightarrow 0,
\end{cases}
\]
which in fact grants the approximation properties (1) \& (2) of \eqref{Ass:F-bis}.  We have thus shown the following
\begin{prop}
\label{prop:ex-Rd}
	The triple $(V,\pi,\kappa)$ above satisfies the density property  \eqref{Ass:F-bis}.
\end{prop}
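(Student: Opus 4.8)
\textbf{Proof plan for Proposition~\ref{prop:ex-Rd}.}
The plan is to check that the triple $(V,\pi,\kappa)=(\Omega,\Lebone^d\restr\Omega,\lds(x-y)\Lebone^d)$ falls within the framework of Section~\ref{s:3} and then deduce the density property \eqref{Ass:F-bis} from the Meyers--Serrin result of Proposition~\ref{meyers-serrin}. First I would record that, as already noted in the paragraph preceding the statement, the L\'evy-integrability \eqref{lds1} of $\lds$ yields \eqref{mitigation of singularity} with $c_\kappa = C_\lds$, since for every $x\in\Omega$ one has $\int_\Omega (1\wedge|x-y|^2)\lds(x-y)\,\Lebone^d(\dd y)\le\int_{\R^d}(1\wedge|h|^2)\lds(h)\,\Lebone^d(\dd h)=C_\lds$, and that the symmetry \eqref{lds2} translates into the detailed balance \eqref{DBC} for $\tetapi(\dd x\,\dd y)=\lds(x-y)\Lebone^d(\dd x)\Lebone^d(\dd y)$; measurability \eqref{true-measurability} is immediate because $(x,y)\mapsto f(y)(1\wedge|x-y|^2)\lds(x-y)$ is Borel and the inner integral is bounded by $C_\lds\|f\|_\infty$. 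This places us in the setting of Section~\ref{s:3}, so $\mathcal{X}_2$ from \eqref{eq:test_extend-quadr} is well-defined.

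Next I would identify $\mathcal{X}_2$ explicitly. By the very definition of $\tetapi$ in terms of $\lds$, a Borel bounded $\varphi$ satisfies $\iint_{\Ed}|\dnabla\varphi(x,y)|^2\,\tetapi(\dd x\,\dd y)<\infty$ if and only if $\frac12\iint_{\Omega\times\Omega}|\varphi(x)-\varphi(y)|^2\lds(x-y)\,\Lebone^d(\dd y)\Lebone^d(\dd x)=\DRd_\lds(\varphi,\varphi)<\infty$ (the diagonal is $\tetapi$-null, cf.\ \eqref{crucial-facts-E'}), whence $\mathcal{X}_2=\rmL^\infty(\Omega)\cap\mathrm{H}_\lds(\Omega)$. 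Now fix $\varphi\in\mathcal{X}_2$. Since $\varphi\in\mathrm{H}_\lds(\Omega)$, Proposition~\ref{meyers-serrin} furnishes a sequence $(\varphi_n)_n\subset\mathrm{C}^\infty(\Omega)$ with $\|\varphi_n-\varphi\|_{\mathrm{H}_\lds(\Omega)}\to0$, i.e.\ $\|\varphi_n-\varphi\|_{\rmL^2(\Omega)}\to0$ and $\DRd_\lds(\varphi_n-\varphi,\varphi_n-\varphi)\to0$. Because $\Omega$ is a bounded Lipschitz domain, $\mathrm{C}^\infty(\Omega)\subset\Lipb(\Omega)$, and since $\pi=\Lebone^d\restr\Omega$ is finite, $\rmL^2$-convergence implies $\rmL^1(V;\pi)$-convergence; likewise the convergence $\DRd_\lds(\varphi_n-\varphi,\varphi_n-\varphi)\to0$ is exactly $\overline\nabla\varphi_n\to\overline\nabla\varphi$ in $\rmL^2(E;\tetapi)$. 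Thus both approximation requirements in \eqref{Ass:F-bis} hold, which completes the proof.

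I do not expect a genuine obstacle here: the only mildly delicate point is the bookkeeping that $\mathrm{C}^\infty(\Omega)\subset\Lipb(\Omega)$ — this uses boundedness of $\Omega$ together with the fact that $\mathrm{C}^\infty(\Omega)$ here means (as in \cite{GuyDiss}) functions smooth up to the boundary, so that both the function and its gradient are bounded on $\overline\Omega$; if one worked instead with the space $\mathrm{C}^\infty(\overline\Omega)$ this is automatic. Everything else is a direct unwinding of definitions and a transfer of the Hilbert-space convergence provided by the Meyers--Serrin theorem into the $\rmL^1$/$\rmL^2$ language of \eqref{Ass:F-bis}.
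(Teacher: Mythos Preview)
Your proposal is correct and follows essentially the same approach as the paper: identify $\mathcal{X}_2=\rmL^\infty(\Omega)\cap\mathrm{H}_\lds(\Omega)$, invoke the Meyers--Serrin density result (Proposition~\ref{meyers-serrin}) to produce approximants in $\mathrm{C}^\infty(\Omega)\subset\Lipb(\Omega)$, and translate the $\mathrm{H}_\lds(\Omega)$-convergence into the $\rmL^1(V;\pi)$ and $\rmL^2(E;\tetapi)$ convergences required by \eqref{Ass:F-bis}. Your remark on the meaning of $\mathrm{C}^\infty(\Omega)$ (smooth up to the boundary, hence in $\Lipb(\Omega)$) is the one point the paper leaves implicit.
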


 \subsection{The density  property  in $\R^d$}
  \label{ss:7.2-bis}

The 
 validity of \eqref{Ass:F-bis}
 can be extended to the case where 
 \begin{itemize}
\item[-]
$V=\R^d$, endowed with  a finite measure $\pi$  that
 is bounded from above and 
 below by the Lebesgue measure   $\Lebone^d $ on any ball in $\R^d$;
 \item[-]
 the kernels $(\kappa(x,\cdot))_{x \in \R^d} \subset   \Ms^+(\R^d{\backslash} \{x\})$ \EEE are a weighted version of  the L\'evy density $\lds$ from \eqref{lds1}--\eqref{lds2}. 
 \end{itemize}
Indeed, we have the following result.
\begin{prop}
\label{prop:ex-Rdu}
Consider on  $V=\R^d$ a finite measure $\pi  \in \mathcal{M}^+(\R^d)$ given by 
\begin{equation}
\label{eq:rdu1}
\begin{gathered}
 \pi (\dd x) =\gamma(x) \Lebone^d (\dd x), \qquad \text{with } 
 \\
 \gamma
\in \rmL^1(\R^d; \Lebone^d) {\cap}\rmL^\infty(\R^d; \Lebone^d) \text{  such that } 
 \text{for every ball $B\subset \R^d$} \ 
 \exists\, c_B>0 \ \forall\, x \in B\, : \,
 \gamma(x) \geq c_B\,.
 \end{gathered}
\end{equation} 
Moreover, set
\[\kappa(x,\dd y):=\gamma(y) \,\lds(y-x) \Lebone^d(\dd y).\]
Then, the density property \ref{Ass:F-bis} holds.
\end{prop}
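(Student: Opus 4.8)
The strategy is to reduce the statement to the bounded-domain case treated in Proposition~\ref{prop:ex-Rd} by a truncation and localization argument, exploiting the Meyers--Serrin-type density (Proposition~\ref{meyers-serrin}) on balls and the hypotheses on $\gamma$ in \eqref{eq:rdu1}. First I would check that the tuple $(V,\pi,\kappa)$ satisfies the standing assumptions of Section~\ref{s:3}: the detailed balance condition \eqref{DBC} is immediate from the symmetry \eqref{lds2} of $\lds$ together with the symmetric way $\gamma$ enters $\tetapi(\dd x\dd y)=\gamma(x)\gamma(y)\lds(y{-}x)\,\Lebone^d(\dd x)\Lebone^d(\dd y)$; the bound \eqref{mitigation of singularity} follows from $\int_{\R^d}(1{\wedge}|x{-}y|^2)\gamma(y)\lds(y{-}x)\,\Lebone^d(\dd y)\le \|\gamma\|_{\rmL^\infty}\int_{\R^d}(1{\wedge}|h|^2)\lds(h)\,\Lebone^d(\dd h)=\|\gamma\|_{\rmL^\infty}C_\nu<\infty$; and the measurability \eqref{true-measurability} holds since $\gamma$ and $\lds$ are Borel (invoke Lemma~\ref{lm:kmeas}). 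One then identifies $\mathcal{X}_2$ as the set of $\varphi\in\rmL^\infty(\R^d;\pi)=\rmL^\infty(\R^d;\Lebone^d)$ with $\iint |\dnabla\varphi(x,y)|^2 \gamma(x)\gamma(y)\lds(y{-}x)\,\Lebone^d(\dd x)\Lebone^d(\dd y)<\infty$.

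The main step is the approximation. Fix $\varphi\in\mathcal{X}_2$; we may assume $\|\varphi\|_\infty\le 1$ by the truncation argument of Lemma~\ref{l:cutoff}. I would approximate in two stages. \emph{Stage 1 (spatial localization):} for $R>0$ choose a Lipschitz cut-off $\chi_R$ with $\chi_R\equiv 1$ on $B_R$, $\chi_R\equiv 0$ outside $B_{2R}$, $\|\nabla\chi_R\|_\infty\le 2/R$, and set $\varphi^R:=\chi_R\varphi$. Since $\|\gamma\|_{\rmL^1}<\infty$ and $\gamma\in\rmL^\infty$, $\varphi^R\to\varphi$ in $\rmL^1(\R^d;\pi)$ by dominated convergence. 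For the gradient term, split $\dnabla\varphi^R(x,y)=\chi_R(y)\dnabla\varphi(x,y)+\varphi(x)\dnabla\chi_R(x,y)$; the first contribution converges to $\dnabla\varphi$ in $\rmL^2(\tetapi)$ by dominated convergence (using $|\dnabla\varphi|^2\in\rmL^1(\tetapi)$ and $\|\gamma\|_\infty<\infty$), and the cross term is controlled by $\|\varphi\|_\infty^2\iint |\dnabla\chi_R(x,y)|^2\gamma(x)\gamma(y)\lds(y{-}x)\,\Lebone^d(\dd x)\Lebone^d(\dd y)$, which is $\to 0$ because $\gamma(x)\gamma(y)\le\|\gamma\|_\infty^2$ and $\iint_{(B_{2R}\times\R^d)\cup(\R^d\times B_{2R})}(1\wedge|x{-}y|^2)\lds(y{-}x)\,\Lebone^d(\dd x)\Lebone^d(\dd y)$ restricted to the region where $|\nabla\chi_R|\neq 0$ is dominated by $\|\nabla\chi_R\|_\infty^2\cdot$(finite L\'evy mass on bounded sets); a careful estimate here, splitting $|x-y|\le 1$ versus $|x-y|>1$ as in Remark~\ref{rmk:lambda}, gives the decay. \emph{Stage 2 (approximation of the localized function):} $\varphi^R$ is supported in $\overline{B_{2R}}$, and its restriction lies in the bounded-domain energy space $\mathrm{H}_{\lds}(B_{3R})\cap\rmL^\infty$ with the weighted form; since $\gamma\ge c_{B_{3R}}>0$ on $B_{3R}$, the weighted and unweighted forms are comparable there, so $\varphi^R|_{B_{3R}}\in\rmL^\infty(B_{3R})\cap\mathrm{H}_{\lds}(B_{3R})$. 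By Proposition~\ref{meyers-serrin} choose $(\psi_m)_m\subset\rmC^\infty(B_{3R})$ with $\psi_m\to\varphi^R$ in $\mathrm{H}_{\lds}(B_{3R})$; multiplying by a fixed cut-off $\equiv 1$ on $B_{2R}$ and supported in $B_{3R}$ and truncating, we obtain $(\varphi^R_m)_m\subset\Lipb(\R^d)$ with $\varphi^R_m\to\varphi^R$ in $\rmL^1(\R^d;\pi)$ and $\dnabla\varphi^R_m\to\dnabla\varphi^R$ in $\rmL^2(\R^d\times\R^d;\tetapi)$ — here one must also control the interaction energy between $B_{3R}$ and its complement, which is finite and stable since both $\varphi^R_m$ and $\varphi^R$ vanish outside $B_{3R}$ and are uniformly bounded, so that contribution is handled by dominated convergence against the finite measure $(1\wedge|x-y|^2)\tetapi$. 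A diagonal argument over $R\to\infty$ and $m\to\infty$ then yields the desired sequence verifying \eqref{Ass:F-bis}.

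The hard part will be the cut-off estimate in Stage~1 and the boundary-interaction estimate in Stage~2: one must show that the energy carried by $\dnabla\chi_R$ (respectively the interaction across $\partial B_{3R}$) is asymptotically negligible, which requires using \eqref{lds1} in the sharp form that the L\'evy density has only a $(1\wedge|h|^2)$-integrable singularity, together with $\gamma\in\rmL^\infty$; the $\rmL^1$-integrability of $\gamma$ is what makes the far-field part of the interaction energy small. Once these two quantitative lemmas are in place the rest is routine dominated convergence against the finite measure $\lambda_\kappa$ of \eqref{tetapi-lambda}, exactly as in the proof of Proposition~\ref{prop:ex-Rd}.
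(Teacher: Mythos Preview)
Your two-stage strategy (spatial cutoff, then unweighted density on a ball via the comparability $c_{B}\le\gamma\le\|\gamma\|_\infty$) is exactly the paper's approach. The paper organizes Stage~2 slightly differently: instead of invoking Proposition~\ref{meyers-serrin} on a bounded ball and then controlling the interaction across $\partial B_{3R}$, it proves a norm-equivalence lemma (for functions in $\mathfrak{C}_m:=\{g\in\rmL^\infty:\|g\|_\infty\le1,\ g\equiv0\text{ outside }B_{m+1}\}$, convergence in the unweighted $\mathrm{H}_\lds(\R^d)$ and in the weighted $\mathrm{H}_{\lds,\gamma}(\R^d)$ coincide) and then applies the \emph{full-space} density of $\rmC_{\mathrm{c}}^\infty(\R^d)$ in $\mathrm{H}_\lds(\R^d)$. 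This avoids a separate boundary-interaction estimate.

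One point in your Stage~2 deserves care: the claim that the interaction energy across $\partial B_{3R}$ is ``handled by dominated convergence against the finite measure $(1{\wedge}|x{-}y|^2)\tetapi$'' is not correct as written, because for $(x,y)\in B_{3R}\times B_{3R}^c$ one has $|\dnabla(\varphi^R_m{-}\varphi^R)(x,y)|^2=|(\varphi^R_m{-}\varphi^R)(x)|^2$, which is \emph{not} dominated by $C(1{\wedge}|x{-}y|^2)$ when $x$ is close to $\partial B_{3R}$; indeed $\tetapi(B_{3R}\times B_{3R}^c)$ is in general infinite. The fix is simple and implicit in your construction: take the Stage-2 cutoff with $\mathrm{supp}(\eta)\subset B_{3R-\delta}$ for some $\delta>0$. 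Then $\varphi^R_m-\varphi^R$ is supported in $B_{3R-\delta}$, so on $B_{3R-\delta}\times B_{3R}^c$ one has $|x-y|\ge\delta$ and hence $|(\varphi^R_m{-}\varphi^R)(x)|^2\le 4(1{\wedge}\delta^2)^{-1}(1{\wedge}|x{-}y|^2)$, which \emph{is} integrable against $\tetapi$. The paper's norm-equivalence route sidesteps this by splitting $|x{-}y|\le1$ versus $|x{-}y|>1$ globally once and for all.
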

Note that, in this setup, 
\[
\tetapi (\dd x \dd y )=  
\kappa(x,\dd y) \pi(\dd x)=\gamma(x)\gamma(y) \,\lds(y-x) \Lebone^d(\dd x)\Lebone^d(\dd y).\]
 Before carrying out the proof,  \EEE we introduce 
the weighted fractional Sobolev space
\[
\begin{aligned}
\mathrm{H}_{\lds,\gamma} (\R^d): &= \{ v \in \rmL^2(\R^d;\pi)\, : \ \DRd_{\lds,\gamma}(v,v)<+\infty\}\,
\qquad \text{with } 
 \\
\DRd_{\lds,\gamma}(v,w) &:= \frac12 \iint_{\R^d{\times} \R^d}   (v(x){-}v(y)) (w(x){-}w(y))\, \gamma(x)\gamma(y)\lds(x{-}y)  \Lebone^d(\dd y)  \Lebone^d(\dd x) 
\end{aligned}
\]
and endow it with the norm
\[
\|v\|_{\mathrm{H}_{\lds,\gamma}(\R^d)}^{2}: = \| v \|_{\rmL^2(\R^d;\pi)}^2 + \DRd_{\lds,\gamma}(v,v)\,.
\]
\begin{proof}[Proof of Proposition \ref{prop:ex-Rdu}]
Let $(B_m)_{m\in \mathbb{N}}$ be the sequence of closed balls  centered at $0$   with radius $m\geq 1$, and $(f_m)_m$ a corresponding sequence of smooth and compactly supported multipliers, with the property of being $2$-Lipschitz, and $f_m\equiv1$ on $B_m$, $f_m\equiv0$ on $B_{m+1}^c= \R^d{\backslash}B_{m+1}$ for all $m\in \mathbb{N}$. Moreover, let $c_m>0$ 
be the corresponding lower bound of $\gamma$ on $B_m$, cf.\ \eqref{eq:rdu1}. 
\par
Take any $\varphi\in \mathrm{H}_{\lds,\gamma}(\R^d)$.  As in the proof of Lemma \ref{l:cutoff} (see Appendix 
\ref{app:density-vindicated}), \EEE
 via a scaling and truncation argument, we can assume without loss of generality that $\|\varphi\|_{\rmL^\infty(\pi)}\leq 1$.
 Setting $\varphi_m:=f_m \varphi$, it is clear that the sequence $(\varphi_m)_m$ converges to $\varphi$ in $\mathrm{H}_{\lds,\gamma}(\R^d)$ as $m\to\infty$:  indeed, to show that 
$ \DRd_{\lds,\gamma}(\varphi_m{-}\varphi,\varphi_m{-}\varphi) \longrightarrow 0$, it suffices to combine the pointwise convergence 
$\overline\nabla (\varphi_m{-}\varphi)(x,y) 
\to 0$ for $(\Lebone^d{\otimes}\Lebone^d)$-a.e.\ $(x,y) \in \R^d{\times} \R^d$ with the estimate \EEE
\begin{equation}\label{eq:rdu2}
\begin{aligned}
 |\overline{\nabla} \varphi_m(x,y)|^2  \EEE  &\leq 2\varphi^2 (x)|\overline{\nabla} (f_m)|^2+f_m^2 (y) |\overline{\nabla} \varphi|^2 \\
&\leq 4 \left(1{\wedge} |x{-}y|^2\right)+2|\overline{\nabla} \varphi(x,y) |^2 \quad \text{for $(\Lebone^d{\otimes}\Lebone^d)$-a.e.\ $(x,y) \in \R^d{\times} \R^d$} 
\end{aligned}
\end{equation}
 which provides an integrable majorant. By dominated convergence we have $ \DRd_{\lds,\gamma}(\varphi_m{-}\varphi,\varphi_m{-}\varphi) \to~0$. 

\EEE By a diagonal argument it is therefore sufficient to find for every $m\in \N$ a sequence of smooth $(\varphi_{m,n})_n$ converging
as $n\to \infty$
 to $\varphi_{m}$ in $\mathrm{H}_{\lds,\gamma}(\R^d)$.  Thus, fix $m \in \mathbb{N}$  and observe that 
 \[
\varphi_m \in  \mathfrak{C}_m: = \{ g\in \rmL^\infty(\R^d;\pi) \, : \  \|g\|_{\\rmL^\infty(\pi)}\leq 1, \ g \equiv 0 \text{ in }\R^d{\backslash} B_{m+1}\}\,.
 \]
 \par
 First, note that finiteness of and the convergence in the various norms used above are equivalent in 
 the class $\mathfrak{C}_m$. 
Clearly, it suffices to argue for 
convergence: thus, we will  show that,
for any $(g_n)_n,\, g $ in $\mathfrak{C}_m$,
\begin{equation}
\label{equivalence}
g_n \to g \quad \text{in }   \mathrm{H}_{\lds}(\R^d) \ \Longleftrightarrow \ g_n \to g \quad \text{in }   \mathrm{H}_{\lds,\gamma}(\R^d)\,.
\end{equation}
For, if $g_n \to g$ in $ \mathrm{H}_{\lds}(\R^d)$, then \EEE
$g_n(x)$ converges to $g(x)$ for a.e. $x\in V$.
 Let us now set $h_n:=(g-g_n)$. 
Via conditioning on the distance we write 
\[|\overline{\nabla} h_n(x,y)| ^2= \boldsymbol{1}_{d\leq 1}(x,y)|\overline{\nabla} h_n(x,y)|^2+\boldsymbol{1}_{d>1}(x,y)|\overline{\nabla} h_n(x,y)|^2\,,\]
where, recall, 
$\boldsymbol{1}_A$ denotes the indicator function of a set $A$ and 
 we have used the short-hand notation
\[
\begin{cases}
\boldsymbol{1}_{d\leq 1} = \boldsymbol{1}_{\{ (x,y)\in \R^d\, : \ |x{-}y|\leq 1\}}\,,
\\
\boldsymbol{1}_{d>1} = \boldsymbol{1}_{\{ (x,y)\in \R^d\, : \ |x{-}y|> 1\}}\,.
\end{cases}
\]
Now, taking into account the fact that 
$\boldsymbol{1}_{d\leq 1}|\overline{\nabla} h_n|^2$ is supported on $B_{m+2}{\times}B_{m+2}$, we estimate
\[
\begin{aligned}
&
\iint_{\R^d{\times}\R^d}  \EEE \boldsymbol{1}_{d\leq 1}(x,y)|\overline{\nabla}h_n(x,y)|^2 \tetapi(\dd x,\dd y)
\\
&\quad
\leq
 \|\gamma\|_{\rmL^\infty}^2 \iint_{B_{m+2}{\times} B_{m+2}} |\overline{\nabla} h_n(x,y)|^2 \lds(x{-}y)\Lebone^d(\dd y)  \Lebone^d(\dd x) \leq  \|\gamma\|_{\rmL^\infty}^2(\Lebone^d) \|h_n\|_{\mathrm{H}_{\lds}(\R^d)}^{2}
 \longrightarrow 0 
 \,.
 \end{aligned}
 \]
 
In turn, we have that 
 \[
 \iint_{\R^d{\times}\R^d}  \boldsymbol{1}_{d> 1}(x,y)|\overline{\nabla}h_n(x,y)|^2 \tetapi(\dd x,\dd y) \longrightarrow 0 
 \]
by dominated convergence,
 as the integrand  pointwise converges to zero, and 
 relying on   
\[
\begin{aligned}
&\iint_{\R^d{\times}\R^d}   \boldsymbol{1}_{d> 1}(x,y)\gamma(x)\gamma(y)\lds(x{-}y)  \Lebone^d(\dd y)  \Lebone^d(\dd x) 
\\
&\qquad = 
 \iint_{\R^d{\times}\R^d}   (1{\wedge}|x{-}y|^2)\lds(x{-}y) \gamma(x)\gamma(y)  \Lebone^d(\dd y)  \Lebone^d(\dd x)
 \leq \|\gamma\|_{\rmL^\infty}(\Lebone^d)
C_{\lds} \pi(\R^d).
\end{aligned}
\]
 Conversely, \EEE  with $g_n$ now to converging to $g$ in $\mathrm{H}_{\lds,\gamma}(\R^d)$, we have
\[
\begin{aligned}
&
\iint_{\R^d{\times}\R^d} \boldsymbol{1}_{d> 1}(x,y)|\overline{\nabla}h_n(x,y)|^2 \lds(x{-}y)\Lebone^d(\dd y)  \Lebone^d(\dd x)
\\
&
 \leq  c_{m+2}^{-2}\EEE \iint_{B_{m+2}{\times}B_{m+2}}
 |\overline{\nabla} h_n(x,y)|^2 \tetapi(\dd x,\dd y) \leq   c_{m+2}^{-2} \EEE  \|h_n\|_{\mathrm{H}_{\lds,\gamma}(R^d)}^{2},
 \end{aligned}
 \]
while 
\[
\iint_{B_{m+1}{\times} B_{m+1}^c} \boldsymbol{1}_{d> 1}(x,y) \lds(x{-}y)  \Lebone^d(\dd y)  \Lebone^d(\dd x) \leq C_{\lds} \Lebone^d(B_{m+1}) . \]
 Hence, we conclude that $g_n\to g $ in $\mathrm{H}_{\lds}(\R^d)$ as $n\to\infty$, and \eqref{equivalence} is proven. \EEE

 In order to approximate each $\varphi_m \in \mathfrak{C}_m {\cap} \mathrm{H}_{\lds}(\R^d)$ by a sequence  of smooth $(\varphi_{m,n})_n$, let us then argue in this way.  \EEE
Since $\rmC_{\mathrm{c}}^{\infty}(\R^d) {\cap}   \mathrm{H}_{\lds}(\R^d)$ is dense in $\mathrm{H}_{\lds}(\R^d)$ by \cite[Thm.\ 3.66]{GuyDiss},
 one can find  a sequence $(g_{m,n})_m$ converging to $\varphi_m$ in $\mathrm{H}_{\lds}(\R^d)$. The result now follows after setting $\varphi_{m,n}:=(f_{m+1}g_{m,n})\wedge 1$, which can be shown to converge to $\varphi_m$ in $\mathrm{H}_{\lds}(\R^d)$ as well, are bounded by $1$, and 
 indeed fulfill $(\varphi_{m,n})_n \subset \mathfrak{C}_m$. 
  By  the equivalence proved in the above lines, we have $\varphi_{m,n} \to \varphi_m $ in $\mathrm{H}_{\lds,\gamma}(\R^d)$ as $n\to\infty$.  
 \par
  This concludes the proof. \EEE
\end{proof}

\subsection{Assumption   \ref{Ass:F-bis} in the torus}
We take as $V$ the $d$-dimensional torus $\torus$ (i.e., the quotient space $\R^d/\mathbb{Z}^d$ w.r.t.\ to the relation $x \sim y$ iff $x-y \in \mathbb{Z}^d$), endowed with the
reference   measure $ \pi = \Lebone^d$. Let us consider two L\'evy densities $\nu$ and $\widehat \nu$ complying with \eqref{lds1} and \eqref{lds2}, and
the induced 
 kernels
\[
\kappa (x,\dd y ) = \nu(x{-}y) \Lebone^d(\dd y) \in \Mloc^{+}(\torus{\backslash}\{x\}),  \qquad \widehat\kappa (x,\dd y ) = \widehat{\nu}(x{-}y) \Lebone^d(\dd y) \in \Mloc^{+}(\torus{\backslash}\{x\})\,.
\]  
We consider the triple
$
(\VV,\ppi,\kkappa) 
$ with 
\[
\VV = \torus{\times}\torus, 
\qquad \ppi = \Lebone^d{\otimes}\Lebone^d, \qquad 
 \kkappa (\xx, \dd \yy):= \kappa (x, \dd y ) \delta_{\hat x}(\dd \hat y) +  \widehat{\kappa} (\hat x, \dd \hat y ) \delta_{x}(\dd y)\,.
\]
We will show the validity of the density condition in this context: note, in particular, that, in contrast to the setup of Section \ref{ss:7.2}, here the kernel
$\kappa$
is \emph{not} absolutely continuous w.r.t.\ the Lebesgue measure.
\begin{prop}
The density property \eqref{Ass:F-bis} holds for the triple $(\VV, \ppi, \TTheta_{\kkappa})$.
\end{prop}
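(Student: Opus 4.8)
The key observation is that the edge space $\VV = \torus \times \torus$ and the kernel $\kkappa$ have a product-like structure: a jump from $\xx = (x,\hat x)$ either moves the first coordinate (with rate $\kappa(x,\cdot)$) while freezing the second, or moves the second coordinate (with rate $\widehat\kappa(\hat x, \cdot)$) while freezing the first. Consequently, for $\boldsymbol\varphi \in \rmL^\infty(\VV;\ppi)$ the nonlocal Dirichlet energy splits as
\[
\iint_{\VV\times\VV} |\dnabla \boldsymbol\varphi(\xx,\yy)|^2 \, \TTheta_{\kkappa}(\dd\xx\,\dd\yy) = \DRd_\nu^{(1)}(\boldsymbol\varphi) + \DRd_{\widehat\nu}^{(2)}(\boldsymbol\varphi),
\]
where $\DRd_\nu^{(1)}$ acts on the first variable (integrating against $\Lebone^d$ in the second) and $\DRd_{\widehat\nu}^{(2)}$ symmetrically. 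Thus $\mathcal{X}_2$ for the triple $(\VV,\ppi,\kkappa)$ is exactly the intersection of the two ``partial'' fractional Sobolev spaces $\rmL^\infty \cap \mathrm{H}_{\nu}$ (in $x$, uniformly in $\hat x$) and $\rmL^\infty \cap \mathrm{H}_{\widehat\nu}$ (in $\hat x$, uniformly in $x$).

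First I would make this decomposition precise and record that, by the already-established Meyers--Serrin type density result on the torus (the torus version of Proposition \ref{meyers-serrin}, obtainable by periodizing the L\'evy density or directly as in \cite{GuyDiss}), smooth functions are dense in $\mathrm{H}_\nu(\torus)$ and in $\mathrm{H}_{\widehat\nu}(\torus)$. Next I would reduce, via the truncation-and-scaling argument of Lemma \ref{l:cutoff}, to approximating a fixed $\boldsymbol\varphi \in \mathcal{X}_2$ with $\|\boldsymbol\varphi\|_\infty \le 1$. The natural strategy is mollification in \emph{both} variables simultaneously: let $\rho_\eps$ be a standard smooth mollifier on $\torus \times \torus$ and set $\boldsymbol\varphi_\eps := \boldsymbol\varphi * \rho_\eps$, which is smooth, hence Lipschitz, on the compact manifold $\VV$, with $\|\boldsymbol\varphi_\eps\|_\infty \le 1$. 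One then checks $\boldsymbol\varphi_\eps \to \boldsymbol\varphi$ in $\rmL^1(\VV;\ppi)$ (immediate) and $\dnabla \boldsymbol\varphi_\eps \to \dnabla \boldsymbol\varphi$ in $\rmL^2(\VV\times\VV;\TTheta_{\kkappa})$. For the latter, the split above lets me treat each term separately: for the first term I use that convolution commutes with finite differences and is a contraction on $\rmL^2$, so that $\DRd_\nu^{(1)}(\boldsymbol\varphi_\eps - \boldsymbol\varphi) \to 0$ by the standard argument that mollification converges in the (partial) fractional Sobolev norm — concretely, $\|\dnabla_1(\boldsymbol\varphi_\eps-\boldsymbol\varphi)\|_{\rmL^2(\nu)} \le \| \dnabla_1\boldsymbol\varphi - (\dnabla_1\boldsymbol\varphi)*\rho_\eps\|_{\rmL^2(\nu)} \to 0$ using density of smooth functions in $\mathrm{H}_\nu$ and the uniform bound on the convolution operator; symmetrically for the second term. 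Compactness of $\torus$ removes any issue with cut-offs, so no analogue of the ball-exhaustion from Proposition \ref{prop:ex-Rdu} is needed.

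The main obstacle — and really the only delicate point — is verifying that joint mollification on $\torus\times\torus$ genuinely controls both partial energies at once; i.e., that $\boldsymbol\varphi_\eps*\rho_\eps$, which mixes the two variables, still has $\DRd^{(i)}$-energy converging. This is where the product structure of $\kkappa$ is essential: because $\TTheta_{\kkappa}$ charges only the ``axis-aligned'' slices $\{\hat x = \hat y\}$ and $\{x = y\}$, Fubini lets me integrate out the frozen variable first, reducing to the one-dimensional mollification estimate fibrewise, then using $\rho_\eps = \rho_\eps^{(1)}\otimes\rho_\eps^{(2)}$ and Minkowski's integral inequality to absorb the mollification in the frozen variable (which is harmless since it is an $\rmL^2$-contraction). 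Once this is set up, one invokes dominated convergence with the majorant coming from $\|\boldsymbol\varphi\|_\infty \le 1$ together with $\iint (1\wedge|x-y|^2)\nu(x-y) < \infty$ exactly as in \eqref{eq:rdu2}, and the proof concludes by appeal to Theorem \ref{thm:density-vindicated}.
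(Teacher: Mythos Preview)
Your plan---tensor-product mollification on $\torus\times\torus$, splitting the energy along the two axis-aligned pieces of $\TTheta_{\kkappa}$, and exploiting translation invariance---is exactly the paper's approach. The execution differs only in the $\rmL^2$-convergence step. The paper never invokes a prior Meyers--Serrin result on $\torus$: after Jensen's inequality gives
\[
|\dnabla\boldsymbol\varphi_n(\xx,\yy)|^2 \le G_n(\xx,\yy) := \iint_{\torus\times\torus} |\dnabla\boldsymbol\varphi(\xx{-}\zz,\yy{-}\zz)|^2\,\mmu_n(\zz)\,\dd\zz,
\]
a direct change of variables $(\xx,\yy)\mapsto(\xx{-}\zz,\yy{-}\zz)$---legitimate because both the torus and the difference kernels $\nu(\cdot)$, $\widehat\nu(\cdot)$ are translation-invariant---yields the \emph{exact} identity $\iint_{\EE} G_n\,\dd\TTheta_{\kkappa} = \iint_{\EE} |\dnabla\boldsymbol\varphi|^2\,\dd\TTheta_{\kkappa}$ for every $n$. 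Combined with Fatou and pointwise convergence this gives $\iint|\dnabla\boldsymbol\varphi_n|^2 \to \iint|\dnabla\boldsymbol\varphi|^2$, and then $\rmL^2$ convergence via the generalized dominated convergence theorem with the \emph{varying} majorants $G_n$. Your route via $\dnabla_1\boldsymbol\varphi_\eps = (\dnabla_1\boldsymbol\varphi)*\rho_\eps$ also works, but the justification you cite (``density of smooth functions in $\mathrm{H}_\nu(\torus)$'') is misplaced and mildly circular; what is actually needed is continuity of translations in $\rmL^2$ of the translation-invariant measure $\nu(y{-}x)\,\dd y\,\dd x\,\dd\hat x$, which follows from general measure theory without any reference to the one-variable Sobolev density.

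Two corrections. First, the majorant you propose---``$\|\boldsymbol\varphi\|_\infty\le1$ together with $\iint(1\wedge|x{-}y|^2)\nu<\infty$, as in \eqref{eq:rdu2}''---does not dominate $|\dnabla\boldsymbol\varphi_n|^2$ uniformly in $n$ with respect to $\TTheta_{\kkappa}$: the Lipschitz constants of $\boldsymbol\varphi_n$ blow up as $n\to\infty$, and the crude bound $|\dnabla\boldsymbol\varphi_n|^2\le 4$ is not $\TTheta_{\kkappa}$-integrable when $\nu$ is genuinely singular. That estimate is suited to the \emph{cut-off} step in Proposition~\ref{prop:ex-Rdu}, not to mollification; here one must use either the varying majorants $G_n$ as the paper does, or the direct $\rmL^2$-continuity of translations. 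Second, drop the final appeal to Theorem~\ref{thm:density-vindicated}: that theorem takes \eqref{Ass:F-bis} as a \emph{hypothesis} and has no role in establishing it; the proof is complete once $\boldsymbol\varphi_n\to\boldsymbol\varphi$ in $\rmL^1(\VV;\ppi)$ and $\dnabla\boldsymbol\varphi_n\to\dnabla\boldsymbol\varphi$ in $\rmL^2(\EE;\TTheta_{\kkappa})$ are shown.
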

\begin{proof}
Let $(\mu_n)_n,\, (\widehat{\mu}_n)_n,\  \subset \rmC_{\mathrm{c}}^\infty (\torus)$ be a standard family of mollifiers, fulfilling for every $k\in \N$
\begin{equation}
\label{torus-mollifiers}
\begin{aligned}
&
\mathrm{\supp}(\mu_n),\, \mathrm{\supp}(\widehat{\mu}_n) \subset B_{r_n}(0)  \text{ for a null sequence $r_n \downarrow 0$}\,,
\\
&
\int_{\torus} \mu_n(x) \dd x = \int_{\torus} \widehat{\mu}_n(x) \dd x  =1\,.
\end{aligned}
\end{equation}
Set
\[
\mmu_n (\xx) = \mu_n(x) \widehat{\mu}_n(\hat x) \qquad \text{for all } \xx = (x,\hat x) 
\in \VV\,.
\]
\par
Let us fix $\varphi \in \rmL^\infty(\VV;\ppi)$, and 
define
\[
\varphi_n(\xx): = \int_{\VV} \varphi(\xx{-}\zz) \mmu_n (\zz) \dd \zz = \iint_{\torus{\times}\torus}  \varphi(x{-}z,
\hat{x}{-}\hat{z}) \mu_n(z) \widehat{\mu}_n(\hat{z})  \dd z \dd \hat{z}\,. 
\]
It is well known that 
\begin{equation}
\label{cvg-L1-torus}
\varphi_n (\xx) \to \varphi(\xx) \quad \text{for } \ppi\text{-.a.e.} \xx \in \VV 
\qquad \text{ and }  \qquad 
\text{$\varphi_n \to \varphi $ in $\rmL^1(\VV;\ppi)$.}
\end{equation}
 We easily obtain that 
 \begin{equation}
 \label{pointwise-tetakk}
 \overline{\nabla} \varphi_n (\xx,\yy) \to \overline{\nabla}\varphi(\xx,\yy) \quad \text{for }  \TTheta_{\kkappa}\text{-a.e.} (\xx,\yy) \in \EE\,.
 \end{equation}
  Moreover, we observe that   for $ \TTheta_{\kkappa}\text{-a.e.} (\xx,\yy) \in \EE$ there holds, by Jensen's inequality,
 \begin{equation}
 \label{lessGn}
 \begin{aligned}
| \overline{\nabla} \varphi_n (\xx,\yy)|^2  & = \left| 
\iint_{\torus{\times}\torus} \varphi (\xx{-}\zz)  \mmu_n (\zz) \dd \zz  {-} \iint_{\torus{\times}\torus} \varphi (\yy{-}\zz)  \mmu_n (\zz) \dd \zz 
\right|^2
\\ & \leq  
\iint_{\torus{\times}\torus} |\varphi (\xx{-}\zz){-} \varphi (\yy{-}\zz)  |^2 \mmu_n (\zz) \dd \zz  \doteq G_n(\xx,\yy)\,.
\end{aligned}
 \end{equation}
Now, by the definition of $\kkappa$ and of the individual kernels $\kappa$ and $\widehat\kappa$ we have 
\[
\begin{aligned}
&
\iint_{\EE} G_n(\xx,\yy)  \TTheta_{\kkappa}(\dd \xx, \dd y) = I_n^1+I_n^2 \qquad \text{with }
\\
& 
I_n^1 = \int_{\torus} \iint_{\Ed} G_n(x,\hat x, y, \hat x) \nu(y{-}x)  \dd y\, \dd x\, \dd \hat x\,,
\\
& 
I_n^2 =  \int_{\torus} \iint_{\Ed} G_n(x,\hat x, x, \hat y) \widehat{\nu}(\hat{y}{-}\hat x) \dd \hat y \, \dd \hat x\,  \dd x\,,
\end{aligned}
\]
where in this context $\Ed = \torus{\times}\torus \backslash \{ (x,x)\, : \ x \in \torus\}$. 
We observe that, by Fubini's theorem, 
\[
\begin{aligned}
I_n^1 &  =  \iint_{\torus{\times}\torus} \left(  \int_{\torus} \iint_{\Ed} |\varphi (x{-}z, \hat x {-}\hat z){-} \varphi (y{-}z, \hat x {-}\hat z)  |^2 
 \nu(y{-}x) \dd y \, \dd x \dd \hat x \right) \, \mu_n (z)  \widehat{\mu}_n (\hat z)  \,  \dd z\, \dd \hat z 
 \\
 & 
 \stackrel{(1)}{=} \iint_{\torus{\times}\torus} \left( \int_{\torus} \iint_{\Ed} |\varphi (x', \hat x'){-} \varphi (y', \hat x')  |^2 
 \nu(y'{-}x') \dd y' \, \dd x' \,\dd \hat x' \right) \,  \mu_n (z)  \widehat{\mu}_n (\hat z) \, \dd z \,\dd \hat z 
 \\
 & 
 \stackrel{(2)}{=} \int_{\torus} \iint_{\Ed} |\varphi (x', \hat x'){-} \varphi (y', \hat x')  |^2 
 \nu(y'{-}x') \dd y' \, \dd x' \,\dd \hat x' 
 \end{aligned}
\]
Here {\footnotesize (1)} follows from the change of variariable $\xx'=\xx{-}\zz $, $\yy' = \yy{-}\zz$
(under which the domains of integration do not change, i.e.\ for all $\hat z \in 
\torus$ we have 
$\torus - \{\hat z\} = \torus$ and similarly for $\Ed$). Eventually,  {\footnotesize (2)}  is due to the fact that 
\[
 \iint_{\torus{\times}\torus}  \mu_n (z)\,\widehat{\mu}_n (\hat z) \, \dd z \,\dd \hat z  = 1
\]
by \eqref{torus-mollifiers}.  Clearly,  we may repeat the very same calculations for $I_n^2$, thus obtaining that 
\[
\iint_{\EE} G_n(\xx,\yy)  \TTheta_{\kkappa}(\dd \xx, \dd \yy) = 
\iint_{\EE}  |\varphi (\xx){-} \varphi (\yy)  |^2 \,
 \TTheta_{\kkappa} (\dd x \dd \yy)
\]
All in all, it follows from \eqref{lessGn} that 
\[
\limsup_{n\to\infty}  \iint_{\EE}  | \overline{\nabla} \varphi_n (\xx,\yy)|^2   \TTheta_{\kkappa} (\dd \xx \dd \yy) \leq \iint_{\EE}  |\varphi (\xx){-} \varphi (\yy)  |^2 \,
 \TTheta_{\kkappa} (\dd \xx \dd \yy)\,.
\]
Since on the other hand we have that, by the Fatou Lemma,
\[
\liminf_{n\to\infty}  \iint_{\EE}  | \overline{\nabla} \varphi_n (\xx,\yy)|^2   \TTheta_{\kkappa} (\dd \xx \dd \yy) \geq \iint_{\EE}  |\varphi (\xx){-} \varphi (\yy)  |^2 \,
 \TTheta_{\kkappa} (\dd \xx \dd \yy)
\]
we ultimately have $\iint_{\EE}   | \overline{\nabla} \varphi_n |^2\, \dd   \TTheta_{\kkappa} \to \iint_{\EE}   | \overline{\nabla} \varphi |^2 \, \dd   \TTheta_{\kkappa}  $.
Combining this with the pointwise convergence \eqref{pointwise-tetakk},
 a version of the dominated convergence theorem yields
the desired convergence 
\begin{equation}
\label{FINAL-torus}
\overline\nabla \varphi_n \longrightarrow \overline\nabla\varphi \qquad \text{in } \rmL^2(\EE; \TTheta_{\kkappa} )\,.
\end{equation}
By \eqref{cvg-L1-torus} and \eqref{FINAL-torus}, we conclude the validity of the density property. 
\end{proof}

\EEE

{\small

\markboth{References}{References}

\bibliographystyle{siam}
\bibliography{ricky_lit}
}

\end{document}